\numberwithin{equation}{section}
\numberwithin{figure}{section}
\theoremstyle{plain}
\newtheorem{thm}{\protect\theoremname}
  \theoremstyle{plain}
  \newtheorem{lemma}[thm]{\protect\lemmaname}
    \newtheorem{prop}[thm]{\protect\propname}
\newtheorem*{thm*}{Theorem}
\newtheorem{definition}[thm]{Definition}
\numberwithin{thm}{section}
\newtheorem{cor}[thm]{Corollary}
\theoremstyle{remark}
\newtheorem*{rem}{Remark}
\providecommand{\propname}{Proposition}
\providecommand{\lemmaname}{Lemma}
\providecommand{\theoremname}{Theorem}
\newcommand{\imag}{\operatorname{Im} \,}
\newcommand{\real}{\operatorname{Re} \,}
\renewcommand{\Im}{\imag}
\renewcommand{\Re}{\real}
\newcommand{\ee}{\epsilon}
\newcommand{\DD}{\mathbb{D}}
\newcommand{\ZZ}{\mathbb{Z}}
\newcommand{\E}{\mathbf{E}}
\newcommand{\Prob}{\mathbf{P}}
\newcommand{\hcap}{\operatorname{hcap}}
\newcommand{\diam}{\operatorname{diam}}
\newcommand{\dist}{\operatorname{dist}}
\newcommand{\ball}{\mathcal{B}}
\newcommand{\Z}{{\mathbb Z}}
\newcommand{\R}{{\mathbb{R}}}
\newcommand{\C}{{\mathbb C}}
\newcommand{\rad}{{\rm rad}}
\newcommand{\SLE}{\text{\tiny SLE}}
\newcommand {\G} {{\mathcal G}}
\newcommand{\LERW}{\text{\tiny LERW}}
\def \Half {{\mathbb H}}
\def \Disk {{\mathbb D}}
\def \F {{\cal F}}
\newcommand {{\wind}} {{\rm wind}}
\newcommand {{\Sine}}{S}
\let \setminus \smallsetminus
\let \le \leqslant
\let \leq \leqslant
\let \ge \geqslant
\let \geq \geqslant
\let \epsilon \varepsilon
\let \phi \varphi
\newcommand{\whoknows}  {{\mathcal A}}
\newcommand{\soup}  {{\mathcal J}}
\newcommand{\paths} {{\mathcal K}}
\newcommand{\saws}{\mathcal{W}}
\newcommand {\eset}{{\emptyset}}
\newcommand {\Square} {{\mathcal S}}
\newcommand {\Cont}{{\rm Cont}}
\newcommand {\maxsle} {{J_\SLE}}
\newcommand {\maxlerw} {{J_\LERW}}
\newcommand{\maxsletwo} {{J_\SLE^2}}
\newcommand {\maxlerwtwo}{{J_\LERW^2}}
\newcommand {\opensle}  {O^{\SLE}}
\newcommand {\openlerw}  {O^{\LERW}}
\title{Convergence of Loop-Erased Random Walk in the Natural Parametrization}
\author{Gregory F. Lawler}
\affil{University of Chicago}
\author{Fredrik Viklund}
\affil{KTH Royal Institute of Technology}
\begin{document}

\maketitle
\begin{abstract}
Loop-erased random walk, abbreviated LERW, is one of the most well-studied critical lattice models. It is the self-avoiding random walk one gets after erasing the loops from a simple random walk in order or alternatively by considering the branches in a uniformly chosen spanning tree. 
This paper proves that planar LERW parametrized by renormalized length converges in the lattice size scaling limit to SLE$_2$ parametrized by $5/4$-dimensional Minkowski content. In doing this we also provide a method for proving similar convergence results for other models converging to SLE. Besides the main theorem, several of our results about LERW are of independent interest: for example, two-point estimates, estimates on maximal content, and a ``separation lemma''. 
\end{abstract}
\tableofcontents
\section{Introduction}
\subsection{Introduction}
The hypothesis that critical two-dimensional lattice models should have conformally invariant scaling limits was formulated in the physics community in the 1980s. Starting with \cite{BPZ} conformal field theory (CFT) was developed to exploit conformal invariance and subsequently applied to many lattice models, producing predictions of, e.g., critical exponents and correlation functions. 

The Schramm-Loewner evolution (SLE) processes \cite{Schramm_LERW} provide a precise mathematical approach by describing scaling limits of random cluster interfaces and self-avoiding walks in the lattice models. To date, convergence in a sense described below, and in particular conformal invariance, has been established in several cases: loop-erased random walk (LERW), the uniform spanning tree, critical percolation, the Ising model, and the discrete Gaussian free field \cite{LSW04, smirnov-perc, smirnov-ising, SS}. The uniform measure on self-avoiding walks is strongly believed to belong to this collection of models, but whether it actually does remains one of the most interesting and apparently difficult open problems in probability. Once a convergence result is established one can use SLE computations to rigorously derive properties such as critical exponents or dimensions of the discrete interfaces, see, e.g., \cite{smirnov_werner, lsw_arm_exponents, Masson}. Some field theoretic statements may also be interpreted and given probabilistic and geometric meaning, see, e.g., \cite{friedrich-werner, kang-makarov} and the references in the latter.

SLE curves are constructed using Loewner's differential equation. It gives the dynamics of a family of Riemann maps from a reference domain onto a continuously decreasing family of simply connected domains. Under favorable circumstances, as in the case of SLE, there is a non-crossing, continuous curve such that one gets the decreasing domains by taking the complements of the growing curve. This \emph{Loewner curve} comes equipped with a particular  parametrization by capacity which it inherits from the Loewner equation. Studying SLE in this parametrization is practical for many problems and we have information about, e.g., sharp Holder exponents, continuity properties, and finer multifractal relations  \cite{RS, lind-holder, JVL, JVL2, JVRW}. Before the present paper all SLE convergence results we know of consider a discrete curve reparametrized by capacity, and proves convergence in that parametrization. This is sufficient to study many properties of  discrete models converging to SLE. 

However, information is lost when reparametrizing the discrete curve. A more detailed analysis (see \cite{benoist-dumaz-werner} for an example) is possible by considering the discrete process parametrized by length, in what is sometimes called its \emph{natural parametrization}. By this we mean that the curve traverses each lattice edge in the same amount of time. Since the limiting trace is fractal, one needs to rescale so that whole curve in a smooth
bounded domain  is traversed in time of order $1$. It then seems reasonable to expect that the discrete curve in its natural parametrization converges to SLE equipped with a different parmatrization than capacity. Indeed, this is widely believed to be true in all the cases where convergence to SLE is known. 

The SLE curve with parameter $\kappa \in (0,8)$ is a random fractal of almost sure dimension $d=1+\kappa/8$. With the length rescaling of the discrete curve in mind, we are looking for a parametrization $\gamma(t)$ such that $r\gamma[0,t]$ equals $\gamma[0,r^dt]$ in distribution. That is, one in which it takes about time $O(r^d)$ for the curve to travel distance $r$. (Compare this with the discrete interpretation of dimension.)  It would be natural to try  to parametrize by $d$-dimensional Hausdorff content, but it turns out that this does not work: the Hausdorff content is $0$ almost surely \cite{rezaei-hausdorff}.  What does work is to parametrize by $d$-dimensional Minkowski content, so that 
\[
\lim_{\ee \to 0+} \ee^{2-d} \text{Area}\left(\left\{z: \, \dist(z, \gamma[0,t]) \le \ee \right\} \right) = t
\]
holds for all $t \ge 0$. To make sense of this requires work, see \cite{LR}. The resulting parametrization is also called the natural parametrization of SLE$_\kappa$.  The first construction \cite{lawler_sheffield} did not use Minkowski content, but went via the Doob-Meyer decomposition of a supermartingale obtained by integrating the SLE Green's function. The ``natural time'' was defined as the increasing part in this decomposition. Both approaches are important for this paper.  

Our main theorem is that LERW parametrized by renormalized length converges to SLE$_2$ parametrized by Minkowski content. Let us give a rough statement.  Fix an analytic simply connected domain $D$ with distinct boundary points $a,b$ and for $N=1,2,\ldots,$ a lattice spacing $N^{-1}$. We take $D_{N}$ to be an appropriate simply connected component of $(N^{-1} \ZZ^2) \cap D$ with boundary edges $a_N, b_N$ approximating $a,b$. We will measure distance between curves using a metric on parametrized curves defined as follows: If $\gamma^1:[s_1,t_1] \rightarrow \C$ and $\gamma^2:[s_2,t_2]
 \rightarrow \C$ are continuous curves, then
\begin{equation} \label{metric}     \rho(\gamma^1,\gamma^2) = \inf\left[\sup_{s_1 \leq t \leq t_1}
   \left|\alpha(t) - t \right| +  \sup_{s_1 \leq t \leq t_1}
   \left|\gamma^2\left(\alpha(t) \right) -\gamma^1( t) \right|\right] ,\end{equation}
 where the infimum is over all increasing homeomorphisms
 $\alpha:[s_1,t_1] \rightarrow [s_2,t_2]$.
\begin{thm}\label{main-thm-rough}
There is a universal (but presumed lattice dependent) constant $\check{c}$ and an explicit sequence $\ee_N \to 0+$ as $N \to \infty$ such that the following holds.  For each $N$, let $\eta(t), t\in [0, T_\eta],$ be LERW in $D_N$ from $a_N$ to $b_N$ viewed as a continuous curve parametrized so that each edge is traversed in time $\check{c}N^{-5/4}$. Let $\gamma(t), t \in [0, T_\gamma],$ be chordal SLE$_2$ in $D$ from $a$ to $b$ parametrized by $5/4$-dimensional Minkowski content. There is a coupling of $\eta$ and $\gamma$ such that
\[
\Prob\left\{\rho \left(\eta,\gamma \right) > \ee_N \right\} < \ee_N.
\]
In particular, $\eta$ converges to $\gamma$ weakly with respect to the metric $\rho$.
\end{thm}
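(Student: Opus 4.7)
The strategy is to bootstrap the known Lawler--Schramm--Werner convergence of LERW to SLE$_2$ in the capacity parametrization into convergence in the natural parametrization. First, I would invoke LSW together with Skorokhod's representation theorem to obtain a coupling of the LERW curve $\eta$ and the SLE$_2$ curve $\gamma$ on a common probability space under which the capacity-parametrized versions are uniformly close with high probability. Under such a coupling, proving Theorem~\ref{main-thm-rough} reduces to showing that the renormalized edge-count $\check{c} N^{-5/4} |\eta[0,\sigma_s]|$ (where $\sigma_s$ is the LERW time at which $\eta$ reaches capacity $s$ as seen from $b_N$) is uniformly close to the $5/4$-Minkowski content $\Cont_{5/4}(\gamma[0,s])$, for all capacity times $s$.

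The core analytic input is a sharp one-point estimate for LERW: for a typical interior point $v$ in $D_N$,
\[
\Prob\bigl[v \in \eta\bigr] = c_{0} \, N^{-3/4} \, G_D(v; a, b)(1+o(1)),
\]
where $G_D$ is the SLE$_2$ Green's function, together with a matching two-point estimate of the form $\Prob[u,v \in \eta] \lesssim N^{-3/2} G_D(u) G_D(v) / |u-v|^{3/4}$. Localized to the domain slit by $\eta[0,\sigma_s]$, and combined with the separation lemma and maximal-content bounds advertised in the abstract, these give first- and second-moment control on the remaining length $|\eta[\sigma_s, T_\eta]|$ conditional on the curve up to capacity time $s$.

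With these estimates in hand, the plan is to mimic the Lawler--Sheffield construction of the natural parametrization at the discrete level. I would decompose the decreasing process $s \mapsto \check{c} N^{-5/4} |\eta[\sigma_s,T_\eta]|$ into its predictable compensator plus a martingale. The compensator is essentially $\check{c} N^{-5/4} \sum_{v} \Prob[v \in \eta \mid \eta[0,\sigma_s]]$, which by the one-point estimate and the coupling to $\gamma$ converges uniformly to the integral of the SLE$_2$ Green's function in the slit domain $D \setminus \gamma[0,s]$; this integral is, by definition of the natural parametrization via Doob--Meyer, the remaining Minkowski content of $\gamma$. The martingale part is controlled by a variance estimate coming from the two-point bound. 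Inverting the comparison of time-changes, using continuity of the Minkowski content as a function of capacity time, yields closeness in the metric $\rho$.

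The main obstacle, I expect, is uniformity of the sharp one-point estimate. The scaling exponent $3/4$ is available from prior work, but pinning down the constant and, more critically, the error uniformly over slit domains of the form $D \setminus \eta[0,\sigma_s]$ (where the tip may be close to the boundary and the curve may have degenerate local geometry) requires robust discrete harmonic-measure input. This is where the separation lemma becomes essential: it furnishes the regularity of $\eta$ near its tip needed to compare discrete quantities controlling LERW with the continuum SLE Green's function and to propagate small errors through the coupling, while the maximal-content bounds ensure the total contribution from atypical scales is negligible.
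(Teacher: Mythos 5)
Your overall strategy is indeed the one the paper follows (capacity-parametrization coupling, the Green's function as observable, comparison of the Doob--Meyer decompositions of the conditional expectations of total length and of total Minkowski content, with second-moment and maximal estimates controlling the martingale part), but two steps are genuinely missing as you describe them. First, the coupling: invoking LSW plus Skorokhod's representation gives you curves that are uniformly close in the capacity parametrization, but it gives no control over the \emph{joint} filtration, and the martingale argument needs exactly that. To write $\E[\check\Theta \mid \mathcal{G}_n]$ as $\check\Theta_{\tau_n}$ plus the integral of the SLE$_2$ Green's function over the slit domain, the Brownian increments after $\tau_n$ must be independent of $\mathcal{G}_n$, and simultaneously the conditional law of the unexplored LERW given $\mathcal{G}_n$ (which contains SLE information) must still be the LERW law in the slit discrete domain. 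A generic Skorokhod coupling guarantees neither property, nor any quantitative rate (needed for an explicit $\epsilon_N$); this is precisely why the paper constructs a bespoke coupling with these measurability properties and polynomial error bounds rather than quoting LSW directly.

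Second, the assertion that the compensator $\check{c}N^{-5/4}\sum_v \Prob[v\in\eta\mid \eta[0,\sigma_s]]$ converges \emph{uniformly over slit domains} to the Green's function integral is where the bulk of the difficulty sits, and the tools you name do not deliver it. The one-point estimate carries a multiplicative error of order $r^{-u}S^{-1}$, which is useless when the sine of the argument of the uniformized point is small or the point is close to the growing (fractal) boundary, and a priori the curves could screen macroscopic regions of such points. The paper's resolution is the open/closed-square bookkeeping: the martingale comparison is run only on contributions from squares still ``open'', and separate hitting-type estimates (probability that the sine gets small before the point is reached, a bottleneck estimate for LERW, and SLE analogues) show that squares closed before being visited carry negligible expected content, while the maximal-content estimate controls near-tip increments and the passage to a predictable version of the bounded-variation part. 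The separation lemma does not play the role you assign it -- it enters upstream, in proving the sharp two-point estimate that feeds the second-moment and maximal bounds -- so without some substitute for the open/closed decomposition the uniform compensator comparison would fail as stated.
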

See Section~\ref{sect:main_complete} and Theorem~\ref{thm:main_complete} in particular for a complete statement, but we mention here that we obtain an estimate on the convergence rate and one may take $\ee_N = c \, \left(\log N \right)^{-1/60}$ in the theorem, where $c$ is a constant depending on the domain configuration. The recent paper \cite{benoist-dumaz-werner} gives an already worked out application of Theorem~\ref{main-thm-rough}. See also \cite{AMK, BCK, kennedy_montecarlo} for additional discussions of discrete models and their relations to SLE in the natural parametrization. There is a version of Theorem~\ref{main-thm-rough} for radial LERW. The proof requires some extra work which is done in \cite{LV_lerw_radial_note}.

The starting point of the proof is the main result of \cite{BLV}:  the renormalized probability that LERW uses a fixed interior edge converges towards the SLE$_2$ Green's function. (See \cite{BLV} and the references therein for a discussion of the LERW growth exponent and related work.) It is important that this result holds for general domains and that we have estimates on the convergence rate. With these facts in hand, the next step is to revisit the convergence in the capacity parametrization \cite{LSW04}. We need to work with a slightly different coupling than the ones previously constructed and we need to be careful about certain measurability properties. We carry out the needed work in a separate paper \cite{LV_lerw_chordal_note}. There we give  proofs using the Green's function as martingale observable and derive quantitative bounds on error terms. It is convenient to work with a discrete difference version of Loewner's equation and we discuss this and develop the required estimates in \cite{LV_lerw_chordal_note}. 

Given these results we thus have a coupling of LERW with SLE$_2$ in which with large probability the Loewner chains and paths are uniformly close when parametrized by capacity. The main  goal of this paper is to show that in this coupling, uniformly as the capacity of the paths is varied, the renormalized length of the LERW is nearly the same as the Minkowski content of the SLE, except for an event of small probability. In order to do this we consider martingales given by taking conditional expectations of the total number of steps and the total content of the LERW and SLE, respectively, given the growing coupled curves sampled at mesoscopic capacity increments. The idea is to look at the Doob-Meyer decompositions of the martingales and use the fact that the Green's functions are very close in order to show that the supermartingale parts are close. From this it is possible to deduce that the increasing parts, that is, the naural times, must also be close. A significant complication is to control the contribution of regions in the complement of the curves where the result of \cite{BLV} gives only trivial information. We handle
 this by discretizing and at each step restricting attention to ``open'' squares for which certain geometric estimates hold that allow us to estimate using \cite{BLV}. The contribution of ``closed'' squares is shown to be negligible.

Although many of our estimates are specific to LERW, our general method of proof is not. We do not see any obstructions for it to work for other models as well, if (and this is a big if!) the analogs of the Green's function convergence and  second moment estimates for the discrete model
are available. 
 
\subsection{Overview of the proof of Theorem~\ref{main-thm-rough}}  \label{sect:overview}
Let us now be more precise about what is needed to carry out this
idea and where in the paper it is done.  We will give more detailed definitions in Section~\ref{sect:preliminaries}.
\begin{itemize}
\item{We fix an analytic simply connected domain $D$ with distinct boundary points $a',b'$.}
\item{For any lattice spacing $N^{-1}$ we approximate $(D,a',b')$ by a triple $(A,a,b)$ where $A = A(D,N) \subset \Z^2=\Z + i\Z$ is a
simply connected lattice set with boundary edges $a,b$ near $Na',Nb'$.  We often identify edges with their midpoints.}
\item{
We identify each $\zeta \in \Z^2 $ with the closed square
$\Square_\zeta$ of side length $1$ centered at $\zeta$. We let $D_A$ be the simply connected complex domain generated by $A$ by taking the (interior of the) union of the squares corresponding the points of $A$. Note that $N^{-1}D_A$ approximates the domain $D$. We will sometimes slightly abuse language and refer to $D_A$ as a ``union of squares domain''.
}
\item{We write $\check a = N^{-1}a, \check b = N^{-1}b$, and  $\check D = \check{D}_A=N^{-1}D_A$ for the quantities scaled by
$N^{-1}$. As $N \to \infty$, $\check D$ converges to $D$ in the Carath\'eodory sense, and it is not hard to estimate the convergence rate. Indeed (see Lemma
\ref{mar5.lemma1}) there exists a conformal transformation $\psi: \check D\rightarrow D$
with $\psi(0) = 0, \psi'(0) > 0, $
\[   |\psi(z) - z| \leq \frac{c \, \log N}{N} , \qquad z \in \check D, \]
\[    |\psi'(z) - 1| \leq \frac{c }{N \, \dist(z, \partial \check D)},
\qquad z \in \check D, \qquad \dist(z, \partial D) \geq  \frac c N.\]}
\item{We fix a conformal transformation
\[   F:D_A \rightarrow \Half, \quad F(a) = 0, \quad F(b) = \infty.\]
Note that this map is defined only up to a final scaling.  We will consider the paths
only up to the time that their half plane capacity reaches $1$.  This half plane capacity is defined in terms of the image under $F$ and so depends on the scaling. We will be able
to consider the entire path in $D$ by varying the initial $F$. }
\item{
We choose a mesoscopic scale $h  = N^{-2u/3}$ where $u>0$ is the exponent (also denoted by $u$) in the error term
in the main estimate from \cite{BLV}, see \eqref{nov3.1}.  We choose the scale  this coarse so that
this error does not contribute significantly in estimates. Let $n_{0}= \lfloor h^{-1} \rfloor$. }
\item{ We
 grow a LERW in $A$ from $a$ to $b$ which we denote by $\eta$. We write $\Prob_{A,a,b}$ for the associated probability measure. We stop the path each
time its capacity has increased by $h$, and write $\eta^n$ for path stopped after $n$ mesoscopic increments.   By removing the vertices of $\eta^n$ from $A$ (taking an appropriate connected component if needed), we have a 
sequence of configurations $(A_0,a_0,b), (A_1,a_1,b), (A_2,a_2,b), \ldots$ with
$A_0 \supset A_1 \supset  \cdots$ and we let 
$D_n = D_{A_n}$. By mesoscopic capacity increment, we mean the
half-plane capacity of $\Half \setminus F(D_n)$
so that  $\hcap\left[\Half \setminus F(D_n)\right]
 \approx h \, n$.  }
 \item{
 We let $g_n: \Half \setminus  F(D_n) \rightarrow
 \Half$ be the conformal transformation with $g_n(z) = z + o(1),
 z \rightarrow \infty$; let $F_n = g_n \circ F$ for
 $g_n = g^n \circ \cdots \circ g^1$
 where $g^n$ is the corresponding transformation $g^n: F_{n-1}(D_{n-1}
 \setminus D_n) \rightarrow \Half$ normalized at infinity.  Let \[U_n
 = F_n(a_n), \quad \xi_n = U_n- U_{n-1}\]  so that $U_n$ is a discrete ``driving term'' for the LERW.}
 
\item{ Let $\Prob_n = \Prob_{A_n,a_n,b}$.   In an accompanying paper
\cite{LV_lerw_chordal_note} we use the LERW Green's function and Loewner
difference estimates to couple the LERW with an SLE$_2$.
To be more precise, we  
 find a standard Brownian motion $W_t$ and a sequence of stopping times
 $0 = \tau_0 < \tau_1 < \tau_2 < \cdots$ such that  except for an event
 of small probability,
  \[                 \max_{n \le n_0}|U_n - W_{\tau_n}| \leq c \, h^{1/5}. \] }
 
    \item{
 Given the Brownian motion, there is a corresponding SLE$_2$ path
 in $\Half$, that is, there is a  simple curve $ \gamma:[0,\infty)
 \rightarrow \Half$ and conformal maps $g_t^\SLE: \Half \setminus \gamma_t
 \rightarrow \Half$ satisfying
 \[     \partial_t g_t^\SLE(z) = \frac{1}{g_t^\SLE(z) - W_t}. \]
 Here we write $\gamma_t =  \gamma[0,t]$ for the trace in $\Half$ and we have parametrized
 the curve so that $\hcap[\gamma_t] = t$.
 We obtain the SLE in $\check D$ by $\check \gamma(t) =
   N^{-1}F^{-1} \left[\gamma(t) \right]$; here, we have retained the capacity 
   parametrization.  
   }
   \item{We let \[\phi_n^{\LERW}(z) = \left(g_n \circ F \right)(Nz)-U_n, \quad  \phi_{\tau_n}^{\SLE}(z) = \left(g_{\tau_n}^\SLE \circ F \right) (Nz) - W_{\tau_n}\] and let $\G_n$ be the $\sigma$-algebra of the coupling, that is, the $\sigma$-algebra
generated by the discrete LERW domains $A_k, k \le n,$ and the Brownian motion $W_s, 0 \leq s \leq \tau_n$.  We are careful in
our construction to make sure that $\{W_t- W_{\tau_n}: t \geq \tau_n\}$
is independent of $\G_n$.  If $\Im \phi^{\SLE}_{\tau_{n}}(z) \geq h^{1/20}$, then with large probability the two uniformizing maps are close:
\[   \max_{n \le n_{0}}\left|\phi_n^\LERW(z) - \phi_{\tau_n}^\SLE(z) \right| \leq ch^{1/30}.\]}
  
\item{Let $T$ and $T_n$ be the number of steps of $\eta$ and $\eta^n,$ respectively, and let $\check{T} = c_{*}^{-1} N^{-5/4}T$ and $\check{T}_n = c_*^{-1} N^{-5/4}T_n$ be the scaled quantities. Here $c_{*}$ is the constant appearing in \eqref{nov3.1} below. Similarly let $\check{\Theta}$ be the $5/4$-dimensional Minkowski content of  $\check{\gamma}_\infty$ and let $\check{\Theta}_t$ times the $5/4$-dimensional Minkowski content of $\check{\gamma}_t$. }

\item{We consider two discrete time $\mathcal{G}_n$-martingales:
\[
M^{\LERW}_n = \E\left[\check{T} \mid \mathcal{G}_n \right] = \check{T}_n + c_*^{-1} N^{-5/4}\sum_{z \in A_n}\Prob_n\left\{z \in \eta\right\}.
\]
and
\[
M^{\SLE}_n =  \E\left[\check{\Theta} \mid \mathcal{G}_n \right] = \check{\Theta}_{\tau_n} + \int_{\check  D \setminus \check  \gamma_{\tau_n 
   }}
 G_{\check  D \setminus \check  \gamma_{\tau_n 
   }}(z;\check  \gamma(\tau_n),
        \check  b) \, dA(z). 
\]
 Here \[ G_{\check  D \setminus \check  \gamma_{\tau_n 
   }}\left(z;\check  \gamma(\tau_n),
        \check  b \right)  = \tilde{c}\, r_n(z)^{-3/4} \sin^3\left[\arg \phi_{\tau_n}^\SLE(z) \right]\] is the Euclidean
 Green's function for SLE$_2$ in $\check{D}\setminus \check{\gamma}_{\tau_n}$ from $\check\gamma(\tau_n)$ to $b$ and we are writing $r_n(z)$ for the conformal radius of $\check{D}\setminus \check{\gamma}_{\tau_n}$ seen from $z$. The value of the constant $\tilde{c} \in (0,\infty)$ is unknown.}

 \item{
 We form the difference of the two $\G_n$-martingales:
\begin{equation}   \label{nov4.1}
  M_n = M_n^\SLE -
     M_n^{\LERW}.
   \end{equation}
   We can then write $M_n = B_n +  Y_n$,
 where
 \[
 B_n = \Check{\Theta}_{\tau_n} - \check{T}_n,
 \]
  
   and
 \[   Y_n =    \int_{\check  D \setminus \check  \gamma_{\tau_n 
   }}
 G_{\check  D \setminus \check  \gamma_{\tau_n 
   }}(z;\check  \gamma(\tau_n),
        \check  b) \, dA(z)
 - c_*^{-1} \, N^{-5/4} \, \sum_{\zeta \in A_n} \Prob_{n}\{\zeta \in \eta\}.\] Notice that $B_n$
 is a difference of two increasing processes so it is a process
 of bounded variation, and
 $ Y_n$ is a difference of two supermartingales.}
 
 \item{The main result of \cite{BLV} tells us that there are constants $c_* \in (0, \infty)$ and $u > 0$ such that
\begin{equation}  \label{nov3.1}
  \Prob_n\{\zeta \in \eta\} = c_* \, G_{D_n}(\zeta; a_n, b)\left(1 + O\left(N^{- u} \right) \right),
  \end{equation}
at least if the interior point $\zeta$ is not too close to $\partial
A_n$.  So after rescaling and integrating this relation, taking regularity properties into account, we expect $Y_n$ to be uniformly small.
}

 \item{In Section~\ref{sect:main-proof} we will (roughly speaking) use estimates for the coupling and \eqref{nov3.1} to find a $\delta>0$ so that if $\ee_N = \left(\log N \right)^{-\delta}$ then there is a ``large'' stopping time $\tau$ such that \begin{itemize}
 \item{$|Y_{n}| \le \ee_N$ for all $n < \tau$,}
 \item{ $\E \left[Y_\tau^2 \right] \le \ee_N$,}
 \item{ and  $\left|B_n' -B_{n-1}'\right| \le \ee_N$ for all $n \le \tau$, where $B_n'$ is a predictable version of $B_n$.} 
 \end{itemize}} 
 \item{Given this, an argument using the $L^2$-maximum principle shows that $\max_{n \le \tau}|B_{n}'|$ is bounded terms of $\ee_N$, with large probability. In Section~\ref{bvsec} we use this to bound $\max_{n \le \tau}|B_n|$, and this is the estimate we want.}
 
         \end{itemize}

 A substantial complication in this approach is that the Loewner difference
 equation only shows that for suitable $\ee > 0$ the uniformizing LERW and SLE maps $\phi_n^\LERW$ and $\phi_{\tau_n}^\SLE$
 are uniformly close for $z \in D$ with $\Im \left[ \phi_{\tau_n}^\SLE(\zeta) \right] \geq h^\ee$.
We need to also control the contribution of points 
for which $\Im\left[\phi_{\tau_n}^\SLE(\zeta)\right]$ is small. Moreover, the error in the precise version of \eqref{nov3.1} depends on the geometry of the domain seen from $\zeta$. In fact, the curves may \emph{a priori} both create large regions of ``bad'' points, but we will show that the proportion of bad points that are subsequently visited goes to zero and so do not actually contribute.  We will achieve this by showing that, roughly speaking, all
such points satisfy at least one of the following conditions for each $n$, and estimate the contribution differently depending on which. Here we summarize the definition for SLE, see Section~\ref{sect:open} the slightly different definition for LERW and further discussion. We set $\lambda = h^{1/100}$ and describe the two conditions. \begin{enumerate}

\item[I.]{We have $\Im\left[ \phi_{\tau_n}^\SLE(\zeta) \right] \ge \lambda$ and there exists $j \leq n$, such that
  \[S_j(\zeta)  \leq \left(\log N \right)^{-2/5}, \quad \text{ where } S_j(\zeta)
   = \sin \left[\arg  \phi_{\tau_n}^\SLE(\zeta) \right].\] Roughly speaking, this means the the path ``screens'' $\zeta$, e.g., by almost closing a bubble around it, but the distance between the curve and $\zeta$ may still be large. }
   \item[II.]{We have $\Im \left[\phi_{\tau_n}^\SLE(\zeta) \right] < \lambda$ and the distance at time $\tau_n$
from $\zeta$ to the curve is less than $\left(\log N \right)^{-5}$
but the tip of the curve is at least distance
$\left(\log N \right)^{-1}$ from $\zeta$, so that the curve ``got close to $\zeta$ and then away''. 
}
 
\end{enumerate}

A square $\Square_\zeta$ becomes ``closed'' at time $n$ (and stays closed forever) if either of the conditions I or II hold for $\zeta$ at time $n$. A square is ``open'' at a given time if it is not closed. The idea is to do the argument as sketched above but instead redefining the processes $M_n^{\SLE},
M_n^{\LERW},M_n,B_n,Y_n$ to be the corresponding
quantities referring to the amount of 
natural time spent in open squares, that is, time spent before the
square has become closed. For this to work we have to show that it is enough to consider the open squares and this part of the argument is given in Section~\ref{badsec}. The proof of Theorem~\ref{main-thm-rough} is completed in Section~\ref{sect:main-proof}, assuming some statements that are proved in later sections.

 The proof of Theorem~\ref{main-thm-rough}   requires sharp one and two-point
estimates for both SLE and LERW.  For SLE they have been developed
in several recent papers, and the sharp one-point estimate for
LERW is \eqref{nov3.1}.
In Section~\ref{LERWsec} we have collected the needed estimates about LERW.
We have separated them from the main argument because they have independent interest and because this section can be read independently.  
  The two-point estimates for LERW
need both the sharp one-point estimate and an 
  appropriate separation lemma that states that that
two-sided LERW conditioned to reach a ball about the origin have
a good chance of having the endpoints at the first visits from
the two directions ``separated''.  We leave the exact
statements for Section \ref{LERWsec}.  This section, which comprises
almost half of this paper, does not use any facts about SLE.

\subsection{Acknowledgments}
Lawler was supported by National Science Foundation grant DMS-1513036. Viklund was supported by the Knut and Alice Wallenberg Foundation, the Swedish Research Council, the Gustafsson Foundation, and National Science Foundation grant DMS-1308476. We also wish to thank the Isaac Newton Institute for Mathematical Sciences where part of this work was carried out.

\section{Preliminaries}\label{sect:preliminaries}
\subsection{Discrete set-up and loop-erased random walk}\label{sect:set-up} 
 
 Here we will give precise definitions of our discrete quantities.

\begin{itemize}
\item If $A$ is a finite subset of $\Z^2$, we let $\partial_e A$ denote
the edge boundary of $A$, that is, the set of edges  of
$\Z^2$ with exactly one endpoint in $A$.  We will specify
elements of $\partial_e A$ by $a$, the midpoint of the edge.  Note
 that
$a$ specifies the edge uniquely up to the orientation.  We will write\
$a_-,a_+$ for the endpoints of the edge in $\Z^2\setminus A$ and $A$, respectively.
Note   that 
\[  a_-, b_- \in \partial A:= \{z \in \Z^2 \setminus A: \dist(z, A) = 1\}, \]
\[     a_+, b_+ \in  \partial_iA := \{z \in A: \dist(z, \partial A) = 1\}. \]
 We also write the edge as
$e_a = [a_-,a_+], e_b = [b_-,b_+]$ for the edges oriented from the outside to the
inside.

\item Let $\whoknows$
denote the set of triples $(A, a, b)$ where
  $A$  is  a finite, simply connected subset of $\Z^2$  
containing the origin,  and $  a,   b$ are 
elements
of $\partial_e A$ with $a_- \neq b_-$.
  We allow $a_+ = b_+$.

\item  let $\Square= \{x+iy \in \C: |x|,|y| \leq 1/2\}$
be the closed square of side length one centered at the origin and
$\Square_z = z + \Square$. If $(A,a,b) \in \whoknows$, let
 $D_A$ be the corresponding simply connected domain defined
 as the interior of
 \[               \bigcup_{z \in A} \Square_z . \]
  This is a simply connected Jordan domain whose boundary is a subset of the edge set of the dual graph of $ \Z^2$. Note that $a,b \in \partial D_A$. We refer to $D_A$ as a ``union of squares'' domain.

\item{Let
$F = F_{A,a,b}$ denote a conformal map from
$D_A$ onto $\Half$ with $F( a ) = 0, F( b ) = \infty$.  This
map is defined only up to a dilation; later we will fix
a particular choice of $F$.
Note that $F$ and $F^{-1}$
  extend continuously to the boundary of the domain (with the appropriate
  definition of continuity at infinity).
   }
   
\item  For $z \in D_A$, we define 
\[   \theta_A(z;a,b) = \arg F(z) , \;\;\;   \Sine_{A,a,b}(z) = \sin \theta_A(z;a,b),
\]
which are
independent of the choice of $F$, since $F$ is unique up to scaling.  
Also for $z \in \Half$, we write 
\[   \Sine(z) =  \sin[\arg(z)].\]

\item We write $r_A(z)=r_{D_A}(z)$ for the conformal radius of $D_A$
with respect to $z$.  It can be computed from $F$ by
\[        r_A(z) = 2\, \frac{\Im F(z) }{|F'(z)|}, \]
which is independent of the choice of $F$.

\item{A walk $\omega = [ \omega_0, \ldots, \omega_n]$ is a sequence of nearest neighbors in $\Z^2$. The length $|\omega| = n$ is by definition the number of traversed edges.}
\item{If $z,w \in A$, we write 
  $\paths_A(z,w)$ for the
  set of walks $\omega$  starting at $z$, ending
  at $w$, and otherwise staying in $A$. }
  \item{  
  The simple random walk measure  $p$ assigns to each walk measure $p(\omega) = 4^{-|\omega|}$. The total measure
  of $\paths_A(z,w)$ equals $G_A(z,w)$, the simple random
  walk Green's function.}
  \item{
  If $a,b \in \partial_e A$, there is an obvious bijection between
  $\paths_A(a_+,b_+)$ and $\paths_A(a,b)$,  the set of 
  walks starting with edge $e_a$, ending with 
  $e_b^R$ and otherwise staying in $A$.
    (Here and
  throughout this section we write $\omega^R$ for the
  reversal of the path $\omega$, that is, if
  $\omega = [\omega_0,\omega_1,\ldots,\omega_k]$, then
  $\omega^R = [\omega_k,\omega_{k-1},\ldots,\omega_0]$.)
We sometimes write $\omega: a \rightarrow b$ for walks in $\paths_A(a,b)$ with the condition to stay in $A$ implicit.  } 
  
 \item{We
  write $H_{\partial A}(a,b)$ for the total random walk measure of  $\paths_A(a,b)$.
It is easy to see that 
$   H_{\partial A}(a,b) =   G_A(a_+,b_+)/16$, 
 The factor of $1/16 = (1/4)^2$ comes from the $p$-measure of the edges $e_a,e_b$. $H_{\partial A}(a,b)$ is called the boundary Poisson kernel.}
 
\item{A self-avoiding walk (SAW) is a walk visiting each point at most once. We write $\saws_A(z,w) \subset \mathcal{K}_A(z,w)$ for the set of  SAWs from $z$ to $w$ staying in $A$.
  We will write $\omega$ for
general nearest neighbor paths and reserve $\eta$ for SAWs.  We   write $\saws_A(a,b)$ similarly when $a,b$ are boundary edges.  }

 \item{The loop-erasing procedure takes a walk and outputs a SAW, the \emph{loop-erasure} of $\omega$. Suppose a walk $\omega = [\omega_0, \ldots, \omega_n]$ is given.
 \begin{itemize}
 \item{If $\omega$ is self-avoiding, set $\text{LE}[\omega] = \omega$.}
 \item{Otherwise, define $s_0 = \max\{ j \le n: \omega_j = \omega_0\}$ and let $\text{LE}[\omega]_0 =\omega_{s_0}$. }
 \item{For $i \ge 0$, if $s_i < n$, define $s_{i+1} = \max\{ j \le n: \, \omega_j = \omega_{s_i}\}$ and set $\text{LE}[\omega]_{i+1} = \omega_{s_i+1}$.}
 \end{itemize}
Note that if $e_a \oplus \omega
\oplus e_b^R \in \paths_A(a,b)$, then
LE$[e_a \oplus \omega
\oplus e_b^R] =  e_a \oplus \text{LE}[\omega]
\oplus e_b^R $.
 }
 \item  The loop-erasing procedure induces a natural measure on SAWs as follows. We define $\hat P_{A,a,b}$, the ``loop-erased'' measure, on $\saws_A(a,b)$ by
 \[     \hat P_{A,a,b}(\eta) = \sum_{\omega \in \paths_A(a,b): \; \text{LE}(
 \omega) = \eta}
p(\omega). \]
Note that  $\hat P_{A,a,b}[\saws_A(z,w)] = H_{\partial A}
(a,b)$.  Let \[\Prob_{A,a,b} = \frac{\hat P_{A,a,b}}{
H_{\partial A}
(a,b)}\] denote the probability measure obtained by normalization. This is the probability law of loop-erased random walk (LERW) in $A$ from $a$ to $b$.

\end{itemize}

We state the main result from \cite{BLV}.  

\begin{lemma}  There exists $\hat c > 0$ and $u  >0$ such that the following holds. Suppose $(A,a,b) \in \whoknows$ and that $\zeta \in A$ is such that $S_{A,a,b}(\zeta) \ge r_A(\zeta)^{-u}$ ,  then
\begin{equation}
 \label{BLV1}   \Prob_{A,a,b}\{ \zeta \in \eta\} = \hat c \, \, r_A(\zeta)^{-3/4}\Sine_{A,a,b}^3(\zeta)
 \, \left[1 +O\left(r_A(\zeta)^{-u}\Sine_{A,a,b}^{-1}(\zeta) \right)\right].
 \end{equation}
 \end{lemma}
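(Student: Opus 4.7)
The plan is to express $\Prob_{A,a,b}\{\zeta \in \eta\}$ combinatorially via the loop-erasure construction, then identify its scaling limit with the SLE$_2$ Green's function using the capacity-parametrized coupling of LERW to SLE$_2$ together with a quantitative separation lemma at $\zeta$. First, I would decompose the LERW at $\zeta$ using Fomin's identity---equivalently, decomposing the underlying random walk at the first passage through $\zeta$ whose loop-erasure retains $\zeta$---to obtain
\[
\Prob_{A,a,b}\{\zeta \in \eta\} \;=\; \frac{1}{H_{\partial A}(a,b)} \sum_{\eta_1:\, a \to \zeta} \hat{P}_{A,a,\zeta}(\eta_1)\, H_{\partial(A \setminus \eta_1)}(\zeta, b),
\]
where $\eta_1$ ranges over LERW arms from $a$ to $\zeta$ and the second factor is a boundary Poisson kernel in the randomly slit domain $A \setminus \eta_1$. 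The sharp one-point asymptotic then reduces to (i) a sharp estimate for the LERW arm partition function from $a$ reaching $\zeta$, and (ii) a comparison of the slit-domain Poisson kernel to its unslit counterpart, modulo a quasi-independence of the two arms near $\zeta$.

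Second, I would transfer to the scaling limit. By the capacity-parametrized coupling of LERW with SLE$_2$ (together with quantitative one- and two-arm estimates for LERW near $\zeta$), the leading-order contributions factor through conformal invariants of the configuration $(A,a,b)$ at $\zeta$. The SLE$_2$ Green's function is known explicitly to equal $\tilde c \, r^{-3/4} \sin^3\theta$, derived by It\^o calculus applied to this candidate observable along the Loewner flow (cf.~\cite{LR}); combining this with Kenyon's $5/4$ growth exponent and the random-walk-to-Brownian-motion scaling then yields the leading-order asymptotic $\hat c \, r_A(\zeta)^{-3/4} \Sine_{A,a,b}^3(\zeta)$ with a universal lattice-dependent constant $\hat c$.

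The main obstacle is the quantitative \emph{separation lemma}: two-sided LERW conditioned to pass near $\zeta$ must, with overwhelming probability, have its incoming and outgoing arms well separated in angle at $\zeta$, so that the non-intersection constraint between them decouples and the slit-domain Poisson kernel admits controlled comparison with the unslit one. This requires iterating a one-step gain-of-separation estimate through many mesoscopic scales while carrying explicit error bounds, and it is this iteration that selects the admissible exponent $u$ and produces the error term $O(r_A(\zeta)^{-u} \Sine_{A,a,b}^{-1}(\zeta))$ in~\eqref{BLV1}; the factor $\Sine^{-1}$ reflects a conditioning on the harmonic measure of $\zeta$, which is precisely why the hypothesis $\Sine_{A,a,b}(\zeta) \geq r_A(\zeta)^{-u}$ is needed to make the estimate nontrivial.
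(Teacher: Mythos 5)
You are attempting to prove a statement that this paper does not prove at all: \eqref{BLV1} is the main theorem of the separate paper \cite{BLV} (restated again as Theorem~\ref{BLV} in Section~\ref{LERWsec}) and is imported here as a black box -- it is the \emph{input} to the present paper, not one of its results. So there is no internal proof to compare against, and your proposal is in effect an outline of that other paper. To its credit, the skeleton is in the right spirit: decomposing the walk at $\zeta$ via the domain Markov property (your first display is essentially correct, except that the second factor should be the interior-to-boundary Poisson kernel $H_{A\setminus\eta_1}(\zeta,b)$ rather than a boundary Poisson kernel, since $\zeta$ is an interior point of the slit domain), and using a separation lemma to decouple the incoming and outgoing arms at $\zeta$, are both genuine ingredients of the argument in \cite{BLV} and are mirrored by the two-point machinery of Section~\ref{twopointsec} here.

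The gap is that everything which makes \eqref{BLV1} hard is asserted rather than proved. The statement is a sharp estimate with a \emph{multiplicative} error $1+O\bigl(r_A(\zeta)^{-u}\Sine_{A,a,b}^{-1}(\zeta)\bigr)$, uniform over all $(A,a,b)\in\whoknows$ and all $\zeta$, including points whose distance to $\partial A$ is mesoscopic. ``Transferring to the scaling limit'' via the capacity-parametrized coupling, Kenyon's growth exponent, and random-walk-to-Brownian-motion convergence can at best produce up-to-constants bounds, or non-quantitative asymptotics for a fixed macroscopic configuration; it does not produce the existence of the universal constant $\hat c$, nor a polynomial error rate, nor uniformity in the domain. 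What is actually needed -- and what occupies \cite{BLV} -- is a quantitative coupling argument showing that, conditioned on the path reaching the scale of $\zeta$ with separated arms, the local configuration forgets $(A,a,b)$ at a polynomial rate (this is where Masson's escape-probability/growth-exponent estimates and a rate of convergence of the LERW driving process enter), together with an iteration of the one-step separation estimate across dyadic scales with explicit error accumulation to compare the slit and unslit Poisson kernels multiplicatively. None of that is carried out or even reduced to precise intermediate statements in your sketch, and the closing remark that the factor $\Sine^{-1}$ ``reflects a conditioning on harmonic measure'' is a heuristic, not a derivation of the error term. As written, the proposal identifies the correct landmarks but does not constitute a proof of the lemma.
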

 	
 We do not have an explicit bound on $u$ except $u>0$. We will fix a value of $u$ such that \eqref{BLV1} 
holds for the remainder of the paper.
For our purpose it is more useful to write \eqref{BLV1} in terms of the
Euclidean Green's function of SLE$_2$, see Section~\ref{SLE:defs} for the definition. For now we recall that in this case
\[    G_{D_A}(\zeta;a,b) = \tilde c \,  r_A(\zeta)^{-3/4}
 \, S_{A,a,b}^3(\zeta), \]
for some universal (but unknown)
$\tilde c >0$.  Therefore,  we may  rewrite \eqref{BLV1}
 as
\begin{equation}  \label{BLV2}
   \Prob_{A,a,b}\{ \zeta \in \eta\} =  c_* \,
     G_{D_A} (\zeta;a,b) \,\left[1
       + O\left(r_A(\zeta)^{-u} \right) \;  \Sine_{A,a,b}^{-1}(\zeta)
       \right], 
       \end{equation}
  where $c_* =   \hat c/\tilde c$.
   
  \subsection{SLE and Minkowski content}\label{SLE:defs}
Chordal SLE$_\kappa$ in $\Half$ is defined by first solving the Loewner equation
\[
\partial_t g_t(z) = \frac{2/\kappa}{g_t(z) - B_t}, \quad g_0(z)=z,
\]
with $B_t$ a standard Brownian motion. For each $t \ge 0$, $g_t(z)$ is a conformal map from a simply connected domain $H_t$ onto $\Half$ normalized so that $g_t(z) = z + (2/\kappa)
t/z + O(1/|z|^2)$ as $z \to \infty$. The family $(g_t(z))$ is called the SLE$_\kappa$ Loewner chain. The SLE$_\kappa$ path is the continuous curve defined by \[\gamma(t)=\lim_{y \to 0+} g^{-1}_t(iy+B_t).\] The curve generates the Loewner chain in the sense that $H_t$ is the unbounded component of $\Half \setminus \gamma_t$, where $\gamma_t = \gamma[0,t]$. As $t \to \infty$, this curve connects $0$ with $\infty$ in $\Half$. The compact set which is disconnected from $\infty$ by $\gamma_t$ is called the SLE$_\kappa$ hull (in general, a hull is a compact set such that $\Half \setminus K$ is unbounded and simply connected) and is denoted $K_t$. If $\kappa \le 4$, then $\gamma$ is simple so that $K_t = \gamma_t$.

Given a hull $K$  there is a Riemann map $g : \Half \setminus K \to \Half$ such that $g(z) = z + o(1)$ as $z \to \infty$. We define the half-plane capacity of $K$ by
\[
\hcap[K]  = \lim_{|z| \to \infty} z \left( g(z)-z \right).
\]
If $\gamma$ is parametrized so that $\hcap[K_t] = (2/\kappa)   t  $, we say that $\gamma$ is parametrized by capacity.

Given a simply connected domain $D$ with marked boundary points (prime ends in general) $a,b$, we define SLE$_\kappa$ in $D$ from $a$ to $b$ by conformal invariance. That is, we choose a conformal map $\phi:D \to \Half$ such that $\phi(a) = 0, \, \phi(b) = \infty$ and consider the image of $\gamma$ under $\phi^{-1}$. The map $\phi$ is only unique up to scaling, but allowing for a linear reparametrization the law of $\gamma$ is scale invariant.

The Green's function for SLE$_\kappa$ in $\Half$ is the function defined by \[G_{\Half}(z; 0,\infty)=\lim_{\ee \to 0+} \ee^{d-2} \Prob \left\{ \dist(z,\gamma_\infty) \le \ee \right\}, \quad d=1+\frac{\kappa}{8}.\]
(We suppress the $\kappa$-dependence in writing $G_{D}$.) We have the formula
\[
G_{\Half}(z; 0,\infty)=\tilde{c} \, r_{\Half}(z)^{d-2}\sin^{\beta}\left( \arg z \right), \quad \beta=\frac 8 \kappa-1,
\] 
where $\tilde{c} \in (0, \infty)$ is a $\kappa$-dependent but unknown constant. Note that $r_{\Half}(z) =2 \Im z$. (Replacing Euclidean distance by conformal radius on the left-hand side in the definition results in the same formula with a computable constant.) 
Using conformal covariance we can see that
\[
G_D(z; a,b) = \tilde{c}\, r_D(z)^{d-2} \sin^{\beta}\left( \arg \phi(z) \right),
\]
where $\phi : D \to \Half$ is as in the previous paragraph. 

Besides the capacity parametrization, the \emph{natural parametrization} of SLE is important for this paper. Let us review a few facts about it, see \cite{LR} for proofs and further discussion.
The simplest definition to state is in terms of $d$-dimensional Minkowski content: given $\gamma_t$, we can define
\[
\Theta_t = \Cont_d\left( \gamma_t\right) = \lim_{\ee \to 0+}\ee^{d-2} \text{Area}\left\{ z: \, \dist(z, \gamma_t) \le \ee \right\}.
\]
Then almost surely this limit exists for all $t$ and $t \mapsto \Theta_t$ is Holder continuous. Setting \[s(t) = \inf \left\{s \ge 0 : \Theta_s = t   \right\},\] we may reparametrize $\gamma$ by Minkowski content, that is, consider $t \mapsto \gamma \circ s(t)$, which can be seen to be almost surely Holder continuous.  This is SLE$_\kappa$ in the natural parametrization (or SLE$_\kappa$ parametrized by natural time). When we do not specify the dimension $d$, e.g., by simply writing $\Cont(\cdot)$ we are assuming $\kappa=2$ and $d=5/4$.
Suppose $D$ is a bounded  simply connected domain with (say) analytic boundary. An important property of the Minkowski content is that if $\gamma$ is SLE$_\kappa$ in $D$ from $a$ to $b$, and $V \subset D$, then
\[
\E \left[ \Cont_d(\gamma_\infty \cap V) \mid \gamma_t \right] = \Cont_d(\gamma_t \cap V) + \int_{V \setminus \gamma_t} G_{D\setminus \gamma_t}(z; \gamma(t), b)\, dA(z).
\]
In particular, $\E[\Theta_\infty] = \int_D G_D(z; a,b) \, dA(z) <\infty$ and
\[
\E \left[\Theta_\infty \mid \gamma_t \right] = \Theta_t + \int_{D \setminus \gamma_t} G_{D \setminus \gamma_t}(z; \gamma(t), b) \, dA(z)
\]
is a martingale, and the two terms on the right hand side form its Doob-Meyer decomposition into an increasing process and a supermartingale, respectively.  

In several places, sometimes without explicit reference, we will use the one-point estimate for SLE. We state one version here, see Section~2.2 of \cite{LR} for this and other versions.

\begin{lemma}\label{lem:SLE-one-point}
Suppose $0 < \kappa < 8$. There exist positive constants $c_*, \alpha$ such that the following holds. Let $\gamma$ be SLE$_\kappa$ in $D$ from $a$ to $b$, where $D$ is a simply connected domain with distinct boundary points (prime ends) $a,b$. Then for all $z \in D$ with $\dist(z,\partial D) \ge 2\ee$,
\[
\Prob\left\{\gamma \cap \ball(z,\ee) \neq \emptyset \right\} = c_*\ee^{2-d}G_D(z;a,b)\left[1+O(\ee^\alpha)\right],
\]
where $G_D(z;a,b)$ is the Green's function for SLE$_\kappa$ from $a$ to $b$ in $D$.
\end{lemma}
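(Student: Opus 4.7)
The plan is to prove the one-point estimate by the classical SLE Green's-function martingale technique: reduce to the upper half-plane, construct the natural local martingale attached to the Green's function, apply optional stopping at the hitting time of $\ball(z,\ee)$, and identify the universal limit of the ``shape'' of the SLE near $z$ at the moment it first hits the small disk.

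By conformal invariance of SLE$_\kappa$ and the covariance relation $G_D(z;a,b)=|\phi'(z)|^{2-d}G_\Half(\phi(z);0,\infty)$ under a uniformizer $\phi:D\to\Half$, a Koebe distortion argument reduces the problem to $D=\Half$, $a=0$, $b=\infty$; the hypothesis $\dist(z,\partial D)\ge 2\ee$ ensures the distortion incurred by replacing $\ball(z,\ee)\subset D$ with $\ball(\phi(z),\ee|\phi'(z)|)\subset\Half$ is $1+O(\ee^\alpha)$. Working in $\Half$, let $g_t$ be the Loewner chain driven by $B_t$, and set $\Upsilon_t=\Im g_t(z)/|g_t'(z)|$ and $\Theta_t=\arg(g_t(z)-B_t)$. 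A standard Ito calculation from $\partial_t g_t(z)=(2/\kappa)/(g_t(z)-B_t)$ and $\partial_t g_t'(z)=-(2/\kappa)g_t'(z)/(g_t(z)-B_t)^2$ shows that
\[
\tilde M_t \;=\; |g_t'(z)|^{2-d}\,G_\Half\bigl(g_t(z)-B_t;0,\infty\bigr) \;=\; \tilde c\,2^{d-2}\,\Upsilon_t^{\,d-2}\sin^\beta\Theta_t
\]
is a local martingale with $\tilde M_0=G_\Half(z;0,\infty)$. Let $\tau=\tau_\ee=\inf\{t:\dist(\gamma(t),z)\le\ee\}$. By Koebe's $1/4$-theorem, $\Upsilon_\tau\asymp\ee$ and $\tilde M_{t\wedge\tau}\le C\ee^{d-2}$ uniformly. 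For $\kappa<8$ a standard argument (on $\{\tau=\infty\}$ either $z$ is eventually swallowed, forcing $\sin\Theta_t\to 0$ fast enough, or the angular Brownian-like behavior drives $\tilde M_t$ to zero) gives $\tilde M_t\mathbf{1}_{\tau=\infty}\to 0$, so optional stopping yields
\[
G_\Half(z;0,\infty) \;=\; \E\bigl[\tilde M_\tau\mathbf{1}_{\tau<\infty}\bigr] \;=\; \tilde c\,\ee^{d-2}\,\Prob\{\tau<\infty\}\,\E\!\left[(2\Upsilon_\tau/\ee)^{d-2}\sin^\beta\Theta_\tau \,\Bigm|\,\tau<\infty\right].
\]

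The main obstacle is the final step: showing that the conditional expectation on the right converges, at polynomial rate $O(\ee^\alpha)$, to a universal constant $c'_*(\kappa)$. I would handle this by a dyadic two-scale mixing argument. For $\ee_k=2^{-k}\dist(z,\partial\Half)$ and $\sigma_k=\inf\{t:\dist(\gamma(t),z)\le\ee_k\}$, use scale invariance and the strong Markov property to view the rescaled conformal ``shape near $z$'' at time $\sigma_k$ as a Markov chain on a compact state space, conditioned on hitting all smaller scales. It suffices to prove exponential mixing of this chain. This can be achieved by a Doeblin-type coupling of two SLE continuations conditioned to hit $\ball(z,\ee_{k+1})$: the Green's-function weighted law concentrates near a specific ``straight approach'' of $\gamma$ toward $z$, so two different starting shapes can be coupled with positive probability within $O(1)$ scales, yielding geometric contraction and the required $O(\ee^\alpha)$ error. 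Combining with the displayed identity gives the claim with $c_*=1/(\tilde c\, c'_*)$.
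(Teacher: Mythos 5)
First, a point of calibration: the paper does not prove this lemma at all --- it is quoted from \cite{LR} (see Section~2.2 there), so the only thing to measure your argument against is the known proof in that reference. Your skeleton is the standard one and matches the strategy behind it: conformal covariance to reduce to $\Half$, the local martingale $\tilde M_t=|g_t'(z)|^{2-d}G_\Half(g_t(z)-B_t)$, boundedness of the stopped process since $\Upsilon_{t\wedge\tau}\gtrsim\ee$, the fact that $\tilde M_t\to 0$ on $\{\tau=\infty\}$ (which for $4<\kappa<8$ needs the swallowing-time argument you only gesture at), and optional stopping, which reduces the lemma to showing that $\E\left[(2\Upsilon_\tau/\ee)^{d-2}\sin^\beta\Theta_\tau \mid \tau<\infty\right]$ equals a universal constant up to a $1+O(\ee^\alpha)$ factor.

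There are, however, two genuine gaps. (1) The reduction step is wrong as stated: with only $\dist(z,\partial D)\ge 2\ee$, Koebe distortion gives a factor $1+O\bigl(\ee/\dist(z,\partial D)\bigr)$, which is of order one, not $1+O(\ee^\alpha)$, when $z$ is at distance comparable to $\ee$ from $\partial D$. Indeed, by scale invariance no bound with a uniform $1+O(\ee^\alpha)$ error can hold in that regime (test $D=\Half$, $z=2\ee i$: both the hitting probability and $\ee^{2-d}G_\Half(z)$ are independent of $\ee$), so the multiplicative error must be measured in terms of $\ee/\dist(z,\partial D)$ (or one must restrict to $z$ at macroscopic distance), which is how the estimate is formulated and used in practice. (2) More seriously, the entire analytic content of the lemma is the polynomial-rate convergence of the conditional ``shape'' expectation, and your treatment of it is a plan rather than a proof: the ``Markov chain on a compact state space'' is really the infinite-dimensional conformal geometry of $\Half\setminus\gamma_{\sigma_k}$ near $z$ together with the tip location; the Doeblin minorization you invoke (``the weighted law concentrates near a straight approach'') is asserted, not derived; and comparing the law conditioned to reach scale $\ee_{k+1}$ with the law conditioned on the terminal event at scale $\ee$ requires a uniform Radon--Nikodym estimate which itself presupposes an a priori up-to-constants one-point bound. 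In \cite{LR} and its antecedents this step is carried out by tilting by the martingale (two-sided radial SLE), passing to a radial-type parametrization in which the argument process $\Theta_t$ becomes an ergodic one-dimensional diffusion converging exponentially fast to its invariant law, and then a coupling argument to pass from conformal-radius events to the Euclidean-distance event $\{\tau_\ee<\infty\}$. Some quantitative input of this kind is exactly what your sketch still owes; without it the final identification of $c_*$ and the $O(\ee^\alpha)$ rate are not established.
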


  \subsection{Complete statement of main result}\label{sect:main_complete}
  
  We will now give a complete statement of our main result. In order to do so, we will have to scale
 the lattice path.

\begin{itemize}
 
 \item  Given $\eta \in \saws_A(a,b)$, of the form
 \[  \eta = [\eta_0 = a_-, \eta_1 = a_+,
 \ldots ,\eta_{n} = b_+, \eta_{n+1} = b_-] , \]
 we write $\eta(t)$ for the curve obtained by going from $a$ to $b$
 along $\eta$ at speed one.  More precisely, $\eta(t), 0 \leq t \leq n$, 
 is defined by
 $\eta(0) = a; \eta(n) = b$; 
 \[ \eta\left(j- \frac 12\right) = \eta_j, \;\;\;\;j=1,\ldots, n;\]
and $\eta(t)$ is defined for other $t$ by
  linear interpolation. 
 
 \item If $\eta(t), 0 \leq t \leq n$ is a curve as above
 and $N > 0$, we let $\eta^N(t)$ denote the scaled map
 \[     \eta^N(t) = N^{-1} \, \eta(t \,c_* \,  N^{5/4}) ,
 \;\;\;\;\; 0 \leq t \leq   \frac{n}{c_* \, N^{5/4}}.\]
 Here $c_*$ is the constant from \eqref{BLV2}.
 
 \item  We write $\Prob_{A,a,b}^N$ for the probability measure
 obtained from $\Prob_{A,a,b}$ by considering the curves scaled
 as above.

\item  If $\eta = [\eta_0,\ldots,\eta_k] \in \saws_A(a,b)$,
let $\eta^j = [\eta_0,\ldots,\eta_j]$, $A_j
 = A \setminus \eta^j$ and $a_j = [\eta_{j} + \eta_{j+1}]/2$
 so that $a_j \in \partial_e A_j$.
  The tuples $(A_j, a_j, b), \, j=0,1,\ldots$ form a sequence of decreasing discrete domains with two marked boundary edges.  We write $D_j = D_{A_j}$.
  Note that $D_j$ is obtained from $D_A$ by removing the
  $j$ squares associated to first $j$ steps of $\eta$ plus
  any squares that have become disconnected from $0$.

\item{We will assume that we have a bounded analytic
simply connected domain $D$ containing the origin with
analytic boundary and two distinct boundary points
$a',b'$.   We will consider lattice approximations of $D$.  
The lattice scaling will be $N^{-1}$.  We will define some
scaled quantities, but the dependence on $N$ will be implicit.}
\item{The assumption that $D$ is analytic is of course not necessary -- we will make it for convenience, but remark that by an approximation argument our main result can be extended to more general domains, assuming local analyticity at $a',b'$.} 
 \item{ If $N > 0$, let $A  = A_{N,D}$ be the connected
 component containing the origin of the set of $\zeta \in \Z^2$
 with $\Square_\zeta \subset ND$. Let $D_A$ be the corresponding
 domain obtained by taking the interior of the union of the $\Square_\zeta$. Let $\check D_A = N^{-1} \, D_A$.  If $a \in \partial_e
 A$, we write $\check a $ for the midpoint of the edge $a/N$. We sometimes identify an edge with its midpoint.}

\end{itemize}

\begin{itemize}

\item We consider the metric \eqref{metric} on continuous curves and write $\wp_\rho$ for the corresponding Prokhorov metric
on probability measures on curves.

\item  If $D$ is a domain as above and $a,b$
are distinct boundary points, then $\mu_D(a,b)$ denotes
the probability measure given by SLE$_2$ with the natural
parametrization.  (In other papers of the first author, 
the notation   measure $\mu_D(a,b)$
refers to SLE with total mass of the partition function
and the probability measure and the corresponding probability
measure 
  denoted by $\mu_D^\#(a,b)$.  However, since we only
  need to use the probability measure in this paper, we
  choose the simpler notation.)    

\end{itemize}

\begin{thm}\label{thm:main_complete}  Let $D$ be a bounded analytic domain containing
the origin with distinct
boundary points $a',b'$.  For each $N$, let $A_N = A_{N,D}$
and let $a_N,b_N \in \partial_e A_N$ with 
\[   \check a :=  a_N/N \stackrel{N \to \infty}{\longrightarrow} a', \;\;\;\;
   \check b:=  b_N/N \stackrel{N \to \infty}{\longrightarrow} b'.\]
   
 Then,
 \[   \lim_{N \to \infty} \Prob^N_{A_N,a_N,b_N} =
    \mu_D(a',b'), \]
 where the convergence is with respect to the Prokhorov
 metric as above.
 \end{thm}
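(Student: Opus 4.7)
My plan is to follow the strategy outlined in Section~\ref{sect:overview}, organizing the work around the martingale difference $M_n=M_n^{\SLE}-M_n^{\LERW}$ and its Doob--Meyer-type splitting $M_n=B_n+Y_n$. The starting point is the coupling produced in \cite{LV_lerw_chordal_note}: a Brownian motion $W$ and stopping times $\tau_n$ such that with probability at least $1-h^c$ the discrete driving term $U_n$ stays within $h^{1/5}$ of $W_{\tau_n}$, and, as long as $\Im \phi^{\SLE}_{\tau_n}(z)\geq h^{1/20}$, the uniformizing maps $\phi_n^{\LERW}$ and $\phi_{\tau_n}^{\SLE}$ stay within $h^{1/30}$ of each other. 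From this I already get uniform closeness of the capacity-parametrized scaled LERW $\check\eta$ and the capacity-parametrized SLE $\check\gamma$, so the whole task is to upgrade from capacity to natural time, i.e.\ to prove that $\check T_n$ and $\check\Theta_{\tau_n}$ are uniformly close along the mesoscopic grid $n\le n_0$ on an event of probability $1-\varepsilon_N$.

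The crucial estimate is the bound on $Y_n$. Splitting the integral/sum according to whether $\Square_\zeta$ is \emph{open} or \emph{closed} at time $n$, on the open part the Loewner-difference coupling gives $|\phi_n^{\LERW}(\check\zeta)-\phi_{\tau_n}^{\SLE}(\check\zeta)|\leq h^{1/30}$ and $\Sine^{-1}$ is controlled by $\lambda^{-1}=h^{-1/100}$; combined with the \cite{BLV} one-point estimate \eqref{BLV2}, a Riemann sum approximation, and regularity of $G$, one gets $|Y_n^{\mathrm{open}}|\le \varepsilon_N$. The main obstacle, and the whole point of the open/closed dichotomy and of Section~\ref{badsec}, is to show that the contribution to both $\check T_n$ and $\check\Theta_{\tau_n}$ from squares that are closed at some $n\le n_0$ is itself $o(1)$ with large probability. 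For the SLE side this is handled by the one-point bound in Lemma~\ref{lem:SLE-one-point} together with integration of $G$ over the small-$\Sine$ and small-distance regions defined by conditions I and II; for the LERW side one uses the analogous sharp one-point and (critically) two-point estimates developed in Section~\ref{LERWsec}, which in particular require the separation lemma. I therefore redefine $M_n^{\SLE},M_n^{\LERW},B_n,Y_n$ by restricting to time spent in \emph{currently open} squares, as suggested in the overview, and from this point forward only the open-square version needs to be controlled.

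With $|Y_n|\le \varepsilon_N$ on a good event and with $|B'_n-B'_{n-1}|\le \varepsilon_N$ (where $B'_n$ is the predictable compensator of $M_n$; its jumps are small because each mesoscopic capacity increment traps only a small area of future content on the SLE side and a correspondingly small number of future lattice points on the LERW side, bounded through the one-point estimates), I apply the discrete-time $L^2$ maximum principle to the martingale $M_n-B'_n$: $\E[(M_\tau-B'_\tau)^2]$ is bounded by $\E[Y_\tau^2]+\mathrm{const}\cdot \varepsilon_N$, so Doob's inequality produces $\max_{n\le \tau}|B'_n|\le \varepsilon_N^{1/3}$ with probability $1-\varepsilon_N^{1/3}$. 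Converting the predictable version $B'_n$ back to $B_n=\check\Theta_{\tau_n}-\check T_n$ (Section~\ref{bvsec}), using that both $B_n$ and $B'_n$ have bounded-variation increments of the same order, yields $\max_{n\le n_0}|\check\Theta_{\tau_n}-\check T_n|\le \varepsilon_N$ on an event of probability at least $1-\varepsilon_N$, with $\varepsilon_N=c(\log N)^{-1/60}$.

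Finally, to assemble Theorem~\ref{thm:main_complete}, I combine the capacity coupling with this uniform closeness of natural times. Between consecutive mesoscopic times $\tau_n,\tau_{n+1}$, the natural time increment of $\check\gamma$ is $o(1)$ uniformly in $n$ by Hölder regularity of $t\mapsto \Theta_t$, and the renormalized length increment of $\check\eta$ is controlled by the maximal-content estimate for LERW proved in Section~\ref{LERWsec}; this lets me interpolate and reparametrize so that both curves, now parametrized by natural/renormalized length, satisfy $\rho(\check\eta,\check\gamma)\le \varepsilon_N$ on the good event. Weak convergence in the Prokhorov metric $\wp_\rho$ follows at once, and pushing the domain approximation through Lemma~\ref{mar5.lemma1} shows that working with $\check D_A$ versus the target $D$ contributes only a further $O((\log N)/N)$ error that is absorbed into $\varepsilon_N$. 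The hard step, and where most of the paper's technical effort is concentrated, is unambiguously the control of the bad/closed squares combined with the passage from $B'_n$ to $B_n$; everything else is a careful but essentially routine assembly of the estimates from \cite{BLV}, \cite{LV_lerw_chordal_note}, and Section~\ref{LERWsec}.
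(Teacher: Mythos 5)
Your outline reproduces the paper's own route essentially step for step: the coupling from \cite{LV_lerw_chordal_note}, the open/closed square dichotomy, the martingale $M_n^\circ=Y_n^\circ+B_n^\circ$ with the discrete $L^2$ maximal lemma applied at a stopping time where $|Y^\circ|$ and the increments of the predictable part are small, the passage from the predictable compensator back to $N^{-5/4}(\Theta_n-T_n)$ via the maximal-content estimates, and the control of closed squares through the one- and two-point estimates of Section~\ref{LERWsec}. (One small inaccuracy: for open squares with $\Im F_n^{\SLE}(\zeta)\ge\lambda$ the quantity that is controlled is the sine, $S_n(\zeta)\ge(\log N)^{-2/5}$ --- further split at $(\log N)^{-3/8}$ in the actual argument --- not $\lambda^{-1}=h^{-1/100}$; this does not matter since $N^{-u}$ beats either factor.)

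The genuine gap is in your final assembly paragraph. All of the estimates you invoke control the curves only up to the time their half-plane capacity, measured through a fixed choice of $F$, reaches $1$ (equivalently, capacity $k$ after replacing $F$ by $k^{-1/2}F$); the complete chordal curves have infinite capacity in $\Half$, so closeness of the truncated, naturally parametrized curves does not by itself bound $\wp_\rho\left[\Prob^N_{A_N,a_N,b_N},\mu_{\check D}(\check a,\check b)\right]$. You still need tail estimates, uniform in $N$ and quantitative in $k$, showing that the portions of both curves beyond capacity $k$ have small diameter and small natural-time duration: for SLE this is continuity at the terminal point together with a bound on the expected Minkowski content in $\{z:|z-\check b|\le r\}$; for LERW it is the corresponding count of lattice points near $b_N$ together with the estimate that, after first coming within distance $rN$ of $Nb$, the walk reaches distance $RN$ again only with probability $O((r/R)^2)$. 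This is exactly what Section~\ref{metricsec} supplies, and it is a needed ingredient rather than bookkeeping; without it your reparametrization step proves closeness of the truncated measures $\nu_{N,k}^{\LERW},\nu_{N,k}^{\SLE}$ but not the convergence asserted in the theorem. (The domain-approximation step you do mention is carried out in the paper via Lemma~\ref{lemma-oct-20} and Corollaries~\ref{mar5.cor1} and~\ref{mar5.cor3}, giving an error $O\left(N^{-5/16}+|\check a-a'|^{1/2}+|\check b-b'|^{1/2}\right)$, which is indeed absorbed as you say.)
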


 We start by making some reductions.  It is not
 difficult (see Corollary \ref{mar5.cor3}) to show that
 \[  \lim_{N \rightarrow \infty}
     \wp\left[\mu_{\check D}(\check a,\check b) ,
     \mu_D(a',b')\right] = 0.\]
Hence, it suffices to show that
$    \lim_{N \to \infty} \wp\left[\nu_N ,\tilde \nu_N
     \right]=0,$
 where \[\nu_N^{\LERW} = \Prob^N_{A_N,a_N,b_N}, \qquad 
  \nu_N^{\SLE} =  \mu_{\check D}(\check a, \check b).\]

 In order to compare
 $\nu_N^{\LERW},  \nu_N^{\SLE}$  we consider the paths parametrized
 by half-plane capacity.  This capacity is defined by
 first taking $F:D_A \rightarrow \Half$ with
 $F_N(a_N) = 0, F_N(b_N) = \infty$ and measuring
 capacities of the images under $F$.  The map $F$
 is unique up to a final dilation.
 
 For every $k < \infty$, we can consider the measures
 $\nu_N^{\LERW}, \nu_N^{\SLE}$ on paths  stopped when the capacity
 reaches $k$.  Since the total capacity of the curves 
 is infinite, this truncation is well defined and does not
 give the entire curve.  In order to get convergence
 in the Prokhorov metric we need two facts.  The first:
 \begin{itemize}
 \item  The curves parametrized by capacity are very close
 in supremum norm except for small probability.
 \end{itemize}
 We prove this in the separate paper \cite{LV_lerw_chordal_note}, but we will give the needed statements below.
 This result has been proved previously for convergence
 in capacity parametrization using a slightly different coupling, see \cite{LSW04, zhan}. The second is the one that we focus on:
 \begin{itemize}
 \item  If we reparametrize by length (using normalized
 number of steps for the LERW and Minkowski content for
 the SLE), the reparametrizations are very close in
 supremum norm except for small probability.
 \end{itemize}
 Let $\nu_{N,k}^{\LERW},  \nu_{N,k}^{\SLE}$ denote the corresponding
 measures on curves 
 parametrized by length but {\em truncated when their
 capacity reaches $k$}.  We will show that
 $\wp \left[\nu_{N,k}^{\LERW},  \nu_{N,k}^{\SLE} \right] $ is small.
 We also need to show for LERW and for SLE that as
 $k \rightarrow \infty$, both
 $\wp \left[\nu_{N,k}^{\LERW},\nu_N^{\LERW} \right]$ and $\wp\left[ \nu_{N,k}^{\SLE},
 \tilde \nu_k^{\SLE} \right]$ are bounded by $\epsilon _k$ for some $\epsilon_k
 \rightarrow 0$ (independent of $N$).
 This is discussed in Section \ref{metricsec}.
 
 Finally, rather than take a particular $F$ and showing
 the estimates for paths truncated at capacity $k$, we
 will start with any $F$ and truncate at capacity $1$.
 Note that paths truncated at capacity $k$ for a given
 $F$ are the same as those truncated at capacity $1$
 for  the map $z \mapsto k^{-1/2} \, F(z).$
 
 This is the main theorem of this paper and 
 precise statements can be 
found in   \eqref{feb15.1}
and \eqref{feb15.2}.

 We prove a theorem on about the chordal version of LERW connecting two boundary
 points but one can derive from this a corresponding result
 about LERW from a boundary point to an interior point.
The details can be found in \cite{LV_lerw_radial_note}, but
 we state the result here.

 \begin{thm}\label{main-thm-rough-radial}
There is a universal constant $\check{c}$ and an explicit sequence $\ee_N \to 0+$ as $N \to \infty$ such that the following holds.  For each $N$, let $\eta(t), t\in [0, T_\eta],$ be LERW in $A_N$ from $a_N$ to the origin viewed as a continuous curve parametrized so that each edge is traversed in time $\check{c}N^{-5/4}$. Let $\gamma(t), t \in [0, T_\gamma],$ be radial SLE$_2$ in $D$ from $a$ to $0$ parametrized by $5/4$-dimensional Minkowski content. There is a coupling of $\eta$ and $\gamma$ such that
\[
\Prob\left\{\rho \left(\eta,\gamma \right) > \ee_N \right\} < \ee_N.
\]
In particular, $\eta$ converges to $\gamma$ weakly with respect to the metric $\rho$.
\end{thm}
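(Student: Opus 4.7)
The plan is to deduce Theorem~\ref{main-thm-rough-radial} from the chordal Theorem~\ref{main-thm-rough} by stopping both curves just before they approach the interior target $0$ and handling the short remaining tail by direct length/content estimates. Fix a small scale $r>0$ to be chosen in terms of $\epsilon_N$, and let $\sigma^\eta_r, \sigma^\gamma_r$ denote the first hitting times of the ball $B(0,r)$ by $\eta$ and by the radial SLE$_2$ curve $\gamma$, respectively. I would argue that up to these stopping times the radial LERW from $a_N$ to $0$ in $A_N$ can be coupled with a chordal LERW from $a_N$ to a suitable boundary edge $b_N$ on the inner boundary of the modified domain $A_N\setminus B(0,rN)$, and similarly for radial vs.\ chordal SLE$_2$. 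The two measures are mutually absolutely continuous with Radon-Nikodym derivative uniform in $N$: on the discrete side this follows from an exact identity expressing radial LERW weights as chordal weights times a ratio of discrete Green's functions, and on the continuous side from a standard Girsanov comparison between the radial and chordal Loewner driving functions before entering $B(0,r)$.

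For the initial segment, Theorem~\ref{main-thm-rough} applied to the chordal pair gives, on an event of probability at least $1-\epsilon_N$ with $\epsilon_N=c(\log N)^{-1/60}$, a coupling in which the two curves truncated at $\sigma^\eta_r, \sigma^\gamma_r$ and parametrized by natural time are $\epsilon_N$-close in the metric $\rho$. The bounded-density comparison transfers this closeness to the radial law at the cost of a constant factor in the failure probability. For the tail after $\sigma_r$ I would estimate the rescaled length of $\eta$ and the Minkowski content of $\gamma$ remaining inside $B(0,r)$. On the SLE side, the expected content in $B(0,r)$ satisfies $\E\left[\Cont\left(\gamma\cap B(0,r)\right)\right]\leq \int_{B(0,r)} G^{\mathrm{rad}}_{\check D}(z;\check a, 0)\, dA(z)\leq C\, r^{5/4}$ from the explicit radial SLE$_2$ Green's function. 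On the LERW side, summing the one-point estimate \eqref{BLV2} over lattice points in $A_N\cap B(0,rN)$ and multiplying by $c_*^{-1}N^{-5/4}$ yields the same order bound $C\, r^{5/4}$ for the expected rescaled number of steps inside the ball. Markov's inequality then gives tails of order $r^{5/8}$ with probability at least $1-Cr^{5/8}$, and the diameters of the corresponding tail arcs are automatically at most $r$. Choosing $r=r_N=\epsilon_N^{2/5}$ balances all sources of error and yields the claimed convergence rate.

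The main obstacle is controlling the Radon-Nikodym density uniformly as $r\to 0$, and propagating the chordal error through this change of measure without losing more than a polynomial factor in $r$. The density between chordal and radial LERW stopped on hitting $B(0,rN)$ is a ratio of discrete Green's functions that can degenerate when the curve gets close to the target; establishing a quantitative upper bound compatible with the rate $\epsilon_N$ requires the sharp Green's function regularity from \cite{BLV} together with the separation lemma and two-point estimates from Section~\ref{LERWsec}. A secondary technical point is that the reference conformal map $F$ defining mesoscopic capacity depends on the chordal target $b_N$; one must verify that the Loewner difference estimates from \cite{LV_lerw_chordal_note} used in the coupling remain quantitatively controlled as $r\to 0$ and the inner target $b_N$ is varied. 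These technical points are addressed in the companion note \cite{LV_lerw_radial_note}.
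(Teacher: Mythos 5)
Your high-level strategy --- deduce the radial statement from the chordal one via a Radon--Nikodym comparison between radial and chordal laws, and control the short piece near the target by Green's-function/content estimates --- is exactly the route the paper indicates: the authors explicitly decline to redo the argument radially and say the chordal-to-radial transfer is done with Radon--Nikodym derivatives in the companion note \cite{LV_lerw_radial_note}, where the actual proof lives. So in spirit you are aligned with the paper. However, the concrete reduction you sketch has genuine gaps. First, the ``modified domain'' $A_N\setminus B(0,rN)$ with target $b_N$ on the inner circle is doubly connected, so it is outside the entire chordal framework of the paper: Theorem~\ref{main-thm-rough} (and the whole setup $(A,a,b)\in\whoknows$, the union-of-squares domains, the coupling of Section~\ref{sect:coupling}) requires simply connected domains with two boundary points of a fixed analytic domain. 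Second, even ignoring connectivity, the error $\ee_N=c(\log N)^{-1/60}$ in Theorem~\ref{main-thm-rough} carries a constant depending on the \emph{fixed} configuration $(D,a',b')$; you apply it to configurations that change with $N$ (inner radius $r_N=\ee_N^{2/5}\to0$), and no uniformity in the domain is available from the statement or its proof --- indeed all the estimates (analyticity of $\partial D$ near the target, Lemma~\ref{feb22.lemma1}, the coupling constants) degrade as the target circle shrinks. A correct version either keeps $r$ fixed and runs a diagonal argument (losing your claimed rate) or proves uniformity, which is new work.

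Third, the absolute-continuity step is stated too strongly. The radial LERW stopped on first hitting $B(0,rN)$ has a \emph{random} endpoint on the circle, so it is not mutually absolutely continuous with a chordal LERW aimed at a fixed edge $b_N$; and even after conditioning on the hitting edge, the Radon--Nikodym density (a ratio of discrete Green's functions, resp.\ a Girsanov factor for SLE) is not uniformly bounded on path space --- it degenerates on paths that nearly disconnect or nearly surround the target, precisely the scenarios the two-point estimates and separation lemma are needed to control. Transferring a statement that holds ``except on an event of probability $\ee_N$'' through an unbounded density requires quantitative moment bounds on the density compatible with $\ee_N$; you flag this as the main obstacle and then defer it to \cite{LV_lerw_radial_note}, but that deferral is the substantive content of the radial proof, not a routine technicality. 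As written, your argument establishes the reduction only modulo these unproved uniformity and density estimates, so it does not constitute a proof of Theorem~\ref{main-thm-rough-radial}; it is, however, a reasonable roadmap consistent with the approach the paper attributes to the companion paper.
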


There are two approaches to this last theorem.  One would be to
redo the work in this paper in the radial setting.  In fact, the  
proof of convergence of LERW with respect to capacity
parametrization in  \cite{LSW04} was for the radial case,
and a version for the chordal case was first done in \cite{zhan} by following
the same outline.   However, for our result this
would take a fair amount of work; in particular, the radial analogue
of the estimate \eqref{BLV2} would need to be proved.  Fortunately,
we now know that one can go between the chordal and radial
results using Radon-Nikodym derivatives and this is the appraoch
we use in \cite{LV_lerw_radial_note}.

\section{Deterministic estimates and coupling}\label{sect:deterministic}

The first step to the proof is to construct the coupling between SLE$_2$
and LERW.  Our argument is similar to that in \cite{LSW04}  although there
are some differences.  First, we use a difference equation form of the Loewner theory.
    This is useful because our
  domains $D_n=D_{A_n}$ are derived from $D_A$ by cutting out
squares rather than a curve.
We could   work with slit domains and translate results, but we feel the difference equation approach produces cleaner arguments in the present setting. 
The other change is that we use   the LERW Green's
function rather than the discrete Poisson kernel as our observable.

 It is important that these results are
redone in our context, but because the proofs are similar to those
in previous coupling,    we will only state the important results here.  Complete proofs and additional discussion
  can be found in
 \cite{LV_lerw_chordal_note}.

\subsection{Loewner difference equation}
Suppose
$\gamma:(0,\infty) \rightarrow \Half$ is a simple
curve with $\gamma(0+) =0$ parametrized so that
$\hcap[\gamma_t] = t$, where $\gamma_t = \gamma[0,t]$. 
Let $g_t:\Half \setminus \gamma_t
\rightarrow \Half$ be the conformal transformation with
$g_t(z) = z +o(1)$ as $z \rightarrow \infty$. Then we have the chordal Loewner differential equation, 
\[  \partial_t g_t(z) = \frac{1 }{g_t(z) - U_t} , \quad g_0(z)=z, \]
where $U_t = g_t(\gamma(t))$.   The proof of this correspondence, at
least as given in \cite{Lbook}, starts by proving a
``difference estimate'' to show that for small
$t$, 
\begin{equation}  \label{oct9.1}
  g_t(z) - z =  \frac{t}{z}
  + O\left(\frac{tr}{|z|^2} \right), 
  \end{equation}
  where $r$ denotes the radius of $\gamma_t$.  This
estimate does not require $\gamma_t$ to be the image of a curve
and in fact holds with an error term uniformly
bounded over  all hulls (see below) of half-plane capacity
$t$ and radius $r$.

We will say that 
 $K \subset \Half$ is a {\em (compact $\Half$-)hull}, 
 if $K$ is bounded and 
  $D_K := \Half \setminus
K$ is a simply connected domain. Define 
\[  r_{K} = \rad(K) = \max\{|z|: z \in K\},\;\;\;\;
  h_K = \hcap(K),\]
and recall that $h_K \leq r_K^2$. If $r_K$ is small, $K$ is located near $0$.
Let $g_{K}$ be the unique conformal transformation
of $D_K$ onto $\Half$ whose expansion at infinity is
\[   g_{K}(z) = z + \frac{h_K}{z} + O(|z|^{-2}).\]
 Suppose now we have a sequence of hulls of small capacity
 $K_1,K_2,\ldots$ and ``locations'' $U_1,U_2,\ldots  \in \R$, so that, roughly speaking $K_j + U_j$ is near $U_j$.
 Let \[r_j = r_{K_j}, \quad  h_j = h_{K_j}= h_{K_j + U_j}, \quad g^j
  = g_{K_j + U_j}\] and let
  \[  g_j =  {g^j \circ \cdots
   \circ g^1}.\]
Since the right-hand side of \eqref{oct9.1}
  depends only on $r$ and not on the exact shape
  of $K$, it follows that if we have two sequences
  for which the capacity increments and ``driving terms'', $h_j$ and $U_j$, are close, then  the functions $g_n$
 are   close.  We give a precise formulation of this
  in the next two proposition.   
 \begin{prop}\label{prop:loewner-comparison}  There exists $1 < c < \infty$ such
 that the following holds.  Suppose $(K_1,U_1),
 $ $ (K_2,U_2)\ldots$ and $(\tilde K_1,\tilde U_1),
 (\tilde K_2,\tilde U_2),$ $\ldots$ are two sequences as above
 with corresponding $r_j, h_j, g^j, g_j$ and
 $\tilde r_j,  \tilde h_j, \tilde g^j, \tilde g_j$.
 Let \[0 <   h <
 r^2 < \epsilon^2 < \delta^8 < 1/c,\] 
 and $n  \leq 1/h$ and suppose that
 for all $j=1,\ldots,n$,
 \[      |h_j -  h| \leq  hr/\delta  , \;\;\;\;
 |\tilde h_j - h| \leq hr/ \delta , \]
  \[      r_j, \tilde r_j  \leq r  , \]
   \[     |U_j -\tilde U_j| \leq \epsilon. \]
Suppose $z = x+iy \in \Half$ and let
$z_n = x_n +iy_n = g_n(z), \tilde z_n =
\tilde x_n + i \tilde y_n = \tilde g_n(z).$
Then, if $y_n,\tilde y_n \geq \delta$,
\begin{equation}  \label{halloween.1}
  |g_n(z) - \tilde g_n(z)| \leq c \, (\epsilon/\delta) \,
     (y \wedge 1).
     \end{equation}
\end{prop}
\begin{prop}
\label{prop:deriv-lb}  There exists $1 < c < \infty$ such
 that the following holds.  Suppose $(K_1,U_1),
 $ $ (K_2,U_2)\ldots$ is a sequence as above
 with corresponding $r_j, h_j, g^j, g_j$.
 Let \[0 <   h <
 r^2 <  \delta^8 < 1/c,\] 
 and $n  \leq 1/h$ and suppose that
 for all $j=1,\ldots,n$,
 \[      |h_j -  h| \leq  hr/\delta, \quad r_j \le r.  \]
Suppose $z = x+iy \in \Half$ and let
$z_n = x_n +iy_n = g_n(z)$.
Then if $y_n \geq \delta$,
\begin{equation}  \label{halloween.2}
  |g_n'(z)| = \exp\left\{-\sum_{j=0}^{n-1} \Re \frac{h}{(z_{j} - U_{j})^2}\right\} \left(1+O(\delta) \right).  
     \end{equation}
     In particular, there is a constant $c$ such that if
\begin{equation}\label{jan26.1}
 \nu=\min_{0\le j \le n} \left\{\sin\left[ \arg\left(g_j(z) - U_j \right) \right] \right\},
 \end{equation}
 then,
 \begin{equation}\label{jan26.2}
 |g'_n(z)| \ge c \left(\frac{y_n}{y}\right)^{1-2\nu^2}.
 \end{equation}
\end{prop}  
\subsection{Coupling}\label{sect:coupling}
We will consider 4-tuples $(A,a,b,F)$ where $(A,a,b) 
\in \whoknows$.  and  $F:D_A \rightarrow \Half$ is a conformal transformation
with $F(a) = 0, F(b) = \infty$.  As we have noted before, there is a one-parameter
family of such transformations $F$, so we will fix one of them. We define \[N = N(A,a,b,F) = |(F^{-1})'(10i)|\] and note that $N$ is half the conformal radius of $D_A$ seen from $F^{-1}(10i)$.   All of our results will hold only for 
  $N$ sufficiently large, and we will not always be explicit about this.

We fix a mesoscopic scale $h$, defined by
\[
h = N^{-2u/3},
\]
where $u$ is the exponent from \eqref{BLV1}. This is a somewhat arbitrary
choice, but we will use that $N^{-u} = O(h^{6/5})$.

Let $(A_0,a_0,b) = (A,a,b), D_0 = D_A, F_0^\LERW = F$. We will define a sequence $(A_n,a_n,b)$ with corresponding
simply connected domains $D_n$ and functions $F_n^\LERW$ recursively
   by saying that the conditional distribution of $(A_n,a_n,b)$
   given $(A_{n-1},a_{n-1},b)$ is that of  the LERW
 probability measure $\Prob_{n-1}:= \Prob_{A_{n-1},a_{n-1},b}$
 where the walk (taking microscopic lattice steps) is stopped at the first time $m = m_n$ such that
 \[   \diam[K^m] \geq h^{2/5} \;\;\; \mbox{ or } \;\;\;
   \hcap[K^m] \geq h , \]
   where
\begin{equation}  \label{feb16.2}
 K^j = F_{{n-1}}(D_{A_{n-1}} \setminus D_{A_{n-1} \setminus \eta^j}) \subset \Half
\end{equation}
and $\eta$ is LERW in $A_{n-1}$ from $a_{n-1}$ to $b$.
  We set
$D_n = D_{A_n}$ and 
\begin{equation}  \label{feb16.1}
 F_n^\LERW =  g_n \circ F_0  - U_n, \;\;\;\; U_n:= 
  g_n \circ F_0 (a_n) .  
 \end{equation}
 where $  g_n: F_0(D_0 \setminus D_n)
  \rightarrow \Half$ is the conformal transformation
  normalized so that $  g_n(\infty) = \infty$,
$ g_n'(\infty) = 1$.   Note that the transformation $F_n^\LERW$ is
translated so that $F_n^\LERW(a_n) = 0, F_n^\LERW(b) = \infty$.
Let $\xi_n = U_n - U_{n-1}$.

Let us be more precise. We write $D_0 = D_A, D_j = D_{A_j}$. Set $m_0=0, m_1=m$, where
\[
 m=\min\left\{j \ge 0: \,    \hcap\left[K_{j} \right] 
 \geq h \text{ or }   \diam\left[K_j \right]  \geq h^{2/5} \right\},\]
 where $K_j= F(D \setminus D_{A_j})$,
  and for $n=0,1, \dots$, and $j =0, 1, \ldots$,
\[
K_j = F(D \setminus D_j), \quad K^n_j = F_{m_n}(D_{m_n} \setminus D_{m_n + j}),
\]
and
\[
\Delta_n = \min\left\{ j \ge 0: \, \hcap[K_j^n] \ge h \, \text{ or } \,  \diam[K_j^n] \ge h^{2/5} \right\},
\]
\[
m_{n+1} = m_n + \Delta_n.  
\]
Write
\[
K^n = K^n_{\Delta_n}.
\]
Then 
   \[t_{m_{n+1}} =t_{m_n} + \hcap\left[K^n \right].\] 
and we set
   \[
  r_{m_{n+1}} = \diam\left[K^n \right].
   \]
 Let $g^{n+1}:\Half \setminus K^n \rightarrow \Half$
 be the conformal transformation with $g^{n+1}(z)
  - z = o(1)$ and set $F_{m_{n+1}} = g^{n+1} \circ F_{m_n}$ and  \[     g_{n+1} = g^{n+1} \circ g^{n} \circ \cdots
    \circ g^1 . \]
  We also define the ``driving process increment'',
  \[   \xi_{n+1} = g^{n+1} \circ F_{m_n}(a_{m_n})-\xi_{n}, \]
 giving a ``driving process'' 
   \begin{equation} \label{jan12.2}  U_{n+1} = \xi_1 + \cdots + \xi_{n+1}.\end{equation}
  Write also
  \[
  H_n = F(D_{m_n}) \subset \mathbb{H}.
  \]
 We continue this process until $n_0$,  the first time $n$ such
 that 
 \[    \diam \left[F(K_{m_n})
  \right] \geq 2 \;\;\;\;\;\; \mbox{ or }
  \;\;\;\;\;\;   \hcap \left[ F(K_{m_n})
  \right]\geq 2 . \]
 Note that $n_0 -1 \leq 1/h$ and that for $n < n_0$,
\[ \hcap \left[F(D_0 \setminus D_{n})
  \right] \leq 2, \;\;\;\;|X_n| \leq 2, \]
  \[   |(F_{m_n}^{-1})'(10i)| \asymp |(F^{-1})'(10i)|
  = N,\]
Using the Beurling estimate, we can see that
for $n < n_0$, the mesoscopic  increments satisfy
\[        t_{m_n} -t_{m_{n-1}}  \leq h +  O(N^{-1}),\;\;\;\;
       r_{m_n}-r_{m_{n-1}}  	\leq h^{2/5} + O(N^{-1/2}).\]
Let $\F_n$ denote the $\sigma$-algebra
generated by $(A_0,a_0,b),\cdots,
(A_{m_n},a_{m_n},b)$.

\begin{lemma}\label{lem:coupling-pt2}
There is a coupling of the LERW $\eta$ and a standard Brownian motion $(W_t, \mathcal{\tilde{F}}_t)$ and 
a sequence of strictly increasing stopping times $\{\tau_n\}$ for $(W_t, \mathcal{\tilde{F}}_t)$ 
such that the following holds.  Let $n_*$ be the minimum of $n_0 + 1$ and the
smallest $n'$ such that one of the following  does not  hold:
 \[
 \max_{n  \leq n'}|\tau_n-nh| \leq  h^{1/5}  ,
\] 
\[
 \max_{n \le n'}|W_{\tau_n}-U_n| \leq  h^{1/10} ,
\] 
  \[ \max_{n \le n'} \max_{\tau_{n-1} \leq t \leq \tau_n}
      |W_{t} - W_{\tau_{n-1}}|  \leq  h^{2/5} ,
\] 
   \[  \max_{t \leq \tau_{n'}}\;\; 
          \max_{t - h^{1/5} \leq s \leq t}\;\;
              |W_t - W_s| \leq  h^{1/12} .
\] 
Then $\Prob\{n_* \leq n_0\} = O(h^{1/10})$. 
Moreover, if $\G_{n}$ denotes the $\sigma$-algebra generated by $\hat \F_{n}$
and $\F_{\tau_{n}}$, then $t \mapsto W_{t + \tau_n}-
W_{\tau_n}$ is independent of $\G_{n}$ and
the distribution of the LERW given
$\G_n$ is the same as the distribution given
$\hat \F_n$.

\end{lemma}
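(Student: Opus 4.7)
The plan is to carry out a Skorokhod-type embedding of the discrete LERW driving sequence $(U_n)$ into a standard Brownian motion. Earlier LERW coupling arguments (e.g.\ \cite{LSW04, zhan}) used a discrete Poisson kernel as the martingale observable, but here the natural choice is the LERW Green's function via \eqref{BLV2}. The workflow is: (i) use \eqref{BLV2} and the strong Markov property of LERW to show that $(U_n)$ is an approximate martingale with conditional increment variance close to $h$; (ii) embed its increments into Brownian motion by the standard Skorokhod construction, with stopping times $\tau_n$ chosen so that $W_{\tau_n}$ matches $U_n$ up to a small deterministic drift correction; (iii) verify each of the four quantitative bounds and the independence claim.

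\textbf{Moment estimates and embedding.} For step (i) I would fix a test point $z \in \Half$ with $\Im z$ of order one relative to $F_{m_n}$, and expand the chordal Loewner correspondence $g^{n+1}(z) = z + h_{n+1}/(z - \xi_{n+1}) + O(h_{n+1} r_{n+1}/|z - \xi_{n+1}|^2)$. Taking conditional expectations of both real and imaginary parts against the approximate-martingale property of the LERW Green's function ratio, and using the a priori bound $|\xi_{n+1}| \le c h^{2/5}$ coming from the diameter cap, I expect to obtain
\begin{align*}
\E[\xi_{n+1} \mid \F_n] &= O(h^{1+\alpha}),\\
\E[\xi_{n+1}^2 \mid \F_n] &= h + O(h^{1+\alpha}),
\end{align*}
with some $\alpha > 0$ inherited from $N^{-u} = O(h^{3/2})$ under the choice $h = N^{-2u/3}$. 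Given these moments, on an enlarged space carrying a standard Brownian motion $W$ I inductively Skorokhod-embed $\xi_{n+1}$: choose a stopping time $\sigma_{n+1}$ of $W_{\tau_n + \cdot} - W_{\tau_n}$ realizing the conditional law of $\xi_{n+1}$ given $\F_n$, and set $\tau_{n+1} = \tau_n + \sigma_{n+1}$. By construction $W_{\tau_n} = U_n$ once the $O(h^{1+\alpha})$ drift has been absorbed into a deterministic correction, giving the bound on $|W_{\tau_n} - U_n|$. The bound $|\tau_n - nh| \le h^{1/5}$ then follows from Bernstein applied to $\sum_{k \le n}(\sigma_k - h)$ with $|\sigma_k| = O(h^{4/5})$ (from $|\xi_k| \le h^{2/5}$ and BDG for Skorokhod stopping times) and $n \le 1/h$; the Brownian oscillation bound on $[\tau_{n-1}, \tau_n]$ is the reflection principle on an interval of length $O(h^{4/5})$; and the uniform modulus of continuity bound over intervals of length $h^{1/5}$ is a standard Brownian modulus estimate on $[0, 1+h^{1/5}]$. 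The independence of $W_{\tau_n + t} - W_{\tau_n}$ from $\G_n$ is then immediate from the strong Markov property, and the distributional identity for the LERW given $\G_n$ versus $\hat \F_n$ holds because the Skorokhod construction after step $n$ only consumes Brownian increments strictly after $\tau_n$.

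\textbf{Main obstacle.} The crux will be deriving the conditional moment estimates for $\xi_{n+1}$ with the quantitative precision needed. The error in \eqref{BLV2} degenerates as $\Sine_{A_n,a_n,b} \to 0$, which is exactly the regime near the tip $a_n$ where the observation is taking place. The diameter cap $\diam[K^n] \le h^{2/5}$ in the definition of a mesoscopic increment is precisely what keeps the image of the test point at distance at least a power of $h$ from the tip, so that a genuine power gain in $h$ survives after composition with the maps $g_n = g^n \circ \cdots \circ g^1$. This is where Propositions \ref{prop:loewner-comparison} and \ref{prop:deriv-lb} enter: they allow the compositional errors to be absorbed into a single clean $O(\delta)$-type bound compatible with the $O(h^{6/5})$ slack manufactured by the choice $h = N^{-2u/3}$. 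Propagating this control through $1/h$ mesoscopic steps without losing the gain, while simultaneously keeping the test point in the ``good'' region where \eqref{BLV2} is sharp, is the delicate bookkeeping on which the whole coupling rests.
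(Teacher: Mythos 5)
Your outline follows essentially the same route as the paper: the paper does not prove Lemma~\ref{lem:coupling-pt2} here but defers the complete argument to \cite{LV_lerw_chordal_note}, where the coupling is built exactly as you describe — the LERW Green's function \eqref{BLV2} (in place of the discrete Poisson kernel of \cite{LSW04}) serves as the martingale observable, the Loewner difference estimates of Propositions~\ref{prop:loewner-comparison} and \ref{prop:deriv-lb} yield the conditional drift/variance estimates for the increments $\xi_n$ with the $h^{6/5}$ slack from $h=N^{-2u/3}$, and a Skorokhod-type embedding plus standard Brownian estimates and the strong Markov property give the four bounds and the independence statement. Your quantitative claims are stated a bit loosely (the accumulated drift correction is $\F_{n-1}$-measurable rather than deterministic, and the embedding times $\sigma_k$ are only controlled in distribution at scale $h$ with tails governed by the $h^{2/5}$ diameter cap, not bounded a.s.\ by $h^{4/5}$), but these are matters of bookkeeping, not of approach.
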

Given the Brownian motion $W_t$, there is a corresponding SLE$_2$ Loewner chain $(g_t^\SLE)$ obtained by solving the Loewner differential equation with $W_t$ as driving term. The Loewner chain is generated by an SLE$_2$ path in
$\Half$ that we denote by $\gamma(t)$.  Let $\hat \gamma(t)
= \hat F^{-1}[\gamma(t)]$ which is an SLE$_2$ path
from $\hat a$ to $\hat b$  in $\hat D_A$
parametrized by capacity in $\Half$ (this parametrization depends
on $F$ but we have fixed $F$.)   
We write
\[
F^{\SLE}_n(z) = (g_{\tau_n}^\SLE \circ F)(z)-W_{\tau_n}
\]
and
\[
F^{\LERW}_n(z) = (g_n \circ F)(z) - U_n.
\]
Combining the coupling with the deterministic
estimates we get the following.

   \begin{lemma}\label{lem:coupling-of-maps}
If $n < n_*$,  we have  uniformly in $\zeta \in A$ such that $\Im F_n^\SLE(\zeta) \ge h^{1/80}$, 
  \[
  \left| F_n^{\LERW}(\zeta) -  F_n^{\SLE}(\zeta)\right|  \le c  h^{1/15}.
  \]
  \end{lemma}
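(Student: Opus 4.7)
The strategy is to reduce the estimate to the deterministic Loewner comparison of Proposition~\ref{prop:loewner-comparison}, feeding it inputs from the coupling of Lemma~\ref{lem:coupling-pt2}. Writing $w := F(\zeta)$ and inserting $g_{t_{m_n}}^{\SLE}(w)$,
\[
F_n^{\LERW}(\zeta) - F_n^{\SLE}(\zeta) = \bigl[g_n(w) - g_{t_{m_n}}^{\SLE}(w)\bigr] + \bigl[g_{t_{m_n}}^{\SLE}(w) - g_{\tau_n}^{\SLE}(w)\bigr] + \bigl[W_{\tau_n} - U_n\bigr].
\]
The last term is bounded by $h^{1/10} \le h^{1/15}$ directly by the coupling. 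For the middle term, $|\partial_t g_t^{\SLE}(w)| \le 1/\Im g_t^{\SLE}(w)$ combined with the monotonicity of $\Im g_t^{\SLE}(w)$ in $t$ gives a bound of $c\,|t_{m_n} - \tau_n|/h^{1/80} \le c\,h^{1/5}/h^{1/80} = c\,h^{15/80}$, comfortably below $h^{1/15}$.

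The bulk of the work is the first bracket, for which I would realize both maps as discrete Loewner chains sharing a common capacity schedule. The LERW chain is discrete by construction, with driving sequence $\{U_j\}$, hulls $K^j$ of diameter $r_j \le h^{2/5}$, and capacity increments $h_j^{\LERW} \le h + O(N^{-1})$. To match, I would discretize SLE along the LERW mesoscopic capacity times $s_j := t_{m_j}$ by setting $\tilde K^j := g_{s_{j-1}}^{\SLE}(\gamma[s_{j-1}, s_j])$ and constant driving terms $\tilde U_j := W_{s_j}$, so that the composition $\tilde g_n = g_{t_{m_n}}^{\SLE}$ has capacity increments $\tilde h_j = h_j^{\LERW}$ identical to the LERW's. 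A Loewner hull-size estimate together with the short-interval BM fluctuation bound $|W_t - W_s| \le h^{1/12}$ for $|t-s| \le h^{1/5}$ from Lemma~\ref{lem:coupling-pt2} gives $\tilde r_j \le c\,h^{2/5}$, and the driving discrepancy is
\[
|U_j - \tilde U_j| \le |U_j - W_{\tau_j}| + |W_{\tau_j} - W_{s_j}| \le h^{1/10} + h^{1/12} \le 2\,h^{1/12},
\]
using $|\tau_j - s_j| \le c\,h^{1/5}$.

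Choosing $r = c\,h^{2/5}$, $\epsilon = 2\,h^{1/12}$, and $\delta = h^{1/80}$ (matching the hypothesis $\Im F_n^{\SLE}(\zeta) \ge h^{1/80}$), one verifies $h < r^2 < \epsilon^2 < \delta^8$ since the exponents satisfy $1 > 4/5 > 1/6 > 1/10$. Proposition~\ref{prop:loewner-comparison} then yields
\[
\bigl|g_n(w) - g_{t_{m_n}}^{\SLE}(w)\bigr| \le c\,(\epsilon/\delta) \le c\,h^{1/12 - 1/80} = c\,h^{17/240} \le c\,h^{1/15},
\]
since $17/240 > 1/15$. Summing the three contributions completes the bound.

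The main obstacle is the clean verification of the hypotheses of Proposition~\ref{prop:loewner-comparison} for the SLE discretization: uniform control of the hull diameters $\tilde r_j$ at the scale $h^{2/5}$ (passing from the BM short-interval fluctuation bound to a geometric hull-size bound), and the lower bound $\Im \tilde g_j(w) \ge \delta$ for every intermediate $j \le n$ (following from the monotonicity of $\Im g_t^{\SLE}(w)$ in $t$ under the Loewner flow and the lemma's hypothesis on $\Im F_n^{\SLE}(\zeta)$). Both are standard consequences of the coupling and the Loewner flow.
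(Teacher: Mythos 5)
Your overall strategy---telescoping $F_n^{\LERW}-F_n^{\SLE}$ into a driving-term error, an SLE time-change error, and a comparison of two synchronized discrete Loewner chains via Proposition~\ref{prop:loewner-comparison}---is exactly the route the paper indicates (the details are deferred to the companion paper), and your final exponent bookkeeping ($c\,\epsilon/\delta\le c\,h^{17/240}\le c\,h^{1/15}$, etc.) is consistent. The problem is in the verification of the hypotheses of Proposition~\ref{prop:loewner-comparison}. That proposition requires $|h_j-h|\le hr/\delta$ for the capacity increments of \emph{both} chains, and your argument also leans on $|\tau_j-t_{m_j}|\le c\,h^{1/5}$; neither follows from the four conditions defining $n_*$ in Lemma~\ref{lem:coupling-pt2}, which constrain only $\tau_n$, $W$, and $U_n$ and say nothing about the LERW capacities $t_{m_j}$. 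A mesoscopic LERW increment stopped by the diameter rule ($\diam[K^j]\ge h^{2/5}$) can have half-plane capacity far below $h$ (a boundary-hugging hull of diameter $d$ can have capacity much smaller than $d^{5/2}$), in which case the increment hypothesis fails and the synchronization $t_{m_n}\approx nh\approx\tau_n$---on which both your middle term and the whole chain comparison rest---breaks down; if $t_{m_n}$ and $\tau_n$ drift apart, $g_n$ and $g_{\tau_n}^{\SLE}$ need not be close at all. Ruling this out is a genuine LERW-regularity input (of the quasi-loop/bottleneck type alluded to after Proposition~\ref{dec23.1}) that belongs to the coupling construction and is silently assumed in your write-up. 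A second unverified hypothesis is $y_n=\Im g_n(w)\ge\delta$ for the \emph{LERW} chain: the lemma's assumption controls only $\Im F_n^{\SLE}(\zeta)$, so the LERW-side lower bound must be obtained by a bootstrap (stop at the first step the LERW imaginary part drops below, say, $\delta/2$, apply the proposition up to that step, and use the resulting closeness to exclude the drop); your closing paragraph addresses only the SLE side.

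There is also a quantitative slip that happens to be harmless: the claimed $\tilde r_j\le c\,h^{2/5}$ for the discretized SLE hulls does not follow from the stated coupling, since your discretization times $s_j=t_{m_j}$ are not the times $\tau_j$ appearing in the third condition of Lemma~\ref{lem:coupling-pt2}; the available oscillation bound (fourth condition) gives only $\tilde r_j\le c\,h^{1/12}$. Fortunately the constraint chain of Proposition~\ref{prop:loewner-comparison} still closes with $r\asymp h^{1/12}$, $\epsilon\asymp h^{1/12}$, $\delta=h^{1/80}$, and the conclusion is unchanged, so this is easily repaired---unlike the capacity-increment synchronization and the LERW imaginary-part bound, which are the real gaps.
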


    \section{Core argument}

 \subsection{Setup}
 
 At this point we will quickly review our setup.
 
 \begin{itemize}
 
 \item{We start with an analytic domain $D$ containing
 the origin and with distinct boundary points $a',b'$. For each integer $N>0$ we define $( A ,a ,b )$ (and the associated union of squares domain $D_{A }$) as
 the discrete approximations of $(ND,Na',Nb')$ with a choice of conformal transformation
     $ F: D_{A} \rightarrow \Half $
 with  $  F(a) = 0,   F(b ) = \infty,   \Im[F(0)] = 1 + o(1)$. All constants, implicit
 or explicit, may depend on $D,a',b', F$, but are otherwise
 universal.}

\item  Let \[h=h_N=N^{-2u/3}, \quad n_0 = n_{0,N} = \lfloor h^{-1}\rfloor,\]
   be the mesoscopic scale, where $u$ is the exponent
 in \eqref{BLV1}.

\item  Let $T_n = c_*^{-1} \left(m_1 + \cdots + m_n \right)$ denote
the number of steps of the LERW taken after $n$
mesoscopic steps, see Section~\ref{sect:deterministic}, rescaled by the constant in \eqref{nov3.1}.  

 \item  The scaled LERW $\eta(t), 0 \leq t \leq 1$,
  in $\check D$
parametrized by capacity is given by
\[        \check \eta(nh) = N^{-1} \eta_{c_{*}T_n}.\]
We choose the parametrization
to linearly interpolate between times $(n-1)h$ and $nh$.

 \item  As in Section~\ref{sect:coupling} we couple the LERW with an SLE$_2$ path
 from $a'$ to $b'$ 
 in $ D_A$, denoted $\hat{\gamma}$, parameterized  so
 that
 \[   \hcap \left( F \circ \hat{\gamma}[0,t]\right) = t .\]
 We let $ \check   \gamma(t) = N^{-1} \, \hat{\gamma}(t)$ be the
 corresponding SLE$_2$ in $\check D = \check{D}_A$.

 \item  Let
 \[    \check   T_{nh} = N^{-5/4} \, T_n  \]
be the rescaled number of steps in the walk with $\check{T}_{t}$ defined by linear interpolation between times $(n-1)h$ and $nh$.  Let
\[      \check \Theta_t = \Cont \left(\check  \gamma[0,t] \right)  \]
be the $5/4$-dimensional Minkowski content of $\check \gamma[0,t]$.

 \end{itemize}
 
 We can now state the main result. 
 
 \begin{thm}\label{thm:main-thm-core-sec}   There exists $\epsilon_N \rightarrow 0$
  such that 
  except for an event of probability
 at most $ \epsilon_N$, 
\begin{equation}  \label{feb15.1}
   \max_{0 \leq t \leq 1}  \left|\check \eta(t) - \check \gamma(t) \right|
   \leq   \epsilon_N , 
   \end{equation}
\begin{equation}  \label{feb15.2}
   \max_{0 \leq t \leq 1}   \left|\check T_t - \check{\Theta}_t \right|
  \leq   \epsilon_N. 
  \end{equation}
\end{thm}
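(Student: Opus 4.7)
My plan is to treat the two assertions \eqref{feb15.1} and \eqref{feb15.2} separately. The first, uniform closeness of the paths in the capacity parametrization, I would obtain directly from the coupling built in Lemma~\ref{lem:coupling-pt2} together with the deterministic Loewner comparison of Proposition~\ref{prop:loewner-comparison}: on the event $\{n < n_*\}$ the LERW driving process $U_n$ and the Brownian motion $W_{\tau_n}$ agree to within $h^{1/10}$ at each mesoscopic time, so Lemma~\ref{lem:coupling-of-maps} gives uniform closeness of $F_n^{\LERW}$ and $F_n^{\SLE}$ away from $\RR$; composing with $F^{-1}/N$ and using the Beurling estimate near $\partial \check D$ yields \eqref{feb15.1}. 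The diameter bound $r_{m_n} - r_{m_{n-1}} \leq h^{2/5} + O(N^{-1/2})$ on each mesoscopic hull controls the linear interpolation between times $(n-1)h$ and $nh$.

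The bulk of the work goes into \eqref{feb15.2}. Following the overview of Section~\ref{sect:overview}, I would introduce the two $\G_n$-martingales $M_n^{\LERW}, M_n^{\SLE}$ and study the decomposition of their difference
\[
M_n = M_n^{\SLE} - M_n^{\LERW} = B_n + Y_n, \quad B_n = \check\Theta_{\tau_n} - \check T_n,
\]
in which $B_n$ is a difference of two increasing processes and $Y_n$ is the difference of the SLE Green's function integral over $\check D \setminus \check\gamma_{\tau_n}$ and the normalized LERW one-point sum over $A_n$. The sharp one-point estimate \eqref{BLV2}, combined with Lemma~\ref{lem:coupling-of-maps}, shows that on the ``regular'' part of $A_n$---where $\Im F_n^{\SLE}(\zeta)$ and $\Sine_{A_n,a_n,b}(\zeta)$ are bounded below by fixed small powers of $h$---the rescaled LERW one-point sum agrees with the SLE Green's integral up to $O(N^{-u})$ per unit area, producing a small contribution to $Y_n$.

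The main obstacle is to control the remaining points, where the relative error in \eqref{BLV2} is too large to sum directly. This is precisely the role of the open/closed square machinery of Section~\ref{sect:open}: I would redefine $M_n^{\LERW}, M_n^{\SLE}, Y_n, B_n$ by restricting all sums and integrals to open squares, and then show separately that (i)~the normalized LERW length spent in squares that become closed, and (ii)~the SLE content in such squares, are both bounded by $\epsilon_N$ with probability at least $1-\epsilon_N$. For (i) I would invoke the sharp LERW one- and two-point estimates and the separation lemma of Section~\ref{LERWsec}, together with the geometric fact that conditions~I and~II of Section~\ref{sect:overview} are satisfied only on sets of small expected area; for (ii) I would use Lemma~\ref{lem:SLE-one-point} and a content-version two-point estimate for SLE$_2$. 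Together with a second-moment bound $\E[Y_\tau^2] \le \epsilon_N$ at a suitable $\G_n$-stopping time $\tau$ (also two-point in nature), these provide the inputs needed for the final step.

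The final step, as sketched in Section~\ref{sect:overview}, is to pass from $Y_n$ to $B_n$. Letting $B_n'$ denote the predictable compensator of $B_n$, the identity $B_n' = M_n + (B_n' - B_n) - Y_n$ exhibits $B_n'$ as a martingale plus the small process $-Y_n$, and Doob's $L^2$ inequality combined with the increment bound $|B_n' - B_{n-1}'| \le \epsilon_N$ controls $\max_{n \le \tau}|B_n'|$ by a power of $\epsilon_N$. The passage from $B_n'$ back to $B_n$ itself is handled in Section~\ref{bvsec}, and interpolating from the mesoscopic times $nh$ to general $t \in [0,1]$ using monotonicity of $\check T_t$ and $\check\Theta_t$ and the mesoscopic increment bounds yields \eqref{feb15.2}. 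The rate $\epsilon_N = (\log N)^{-\delta}$ is determined by the slowest input, in practice the threshold $(\log N)^{-2/5}$ appearing in condition~I.
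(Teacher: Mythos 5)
Your plan for \eqref{feb15.2} is essentially the paper's own: restrict to open squares, form the martingale difference with the predictable compensator $B_n^\circ$, apply the abstract $L^2$ lemma (Lemma~\ref{lem:l2-lemma}) at a stopping time built from $Q_n$ and the maximal functions, and transfer back from $B_n^\circ$ to $N^{-5/4}(\Theta_n-T_n)$ via Section~\ref{bvsec}; the inputs you name (the one-point estimate \eqref{BLV2}, Lemma~\ref{lem:coupling-of-maps}, the maximal estimate, and a two-point bound for the second moment of $Y$) are the ones actually used. One minor misattribution: the SLE content in closed squares is controlled not by a two-point content estimate but by the small-sine estimate (Lemma~\ref{feb23.sle}) and the bottleneck estimate of Lemma~\ref{lemmaII}; the two-point Green's function (from \cite{LZhou}) enters only in the bound on $\E[(Y'_{n_1})^2]$.

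The genuine gap is your treatment of \eqref{feb15.1}. The paper does not prove \eqref{feb15.1} in this section at all: it is imported, with a polynomial rate, from the companion paper \cite{LV_lerw_chordal_note}, and the text states explicitly that here only \eqref{feb15.2} is addressed. Your proposed shortcut --- Lemma~\ref{lem:coupling-pt2} plus Proposition~\ref{prop:loewner-comparison} and Lemma~\ref{lem:coupling-of-maps}, then ``composing with $F^{-1}/N$ and using the Beurling estimate'' --- does not suffice. Those results give closeness of $F_n^{\LERW}$ and $F_n^{\SLE}$ only at points $\zeta$ with $\Im F_n^{\SLE}(\zeta)\geq h^{1/80}$, i.e.\ at points bounded away from the curves; the curves themselves, and in particular the tips $\check\eta(nh)$ and $\check\gamma(nh)$, live exactly where this hypothesis fails, so sup-norm closeness of the paths does not follow by applying $F^{-1}$. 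What is missing is a quantitative regularity estimate for the discrete path showing that closeness of the Loewner data pins down the location of the LERW tip --- this is where annulus-crossing (``$6$-arm'') estimates of the kind indicated in the Remark after Proposition~\ref{dec23.1} are needed, and it is precisely the extra work carried out in \cite{LV_lerw_chordal_note}. The mesoscopic diameter bound $r_{m_n}-r_{m_{n-1}}\leq h^{2/5}+O(N^{-1/2})$ only controls the interpolation within one capacity increment, not the distance between the two tips at time $nh$. So either cite the companion paper for \eqref{feb15.1}, as the paper does, or supply that path-regularity argument; as written, this step would fail.
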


 We will prove the theorem with
 \[\epsilon_N = c\, \left(\log N \right)^{-1/60}\] where the constant $c$ depends
 on $D,a',b',F$. 
 The estimate \eqref{feb15.1} with an unspecified sequence $\ee_N$ and for a slightly different coupling was done in \cite{LSW04}.
The convergence rate in the coupling of \cite{LSW04} was estimated in \cite{BJK, JV}. In \cite{LV_lerw_chordal_note} we obtain a  polynomial convergence rate for \eqref{feb15.1} in the case of the coupling used in this paper. In this paper, we will only worry about proving the second estimate
\eqref{feb15.2}.
  We encourage the reader to recall the general idea of the proof as outlined in Section~\ref{sect:overview}.

For the remainder, we fix $N$ and a coupling as above.
Where we use $n$, we will assume that $n  <  n_*$  where
$n_*$ is as in 
  Lemma  \ref{lem:coupling-pt2}.

 \subsection{Maximal estimate}
 
We will need to know that neither the Minkowski content nor the scaled
 number of steps visited by the loop-erased random walk can get large
 on a small region.  To make this precise,   let $\ball(z,\epsilon)$
 denote the closed disk of radius $\epsilon$ about $z$ and
 define
\begin{eqnarray*}
     \maxsle& = &   N^{-5/4} \, \sup_{z \in \C} \Cont\left[
     \hat \gamma \cap \ball(z,
     N/\log N)\right] \\
    &  =  & \sup_{z \in \C} \Cont\left[ {\check \gamma} \cap \ball(z,1/\log N)\right].
    \end{eqnarray*}
 The LERW analogue is
 \[  \maxlerw = N^{-5/4} \, \sup_{z \in \C}
      \sum_{\zeta \in A \cap \ball(z,N/\log N)}
               1\{\zeta \in \eta\}.\]
               
\begin{prop}  \label{maxprop}
There exists $c < \infty$ such that
\[      \E\left[\maxsletwo\right] + \E\left[\maxlerwtwo\right]
   \leq c \, \left(\log N \right)^{-5/4}.\]
 \end{prop}
 
  \begin{proof} 
The estimate for SLE was done in 
\cite{LR} where a similar maximal estimate is a key step
for establishing H\"older continuity of the Minkowski content
with respect to capacity parametrization.  In
Proposition \ref{prop:maximal-estimate}  we
use a similar 
  argument   for LERW  after establishing
 a bound
on the second moment for the number of steps of the walk.

\end{proof}

 \subsection{Open and closed squares: definitions}\label{sect:open}
 
\begin{figure}[t]
\centering
  \def\svgwidth{0.95\columnwidth}
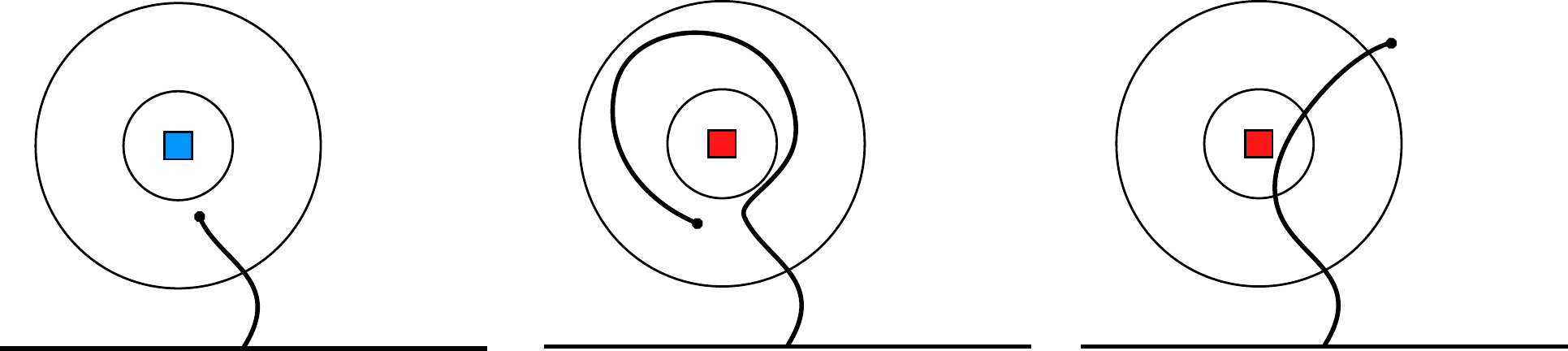
\caption{Left: The square is open. Middle: Closed square of Type I. The distance from the real line after uniformizing is still larger than $\lambda$, but the sine of the argument along the path has gotten small. Right: Closed square of Type II. The curve got close to the square and then escaped while the distance from the real line after uniformizing dropped below $\lambda$. } \label{fig:badpoints}
\end{figure}
    Throughout this section we set
\[           \lambda = \lambda_N = h^{ 1/100}.\]
In the following definition recall that $ \Im[ F_n^\SLE(\zeta)] $
is a decreasing function of $n$. See Figures~\ref{fig:badpoints} and \ref{fig:badpoints2}.
\begin{definition}   \label{sleopendef}
We will say that the square $\Square_\zeta, \zeta \in A,$ is {\bf closed for 
  SLE} at step $n$   if either of $I$ or $II$ holds at
$t= \tau_n$, where:
\begin{itemize}
\item[\bf{I}:] We have \[ 
     \lambda \leq \Im[ F_{ n}^\SLE(\zeta)] \leq 10 ,\]
    and 
       \[   \Sine_{\Half \setminus \gamma_{\tau_n}}(F(\zeta);\gamma({\tau_n}),\infty)
  \leq \frac{1}{\left(\log N \right)^{2/5}}.\]
  \end{itemize}
\begin{itemize}
\item[\bf{II}:] We have
\[   \Im[F_n^\SLE(\zeta)]  < \lambda , \]
 \[\dist(    \zeta, \partial (   D_A \setminus \hat {\gamma}_t)) \leq \frac{N}{\left(\log N \right)^5}, \]
 and
 \[  
 |   \zeta - \hat {\gamma}(t)| \geq \frac{N }{\log N}.\]
 \end{itemize}
 \end{definition}
 
\begin{definition}\label{lerwopendef}

We will say that the square $\Square_\zeta, \, \zeta \in A,$ is {\bf closed for 
 LERW} at step $n$   if either
of $I$ or $II$ holds at
$t= \tau_n$, where:
\begin{itemize}
\item[\bf{I}:] We have
 \[ 
      \lambda \leq \Im[ F_{{  n}}^{\SLE}(\zeta)] \leq 10 ,\]   
    and $\Square_\zeta$ is closed for SLE.
  \end{itemize}
\begin{itemize}
\item[\bf{II}:] We have 
\[   \Im[ F_{ n}^{\SLE}(\zeta)]  < \lambda , \]
 \[\dist( \zeta,\partial    {D}_n) \leq \frac{N}{\left(\log N \right)^5}, \]
 and \[   |   \zeta -    a_k|
  \geq \frac{N}{\log N}.\]
 \end{itemize}
 \end{definition}
\begin{figure}[t]
\centering
  \def\svgwidth{0.65\columnwidth}
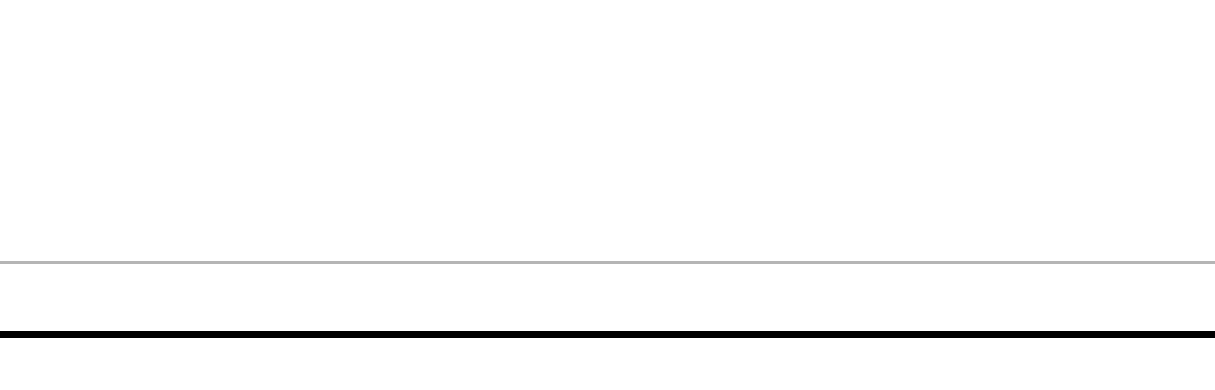
\caption{The image of an open square near the boundary. If $z=F(\zeta)$ and $S(z)=\Sine_{\Half \setminus \gamma_{\tau_n}}(z;\gamma({\tau_n}),\infty) \ge \left(\log N \right)^{-2/5}$ we have derivative estimates. The conclusion is that the distance to the boundary is $o(N/\left(\log N \right)^5)$ and so if the square is open it is at distance $O(\left(\log N \right)/N)$ of the tip.} \label{fig:badpoints2}
\end{figure}
In both cases, once a square is closed it stays closed forever.
A square is said to be open for SLE (for LERW) at step $n$ if it is not closed for SLE (for LERW) at step $n$.

We will write $\opensle_{n,\zeta}$ and $\openlerw_{n,\zeta}$ for the indicator functions
of the event that $\Square_\zeta$ is open for SLE and LERW, respectively.
Then we have the following properties:
\begin{itemize}
\item  If $\Im[F_n^\SLE(\zeta) ] \geq \lambda$, then 
$\opensle_{n,\zeta} = \openlerw_{n,\zeta}$.

\item If $n \leq m$,  and $\opensle_{n,\zeta} = 0$, then
$\opensle_{m,\zeta}  = 0. $  If $\openlerw_{n,\zeta} 
= 0$ then $\openlerw_{m,\zeta}  = 0$. 

\end{itemize}
 
The next observation is that a square $\Square_\zeta$
 cannot still be open  if it both 
 far from the tip  and the conformal map has a 
small imaginary part.  
The essential idea is that in order for the imaginary part to be small
but for the curve to not get close to the point, there must be a time
when the sine of the angle was small.

\begin{prop} \label{prop.feb13}
$\;$
\begin{itemize}
\item Suppose $\Square_ \zeta$ is open for SLE at step $n$.  Then either $\Im[F_{n}^\SLE(\zeta)]
\geq  \lambda $ or $|\zeta-\hat\gamma(t)| \leq N/\left(\log N \right)$.
\item  Suppose $\Square_ \zeta$ is open for LERW at step $n$.  Then either $\Im[F_{n}^\SLE(\zeta)]
\geq \lambda $ or $|\zeta - a_n|  \leq N/\left(\log N \right)$.
\end{itemize}
\end{prop}

\begin{proof}  Let $z = F(\zeta)$ and
suppose $\Im(z) \leq 20$ and let $\rho=\tau_k$ where $k$ is the first $n$
with $\Im[g_{\tau_n}(z)] \leq \lambda $. Then using Koebe's theorem, \[\dist\left(\zeta, \partial(D_A \setminus \hat\gamma_\rho) \right) \asymp N\frac{\lambda}{|g'_\rho(z)|}.\]If $\Square_\zeta$ is still open,
then \eqref{jan26.2} implies that
\[
\frac{\lambda}{|g'_\rho(z)|} \le c \lambda^{2 \nu^2}, \quad \nu = \left(\log N \right)^{-2/5}.
\]

Combining these estimates gives $\dist(\zeta,\partial ( D_A \setminus
 \hat{\gamma}_\rho) )=  o(N/\left(\log N \right)^5)$.  A similar argument (using Lemma~\ref{lem:coupling-of-maps})
shows the same for the LERW.   
\end{proof}
  
We can restate this as follows.  Suppose $\zeta \in A$ with $\Im[F(\zeta)]
\geq \lambda$,
\begin{itemize}
\item   The square $\Square_\zeta$ stays open until either the sine of the argument
gets too small or the imaginary part drops below $\lambda.$  We measure
the argument using the SLE path but by the coupling, since the imaginary part is at least $\lambda$, it is almost the same
as measuring using the LERW.
\item  If the sine gets too small, $\Square_\zeta$ closes.
\item  If  the imaginary part of $F_n^\SLE(\zeta)$
drops below $\lambda$ and  $\Square_\zeta$ has
not closed, we know that $\zeta$ is within distance $N/\left(\log N \right)^{5}$
of the boundary.
\item  The square now closes when the tip of the path
  gets distance $N/\log N$ away
from  $\zeta$. (This is defined separately for ``closed for SLE'' and
``closed for LERW''.)  It is possible that the square
$\Square_\zeta$ will be visited before it is closed; indeed, this is the
``typical'' behavior if the path will visit $\Square_\zeta$.
\end{itemize}
We will work with contents restricted to open squares. Define
\[ I_\zeta = c_*^{-1}\,1\{\zeta \in \eta\},\]
and
\[    I_\zeta^\circ = c_*^{-1}\, 1\{\exists k \mbox{ such that } \eta_k = \zeta
\mbox{ and } \Square_\zeta  \mbox{ is open for LERW at step } k-1\}. \]
Let
\[   T =  \sum_{\zeta \in \eta } I_\zeta, \;\;\;\;
  T_n = \sum_{\zeta \in \eta^n } I_\zeta, \]
\[    T^\circ = \sum_{\zeta \in \eta } I_\zeta^\circ , \quad T_n^\circ = \sum_{\zeta \in \eta^n } I_\zeta^\circ  \]
denote the number of  points and
number of open points visited by $\eta$ and $\eta^n$, respectively (both scaled by $c_*$).  

Now we define the corresponding SLE quantities. For each $\zeta \in A$, let
 \[j(\zeta) = \min\{n: \Square_\zeta {\text{ is closed for SLE at step }} n \} \] be the step at which  $\Square_\zeta$ closes for SLE and let  $\Theta^\circ_\zeta$
 denote the $5/4$-dimensional Minkowski content of the path in $\Square_\zeta$ before closing,
 \[     \Theta_\zeta =  
 \, \Cont\left[  \hat\gamma \cap
    \Square_\zeta \right], \;\;\;\;  \Theta^\circ_\zeta =
 \, \Cont\left[  \hat\gamma[0,\tau_{j(\zeta)}] \cap
    \Square_\zeta \right].\]
    Then we set 
    \[  \Theta = \Cont[\hat\gamma] =  \sum_{\zeta \in A} \Theta_\zeta, \;\;\;\;
      \Theta_n = \Cont\left[\hat\gamma[0,\tau_n]\right], \]
\[       \Theta^\circ = \sum_{\zeta \in A} \Theta^\circ_\zeta, \quad \Theta^\circ_n = 
 \sum_{\zeta \in A}   \Cont\left[\hat \gamma[0,\tau_{j(\zeta)} \wedge 
 \tau_n ] \cap
    \Square_\zeta \right].\]
 (There is some ambiguity in this notation.  We write $\Theta_\zeta$ and
 $\Theta_n$ and they mean different things whether or not the subscript
 is a point in $\Z^2$ ($\zeta$) or a nonnegative integer ($j,k,m,n$).
 We hope this will not cause confusion.)

\subsection{Proof of Theorem~\ref{thm:main-thm-core-sec}}\label{sect:main-proof}
The goal of this section is to prove the main result but we will leave proofs of some
facts for later sections. We will achieve this by proving the following statement.
\begin{prop}  \label{main-prop}
There exists $c$ such that for $N$ sufficiently large,
\begin{equation}  \label{feb18.5}
    \Prob\left\{\max_{0 \leq n \leq n_*} N^{-5/4} \, |T_n - \Theta_n| 
   \geq c\, \left(\log N \right)^{-1/60}  \right\}
      \leq c\, \left(\log N \right)^{-1/30}. \end{equation}
\end{prop}
We will argue that we can replace $T_n$ and $\Theta_n$ by $T_n^\circ$ and $\Theta_n^\circ$ as defined in the previous section. 

In this section stopping times and martingales will be discrete time
with respect to the filtration $\{\G_n\}$ of the coupling.

Note that
\[   \E\left[T^\circ \mid \G_n\right]
  = T_n^\circ + R_n^\circ, \;\;\;\;\;
   \mbox{ where } R_n^\circ=
     \sum_{\zeta \in A_n}  \openlerw_{n,\zeta}\,
     \E_n \left[I_\zeta^\circ\right], \]
  where we write  
 \[   \E_n \left[I_\zeta^\circ\right] =  
 \E_{A_n,a_n,b} \left[I_\zeta^\circ \right].\]
In particular,  $T_n^\circ + R_n^\circ$ is a martingale.
The corresponding SLE martingale is
  \begin{equation}  \label{feb18.1}
    \E\left[\Theta^\circ \mid \mathcal{G}_n \right] = \Theta^\circ_n + \sum_{\zeta \in A}
     \opensle_{n,\zeta} \,\E_n\left[\Theta^\circ_\zeta\right],
\end{equation}
 where
 $  \E_n\left[\Theta^\circ_\zeta\right]$
 is the expected value
 of  $\Theta^\circ_\zeta$  with respect to  SLE$_2$  from $\hat{\gamma}(\tau_n)$ to $b$ in
 $D_A \setminus \hat\gamma_{\tau_n}$.  
We consider the difference, which is also a martingale:
 \[    N^{-5/4}\E\left[ \Theta^\circ -   T^\circ
    \mid \G_n\right] =     Y_n^\circ + \tilde{B}_n^\circ , \]
    where
    \[    Y_n^\circ = N^{-5/4}\sum_{\zeta \in A}\left(\E_n \left[I_\zeta^\circ \right] - \E_n\left[\Theta^\circ_\zeta\right] \right) 
     , \;\;\;\;\;\;
        \tilde{B}_n^\circ  = N^{-5/4} \left[\Theta_n^\circ - \, T_n^\circ  \right].\] 
        It turns out to be convenient to modify this and replace $\tilde{B}_n^\circ$ by a predictable  (i.e., $\mathcal{G}_{n-1}$-measurable) version. For this we set
        \[
        B_n^\circ  = \sum_{j=1}^n\E \left[\tilde{B}_j^\circ-\tilde{B}_{j-1}^\circ  \mid \mathcal{G}_{j-1} \right].
        \]
        and define the martingale
        \[
        M_n^\circ = Y_n^\circ + B_n^\circ.
       \]
   The next lemma whose proof we delay shows that it suffices to
  prove \eqref{feb18.5} with $B_n^\circ$ in place of $N^{-5/4} \, (\Theta_n -
  T_n)$.
  
  \begin{lemma} \label{bvlemma}
   There exists $c < \infty$ such that
  \[  \Prob\left\{ \max_{n \leq n_0} \left|B_n^\circ - N^{-5/4}\, (\Theta_n -
   T_n) \right| \geq c \, \left(\log N \right)^{-5/128}
   \right\}  \leq c \, \left(\log N \right)^{-5/32}.\]
  \end{lemma}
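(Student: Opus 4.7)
The plan is to decompose
\[
B_n^\circ - N^{-5/4}(\Theta_n - T_n) = \bigl(B_n^\circ - \tilde B_n^\circ\bigr) \;+\; N^{-5/4}\bigl[(\Theta_n^\circ - \Theta_n) - (T_n^\circ - T_n)\bigr],
\]
and bound each piece separately at the advertised rate.

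For the second (non-martingale) piece: since $\Theta_n - \Theta_n^\circ$ is the SLE Minkowski content picked up inside squares $\Square_\zeta$ after they have become SLE-closed (and analogously for $T_n - T_n^\circ$), both processes are non-negative and non-decreasing in $n$, so the supremum over $n \le n_0$ is attained at $n_0$. Controlling the total content/count collected in closed squares is precisely what Section~\ref{badsec} provides: using the two closure regimes of Definitions~\ref{sleopendef}--\ref{lerwopendef} (screening, Type I, vs.\ close-and-escape, Type II) together with \eqref{BLV2}, its SLE counterpart (Lemma~\ref{lem:SLE-one-point}), and the separation estimates of Section~\ref{LERWsec}, one shows that each of $N^{-5/4}(\Theta_{n_0} - \Theta_{n_0}^\circ)$ and $N^{-5/4}(T_{n_0} - T_{n_0}^\circ)$ is $O((\log N)^{-5/128})$ off an event of probability $O((\log N)^{-5/32})$.

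For the martingale piece: since $\Theta_n^\circ$ and $T_n^\circ$ are $\G_n$-adapted and non-decreasing, $\tilde B_n^\circ$ has bounded variation and $B_n^\circ$ is by construction its predictable compensator, so $M_n := \tilde B_n^\circ - B_n^\circ$ is a mean-zero $\G_n$-martingale. The single-step increment $\Delta \tilde B_j^\circ = N^{-5/4}(\Delta \Theta_j^\circ - \Delta T_j^\circ)$ is bounded by the content/count in a mesoscopic piece of the curve, which has diameter at most $h^{2/5}$ in the image of $F$ and, by the Beurling/distortion bounds of Section~\ref{sect:deterministic}, much smaller than $1/\log N$ in scaled $\check D$-coordinates once $N$ is large. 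Proposition~\ref{maxprop} therefore gives
\[
\E\bigl[\max_{j \le n_0} |\Delta \tilde B_j^\circ|^2\bigr] \le C\bigl(\E[\maxsletwo] + \E[\maxlerwtwo]\bigr) \le C(\log N)^{-5/4}.
\]
Meanwhile, $\sum_{j=1}^{n_0} |\Delta \tilde B_j^\circ| \le N^{-5/4}(\Theta + T)$, whose second moment is $O(1)$ (using finiteness of the second moment of SLE Minkowski content and of the rescaled LERW length, the latter being one of the two-point estimates of Section~\ref{LERWsec}). Orthogonality of martingale differences and Cauchy--Schwarz give
\[
\E[M_{n_0}^2] \le \sum_{j=1}^{n_0}\E[(\Delta \tilde B_j^\circ)^2] \le \E\Bigl[\max_j |\Delta \tilde B_j^\circ| \cdot \sum_j |\Delta \tilde B_j^\circ|\Bigr] \le C(\log N)^{-5/8}.
\]
Doob's $L^2$ maximal inequality and Chebyshev then yield $\max_{n \le n_0}|M_n| \le C(\log N)^{-5/128}$ off an event of probability $\le C(\log N)^{-5/32}$.

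The main obstacle is the closed-squares bound, where one must verify quantitatively that the content/count swept up after closure is vanishingly small at the stated rate. The martingale side, by contrast, is a routine consequence of Proposition~\ref{maxprop} and the second-moment bounds for $\Theta$ and for $N^{-5/4}T$; no subtler probabilistic input is needed there. Combining the two pieces through the triangle inequality yields Lemma~\ref{bvlemma}.
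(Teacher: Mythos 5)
Your overall plan coincides with the paper's (Section~\ref{bvsec}): split into the compensator comparison $B_n^\circ-\tilde B_n^\circ$ and the closed-square comparison $\tilde B_n^\circ - N^{-5/4}(\Theta_n-T_n)$. The second piece is handled exactly as in Proposition~\ref{bvprop} (monotonicity, Proposition~\ref{feb21.1}, Markov), and that is fine. For the martingale piece, your Cauchy--Schwarz/Doob computation is legitimate and in fact a little more direct than the paper's Lemma~\ref{bvlemma2} (you use the second moment $\E[(N^{-5/4}(\Theta+T))^2]=O(1)$, which is available from Proposition~\ref{prop:maximal-estimate} and the SLE two-point Green's function bound, whereas the paper's lemma only needs a first moment and compensates with a stopping-time truncation) --- \emph{provided} the pathwise increment bound $\max_j|\Delta\tilde B_j^\circ|\le \maxsle+\maxlerw$ is established.

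That is where there is a genuine gap: your justification of the increment bound is wrong. You argue that the curve segment added in one mesoscopic step has image diameter at most $h^{2/5}$ in $\Half$ and hence, ``by Beurling/distortion,'' Euclidean diameter much smaller than $N/\log N$ in $D_A$. Small image diameter does \emph{not} imply small preimage diameter: $F_n^{-1}$ expands enormously beyond bottlenecks. If the curve has almost enclosed a region of diameter of order $N$, leaving a lattice-scale gap, the image of that entire region has diameter polynomially small in $N$, in particular smaller than $h^{2/5}=N^{-4u/15}$; so within a single mesoscopic step the curve can sweep through it, visiting sites at distance of order $N$ from the tip without triggering either stopping rule. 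Note that your argument nowhere uses openness, so it would equally ``prove'' the increment bound for the full counts $T_n,\Theta_n$ --- which is precisely what the open/closed-square machinery exists to avoid. The correct argument (the paper's) is: a capacity increment of order $h$ cannot swallow any $\zeta$ with $\Im[F_n^{\SLE}(\zeta)]\ge\lambda$ (that costs capacity of order $\lambda^2\gg h$), and among the squares with $\Im[F_n^{\SLE}(\zeta)]<\lambda$ the \emph{open} ones lie within distance $N/\log N$ of the tip by Proposition~\ref{prop.feb13}; hence the one-step increment of the open count is bounded by the number of visited sites (resp.\ the content) in a ball of radius $N/\log N$ about the tip, i.e.\ by $N^{5/4}\maxlerw$ (resp.\ $N^{5/4}\maxsle$). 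With that substitution, the rest of your argument goes through.
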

  
  \begin{proof}  See Section~\ref{bvsec}, and in particular Proposition \ref{bvprop}.
  \end{proof}
  Given this, the strategy is to apply the following general lemma to the martingale $M_n^\circ = Y_k^\circ + B_k^\circ$ with  $\ee, \delta$ being chosen as suitable negative exponents of $\log N$.
\begin{lemma}\label{lem:l2-lemma}  Suppose $B_k, {M}_k$ are discrete time processes
with $M_k$ a square-integrable
martingale with respect to a filtration $\{\F_k\}$
with $M_0 = 0$.  Assume
that $B_k = X_k - Z_k$ where $X_k,Z_k$ are positive increasing
predictable (that is, $X_k,Z_k$ are $\F_{k-1}$-measurable)
processes with $X_0 = Z_0 = 0$.  Let $Y_k = M_k - B_k$.
Suppose that $\tau$ is a stopping time such that 
\[    \E[X_\tau  + Z_\tau ] \leq c_1, \]
and
\[       |Y_j| \leq \epsilon, \quad  |B_{j+1} - B_{j}| \le \ee, \quad \quad   j < \tau.\]
   Then for every $y > 0$,
 \[  \Prob\left\{\max_{0 \leq j \leq k \wedge \tau}
   |B_j| \geq y +2\epsilon \right\}
    \leq 
y^{-2} \, \left(\E \left[ Y_{k \wedge \tau}^2 \right] 
       + 3 \, \epsilon \, c_1\right). \]   
\end{lemma}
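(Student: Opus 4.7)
The plan is to apply Chebyshev's inequality to the martingale $M$ stopped at the first time $B$ crosses a suitable threshold, then to control the resulting second moment through a discrete Doob--Meyer-type identity that exploits the predictability of $B$.

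First I would introduce the stopping time $\sigma = \inf\{j \ge 0: |B_j| \ge y+\ee\}$, which is a stopping time (in fact predictable) since $B_j$ is $\F_{j-1}$-measurable, and let $\rho = \sigma \wedge k \wedge \tau$. The key deterministic observation is that on the event $E = \{\max_{0 \le j \le k\wedge\tau}|B_j| \ge y + 2\ee\}$ we have $|M_\rho| \ge y$: indeed, letting $j^*$ be the first index with $|B_{j^*}| \ge y + 2\ee$, we have $j^* \le k\wedge\tau \le \tau$, so the increment hypothesis applied at $j^*-1 < \tau$ gives $|B_{j^*} - B_{j^*-1}| \le \ee$ and hence $|B_{j^*-1}| \ge y + \ee$, showing $\sigma \le j^*-1 < \tau$. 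On $E$ one therefore has $|Y_\sigma| \le \ee$ (from the hypothesis at $\sigma < \tau$) and $|B_\sigma| \ge y + \ee$, so the reverse triangle inequality yields $|M_\sigma| \ge y$. Chebyshev plus the submartingale property of $M^2$ (with $\rho \le k\wedge\tau$ a bounded stopping time) then gives
\[
\Prob(E) \;\le\; y^{-2}\,\E[M_\rho^2] \;\le\; y^{-2}\,\E[M_{k\wedge\tau}^2].
\]

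It remains to bound $\E[M_{k\wedge\tau}^2]$ by $\E[Y_{k\wedge\tau}^2]+3\ee c_1$. Writing $\Delta M_j = \Delta Y_j + \Delta B_j$ and using that $B$ is $\F_{j-1}$-predictable, so $\E[\Delta Y_j \mid \F_{j-1}] = -\Delta B_j$, a direct computation of $\E[(\Delta M_j)^2 \mid \F_{j-1}]$ and of $\E[Y_j^2 - Y_{j-1}^2 \mid \F_{j-1}]$ yields the discrete identity
\[
\E[M_{k\wedge\tau}^2] \;=\; \E[Y_{k\wedge\tau}^2] \;+\; 2\sum_{j\ge 1}\E\!\left[Y_{j-1}\,\Delta B_j\, \mathbf{1}\{j \le \tau\}\right] \;-\; \sum_{j\ge 1}\E\!\left[(\Delta B_j)^2\,\mathbf{1}\{j \le \tau\}\right].
\]
The last sum is nonnegative and can be dropped. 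In the middle sum, on $\{j \le \tau\}$ we have $j-1 < \tau$, so $|Y_{j-1}| \le \ee$; meanwhile, since $B = X - Z$ with $X$ and $Z$ increasing and predictable, the pathwise total variation $\sum_j |\Delta B_j| \mathbf{1}\{j \le \tau\}$ is bounded by $X_\tau + Z_\tau$, whose expectation is at most $c_1$. This gives an $O(\ee c_1)$ bound on the cross term, completing the estimate.

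The only genuinely delicate step is the deterministic passage on $E$ from a threshold crossing of $|B|$ to a lower bound on $|M|$ strictly \emph{before} time $\tau$: the bound $|Y_\sigma| \le \ee$ is only available when $\sigma < \tau$, which is precisely why the threshold in the definition of $\sigma$ is $y + \ee$ rather than $y$, and why the statement is phrased in terms of $y + 2\ee$. The identity for $\E[M_{k\wedge\tau}^2]$ is a routine discrete stochastic calculus computation, and the rest is algebra.
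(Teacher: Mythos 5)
Your proof is correct and follows essentially the same route as the paper: the same predictability-based second-moment identity bounding $\E[M_{k\wedge\tau}^2]$ by $\E[Y_{k\wedge\tau}^2]$ plus an $O(\epsilon\, c_1)$ cross term controlled by the total variation $X_\tau+Z_\tau$, together with the same deterministic use of the increment bound to step back from the threshold $y+2\epsilon$ to $y+\epsilon$ at an index strictly before $\tau$. The only cosmetic difference is that you apply Chebyshev to the martingale stopped at the explicit crossing time $\sigma$ (in effect re-proving Doob's $L^2$ maximal inequality), whereas the paper invokes the maximal inequality for $\max_j|M_j|$ directly; this even yields the slightly better constant $2\epsilon c_1$ in place of $3\epsilon c_1$.
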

\begin{proof}See the end of the section.
\end{proof}
With this lemma in mind we see that we need to find a stopping time $\tau$ for which it holds that $\max_{n < \tau}|Y_n^\circ|$, $\max_{n < \tau}|B_{n}^\circ-B_{n-1}^\circ|$, and $\E\left[|Y_\tau^\circ|^2 \right]$ are all small. We will define the stopping time in terms of an estimate of $|Y_n^\circ|$, which we will now derive. For a fixed $n$, let
\[
S_n(\zeta) = \sin\left[\arg F_n^{\SLE}(\zeta)  \right]
\]
and then
\begin{align*}  A_n' & = \left\{\zeta \in A:\, \Im\left[F_n^\SLE(\zeta)\right]
   \geq \lambda\, ; \,S_n(\zeta) \geq  \left(\log N \right)^{-3/8} \right\},\\
    A_n'' &=  \left\{\zeta \in A:\, \Im\left[F_n^\SLE(\zeta)\right]
   \geq \lambda\, ; \,S_n(\zeta) <   \left(\log N \right)^{-3/8} \right\}. \end{align*}  
  The choice of $3/8$ is somewhat arbitrary and we have not optimized it.
We will use the fact that 
 $\frac 13 <\frac 38
    <\frac 25$.
 We write
 \[      \E\left[T^o \mid \G_n \right] = T_n^\circ+ \sum_{\zeta \in A_n'
 \cup A_n''} \opensle_{n,\zeta} \, \E_n\left[I_\zeta^o \right]
   + \sum_{\zeta \in A_n \setminus( A_n' \cup A_n'')} 
     \openlerw_{n,\zeta} \, \E_n\left[I_\zeta^o \right],\]
 \[    \E \left[\Theta^\circ\mid \G_n \right] = \Theta_n^\circ +
 \sum_{\zeta \in A_n'
  \cup A_n'' }  \opensle_{n,\zeta} \,\E_n\left[\Theta_\zeta^\circ \right]
   + \sum_{\zeta \in A_n \setminus ( A_n' \cup A_n'')} 
    \opensle_{n,\zeta} \,  \E_n \left[\Theta_\zeta^\circ \right]. \]
  Here we are using the fact that $\opensle_{n,\zeta}
   = \openlerw_{n,\zeta}$ in $A' \cup A''$.
Since
\[Y_n^\circ = N^{-5/4}\sum_{\zeta \in A}\left(\E_n \left[I_\zeta^\circ \right] - \E_n\left[\Theta^\circ_\zeta\right] \right) \]
we can estimate 
\begin{equation}  \label{feb23.18}
  |Y_n^\circ| \leq  | Y_n'|
      + Q_n + \tilde Q_n, 
      \end{equation}
 where
 \[     Y_n' = N^{-5/4}  \sum_{\zeta \in A_n'} \left(
   \E_n[I_\zeta^\circ] -  \E_n[\Theta_\zeta^\circ]\right), \]
 \[    Q_{n} =  N^{-5/4} \sum_{\zeta \in A_n''}  \opensle_{n,\zeta} \,  \left(
   \E_n[I_\zeta^\circ] +  \E_n[\Theta_\zeta^\circ]\right), \]
   \[   \tilde Q_n =  N^{-5/4} \sum_{\zeta \in A \setminus (A_n' \cup A_n'')} 
    \left( \openlerw_{n,\zeta}\,
   \E_n[I_\zeta^\circ] +  
    \opensle_{n,\zeta} \, \E_n[\Theta_\zeta^\circ]\right). \]
We can then describe the stopping time as follows.  Let $n_1$ be the minimum
of $n_*$ and the first $n$ such that either
\[   Q_n \geq  \frac 12 \, \left(\log N \right)^{-1/30}\]
or 
\[     \E\left[\maxsle + \maxlerw \mid \G_n \right]
    \geq \left(\log N \right)^{-1/2}. \] 
\begin{lemma}\label{lem:n_1}
We have
    \begin{equation}\label{feb26.1}
    \Prob\left\{n_1 < n_* \right\} = o\left(\left(\log N \right)^{-1/30}\right),
    \end{equation}
   \begin{equation}\label{feb23.192}
   Q_{n} \le \left(\log N \right)^{-1/30},\quad n \le n_1,
   \end{equation}
   \begin{equation}\label{feb23.212}
     \tilde{Q}_n \le \left(\log N \right)^{-1/2},\quad n< n_1,\end{equation}
   and \begin{equation}\label{feb23.213}
   \E\left[\tilde{Q}^2_{n_1} \right] = O\left(\left(\log N \right)^{-5/4}\right).
    \end{equation}
    \end{lemma}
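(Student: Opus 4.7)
I will handle (2), (3), (4) by direct reductions to Proposition~\ref{prop.feb13} and Proposition~\ref{maxprop}, and then tackle (1), which is the technical heart.

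For (3), any $\zeta \in A\setminus(A_n'\cup A_n'')$ contributing to $\tilde Q_n$ satisfies $\Im\phi_n^\SLE(\zeta) < \lambda$ and either $\opensle_{n,\zeta}=1$ or $\openlerw_{n,\zeta}=1$; Proposition~\ref{prop.feb13} then places $\zeta$ within Euclidean distance $N/\log N$ of $\hat\gamma(\tau_n)$ or $a_n$. Summing the conditional one-point masses over this disk gives
\[
\tilde Q_n \le \E_n[\maxsle + \maxlerw],
\]
which is below $(\log N)^{-1/2}$ for $n < n_1$ by definition of $n_1$, proving (3). Evaluating at $n = n_1$, squaring, and applying Jensen together with Proposition~\ref{maxprop} yields (4): $\E[\tilde Q_{n_1}^2] \le \E[(\maxsle + \maxlerw)^2] = O((\log N)^{-5/4})$. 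Part (2) at $n < n_1$ is by definition; at $n = n_1$, squares only leave $A_n''$ by closing, and both the change in conditional expectations and the contribution of squares newly entering $A_n''$ are negligible: the first is supported near the new curve piece of diameter $O(h^{2/5})$ in $\Half$ and is dominated by $\E_{n_1-1}[\maxsle + \maxlerw]$, while the second involves a band of width $O(h/\lambda^2)$ in the sine coordinate, polynomially small in $N$. Hence $Q_{n_1}\le \tfrac12(\log N)^{-1/30} + o(1)$ for $N$ large.

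For (1), I split $\{n_1 < n_*\} = E_1\cup E_2$ according to which rule triggered at $n_1$. Doob's $L^2$ maximal inequality applied to the nonnegative martingale $n\mapsto \E_n[\maxsle + \maxlerw]$ together with Proposition~\ref{maxprop} gives
\[
\Prob(E_1)\le \frac{4\,\E[(\maxsle+\maxlerw)^2]}{(\log N)^{-1}} = O((\log N)^{-1/4}) = o((\log N)^{-1/30}).
\]
For $E_2$, I would first prove the uniform first-moment bound $\E[Q_n] = O((\log N)^{-39/32})$. By \eqref{BLV2} and Lemma~\ref{lem:SLE-one-point}, both $\E_n[I_\zeta^\circ]$ and $\E_n[\Theta_\zeta^\circ]$ are controlled by $C\, G_{D_n}(\zeta;a_n,b)$ up to a tip-supported error absorbable on $E_1^c$. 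The resulting Riemann sum compares to $\int_{\check V_n''} G_{\check D_n}(\check z;\hat\gamma(\tau_n),\check b)\, dA(\check z)$, which under the pullback $w = \phi_n^\SLE(\check z)$ becomes
\[
\tilde c \int |(\phi_n^\SLE)'|^{-5/4}(2\Im w)^{-3/4}\sin^3(\arg w)\,\mathbf{1}\{\Im w\ge\lambda,\ \sin(\arg w)\le(\log N)^{-3/8}\}\, dA(w).
\]
The capacity bound $\hcap[\Half\setminus\phi_n^\SLE(\check D_n)]\le 2$ and analyticity of $\partial D$ at $b'$ give $|(\phi_n^\SLE)'|^{-5/4}$ bounded in bulk and $O(|w|^{-5/2})$ at infinity; integration in polar coordinates with $\epsilon = (\log N)^{-3/8}$ yields $O(\epsilon^{13/4}) = O((\log N)^{-39/32})$ uniformly in $n\le n_*$. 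The exponent $3/8\in(1/3,2/5)$ is chosen precisely to stay strictly below the Type-I closing exponent $2/5$ and well below the target $1/30$.

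To upgrade this uniform moment bound to $\sup_n$, I would dominate
\[
Q_n \le \E_n[Z], \qquad Z := N^{-5/4}\sum_{\zeta\in V_*}(I_\zeta + \Theta_\zeta),\qquad V_* := \bigcup_{n\le n_*} V_n'',
\]
using $\mathbf{1}_{A_n''}(\zeta)\le\mathbf{1}_{V_*}(\zeta)$ together with $I_\zeta^\circ\le I_\zeta$ and $\Theta_\zeta^\circ\le\Theta_\zeta$. Doob's $L^2$ maximal inequality applied to the martingale $n\mapsto\E_n[Z]$ then gives $\E[(\sup_n\E_n[Z])^2]\le 4\E[Z^2]$. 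The two-point LERW estimates developed in Section~\ref{LERWsec} and their SLE analogues let one bound $\E[Z^2]$ by a (larger) negative power of $\log N$, so that Markov closes the estimate $\Prob(E_2) = o((\log N)^{-1/30})$. The principal obstacle is obtaining this $L^2$ bound on $Z$: one must expand $\E[Z^2]$ as a double sum and split into diagonal, near-diagonal, and far-diagonal contributions, applying the sharp two-point asymptotics in the far regime and Cauchy--Schwarz with the one-point bound in the near regime, while handling the restriction to $V_*$ via a uniform control of how the sine coordinate moves per mesoscopic step. A secondary subtlety is ensuring uniformity of constants in the conformal pullback as $n$ varies, which follows from the a priori capacity bound $\hcap\le 2$ and the analyticity of $\partial D$ at $b'$.
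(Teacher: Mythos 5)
Your treatment of \eqref{feb23.212} and \eqref{feb23.213} coincides with the paper's ($\tilde Q_n \le \E[\maxsle+\maxlerw\mid\G_n]$ via Proposition~\ref{prop.feb13}, then conditional Jensen and Proposition~\ref{maxprop}), and your $E_1$ bound is fine. The genuine gap is in \eqref{feb26.1}, on the $Q$-trigger event $E_2$. The paper never needs a supremum over $n$: since $n_1$ is a stopping time, it suffices to control $Q$ at the single time $n_1$, and this follows from the pathwise domination $Q_n\le\E[Q_I\mid\G_n]$ (every open $\zeta\in A_n''$ has $\rho_\zeta\le n<\tau_\zeta$ in the notation of Proposition~\ref{feb23.prop1} with $p=3/8$, thanks to the factor $2$ in the threshold there), whence $\E[Q_{n_1}]\le\E[Q_I]=O\left((\log N)^{-1/8}\right)$ and Markov gives $\Prob\{Q_{n_1}\ge\tfrac12(\log N)^{-1/30}\}=O\left((\log N)^{1/30-1/8}\right)=o\left((\log N)^{-1/30}\right)$. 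You instead dominate $\sup_n Q_n$ by $\sup_n\E_n[Z]$ and invoke Doob's $L^2$ inequality, which forces you to prove $\E[Z^2]=o\left((\log N)^{-1/10}\right)$ via new two-point estimates for the ``sine gets small before the height drops below $\lambda$'' event; no such estimate is available in the paper, and you explicitly leave it as the principal obstacle, so your proof of \eqref{feb26.1} is incomplete as written. (Note that the weak-type $L^1$ maximal inequality for the nonnegative martingale $\E_n[Z]$ would have closed your route using only the first moment $\E[Z]\le\E[Q_I]$; the $L^2$ detour is both unnecessary and unproved.)

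Two further soft spots. First, your uniform first-moment claim $\E[Q_n]=O\left((\log N)^{-39/32}\right)$ rests on asserting that the conformal distortion factor for $\check D\setminus\check\gamma_{\tau_n}$ is ``bounded in bulk''; Lemma~\ref{feb22.lemma1} gives the $\delta^{13/4}$ bound only for the fixed analytic domain at time zero, and no uniform-in-$n$ analogue for the random slit domains is established anywhere. The paper avoids this entirely by arguing pathwise against the time-zero Green's function (Lemma~\ref{feb23.sle} plus Lemma~\ref{feb22.lemma1} inside Proposition~\ref{feb23.prop1}), which is also what yields the exponent $3p-1=1/8$ actually used. Second, in \eqref{feb23.192} at $n=n_1$, your assertion that the change of the conditional expectations is ``supported near the new curve piece'' and dominated by $\E_{n_1-1}[\maxsle+\maxlerw]$ is not right as stated: when the curve grows, the conditional expectations attached to distant squares do change (only their conditional mean is preserved). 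The paper's route is the deterministic one-step stability of $S_n(\zeta)$ for $\Im[F_n^{\SLE}(\zeta)]\ge\lambda$ (so that squares in $A_{n_1}''$ were already flagged at step $n_1-1$) combined with Proposition~\ref{feb23.prop1}; your step needs to be rebuilt along those lines rather than via the maximal content.
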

    \begin{proof}Write $S_n(\zeta) = \sin\left[ \arg F_n^{\SLE}(\zeta)\right]$.
Note that if $n \leq n_1$, and $\zeta \in A_n''$, then deterministically
 (for $N$ sufficiently large)  \[S_{n-1}(\zeta) <  2\,\left(\log N \right)^{-3/8}, \quad \Im[F_{n-1}^\SLE(\zeta)] \geq \lambda.\] 
We shall prove in Proposition~\ref{feb23.prop1} that this gives \eqref{feb23.192}.
  On the other hand, as we will see, Proposition \ref{feb23.prop1} also shows that for any stopping time $\tau$ we have the estimate
$\E[Q_\tau] \leq O \left(\left(\log N \right)^{-1/8} \right)$, and hence 
\begin{equation}  \label{feb23.10}
      \Prob\{Q_{n_1} \geq \left(\log N \right)^{-1/16} \}\leq c \, \left(\log N \right)^{-1/16}.
   \end{equation}  
Using   Proposition \ref{prop.feb13}, we see that for any
stopping time $n \leq n_0$, 
\[    \tilde Q_n \leq \E \left[\maxsle \mid \G_n \right] + \E\left[\maxlerw \mid \G_n \right]\]
so we get \eqref{feb23.212}. Using Proposition \ref{maxprop} we see that
\[  \E\left[\E(\maxsle \mid \G_n)^2 \right]
  \leq \E\left[\E(\maxsletwo \mid \G_n) \right]
     \leq \E(\maxsletwo) \leq  c\, {\left(\log N \right)^{-5/4}}, \]
      and similarly for  $\E\left[\E(\maxlerw \mid \G_n)^2 \right]$.
Hence \eqref{feb23.213} follows.
Also, using Chebyshev's inequality,
\begin{equation}  \label{feb23.11}
 \Prob \left\{\E(\maxsle+
\maxlerw \mid \G_n)  \geq \left(\log N \right)^{-1/2} \right\}
   \leq  c \, \left(\log N \right)^{-1/4}.
   \end{equation}
Combining \eqref{feb23.10} and \eqref{feb23.11}, we get \eqref{feb26.1}. \end{proof}

 It remains to handle the main term, $Y_n'$. 
 \begin{lemma}
There is a constant $c< \infty$ such that if $n_1$ is as above, then
 \begin{equation}  \label{feb23.22}
    \E\left[ (Y_{n_1}')^2 \right] \leq c\, \left(\log N \right)^{-1/4}.
\end{equation}
\end{lemma}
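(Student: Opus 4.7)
The plan is to show $\mathbf{E}[(Y_n')^2] = O((\log N)^{-5/4})$ uniformly in $n \le n_1$, which is in fact stronger than the claimed $(\log N)^{-1/4}$ bound. Throughout, the definition of $A_n'$ furnishes two crucial lower bounds: $\Im F_n^\SLE(\zeta) \ge \lambda = h^{1/100}$ places $\zeta$ above the threshold $h^{1/80}$ needed for Lemma~\ref{lem:coupling-of-maps}, and $S_n(\zeta) \ge (\log N)^{-3/8}$ keeps the cubic factor in the formula $G = \tilde c\, r^{-3/4} S^3$ from being too sensitive to perturbations.

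I would decompose $Y_n' = Y_n'' + Y_n'''$, where $Y_n''$ is the analogous sum with $I_\zeta, \Theta_\zeta$ in place of $I_\zeta^\circ, \Theta_\zeta^\circ$, and $Y_n'''$ collects the $\circ$ corrections. For $Y_n''$ I would proceed in three steps. First, the sharp one-point estimate (BLV2) gives
\[
c_*\E_n[I_\zeta] = G_{D_n}(\zeta;a_n,b)\bigl[1+O\bigl(r_n(\zeta)^{-u}(\log N)^{3/8}\bigr)\bigr].
\]
Second, the SLE Green's function identity yields
\[
\E_n[\Theta_\zeta] = \int_{\Square_\zeta} G_{D_A \setminus \hat\gamma_{\tau_n}}(z;\hat\gamma(\tau_n),b)\,dA(z),
\]
which Koebe distortion on the unit square $\Square_\zeta$ replaces by $G_{D_A \setminus \hat\gamma_{\tau_n}}(\zeta;\hat\gamma(\tau_n),b)$ up to a small multiplicative error. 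Third, Lemma~\ref{lem:coupling-of-maps} gives $|F_n^\LERW(\zeta) - F_n^\SLE(\zeta)| \le ch^{1/15}$, and substituting into $G = \tilde c\, r^{-3/4}S^3$, using $S_n(\zeta) \ge (\log N)^{-3/8}$ to control the Jacobian of $S^3$, yields
\[
G_{D_n}(\zeta;a_n,b) = G_{D_A \setminus \hat\gamma_{\tau_n}}(\zeta;\hat\gamma(\tau_n),b)\bigl[1 + O\bigl(h^{1/15}(\log N)^{9/8}\bigr)\bigr].
\]
All three errors are polynomial in $N^{-1}$ (up to log factors); combining them with $N^{-5/4}\sum_{\zeta \in A}G_{D_n}(\zeta;a_n,b) = O(1)$ bounds $|Y_n''|$ deterministically by a polynomial-in-$N^{-1}$ quantity, negligible compared to the required $(\log N)^{-1/8}$.

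For $Y_n'''$ I would invoke Proposition~\ref{prop.feb13}: any square that is still open at step $n$ but is visited after it later closes must, at closure, lie within $N/\log N$ of the corresponding tip. Hence pointwise
\[
|Y_n'''| \le \mathbf{E}_n\bigl[c_*^{-1}(\check T - \check T^\circ) + (\check \Theta - \check \Theta^\circ)\bigr] \le \mathbf{E}_n[\maxsle + \maxlerw].
\]
Jensen's inequality and Proposition~\ref{maxprop} then give
\[
\mathbf{E}\bigl[|Y_n'''|^2\bigr] \le \mathbf{E}\bigl[(\maxsle + \maxlerw)^2\bigr] = O\bigl((\log N)^{-5/4}\bigr),
\]
uniformly in $n$; combined with the polynomial bound on $Y_n''$ this yields the $L^2$ estimate.

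The main obstacle is the $\circ$ correction: the openness definitions for LERW and SLE involve different tips ($a_n$ versus $\hat\gamma(\tau_n)$), so bounding the LERW correction by $\maxlerw$ and the SLE correction by $\maxsle$ requires first aligning the two tips via the coupling of Lemma~\ref{lem:coupling-pt2} and then invoking the LERW-specific separation and maximal estimates of Section~\ref{LERWsec}. Once the alignment is in hand, the pointwise comparison above and the maximal content bound from Proposition~\ref{maxprop} close the argument.
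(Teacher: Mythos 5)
Your first step (comparing $\E_n[I_\zeta]$ with $\E_n[\Theta_\zeta]$ on $A_n'$ via Lemma~\ref{lem:coupling-of-maps} and \eqref{BLV1}) parallels the paper, but the way you convert this multiplicative smallness into an $L^2$ bound has a genuine gap. You assert that $N^{-5/4}\sum_{\zeta}G_{D_n}(\zeta;a_n,b)=O(1)$ \emph{deterministically}. This quantity is (up to constants) $\E_n[\check\Theta_\infty]-\check\Theta_{\tau_n}$, the conditional expected remaining Minkowski content, and it is not uniformly bounded over realizations of the drawn curve: configurations in which $\hat\gamma_{\tau_n}$ has created long narrow channels or spiral-like regions that the future curve must traverse make the Green's function integral arbitrarily large, and restricting to $A_n'$ does not obviously repair this. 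Only its moments are controlled, and that is precisely why the paper bounds $\E\bigl[\,\bigl|\E_n[\Theta_\infty-\Theta_n]\bigr|^2\,\bigr]$ by a constant using the two-point SLE Green's function estimate of \cite{LZhou} (a supermartingale argument). Your proposal has no substitute for this step; note also that the crude deterministic bound of order $N^{3/4}$ cannot be beaten by the coupling error $O(h^{1/15})$, since $u$ (hence $h$) involves an unknown, possibly tiny, exponent.

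The treatment of the $\circ$-correction also fails. The inequality $|Y_n'''|\le \E_n[\maxsle+\maxlerw]$ is not valid: $Y_n'''$ aggregates, over \emph{all} $\zeta\in A_n'$, the conditional expectation of visits and content accrued after the square closes, and such a total is not dominated by the maximal content in a single ball of radius $N/\log N$ (in the paper the maximal quantities control only single mesoscopic increments, in Section~\ref{bvsec}, and the near-tip term $\tilde Q_n$). Moreover, the geometric claim you invoke is false for the relevant closure mechanism: on $A_n'$ one has $\Im[F_n^\SLE(\zeta)]\ge\lambda$, so closures are of Type I (``screening''), which can occur while the curve is still far from $\Square_\zeta$; Proposition~\ref{prop.feb13} concerns open squares with small imaginary part and does not say what you need. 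The paper instead uses the conditional Type I estimate, Proposition~\ref{feb23.prop1}, to get $\E_n[I_\zeta^\circ]=\E_n[I_\zeta]\bigl(1+O((\log N)^{-1/8})\bigr)$ and similarly for $\Theta_\zeta^\circ$, which again reduces the problem to the $L^2$ bound discussed above---and which is also why the attainable bound is $(\log N)^{-1/4}$ rather than the $(\log N)^{-5/4}$ you claim.
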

\begin{proof}
Suppose $n \le n_1$.
We first use Lemma~\ref{lem:coupling-of-maps}
 to see that   if $\zeta \in A_n'$, then
 \[   F_n^{\LERW}(\zeta) =  F_n^{\SLE}(\zeta) \, \left[1 + O \left(h^{1/20} \right)
 \right]. \]
 Moreover, the Beurling estimate shows that (if $N$ is sufficiently large), $S_n(\zeta) \ge r_A(\zeta)^{-u}$ for all $\zeta \in A_n'$. Hence, from \eqref{BLV1}, integrating the Green's function over $\Square_\zeta$,
 \[  \E_n \left[ I_\zeta \right] = \E_n\left[\Theta_\zeta\right]
  \, \left[1 + O \left(h^{1/30} \right)
 \right].\]
Note that all closed squares in $A_n'$ are of Type I. Therefore using Proposition~\ref{feb23.prop1}, we see that
 \[     \E_n \left[ I_\zeta^\circ \right] = \E_n \left[ I_\zeta\right]
  \, \left[1+O\left(\left(\log N \right)^{-1/8}\right)\right], \]\[
    \E_n\left[ \Theta_\zeta^\circ \right]
  = \E_n \left[\Theta_\zeta \right]
  \, \left[1+O\left(\left(\log N \right)^{-1/8} \right)\right], \]
 and hence 
 \[     \left|\E_n \left[ I_\zeta^\circ \right] - \E_n \left[ \Theta_\zeta^\circ \right]\right|
    \leq  c\, \left(\log N \right)^{-1/8}\, \E_n \left[ \Theta_\zeta^\circ \right]. \]
Since  
\[
\sum_{\zeta \in A_n'}\E_n\left[ \Theta_\zeta^\circ\right] \le  \left| \E_n \left[ \Theta_\infty \right] -\Theta_n \right|,  
    \]
 after recalling that $Y_n'$ is rescaled, it follows that
 \[      \E\left[(Y_n')^2 \right] \leq c\, \left(\log N \right)^{-1/4}\, N^{-5/2} \,
 \E\left[ \left|\E_n\left[ \Theta_\infty - \Theta_n \right] \right|^2
 \right].\]
However, as shown in \cite{LZhou}, if $\kappa < 8$, and $\check{D}_t = \check{D} \setminus \check{\gamma}_t$, then for
any stopping time $\tau$,
\begin{align*}
\lefteqn{ N^{-5/2}\E\left[\E \left[\Theta_\infty - \Theta_\tau \mid \G_\tau \right]^2\right]}
  \hspace{.5in}\\
& = \E\left[\int_{\check D_\tau  \times \check D_\tau} G_{\check D_\tau}(z;\check \gamma(\tau),\check b)
 \, G_{\check{D}_\tau} (w;\check \gamma(\tau),\check b) \, dA(w)\,  dA(z) \right]\\
 &  \leq   
  c\, \E\left[\int_{\check D_\tau  \times \check D_\tau} G_{\check D_\tau}(z,w; 
  \check \gamma(\tau),\check b) \, dA(w)\,  dA(z) \right]\\
   & =   c\,  \E\left[\int_{\check D_\tau  \times \check D_\tau} \E\left[G_{\check D_\tau}(z, 
 w;\check \gamma(\tau),\check b)\mid \mathcal{G}_{\tau}\right] \, dA(w)\,  dA(z) \right]\\ 
 & \leq  c\,   \int_{\check D   \times \check D } G_{\check D}(z,w) \, 
  dA(w)\,  dA(z)     < \infty.
\end{align*}
  Here $G_{\check D_t}(z,w;\check\gamma(t), \check b)$ denotes the (unordered) two-point
  SLE$_\kappa$ Green's function which is a positive supermartingale justifying
  the last equality.  The first inequality is a general estimate
  about the two-point Green's function. The conclusion is that we have proved \eqref{feb23.22}. \end{proof}
\begin{proof}[Proof of Proposition~\ref{main-prop}]
  Combining \eqref{feb23.18},
  \eqref{feb23.192}, \eqref{feb23.212}, 
  and \eqref{feb23.22}, we see that
  \[  \E\left[Y_{n_1}^2\right] \leq c \, \left(\log N \right)^{-1/15}.\]
Proposition~\ref{main-prop} then follows from Lemma~\ref{lem:l2-lemma} using 
\[ \epsilon =  \left(\log N \right)^{-1/15} , \;\;\;\;
    y = \left(\log N \right)^{-1/60},\]
 to get
 \[ \Prob\left \{\max_{0 \leq j \leq n_1}
        |B_j| \geq 3\, \left(\log N \right)^{-1/60} \right\}
 \leq  c\, \left(\log N \right)^{-1/30}.\]
\end{proof}

It remains to prove Lemma~\ref{lem:l2-lemma}.
\begin{proof}[Proof of Lemma~\ref{lem:l2-lemma}]
We write $\Delta Y_j = Y_j - Y_{j-1},\,
\Delta B_j = B_{j} - B_{j-1},$ and  $\Delta M_j = M_j - M_{j-1}$.
Using the assumptions that $B_k$ is $\F_{k-1}$-measurable
and $M_k$ is a martingale, we get 
\begin{align*}
 \lefteqn{\E[Y_{k \wedge \tau}^2 \mid \F_{k-1}]   - Y_{(k-1) \wedge \tau}^2}
 \hspace{.4in}\\
  & =  1\{\tau >  n-1\} \, \left(2 Y_{k-1} \,\E\left[  \Delta Y_k 
   \mid \F_{k-1}\right] + \E \left[(\Delta Y_k)^2 \mid
     \F_{k-1} \right] \right)\\
  & =  1\{\tau > k-1\} \, \left(2  \,Y_{k-1} \,  \Delta B_k
   + (\Delta B_k)^2 + \E\left[(\Delta M_k)^2 \mid \F_{k-1} \right] \right)
 \end{align*}
By taking expectations of both sides and adding we see that 

\begin{align*}
\E[Y_{k \wedge \tau}^2]
  & =  \E \left[M_{k \wedge \tau}^2 \right]\\
  & \hspace{6ex}+
  2\sum_{j=1}^k \E \left[Y_{j-1} \,  \Delta B_j 
  ; \tau > j-1 \right] +\sum_{j=1}^k \E \left[(\Delta B_j)^2
  ; \tau > j-1 \right]\\
  & \geq  
 \E\left[M_{k \wedge \tau}^2 \right]\\
  & \hspace{6ex} - 
  2\epsilon \sum_{j=1}^n \E \left[ |\Delta B_j|
   ; \tau > j-1 \right] -\epsilon \sum_{j=1}^n \E \left[|\Delta B_j|
  ; \tau > j-1 \right]\\
  & \geq  \E\left[M_{k \wedge \tau}^2 \right] -
    3 \epsilon \, \E \left[X_\tau + Z_\tau \right ] .
  \end{align*}
Therefore,
\[ \E\left[M_{k \wedge \tau}^2 \right]
   \leq \E\left[Y_{k \wedge \tau}^2 \right] + 3 \epsilon \, 
     \E \left[X_\tau + Z_\tau \right] \leq \E\left[Y_{k \wedge \tau}^2 \right]
       + 3 \, \epsilon \, c_1. \]
Hence by the $L^2$ maximal principle,
\[  \Prob\left\{\max_{0 \leq j \leq k \wedge \tau}
   |M_j| \geq y \right\} \leq y^{-2} \, \left( \E\left[Y_{k \wedge \tau}^2 \right]
       + 3 \, \epsilon \, c_1 \right). \]
 Hence, recalling that $|B_j| = |M_j - Y_j|$, and $|Y_j|1_{j < \tau} \le \ee 1_{j < \tau}$,
\begin{align*}
 \Prob\left\{\max_{0 \leq j \leq k \wedge \tau}
   |B_j| \geq y +2\epsilon \right\}
    &  \leq   \Prob\left\{\max_{0 \leq j < k \wedge \tau}
   |B_j| \geq y + \epsilon \right\}\\
   & \leq    \Prob\left\{\max_{0 \leq j <k \wedge \tau}
   \left( |M_j| + |Y_j| \right) \geq y + \ee \right\}  \\ 
   & \leq \Prob\left\{\max_{0 \leq j <k \wedge \tau}
   |M_j|  \geq y \right\} \\
   & \le
y^{-2} \, \left(\E\left[Y_{k \wedge \tau}^2 \right] 
       + 3  c_1  \epsilon \right),
       \end{align*}
       which is what we wanted to prove.
\end{proof}

\section{Open and closed squares: estimates}  \label{badsec}
\subsection{Expected number of visits in closed squares}

In this subsection, we will show that the expected contribution
to the natural time for squares that are closed goes to zero
by proving the following.

\begin{prop}  \label{feb21.1}
There exists $c < \infty$ such that 
\[    \E[\Theta - \Theta^\circ] 
    +   \E[T - T^\circ] \leq c \, \left(\log N \right)^{-1/5}
     \, N^{5/4}.\]
\end{prop}  
Before giving the proof we need several lemmas.
Recall that
\[ \Theta - \Theta^\circ= \sum_{\zeta \in A}
   \E[\Theta_\zeta - \Theta^\circ_\zeta], \;\;\;\;
    \E[T - T^\circ] = \sum_{\zeta \in A}
   \E[I_\zeta -I^\circ_\zeta]. \]
We prove the estimates separately for SLE and LERW although the arguments
are similar.  
We start with a simple estimate that uses only the smoothness
of $D$ and the Green's function for SLE and LERW.

 \begin{lemma} \label{feb22.lemma1} Suppose $D$ is an analytic domain with $A=A(N,D)$.
 $\;$  There exists $c < \infty$ such that
for all $\delta > 0$ the following statements hold.
 \begin{enumerate}
 \item  If $A^{\delta,1}
  = \{\zeta \in A: S_A(\zeta;a,b) \leq \delta \}$. 
  \[    N^{-5/4}\sum_{\zeta \in A^{\delta,1}}  \E\left[
       \Theta_\zeta + I_\zeta\right]
          \leq c
            \, \delta^{13/4}.\]
            
  \item If  $A^{\delta,2}
   = \{\zeta \in A:\dist(\zeta, \partial D_A) \leq \delta
  N\, \}$, then 
       \[      N^{-5/4}\sum_{\zeta \in A^{\delta,2}}  \E\left[
       \Theta_\zeta + I_\zeta\right]\leq c
            \, \delta^{5/4}.\]     
  \end{enumerate}
   \end{lemma}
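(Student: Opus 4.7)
My plan is to reduce both bounds to integrals of the SLE$_2$ Green's function
\[ G_{D_A}(\zeta;a,b)=\tilde c\,r_A(\zeta)^{-3/4}\Sine_{A,a,b}(\zeta)^3, \]
and to carry those integrals out in the analytic target domain $D$. For the SLE part, $\E[\Theta_\zeta]=\int_{\Square_\zeta}G_{D_A}(z;a,b)\,dA(z)$ is comparable to $G_{D_A}(\zeta;a,b)$ since $G_{D_A}$ varies slowly on unit scale away from $\partial D_A$. For the LERW part, \eqref{BLV2} gives $\E[I_\zeta]\le C\,G_{D_A}(\zeta;a,b)$ wherever $\Sine_{A,a,b}(\zeta)\ge r_A(\zeta)^{-u}$; in the thin residual layer near $\partial D_A$ where \eqref{BLV2} degenerates I appeal to a crude uniform upper bound on the LERW one-point function (to be imported from the LERW estimates of Section~\ref{LERWsec}, or obtained directly from Fomin's identity together with a Beurling bound on simple random walks). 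Both parts then reduce to the deterministic estimate
\[ N^{-5/4}\sum_{\zeta\in A^{\delta,i}}G_{D_A}(\zeta;a,b)\le c\,\delta^{e_i},\qquad (e_1,e_2)=(13/4,\,5/4). \]

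Next I would rescale. Writing $\check D = N^{-1}D_A$ and $\check\zeta=\zeta/N$, the conformal covariance of $G$ (with exponent $2-d=3/4$) gives $G_{D_A}(\zeta;a,b)=N^{-3/4}G_{\check D}(\check\zeta;\check a,\check b)$, while approximating the sum by the corresponding integral contributes a factor of $N^2$ from $dA(\zeta)=N^2\,dA(\check\zeta)$, so the prefactor $N^{-5/4}$ cancels exactly. The task becomes the $N$-free bound
\[ \int_{\check D^{\delta,i}} G_{\check D}(\check z;\check a,\check b)\,dA(\check z) \le c\,\delta^{e_i}, \]
where $\check D^{\delta,i}$ is the obvious continuum analogue of $A^{\delta,i}/N$. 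For $\delta \gtrsim (\log N)/N$ I replace $\check D$ by the analytic domain $D$ using the Riemann map comparison $|\psi-\mathrm{id}|=O((\log N)/N)$ of Lemma~\ref{mar5.lemma1}; for smaller $\delta$ only $O(\log N)$ lattice layers near $\partial D_A$ are involved and a direct crude summation gives a bound much better than $\delta^{5/4}$.

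For part (1), I transplant the integral to $\Half$ via a conformal map $F:D\to\Half$. Conformal covariance gives
\[ \int_D G_D(z;a',b')\,1\{\Sine_{D,a',b'}(z)\le\delta\}\,dA(z) = \int_\Half G_\Half(w)\,1\{\sin\arg w\le\delta\}\,|(F^{-1})'(w)|^{5/4}\,dA(w). \]
In polar coordinates $w=re^{i\theta}$, $G_\Half(w)\,dA(w)$ is a constant multiple of $r^{1/4}\sin^{9/4}\theta\,dr\,d\theta$. The angular part $\int_{\sin\theta\le\delta}\sin^{9/4}\theta\,d\theta$ is $O(\delta^{13/4})$, and the radial part $\int_0^\infty r^{1/4}|(F^{-1})'(re^{i\theta})|^{5/4}\,dr$ is uniformly bounded in $\theta$: near $w=0$ the derivative $|(F^{-1})'|$ is bounded by analyticity of $\partial D$ at $a'$, and near $w=\infty$ the expansion $F^{-1}(w)=b'+O(1/w)$ gives $|(F^{-1})'(w)|\asymp |w|^{-2}$, making the integrand decay like $r^{-9/4}$.

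For part (2), I work directly in $D$ and split the $\delta$-neighborhood of $\partial D$ into the pieces near $a'$ and $b'$, namely $\ball(a',\delta)\cap D$ and $\ball(b',\delta)\cap D$, and the complementary strip. Near $a'$, $\Sine_{D,a',b'}$ is generically of order one while $r_D(z)\asymp|z-a'|$ by analyticity of $\partial D$ at $a'$, so $G_D\asymp |z-a'|^{-3/4}$ and polar integration gives $\int_0^\delta s^{-3/4}\cdot s\,ds = O(\delta^{5/4})$, and likewise near $b'$. On the complementary strip, Schwarz reflection of $F$ across the analytic arcs of $\partial D$ away from $a',b'$ produces $|F'|\asymp 1$ and $|F|$ bounded away from zero there, forcing $\sin\arg F(z)\asymp \dist(z,\partial D)$; hence $G_D\asymp \dist(z,\partial D)^{9/4}$ and the coarea formula gives $\int_0^\delta s^{9/4}\,ds\cdot O(1) = O(\delta^{13/4})$. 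The two contributions sum to $O(\delta^{5/4})$, as required. The one place I anticipate technical friction is the LERW upper bound in the regime where \eqref{BLV2} degenerates; once that piece is in hand, the rest is a clean continuum calculation.
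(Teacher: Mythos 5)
Your continuum computation of the two Green's-function integrals is correct and gives the right exponents, and it is a genuinely different execution from the paper's: the paper never rescales or passes to $\check D$ or $\Half$, but bounds the lattice sum directly, decomposing dyadically in the distance to $a$ and $b$ (part 1) and in the distance to the boundary (part 2), using the single pointwise input $\E[\Theta_\zeta+I_\zeta]\le c\,r_A(\zeta)^{-3/4}\left[S_{A,a,b}(\zeta)^3+O\left(r_A(\zeta)^{-u}\right)\right]$ together with the geometric fact (from analyticity of $\partial D$) that $S_{A,a,b}(\zeta)\le\delta$ forces $\zeta$ to lie within distance $O(\delta\,2^{-k}N)$ of $\partial D_A$ when $|\zeta-a|\asymp 2^{-k}N$. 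Your route buys a cleaner conceptual picture at the cost of extra bookkeeping (sum-to-integral comparison, transferring the defining conditions of $A^{\delta,i}$ from $D_A$ to $\check D$ to $D$ via Lemma~\ref{mar5.lemma1}), which the paper avoids entirely.

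The genuine gap is your treatment of the LERW term where \eqref{BLV2} degenerates. The ``crude uniform upper bound'' you propose to import (e.g.\ $\Prob_{A,a,b}\{\zeta\in\eta\}\le c\,\dist(\zeta,\partial A)^{-3/4}$, which is what Corollary~\ref{corollary.bastille1} supplies) carries no factor of $S_{A,a,b}(\zeta)^3$, and that factor is precisely what generates the powers $\delta^{13/4}$ and $\delta^{5/4}$. Summed over the layer where \eqref{BLV2} degenerates (thickness a power of $N$ close to $N$), the crude bound leaves, after multiplying by $N^{-5/4}$, an error which is a fixed small negative power of $N$ with no $\delta$-dependence; this cannot be absorbed into $c\,\delta^{13/4}$ or $c\,\delta^{5/4}$ uniformly in $\delta$ (the paper applies part 2 with $\delta=N^{-1/2}$ in Corollary~\ref{mar5.cor3}), and over a full $\delta N$-neighbourhood the same crude bound only yields $O(\delta^{1/4})$. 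Likewise, your claim that for $\delta\lesssim(\log N)/N$ ``a direct crude summation gives a bound much better than $\delta^{5/4}$'' fails for the LERW part: with only crude bounds the innermost $O(\log N)$ layers contribute on the order of $N^{-1/4}$ up to logarithms, far larger than $\delta^{5/4}$. The fix is the input the paper actually uses: the additive-error form of the one-point estimate, Theorem~\ref{BLV}, which is stated for all $(A,a,b)\in\whoknows$ with no restriction $S\ge r_A^{-u}$, so the $S^3$ factor is retained everywhere and your continuum calculation then covers the main term, leaving only the $O(r_A^{-u})$ remainder to track. (Two minor points: $\E[\Theta_\zeta]\asymp G_{D_A}(\zeta;a,b)$ is not valid for squares touching $\partial D_A$, so keep the integral $\int_{\Square_\zeta}G_{D_A}\,dA$ there; and near $a'$ your ``$S$ generically of order one, $r_D(z)\asymp|z-a'|$'' should be replaced by the honest upper bound $G_D(z)\le c\,|z-a'|^{-3/4}$, which does hold.)
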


   \begin{proof}  We use only the Green's function estimate
   \[     \E\left[
       \Theta_\zeta + I_\zeta\right] \leq c \,r_A(\zeta)^{-3/4}
       \, \left[S_{A}(\zeta;a,b)^3 + O(r_A(\zeta)^{-u}) \right].\]  
\begin{enumerate}
\item For $\zeta$ that are distance
 $2^{-k}N$ to $2^{-k+1}N$ from $a$, in order for $S_A(\zeta;a,b)$
 to be less than 
$\delta$, the points must be with distance $O(\delta 2^{-k}N)$
 of the boundary.  The number of such points is $O(2^{-2k} \, \delta
 \, N^2 )$
 and the value of $\E[\Theta_\zeta + I_\zeta]$
  for these points
 is bounded by  $O((\delta 2^{-k} N)^{-3/4}  \, \delta^3).$  Hence
 the sum over this region is bounded by $2^{-5k/4} \, \delta
 ^{13/4}\, N^{5/4}$.  We can sum over $k$ and handle points near $b$ similarly.
 \item
 The sum over $\zeta$  at distance $O(\delta N)$ of $a$ or $b$ is
 $O(\delta^{5/4} \, N^{5/4})$. For the points that are distance
 between $k \delta N$ and $(k+1) \delta N$, there are $O( (\delta N)^2)$
 points with typical value of the Green's function being of order
 $k^{-3} \, (\delta N)^{-3/4}$.  Hence the sum over that region
 is $O(k^{-3} \,\delta^{5/4} \, N^{5/4})$ and we can sum over $k$.
\end{enumerate}

 \end{proof}

We will consider separately  ``Type I'' and  ``Type II''
closures
using the notation of Definition \ref{sleopendef}.

\begin{lemma}  \label{feb23.sle}
There exists $c < \infty$ such that
the following holds. Suppose $D$ is a simply connected domain containing
the origin,  and $a,b$
are distinct boundary points.  Let $\gamma$ be an $SLE_2$ path
from $a$ to $b$ in $D$ and let
\[        S_t =  S_{D \setminus \gamma_t}(0;\gamma(t),b) . \]
Let
\[   \sigma_s = \inf\left\{t: |\gamma(t)| \leq e^{-s }\right\}, \]
where we set $\sigma_s = \infty$ if $\dist(\gamma,0)
 > e^{-s}$.  Let
 \[         \Psi = \Psi_s =   \min_{0 \leq t \leq
     \sigma_{s-2} }   S_t. \]
Then,
\[      \Prob\{\Psi \leq \delta; \sigma_{s} < \infty\}
   \leq    c \, e^{-3s/4} \, \delta^3  \, \left[s\,S_0^{3}
      +  1 \right].\]

 \end{lemma}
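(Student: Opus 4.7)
The plan is to introduce the stopping time $\tau = \inf\{t \geq 0 : S_t \leq \delta\}$, so that $\{\Psi_s \leq \delta\}$ coincides with $\{\tau \leq \sigma_{s-2}\}$, and then to apply the strong Markov property of SLE$_2$ at $\tau$ together with the one-point estimate from Lemma~\ref{lem:SLE-one-point}. Conditionally on $\mathcal{F}_\tau$, on the event $\{\tau \leq \sigma_{s-2}\}$, the continuation of $\gamma$ is an SLE$_2$ from $\gamma(\tau)$ to $b$ in $D \setminus \gamma_\tau$; since $\dist(0, \gamma_\tau) \geq e^{-(s-2)}$, for $s$ large we have $\dist(0, \partial(D \setminus \gamma_\tau)) \geq 2e^{-s}$, so Lemma~\ref{lem:SLE-one-point} applies and gives
\[
\Prob\bigl(\sigma_s < \infty \,\big|\, \mathcal{F}_\tau\bigr) \leq c\, e^{-3s/4}\, G_{D \setminus \gamma_\tau}\bigl(0; \gamma(\tau), b\bigr) = c\, e^{-3s/4}\, r_\tau(0)^{-3/4}\, S_\tau^3 \leq c\, e^{-3s/4}\, \delta^3\, r_\tau(0)^{-3/4},
\]
where $r_\tau(0)$ denotes the conformal radius of $D \setminus \gamma_\tau$ at $0$, and I used $S_\tau \leq \delta$ by continuity. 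Taking expectations reduces the problem to bounding $\E\bigl[r_\tau(0)^{-3/4}\, 1\{\tau \leq \sigma_{s-2}\}\bigr]$ by $c\,(s S_0^3 + 1)$.

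To establish this last bound, I would dyadically decompose over the scale of $\gamma_\tau$ near $0$. For $k \geq 0$, set $A_k = \{\sigma_{k-1} \leq \tau < \sigma_k\}$ with the convention $\sigma_{-1} = 0$. On $A_k$, for $k$ larger than some fixed $k_0 = k_0(D)$, the curve has entered $B(0, e^{-(k-1)})$ but not $B(0, e^{-k})$ by time $\tau$, so $\dist(0, \gamma_\tau) \in (e^{-k}, e^{-(k-1)}]$ and Koebe's $1/4$ theorem gives $r_\tau(0) \asymp e^{-k}$, hence $r_\tau(0)^{-3/4} \leq c\, e^{3k/4}$. Since $A_k \subset \{\sigma_{k-1} < \infty\}$, a second application of Lemma~\ref{lem:SLE-one-point} (to the original SLE in $D$, at scale $e^{-(k-1)}$) gives $\Prob(\sigma_{k-1} < \infty) \leq c\, e^{-3(k-1)/4}\, S_0^3$. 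Summing,
\[
\sum_{k = k_0}^{s-2} \E\bigl[r_\tau(0)^{-3/4}\, 1\{A_k\}\bigr] \leq \sum_{k = k_0}^{s-2} c\, e^{3k/4}\, e^{-3(k-1)/4}\, S_0^3 \leq c\, s\, S_0^3,
\]
while the contribution from $k < k_0$ is bounded uniformly in $s$ by $O(1)$. Combining with the previous display yields the claimed inequality.

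The main delicate point is the precise meaning of the error terms in Lemma~\ref{lem:SLE-one-point} after conditioning on $\mathcal{F}_\tau$: the factor $1 + O(\epsilon^\alpha)$ must be absorbed into the constant, which is legitimate once $e^{-s}$ is small relative to $\dist(0, \gamma_\tau) \geq e^{-(s-2)}$, since we only need an upper bound. A second minor issue is that Koebe's comparison $r_\tau(0) \asymp \dist(0, \gamma_\tau)$ is only valid when $e^{-k}$ is small relative to $\dist(0, \partial D)$; this restricts the dyadic sum to $k \geq k_0$, but the remaining finitely many scales contribute a bounded term absorbed into the $+1$. The essential cancellation driving the estimate is that at scale $e^{-k}$ the factor $e^{3k/4}$ from $r_\tau(0)^{-3/4}$ precisely balances the decay $e^{-3(k-1)/4}\, S_0^3$ from the one-point estimate, producing $O(S_0^3)$ per scale and $O(s S_0^3)$ after summing over the $O(s)$ relevant scales.
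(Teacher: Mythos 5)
Your argument is correct and is essentially the paper's own proof: you stop at the first time $\tau$ at which the sine drops to $\delta$, apply the domain Markov property and the one-point estimate (Lemma~\ref{lem:SLE-one-point}) at that time, decompose over the dyadic scale $\{\sigma_{k-1}\leq\tau<\sigma_k\}$ at which $\tau$ occurs so that the conformal radius there is $\gtrsim e^{-k}$, bound $\Prob\{\sigma_{k-1}<\infty\}\leq c\,S_0^3\,e^{-3(k-1)/4}$ by a second application of the one-point estimate, and sum the $O(s)$ scales, exactly as the paper does with its events $E_k$, $V_k$; the only difference is that you condition on $\mathcal{F}_\tau$ first and then decompose, while the paper writes $\Prob(E_k\cap V_{k-1})\,\Prob(V_s\mid E_k\cap V_{k-1})$, which is purely cosmetic. (As in the paper's proof, the constants are uniform only under the implicit normalization that $D$ contains a disk of fixed radius about the origin, so that $r_D(0)\gtrsim 1$; your $k_0(D)$ and the paper's bound $\Prob(V_{k-1})\leq c\,S_0^3e^{-3k/4}$ both rely on this.)
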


\begin{proof}  Let $\rho = \inf\{t: S_t \leq \delta\}$,
and let 
\[   E_k =   \{\sigma_{k-1} \leq \rho < \sigma_k \}, \;\;\;\;\;
  V_k = \{\sigma_k < \infty\}. \]
Set $k^* = \lceil s \rceil$.   We
then have
\begin{align*}
 \Prob\{\Psi \leq \delta; \sigma_{s} < \infty\}
 & \le    \sum_{k=1}
 ^{k^*-2} 
     \Prob(E_k \cap V_s)\\
     &  =  \sum_{k=1}
 ^{k^*-2} 
     \Prob(E_k \cap V_{k-1})  \,  \Prob(V_s \mid
     E_k \cap V_{k-1}).
     \end{align*}
 The strong Markov property applied at time $\rho$ implies that
 for $k \leq k^*$, 
\begin{equation}  \label{mar7.1}
  \Prob(V_s \mid
     E_k \cap V_{k-1}) \leq c \, \delta^3 \, e^{3(k-s)/4}.
     \end{equation}
 If $k=1$, we will use the trivial bound $\Prob(E_1 \cap V_0) \leq 1$.
 However, for $k  > 1$, we use
 \[  \Prob(E_k \cap V_{k-1}) \leq \Prob(V_{k-1})
   \leq c \, S_0^3 \, e^{-3k/4}.\]
   The lemma is obtained by summing over $k$.

\end{proof}

  \begin{lemma}  \label{feb23.lerw} There exists $c,q$ such that the following is true.
Suppose $(A,a,b) \in \whoknows$ and let $\delta >0$.  In the
measure $\Prob_{A,a,b}$ let $\Psi$ be the minimum of $S_k$ over 
times $k$ before the first visit to the disk of radius $\delta^{-q}$
about the origin.  Then,
\[      \Prob_{A,a,b} \{\Psi \leq \delta; 0 \in \eta \}
   \leq    c \, r_A^{-3/4}  \, \delta^3  \, \left[s\,S_0^{3}
      +  1 \right].\]
\end{lemma}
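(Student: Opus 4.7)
The approach will mirror that of Lemma~\ref{feb23.sle}, with the Markov property of SLE replaced by the domain Markov property of LERW and the SLE one-point estimate replaced by \eqref{BLV2}. Write $S_k = \Sine_{A_k,a_k,b}(0)$ and $\rho = \min\{k: S_k\le \delta\}$. Set up dyadic scales: let $\sigma_k$ be the first time the LERW enters $\ball(0, 2^{-k}r_A)$ (with $\sigma_k=\infty$ if never), $V_k = \{\sigma_k<\infty\}$, and $E_k = \{\sigma_{k-1}\le \rho < \sigma_k\}$, and fix $k^* \asymp \log(r_A\,\delta^q)$ so that $2^{-k^*}r_A \asymp \delta^{-q}$. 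On the event $\{\Psi \le\delta,\,0\in \eta\}$ the time $\rho$ is finite and must fall in some shell $E_k$ with $1 \le k \le k^*$, so that
\[
\Prob_{A,a,b}\{\Psi\le\delta;\, 0\in\eta\} \le \sum_{k=1}^{k^*} \Prob(E_k)\,\Prob(0\in\eta\mid E_k).
\]

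To estimate $\Prob(0\in\eta\mid E_k)$, apply the domain Markov property of LERW at time $\rho$: given $\eta^\rho$, the remainder is distributed as $\Prob_{A_\rho,a_\rho,b}$, and \eqref{BLV2} applied at the interior point $0$ yields
\[
\Prob_{A_\rho,a_\rho,b}\{0\in\eta\} \le c\, r_{A_\rho}(0)^{-3/4}\, S_{A_\rho,a_\rho,b}(0)^3 \le c\, r_{A_\rho}(0)^{-3/4}\,\delta^3,
\]
since $S_\rho\le\delta$ by definition of $\rho$. On $E_k$ the tip has come within distance $\asymp 2^{-k+1}r_A$ of $0$, so by Koebe's theorem $r_{A_\rho}(0) \le c\, 2^{-k}r_A$, giving $\Prob(0\in\eta\mid E_k)\le c\, 2^{3k/4}\, r_A^{-3/4}\,\delta^3$. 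This is the LERW analog of \eqref{mar7.1}.

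For the prefactor, $\Prob(E_k)\le \Prob(V_{k-1})$, and a LERW one-arm estimate of the form $\Prob(V_{k-1}) \le c\, S_0^3\, 2^{-3(k-1)/4}$ should take care of the ``reach scale $2^{-k+1}r_A$'' event for $k\ge 2$; for $k=1$ we use the trivial $\Prob(E_1)\le 1$. Combining and summing over $k$ then gives
\[
\Prob_{A,a,b}\{\Psi\le\delta;\, 0\in\eta\} \le c\,\delta^3\, r_A^{-3/4}\Bigl[\,1 + S_0^3\sum_{k=2}^{k^*} 1\Bigr] \le c\,\delta^3\, r_A^{-3/4}\,[k^*\, S_0^3 + 1],
\]
which matches the stated bound with $s\asymp \log r_A$.

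\textbf{Main obstacle.} The crucial ingredient is the LERW one-arm bound $\Prob(V_{k-1}) \lesssim S_0^3\cdot 2^{-3(k-1)/4}$, the discrete counterpart of the SLE bound $\Prob(V_{k-1})\le c\,S_0^3\,e^{-3(k-1)/4}$ used in Lemma~\ref{feb23.sle}. This must be extracted from \eqref{BLV2} together with a second-moment/separation argument of the type underlying Proposition~\ref{maxprop} and the two-point estimates of Section~\ref{LERWsec}; getting the correct exponent $3/4$ (rather than a cruder first-moment bound) is exactly what a separation lemma is needed for. A secondary technical point is that \eqref{BLV2} is only sharp when $S \ge r^{-u}$; when this condition fails one falls back on a pure upper bound $\Prob\{\zeta\in\eta\}\le c\,G_{D_A}(\zeta;a,b)$, which is available by the same methods as \eqref{BLV2} and is enough to push Step~2 through in all configurations.
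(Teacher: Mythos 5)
Your outline is essentially the paper's proof: the paper disposes of this lemma by repeating the argument of Lemma~\ref{feb23.sle} --- dyadic shells $E_k$, $V_k$, the domain Markov property at the time $\rho$ when the sine first drops below $\delta$, and the one-point estimate \eqref{jul14.0} supplying the analogue of \eqref{mar7.1} --- and your steps reproduce exactly that scheme, with the number of scales $k^*$ accounting for the factor $s$ in the bracket. One remark: the one-arm bound you single out as the ``main obstacle'' does not have to be re-derived from scratch; it is Corollary~\ref{corollary.bastille1} combined with \eqref{jul14.0}, namely $\Prob_{A,a,b}[I_r]\asymp r^{3/4}\,\Prob_{A,a,b}\{0\in\eta\}\le c\,r^{3/4}\,r_A^{-3/4}\bigl[S_0^3+r_A^{-u}\bigr]$, and the separation lemma (Theorem~\ref{seplemma.1}) is indeed the tool behind it, as you guessed.

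The one place where your proposal goes astray is the additive error in \eqref{jul14.0}. When you apply the estimate at time $\rho$ you silently drop the $O(r^{-u})$ term, and your proposed remedy --- a ``pure upper bound $\Prob\{\zeta\in\eta\}\le c\,G_{D_A}(\zeta;a,b)$ available by the same methods'' when $S< r^{-u}$ --- is not proved or claimed anywhere; when $S^3\ll r^{-u}$ that is precisely the regime in which no such bound is known. The intended mechanism, and the whole reason the disk of radius $\delta^{-q}$ appears in the statement, is that $\rho$ occurs before the path enters that disk, so the relevant conformal radius at the origin is at least of order $\delta^{-q}$ and the error term in \eqref{jul14.0} is $O(\delta^{qu})$; choosing $q\ge 3/u$ makes it $O(\delta^3)$, so it is absorbed into the main term $S_\rho^3\le\delta^3$ (the same remark handles the $r_A^{-u}$ errors in the shell probabilities $\Prob(V_{k-1})$, which after summing contribute to the ``$+1$'' in the bracket). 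With that correction your argument is the paper's.
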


\begin{proof}  This is proved similarly as the
previous lemma  using \eqref{jul14.0} to justify
the analogue of \eqref{mar7.1}.  The
condition on $\delta^{-q}$ is included so that the error terms
in \eqref{jul14.0} are smaller than the dominant term.
\end{proof}

\begin{prop}  \label{feb23.prop1} \label{feb23.prop1alt}
There exists $c < \infty$ such that
the following holds. Suppose $\Im F(\zeta) \ge \lambda$ and    $1/3<p<2/3$. Let
$  S_n^\SLE(\zeta) =
 \sin\left[ \arg  F_{ n}^\SLE(\zeta)\right]$,   let 
  $\rho_\zeta$ be the
first $n$ such that \[ S_n^\SLE(\zeta)
  \leq 2 \, \left(\log N \right)^{-p},\] and let $\tau_\zeta$
 be the first $n$ such that $\Im[ F_{n-1}^\SLE(\zeta)]
  < \lambda$. Let $Q_I = Q_I^{\SLE}
   + Q_I^{\LERW}$ where 
  \[   Q_I^{\SLE} = N^{-5/4} \sum_{\zeta}
         1\{\rho_\zeta  \leq \tau_\zeta \}\,
       \Theta_\zeta ,\]\[
         Q_I^{\LERW} = N^{-5/4} \sum_{\zeta}
         1\{\rho_\zeta  \leq \tau_\zeta \}\,
       1\{\zeta \in \eta\}  .\] 
Then,
$$ \E\left[Q_I
  \right]  \leq  c \, \left(\log N \right)^{-(3p-1)}.$$
In particular, for every stopping time $\sigma$
and every $r > 0$,
\[  \Prob
\left\{\E\left[Q_I \mid \G_\sigma\right]  \geq r \right\}
  \leq c \, r^{-1} \, \left(\log N \right)^{-(3p-1)}.\]
\end{prop}

\begin{proof}  
Let $\sigma_\zeta$ be the hitting time of $\Square_\zeta$. The Beurling estimate and Lemma \ref{feb23.sle} with $\delta =
  2 \, \left(\log N \right)^{-p}$ shows that \[\Prob\left\{\rho_\zeta \le \tau_\zeta \right\} \le \Prob \left\{ \rho_\zeta \le \sigma_\zeta \right\} \le c r_A(\zeta)^{-3/4}\left(\log N \right)^{-3p}.\] 
 Assume that $S(\zeta) \ge \left(\log N \right)^{-1/3}$. Then the Green's function satisfies \[G(\zeta) \ge c r_A(\zeta)^{-3/4} \left(\log N \right)^{-1}\]  
and consequently, since $\E\left[ \Theta_\zeta\right] = \int_{\Square_\zeta}G(z) \, dA(z) \asymp G(\zeta)$, 
 \begin{align*}   
 \E\left[\Theta_\zeta; \rho_\zeta  \leq \tau_\zeta \right] & = \E\left[\Theta_{\zeta} \mid \rho_\zeta  
  \leq \tau_\zeta \right]\Prob\left\{\rho_\zeta \le \tau_\zeta \right\}\\
   &\leq c \, \left(\log N \right)^{-(3p-1)}
     \, \E[\Theta_\zeta].\end{align*}
Therefore,
\begin{align*}
  \E\left[Q_I^{\SLE} \right]
& \leq  c \, \left(\log N \right)^{-(3p-1)} \, N^{-5/4} \,\E[\Theta]
   + \sum_{S(\zeta) \leq \left(\log N \right)^{-1/3}}\,
   N^{-5/4}\E[\Theta_\zeta]\\
   &   \leq   c \, \left(\log N \right)^{-(3p-1)}.
   \end{align*}
     
 The estimate for $Q_I^{\LERW}$ is done similarly
 using Lemma \ref{feb23.lerw}.

\end{proof}

For the Type II closures,  we start with the following
lemma, see \cite{lawler_continuity, lawler_field}.
\begin{lemma}   \label{lemmaII}
 There exists $c < \infty$ such that if $D$
is a simply connected domain containing the unit disk  with
distinct boundary points $a,b$ with $|a| = 1$, $0 < r \leq 1/2$, and
\[   \tau_r = \min\{t: |\gamma(t)|  = r \} \]
\[   \tau_R = \min\{t: |\gamma(t)| = R \}, \]
then 
\[  \Prob\{ \tau_R < \tau_r
  \mid \tau_r < \infty\} \leq c \, R^{-3/2}. \]
 \end{lemma}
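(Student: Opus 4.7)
The plan is to reduce the desired bound to a Green's function estimate via the strong Markov property of SLE$_2$, and then to bound that Green's function by a Beurling projection argument. First, on the event $\{\tau_R<\tau_r\}$, applying the strong Markov property at $\tau_R$ the process continues as chordal SLE$_2$ from $\gamma(\tau_R)$ to $b$ in the random slit domain $D'=D\setminus\gamma[0,\tau_R]$. By the SLE one-point estimate (Lemma~\ref{lem:SLE-one-point}, or the variant of it valid when the target ball is comparable to the inradius),
\[
\Prob\{\tau_r<\infty\mid\F_{\tau_R}\}\le C\, r^{3/4}\,G_{D'}(0;\gamma(\tau_R),b).
\]
Unconditionally, since $D$ contains the unit disk and $|a|=1$, the Green's function $G_D(0;a,b)$ is bounded below by a universal positive constant, so $\Prob\{\tau_r<\infty\}\ge c\,r^{3/4}$. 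Thus it suffices to prove the deterministic estimate $G_{D'}(0;\gamma(\tau_R),b)\le C\,R^{-3/2}$ on $\{\tau_R<\tau_r\}$.

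Using the explicit formula $G_{D'}(0;\gamma(\tau_R),b)=\tilde c\,r_{D'}(0)^{-3/4}\sin^3(\arg\phi(0))$, where $\phi\colon D'\to\Half$ maps $\gamma(\tau_R)\to0$ and $b\to\infty$, and noting that $a\in\partial D'$ forces $r_{D'}(0)\le 4$ by Koebe, the whole problem reduces to the geometric bound
\[
\sin(\arg\phi(0))\le C\,R^{-1/2}\qquad\text{on }\{\tau_R<\tau_r\}.
\]
I would establish this by Beurling's projection theorem applied to the connected set $\gamma[0,\tau_R]$. On $\{\tau_R<\tau_r\}$ this set lies in $\{r\le|z|\le R\}$ with one endpoint on $\partial B(0,1)$ and the other on $\partial B(0,R)$; Beurling then gives that Brownian motion from $0$ inside $B(0,R)\cap D'$ exits through $\partial B(0,R)$ with probability $\le C\,R^{-1/2}$, hence the $D'$-harmonic measure from $0$ of any fixed-size boundary neighborhood of $\gamma(\tau_R)$ is $\le C\,R^{-1/2}$. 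Transporting by $\phi$, this equals the $\Half$-harmonic measure from $W:=\phi(0)$ of a boundary interval around $0$; the Koebe distortion machinery of Section~\ref{sect:deterministic} applied at the tip shows that this interval can be taken of length comparable to $\Im W$, and $\omega_{\Half}(W,[-\Im W,\Im W])\asymp\sin(\arg W)$. The claim $\sin(\arg W)\le C\,R^{-1/2}$ follows.

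Combining yields $\Prob\{\tau_R<\tau_r,\;\tau_r<\infty\}\le C\,r^{3/4}R^{-3/2}$, and dividing by $\Prob\{\tau_r<\infty\}\ge c\,r^{3/4}$ gives the lemma. The main obstacle is the sine estimate in the preceding paragraph: while the Beurling harmonic-measure bound itself is classical, its conversion into a bound on $\arg\phi(0)$ requires careful control of Koebe distortion of $\phi$ near the moving tip $\gamma(\tau_R)$. A further subtlety is the regime where $\gamma[0,\tau_R]$ has already approached within distance $2r$ of $0$, where the one-point estimate is only usable in a weaker form; in that regime a refined Beurling bound $\sin\le C(r/R)^{1/2}$ combined with $r_{D'}(0)\asymp r$ still yields $G_{D'}\le C\,R^{-3/2}$ after cancellation of the $r$ factors, so both regimes contribute the same order.
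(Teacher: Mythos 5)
The paper does not actually prove this lemma; it is quoted from \cite{lawler_continuity, lawler_field}, where it is obtained from SLE martingale/derivative estimates. Your argument, however, has two genuine gaps. First, the lower bound $\Prob\{\tau_r<\infty\}\ge c\,r^{3/4}$ with a universal $c$ is false. The hypotheses only force $r_D(0)\asymp 1$ (Koebe, since $|a|=1$ and $\DD\subset D$); they say nothing about $\sin\left[\arg\phi(0)\right]$, which can be arbitrarily small, e.g.\ when $b$ is close to $a$ (take $D=\DD$, $a=1$, $b=e^{i\delta}$, where $\Prob\{\tau_r<\infty\}\asymp r^{3/4}\delta^3$). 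The statement is a \emph{conditional} probability precisely because the factor $G_D(0;a,b)$ must cancel between numerator and denominator, so any proof has to bound $\Prob\{\tau_R<\tau_r,\,\tau_r<\infty\}$ by $C\,R^{-3/2}\,r^{3/4}\,G_D(0;a,b)$, i.e.\ \emph{relative to the initial Green's function}; your deterministic bound, even where it holds, is not of that form. (A smaller issue of the same kind: $r_{D'}(0)\le 4$ bounds $r_{D'}(0)^{-3/4}$ from \emph{below}, which is the wrong direction for an upper bound on $G_{D'}$; on $\{\tau_R<\tau_r\}$ one only has $r_{D'}(0)\ge r$.)

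Second, the key deterministic claim $\sin\left[\arg\phi_{D'}(0)\right]\le C\,R^{-1/2}$ on $\{\tau_R<\tau_r\}$ is false. Note that $\sin\left[\arg\phi_{D'}(0)\right]\asymp\min(\omega_A,\omega_B)$, where $\omega_A,\omega_B$ are the harmonic measures from $0$ of the two prime-end arcs of $\partial D'$ joining the tip to $b$; Beurling only makes the boundary \emph{near the tip} have small harmonic measure, not a whole arc, which runs all the way back to $b$ and may pass within distance $O(1)$ of $0$. Concretely, take $D$ a long horizontal strip of half-width $1.1$ with a spike removed so that $a=-i$, put $b=1.1i$ on the top edge above $0$, and let $\gamma[0,\tau_R]$ run eastward below $0$ at height $-0.6$ out to distance $R$: then one arc contains the upper side of $\gamma$ and the top edge west of $b$, the other contains the top edge east of $b$, both visible from $0$, so $\min(\omega_A,\omega_B)\asymp 1$ and $G_{D'}(0;\gamma(\tau_R),b)\asymp1$. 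In such configurations the conditional probability of hitting $B(0,r)$ after $\tau_R$ is still $\asymp r^{3/4}$, so the smallness asserted by the lemma must come from the unlikeliness of the event $\{\tau_R<\tau_r\}$ itself --- a genuinely probabilistic SLE$_2$ estimate (exponent $3/2=(8/\kappa-1)/2$), which is what the cited references supply and which no purely harmonic-measure argument can produce. Your closing ``two regimes'' patch does not repair this: it only treats prior approach within $2r$, not the configurations above, and the auxiliary identification $\omega_{\Half}(W,[-\Im W,\Im W])\asymp\sin\left[\arg W\right]$ is also off (it is $\asymp\sin^2$ when the argument is small), with the Koebe transfer of a fixed-size neighborhood of the tip to such an interval left unjustified exactly in the screened situations that matter.
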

 \begin{prop}\label{feb23.prop2}
There exists $c < \infty$ such that
the following holds.  Let
  $\rho_\zeta$ be the
first $n$ such that $\dist(\zeta, 
D \setminus \gamma_{\tau_n}) \leq \left(\log N \right)^{-5}
\, N$ and let $\psi_\zeta$ be the first
$m > \rho_\zeta$  such that $|\gamma(\tau_m) -
  \zeta| \geq \left(\log N \right)^{-1}\, N$.  Let
  \[   Q_{II}^{\SLE} = N^{-5/4} \sum_{\zeta}
         1\{\psi_\zeta  < \infty\}\,
       \Theta_\zeta   .\] 
Then,
\[ \E\left[Q_{II}^\SLE
  \right]  \leq  c \, \left(\log N \right)^{-6}.\]
In particular, for every stopping time $\sigma$
and every $r > 0$,
\[  \Prob
\left\{\E \left[Q_{II}^{\SLE} \mid \G_\sigma \right]  \geq r \right\}
  \leq c \, r^{-1} \, \left(\log N \right)^{-6}.\]
\end{prop}

\begin{proof}  Fix $\zeta$, let $\tau_\zeta = \tau_{\psi_\zeta}$
 and let $\sigma_\zeta$ be the
first $t$ with $|\gamma(t) - \zeta| \leq 4$.
Then,
\[   \Prob\{\tau_\zeta < \sigma_\zeta < \infty\}
    \leq c \, \left(\log N \right)^{-6} \, \Prob\{\sigma_\zeta < \infty\}
       \leq c \, \left(\log N \right)^{-6} \, \E\left[\Theta_\zeta\right].\]
 Also,
 \[   \E\left[\Theta_\zeta \mid \tau_\zeta < \sigma_\zeta < \infty
    \right] \asymp  1.\]
 Therefore,
 \[   \E\left[\Theta_\zeta \,;\, 
  \tau_\zeta < \sigma_\zeta < \infty\right] \leq c \, 
  \left(\log N \right)^{-6} \, \E\left[\Theta_\zeta\right].\]
 
 There is another term which corresponds to the event 
 $\sigma_\zeta <  \tau_\zeta $.  Given this event, we need the
 expected Minkowski content of $\gamma[\tau_\zeta,\infty)
   \cap \Square_\zeta$.  Using  Lemma \ref{lemmaII}, we can
   see that
 \[ \E\left[\Cont (\gamma[\tau_\zeta,\infty)
   \cap \Square_\zeta) \mid \sigma_\zeta <  \tau_\zeta\right]
     = o\left(\left(\log N \right)^{-6}\right).\]
  Therefore,
  \[   \E\left[\Theta_\zeta \,;\, 
  \psi_\zeta < \infty\right] \leq c \, 
  \left(\log N \right)^{-6} \, \E\left[\Theta_\zeta\right],\]
 \[   \E\left[Q^{\SLE}_{II} \right]
    \leq c \, \left(\log N \right)^{-6} \, N^{-5/4}
   \sum_\zeta \E\left[\Theta_\zeta\right]
    \leq c\,  \left(\log N \right)^{-6}.\]

\end{proof}

\begin{prop}
There exists $c < \infty$ such that
the following holds.  Let
  $\rho_\zeta$ be the
first $n$ such that $\dist(\zeta, 
\partial D_n  ) \leq \left(\log N \right)^{-5}
\, N$ and let $\psi_\zeta$ be the first
$m > n$  such that $|\eta_{\sigma_n} -
  \zeta| \geq \left(\log N \right)^{-1} \, N$.  Let
  \[   Q_{II}^{\LERW} = N^{-5/4} \sum_{\zeta}
         1\{\tau_\zeta  < \infty\}\,
       \Theta_\zeta   .\] 
Then,
$ \E\left[Q_{II}^\LERW
  \right]  \leq  c \, \left(\log N \right)^{-4}.$
In particular, for every stopping time $\sigma$
and every $r > 0$,
\[  \Prob
\left \{\E[Q_I \mid \G_\sigma]  \geq r \right\}
  \leq c \, r^{-1} \, \left(\log N \right)^{-4}.\]
\end{prop}

\begin{proof}  This is proved in the same way
using Proposition \ref{dec23.1}.
\end{proof}
\begin{proof}[Proof of Proposition \ref{feb21.1}]
The proof follows from Proposition~\ref{feb23.prop1} by choosing $p=2/5$ together with Proposition~\ref{feb23.prop2}.
\end{proof}
\subsection{Comparing the time  and the predictable version}
   \label{bvsec}
Recall the definition of $\tilde{B}_n^\circ = N^{-5/4}(\Theta^\circ_n - T_n^\circ)$ and the predictable version \[
B_n^\circ = \sum_{j=1}^n \E_{j-1}\left[\tilde{B}_j^\circ - \tilde{B}_{j-1}^\circ\right].
\] In this section we will show that $B_n^\circ$ is close to $N^{-5/4}\left(\Theta_n-T_n \right)$, that is, prove Lemma~\ref{bvlemma}. We will do this in two steps: we first first compare $B_n^\circ$ with $\tilde{B}_n^\circ$ and then $\tilde{B}_n^\circ$ with $N^{-5/4}\left(\Theta_n-T_n \right)$. We will argue separately for the LERW and SLE parts, but the idea of the argument is the same in both cases.

One of the basic theorems of martingale theory is that a continuous
 martingale with paths of bounded variation is zero.  The
 next lemma  can be considered a
  discrete time analogue  where  the notions
 of ``continuous'' and ``bounded variation'' are approximated.

\begin{lemma} \label{bvlemma2}
Suppose $\{X_k\}$ is an increasing
process with $X_0 = 0$
 adapted to $\{\G_k\}$. Let $\Delta_k = X_k - X_{k-1}$,
 $L_k = \E[\Delta_k
 \mid \G_{k-1}],$ and let $Z_k$ be the martingale
 \[    Z_n = \sum_{j=1}^n [\Delta_j - L_j] = X_n - \tilde X_n , \;\;\;\;
  \mbox{where } \;\; \tilde X_n  = \sum_{j=1}^n L_j .\]
 Let
  \[  J_n = \max\{\Delta_j : j=1,\ldots,n\},\]
  \[  \bar Z_n = \max \{|Z_j|: j=1,\ldots,n\}.\]
Suppose that $\E \left[J_n^2 \right] \leq \epsilon^2 \leq 1$
 and $\E \left[X_n \right] = K < \infty$. 
  Then.
  \[    \Prob\{\bar Z_n \geq \epsilon^{1/16}\}
    \leq  7\, \epsilon^{1/4} + 2 \, \epsilon^{1/2}
     \, K.
    \]
  \end{lemma}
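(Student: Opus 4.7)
The process $Z_n$ is a square-integrable martingale with $Z_0 = 0$, and the goal is to show that $\bar Z_n$ is typically small given the hypothesis that the jumps $J_n$ are controlled in $L^2$ and that the total mass $X_n$ is controlled in $L^1$. The core idea is Doob's $L^2$ maximal inequality applied to $Z$, but since $\E[X_n^2]$ is not assumed to be finite, one cannot estimate $\E[Z_n^2]$ directly from $\E[J_n^2]$; a truncation is needed first.

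The first step is to introduce the stopping time $\tau = \min\{k \le n : X_k > \ee^{-1/2}\} \wedge n$. Since $X$ is $\{\G_k\}$-adapted, $\tau$ is a $\{\G_k\}$-stopping time, and by Markov's inequality
\[ \Prob\{\tau < n\} \le \ee^{1/2}\, \E[X_n] = \ee^{1/2}\, K. \]
On $\{\tau = n\}$ we have $Z_{k \wedge \tau} = Z_k$ for all $k \le n$, so it suffices to bound $\Prob\{\max_{k \le n} |Z_{k \wedge \tau}| \ge \ee^{1/16}\}$ and to add the $\ee^{1/2} K$ term at the end.

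The main estimate is then on $\E[Z_{n \wedge \tau}^2]$. Using orthogonality of the martingale differences, the observation that $\{\tau \ge k\} \in \G_{k-1}$, and the bound $\E[(\Delta_k-L_k)^2 \mid \G_{k-1}] \le \E[\Delta_k^2 \mid \G_{k-1}]$ (conditional variance is at most conditional second moment),
\[ \E[Z_{n \wedge \tau}^2] \;=\; \sum_{k=1}^n \E\bigl[(\Delta_k - L_k)^2 \, 1\{\tau \ge k\}\bigr] \;\le\; \E\Bigl[\sum_{k=1}^{n \wedge \tau} \Delta_k^2\Bigr]. \]
Since $X$ is increasing, $\sum_{k=1}^{n \wedge \tau} \Delta_k^2 \le J_n \, X_{n \wedge \tau}$, and by construction of $\tau$ one has $X_{n \wedge \tau} \le \ee^{-1/2} + J_n$. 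Taking expectations and using $\E[J_n] \le (\E[J_n^2])^{1/2} \le \ee$ together with $\E[J_n^2] \le \ee^2$ gives $\E[Z_{n \wedge \tau}^2] \le \ee^{1/2} + \ee^2 \le 2\ee^{1/2}$.

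Finally, the $L^2$ maximum inequality applied to the stopped martingale $\{Z_{k \wedge \tau}\}$ at level $\ee^{1/16}$ yields
\[ \Prob\{\max_{k \le n} |Z_{k \wedge \tau}| \ge \ee^{1/16}\} \;\le\; \frac{\E[Z_{n \wedge \tau}^2]}{\ee^{1/8}} \;\le\; 2\ee^{3/8}, \]
which combined with $\Prob\{\tau < n\} \le \ee^{1/2} K$ implies the stated bound. There is no single hard step; the only delicate point is choosing the truncation level ($\ee^{-1/2}$) to balance the Markov term against the $L^2$-maximum term, and verifying that the conditional variance estimate survives multiplication by the predictable indicator $1\{\tau \ge k\}$, both of which are straightforward.
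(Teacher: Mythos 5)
Your proof is correct, and it reaches the stated bound (in fact the slightly stronger $2\,\epsilon^{3/8} + \epsilon^{1/2}K$, which implies $7\,\epsilon^{1/4}+2\,\epsilon^{1/2}K$ since $\epsilon \le 1$) by a genuinely different and somewhat cleaner route than the paper. The paper also applies the $L^2$ maximal inequality to a stopped version of $Z$, but it works directly with the increments $\Delta_j - L_j$: it needs one stopping time $\tau$ (the first $k$ with $L_{k+1}\ge\epsilon^{3/4}$) to obtain the pointwise bound $|\Delta_j-L_j|\le J_n+\epsilon^{3/4}$, and a second stopping time $\sigma$ truncating $X_j+\tilde X_j$ at $\epsilon^{-1/2}$, so that $\E[Z_\sigma^2]\le \E[(J_n+\epsilon^{3/4})\sum_{j\le\sigma}|\Delta_j-L_j|]$, with the events $\{\tau<n\}$ and $\{\sigma<\tau\}$ estimated separately. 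You instead use the conditional-variance bound $\E[(\Delta_k-L_k)^2\mid\G_{k-1}]\le\E[\Delta_k^2\mid\G_{k-1}]$, which removes $L_k$ (and hence $\tilde X$) from the picture altogether; since $\{\tau\ge k\}\in\G_{k-1}$, the orthogonality identity survives the truncation, and the single truncation of $X$ at $\epsilon^{-1/2}$ suffices via $\sum_{k\le n\wedge\tau}\Delta_k^2\le J_n X_{n\wedge\tau}\le J_n(\epsilon^{-1/2}+J_n)$. This buys one fewer stopping time, no control of the predictable part, and a marginally better exponent; the paper's version keeps both increasing processes $X$ and $\tilde X$ explicitly in view, but that extra bookkeeping is not needed for the statement. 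In a final write-up you should note explicitly that each increment is square-integrable (immediate from $\Delta_k\le J_n\in L^2$ and conditional Jensen for $L_k$), which is what licenses the orthogonality computation for the stopped martingale; with that remark added, your argument is complete.
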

  
In the hypotheses the bound on $\E \left[J_n^2 \right]$ can be considered
an ``almost continuous paths'' assumption and the bound
on $\E[X_n]$ will give the bound on the total variation
of $Z_n$.
       
\begin{proof}  
Note that $\E \left[J_n \right] \leq \sqrt{\E \left[J_n^2 \right]} \leq  \epsilon$
and $\E \left[\tilde X_n \right]
  = \E \left[X_n \right] \leq K$. 
  Let   $\tau$ be the minimum of $n$ and the first $k$
with $L_{k+1} \geq \epsilon^{3/4}$. (Note that $L_k$ is predictable.) If $k < n$, then by definition
$L_k \leq \E \left[J_n \mid \G_{k-1} \right]$.   Hence, 
\[  \E \left[L_{\tau + 1}^2; \tau < n \right] \leq  \E\left[\E( J_n \mid \G_\tau)^2
\right] \leq \E\left[\E(J_n^2 \mid \G_\tau) \right]
   = \E\left[J_n^2\right] \leq \epsilon^2.\]
and hence 
\[  \Prob\left\{\tau < n\right\} = 
 \Prob\left\{L_{\tau +1} \geq \epsilon^{3/4}; \tau < n \right\}
  \leq \epsilon^{-3/2} \, \E \left[ L_{\tau+1}^2; \tau < n
\right] \leq  \epsilon^{1/2} .\]
Also,  if $j \leq \tau$,
\[   |\Delta_j - L_j| \leq
  \Delta_j + L_j \leq   J_n + \epsilon^{ 3/4} .\]

  Let
   $\sigma = \sigma_n$ be the minimum of $\tau$
  and 
 \[     \min\{j: X_j + \tilde X_j \geq  \epsilon^{-1/2} \}.\]
Note
 that 
\begin{align*}
  \sum_{j=1}^{ \sigma}
   |\Delta_j - L_j| & \leq     X_{(n \wedge \sigma) - 1}
    + \tilde X_{(n \wedge \sigma) -1} +   \Delta_{n \wedge \sigma}
       + L_{n \wedge \sigma}   \\
     & \leq    \epsilon^{-1/2} + J_n + \epsilon^{3/4} 
    \\
   &\leq  2\,\epsilon^{-1/2} + J_n.
   \end{align*}
 Therefore, 
 \begin{align*}
  \E \left[Z_{\sigma}^2 \right]    &  =    \sum_{j=1}^
  { n} \E\left[(\Delta_j- L_j)^2; \sigma \leq j\right] \\
    & \leq  \E\left[(J_n + \epsilon^{3/4}) \,  \sum_{j=1}^{ \sigma}
      |\Delta_j-L_j |\right]\\
      &  \leq  \E\left[(J_n + \epsilon^{3/4})\, (2\epsilon^{-1/2}+ J_n)\right]
         \\
      & \leq  2 \epsilon^{1/4} + \left[\epsilon^{3/4} + 2 \epsilon^{-1/2} \right]
       \, \E(J_n) + \E(J_n^2) \\
       & \leq   2 \epsilon^{1/4} + \epsilon^{7/4}
           + 2 \epsilon^{1/2} +\epsilon^2 \leq    6 \, \epsilon^{1/4}
   \end{align*}
 By the $L^2$-maximal inequality, we see that
 \[   \Prob\{\bar Z_\sigma \geq \epsilon^{1/16}\}
   \leq \epsilon^{-1/8} \, \left(
   6\,\epsilon^{1/4}
   \right)  \leq  6 \, \epsilon^{1/8}.\]
  Also,
  \begin{align*}
    \Prob\{\sigma <  \tau \}
  & \leq     \Prob\left\{X_n + \tilde X_n \geq 
  \epsilon^{-1/2}
  \right\} \\
  & \leq  \epsilon^{1/2} \, \left(\E[X_n] + \E
  [\tilde X_n] \right) \leq 2 \, \epsilon^{1/2} \,
   K.
   \end{align*}
 Since
 \[ \{\bar Z_n \geq \epsilon^{1/16}\}
 \subset \{\bar Z_\sigma \geq \epsilon^{1/16}\}
  \cup \{\sigma < \tau\} 
   \cup \{\tau < n\},\]
the proof is finished. 
\end{proof}

\begin{prop}
There exists $c < \infty$ such that if 
\[    Z_n^{\LERW} = N^{-5/4} \left(T_n^\circ - \sum_{j=1}^n \E\left[T_j^\circ -
  T_{j-1}^\circ \mid \G_n \right]\right), \]
\[     Z_n^{\SLE} = N^{-5/4} \left(\Theta_n^\circ - \sum_{j=1}^n \E\left[
\Theta_j^\circ -
  \Theta_{j-1}^\circ \mid \G_n \right]\right), \]
then, 
\[     \Prob\left\{\max_{1 \leq j \leq n_*} |Z_n^\LERW| \geq \left(\log N \right)^{-5/128}
\right\}    \leq c \, \left(\log N \right)^{-5/32}. \]
\[   \Prob\left\{\max_{1 \leq j \leq n_*} |Z_n^\SLE| \geq \left(\log N \right)^{-5/128}
\right\}    \leq c \, \left(\log N \right)^{-5/32}. \]

\end{prop}

\begin{proof}  We will apply the previous lemma with $Z = Z^{\LERW}, Z = Z^{\SLE}$.
We claim that
\[   \max_{1 \leq j \leq n_*}    N^{-5/4} \,\left(T_j^\circ - T_{j-1}^\circ
 \right)  \leq \maxlerw, \]
 \[  \max_{1 \leq j \leq n_*}    N^{-5/4} \,\left(\Theta_j^\circ - \Theta_{j-1}^\circ
 \right)  \leq \maxsle. \]
 To see this, note that no vertex $\zeta$  with $\Im[F_n^\SLE(\zeta)]
  \geq \lambda$  can be reached by an increment of capacity
  $O(h)$.  Hence the only points  that could be visited have
  $\Im[F_n^\SLE(\zeta)]
   \leq \lambda.$  By Proposition \ref{prop.feb13},  if
   $\Im[F_n^\SLE(\zeta)]
   \leq \lambda$ and $S_\zeta$ is open,
   then it is within  distance $N/\left(\log N \right)$  of  $a_n$.  
   Therefore   $T_{n+1}^\circ - T_{n}^\circ$ is bounded
  above by  the number of sites visited by the 
 walk within distance $N/\left(\log N \right)$  of  $a_n$ and this
 is bounded by $N^{5/4} \, \maxlerw$.  The argument in the SLE case is the same.

Moreover, using Proposition~\ref{maxprop} we know that 
\[ \E[\maxlerwtwo + \maxsletwo] \leq c \, \left(\log N \right)^{-5/4}.\]  Also
$\E[T_n^\circ + \Theta_n^\circ] \leq \E[T_n + \Theta_n] \leq c \, N^{5/4}$. 
Hence we can use the lemma with $\epsilon = O(\left(\log N \right)^{-5/8}),
K = O(1)$.

\end{proof}

\begin{prop}  \label{bvprop}

There exists $c < \infty$ such that if 
\[    \hat Z_n^{\LERW} = N^{-5/4} \left(T_n  - \sum_{j=1}^n \E\left[T_j^\circ -
  T_{j-1}^\circ \mid \G_n \right]\right), \]
\[     \hat Z_n^{\SLE} = N^{-5/4} \left(\Theta_n   - \sum_{j=1}^n \E\left[
\Theta_j^\circ -
  \Theta_{j-1}^\circ \mid \G_n \right]\right), \]
then, 
\[     \Prob\left\{\max_{1 \leq j \leq n_*} |\hat Z_n^\LERW| \geq c\, \left(\log N \right)^{-5/128}
\right\}    \leq c \, \left(\log N \right)^{-5/32}. \]
\[   \Prob\left\{\max_{1 \leq j \leq n_*} |\hat Z_n^\SLE| \geq c\, \left(\log N \right)^{-5/128}
\right\}    \leq c \, \left(\log N \right)^{-5/32}. \]

 \end{prop}
 
\begin{proof}
In Proposition \ref{feb21.1} we show in fact that
\[   \E[T  - T ^\circ] + \E[\Theta - \Theta^\circ]
  \leq c\, \left(\log N \right)^{-1/5} \, N^{5/4}. \]
It follows from the Markov inequality that
\[  \Prob\left\{(T - T^ \circ) +
   ( \Theta- \Theta^\circ) \geq  \left(\log N \right)^{-5/128} \,  N^{5/4}
   \right\} \leq  o((\log N)^{-5/32}).
   \]
 Since
 \[   0 \leq  T_n - T_n^\circ \leq T - T^\circ, \;\;\;\;
   0 \leq  \Theta_n - \Theta_n^\circ \leq \Theta - \Theta^\circ,\]
 we get the result. 
 \end{proof}

\section{LERW estimates}\label{LERWsec}

\subsection{Introduction and notation}

In this section we establish some ``two-point''
estimates
  about LERW that have
  independent interest.   Because we are working
  only
  with the LERW and not the scaling limit, we will  
  consider subsets of $\Z^2$ rather than of $N^{-1} \, \Z^2$. 
  We make the convention that 
all constants, including
implicit constants in $O(\cdot)$ or $\asymp$ notation,
are assumed to be universal, that is, do not depend on
$A,r,a,b,z,w,\ldots$.   We will use the notation
from Section \ref{sect:set-up} and some more that
we give now. This section does not use any results about SLE.

\begin{itemize}

\item Let $\whoknows$
denote the set of triples $(A, a, b)$ where
  $A$  is  a finite, simply connected subset of $\Z^2$  
containing the origin,  and $  a,   b$ are distinct
elements
of $\partial_e A$, the edge boundary of $A$.

\item We identify the edge $a$ with its midpoint
and write $e_a,e_b$
for the directed inward pointing edges which start in $\partial A$ and end in $A$.
We will write  $a^*,b^* \in A$ (rather than $a_+,b_+$)
for the terminal points of $e_a,e_b$.

\item If $(A,a,b) \in \whoknows$,  we let
$f = f_A$ be the
  unique conformal transformation $f: D_A \rightarrow
 \Disk$ with $f(0) = 0, \arg f(a) = 0$.  We set
 \[    r_A = r_A(0) = |f'(0)|^{-1} , \]
 \[         S_{A,a,b} 
      =    \sin\left[ \frac{\arg f(b)}2\right] 
      ,\]
One can check that these definitions
agree with our  
previous definitions of $r_A, S_{A,a,b}$.

\item  If $r \geq 1$, we let $C_r$ denote the discrete open disk of radius
$r$ about the origin,
\[    C_r = \{z \in \Z^2: |z| < r \}. \]
If $\zeta \in \Z^2$, we let $C_r(\zeta) = C_r + \zeta
 =  \{z + \zeta: z
\in C_r\}$.

\item Let $I_r$ be the set of self-avoiding walks (SAWs) that include at least one
vertex in $C_r$.  Note that $I_1$ is the set of SAWs that
go through the origin.

\item
Let $\whoknows_r$ be the set of $(A,a,b)$ such that
$C_r \subset A$, that is, such
that  $\dist(0, \partial A)
 \geq r$.  In particular,  $\whoknows_1 = \whoknows$.

 \item  Let $\soup_r$ be the set of
$(A,a,b) \in \whoknows_r$ such that $a^*,b^* \in C_r$.
Note that if $(A,a,b) \in \soup_r$, then
\[     r \leq \dist(0,\partial A) < r+1, \;\;\;\;
r-1 \leq |a^*|,|b^*| < r.\]

\end{itemize}

Using the Koebe $1/4$-theorem  we see that
$  r_A \asymp r $ if $  (A,a,b)
 \in \soup_r.$
Also, we claim that there exists $c > 0$
such that if $(A,a,b) \in \soup_r$, then 
\begin{equation} \label{sep25.1}
      S_{A,a,b} \geq c \, \frac{|a-b|}{r}.
      \end{equation}
One way to prove this is  to  consider a Brownian
motion starting at the origin and estimating the
probability of the  event
that the path makes clockwise and counterclockwise loops
around $a$ without encircling $b$.
Since we will not need this estimate, we will not give
a full proof.  We will, however, need the following
special case which can be proved in this way;
we  leave the  details 
  to the reader.
\begin{itemize}
\item For every $\delta > 0$, there exists $c_\delta = c(\delta) >0$
such that if $(A,a,b) \in \soup_r$ with $|a-b|
 \geq \delta \, r$, then
\begin{equation} \label{sep25.11}
        S_{A,a,b} \geq c_\delta . 
\end{equation}
\end{itemize}
We note for interest that the analogous upper bound in \eqref{sep25.1}
does not hold.  Using the Beurling estimate, we could show that
$
    r \, S_{A,a,b} \leq c \, |a-b|^{1/2}$
but we will not need this. 
 
\subsection{Statements}\label{sect:main-lerw}

We will state the main estimates here leaving some of
the proofs until later.
We start by restating the main result from \cite{BLV}.

\begin{thm}  \label{BLV}
There exists $\hat c \in (0, \infty)$ and $u >0$ such that if
$(A,a,b) \in \whoknows$, then
\[
   \Prob_{A,a,b}\{0 \in \eta\}
    = \hat c \, r_A^{-3/4} \, \left[S_{A,a,b}^3 +
      O(r_A^{-u})\right].
      \]
In particular, there exist $ 0 < c_1 < c_2 < \infty$,
such that if $(A,a,b) \in \soup_r$,
\begin{equation}  \label{jul14.0}
 c_1 \, r^{-3/4} \, 
 \left[S_{A,a,b}^3 
      -r^{-u}\right]  \leq
      \Prob_{A,a,b}\{0 \in \eta\} \leq
      c_2 \, r^{-3/4} \, 
 \left[S_{A,a,b}^3 
      +r^{-u}\right]. \end{equation}
        
\end{thm}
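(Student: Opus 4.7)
The plan is to reproduce the strategy of \cite{BLV}, which identifies the asymptotic visit probability with the Euclidean SLE$_2$ Green's function. The starting point is to rewrite the LHS in terms of the simple random walk measure via the loop-erasing bijection:
\[
\Prob_{A,a,b}\{0 \in \eta\} \cdot H_{\partial A}(a,b) = \sum_{\omega:\, a \to b,\, 0 \in \text{LE}(\omega)} p(\omega).
\]
I would decompose the right-hand side by splitting each walk at the canonical visit to $0$ that corresponds to the appearance of $0$ in $\text{LE}(\omega)$. This produces, roughly, a sum over SAWs $\eta': a \to 0$ in $A$ (with a LERW-type weight inherited from the erasing procedure on the first piece) of a random-walk Green's function / Poisson kernel weight for a continuation from $0$ to $b$ inside $A \setminus \eta'$.

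Second, I would apply sharp one-point asymptotics to each half. The probability that LERW from a boundary edge reaches the interior point $0$ decays with exponent $3/8$ in the distance to the boundary, and similarly for the reversed half from $b$ to $0$ using reversibility of LERW. The separation lemma to be developed in Section~\ref{LERWsec} ensures that, conditional on both halves reaching $0$, the two pieces meet in a well-separated configuration with uniformly positive probability; this is what allows a product formula with controlled error. Multiplying the two $r_A^{-3/8}$ factors produces the growth rate $r_A^{-3/4}$, and combining the boundary Poisson kernel contributions with a discrete harmonic measure estimate aggregates into $S_{A,a,b}^3$, reflecting the SLE$_2$ boundary exponent $\beta = 8/\kappa - 1 = 3$ for $\kappa=2$.

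Third, to pin down the universal constant $\hat c$ and to obtain a polynomial error $O(r_A^{-u})$, I would pass to the scaling limit using the convergence of LERW to SLE$_2$ with quantitative rate (of the type carried out in \cite{LV_lerw_chordal_note}). Matching the rescaled visit probability against the explicit formula $G_{D_A}(0;a,b) = \tilde c \, r_A^{-3/4} S_{A,a,b}^3$ for the SLE$_2$ Green's function then identifies $\hat c$ and supplies the error exponent $u$.

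The main obstacle is \emph{uniformity} of the error $r_A^{-u}$ over the full class $\whoknows$, allowing arbitrarily rough simply connected lattice domains and arbitrary boundary edges $a,b$. This requires sharp discrete Beurling-type estimates to control harmonic measure near rough pieces of $\partial A$, a separation lemma strong enough to guarantee non-degenerate meeting of the two LERW halves independent of the geometry, and careful bookkeeping so that the error appears essentially additively and is not amplified when $S_{A,a,b}$ is small. The second statement \eqref{jul14.0} for $(A,a,b) \in \soup_r$ then follows from the first using $r_A \asymp r$ in that regime.
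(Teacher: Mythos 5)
First, note that the paper does not prove this statement at all: Theorem~\ref{BLV} is imported verbatim as the main result of \cite{BLV}, so your task here is really to reprove that external theorem, and your sketch has genuine gaps. The decomposition in your first step (split the walk at the surviving visit to $0$, leaving a LERW piece from $a$ to $0$ and a continuation from $0$ to $b$ avoiding it) is indeed the right skeleton, and it is the same skeleton used in Section~\ref{twopointsec} for the two-point \emph{upper} bound. But your second step misidentifies the mechanism: there is no ``$r_A^{-3/8}$ per half'' factorization. What the decomposition produces is Poisson kernel/Green's function factors times a single random-walk-avoids-LERW escape probability, and the whole exponent $3/4$ sits in that escape probability (compare Theorem~\ref{lemma.twopoint}, where each marked point contributes one factor $r^{-3/4}$ via Proposition~\ref{prop.nonintersection}). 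Moreover, the separation lemma can only deliver up-to-constants comparability (as in Corollary~\ref{corollary.bastille1}); it cannot produce the sharp asymptotics with multiplicative error $1+O(r_A^{-u})$, and that sharp constant-plus-rate statement, together with the $\sin^3$ dependence uniform over all of $\whoknows$, is precisely the content of the theorem. In \cite{BLV} this requires Fomin-type identities, a coupling/mixing argument for the two-sided escape probability, and strong approximation of the discrete Poisson kernel — none of which appear in your outline, and the $S^3$ factor is not something that ``aggregates'' from harmonic measure bookkeeping.

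The third step is the most serious problem: you propose to pin down $\hat c$ and the error exponent $u$ by invoking quantitative convergence of LERW to SLE$_2$ as in \cite{LV_lerw_chordal_note}. This is circular in the present program — the estimate \eqref{BLV1} is the martingale observable and the key input to that coupling and to this paper — and it is also unworkable on its own terms: convergence of curves in the capacity parametrization, even with a rate, is a mesoscopic statement that does not control the probability of visiting a single lattice site; the available rates are far too weak (logarithmic, or small powers at a fixed macroscopic scale) to yield a uniform polynomial error $O(r_A^{-u})$ over arbitrary simply connected $A$ and arbitrary boundary edges; and since $\tilde c$ in $G_{D_A}(\zeta;a,b)=\tilde c\, r_A^{-3/4}S^3$ is unknown, ``matching'' against the SLE Green's function cannot identify $\hat c$ — in this paper only the ratio $c_*=\hat c/\tilde c$ ever appears, exactly because $\hat c$ must come from the discrete analysis, not from SLE.
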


A standard technique for
estimating the probability of hitting or getting
near a point (for example, the origin) is to 
observe the path up to the first time it gets within
a fixed distance, say $r$,  of the point.  We will do something
similar here, except that we will grow the loop-erased
walk from both the beginning and the end.  If a path from
$a$ to $b$ enters $C_r$ we consider the first and last
visits to $C_r$, that is, the path up to the
 first visit 
 and the reversed path up to its first visit.

To be more precise, 
   if $(A,a,b)
 \in \whoknows_r$, and  $\eta \in \saws(A,a,b)
  \cap I_r$, then there is a unique decomposition
  \[   \eta = \eta^1 \oplus \tilde \eta \oplus
  \eta^2, \] 
where $\eta^1$ is the initial segment of $\eta$
stopped at the first visit to $C_r$ and $[\eta^2]^R$
is the initial segment of $\eta^R$ stopped
at the first visit to $C_r$.  We write
$(A_r,a_r,b_r) \in \soup_r$, where $a_r,b_r$
are the final edges of $\eta^1,[\eta^2]^R$,
respectively; $a_r^*,b_r^* \in C_r$ are the
terminal vertices of $\eta^1,[\eta^2]^R$,
respectively; and $A_r$ is the connected component
of $(A \setminus [\eta^1 \cup \eta^2]) \cup
\{a^*_r,b_r^*\}$ containing the origin.  We also write $S_r =
S_{A_r,a_r,b_r}$; 
 if $\eta \not\in I_r$,
we set $S_r = 0$. 

If $(A,a,b) \in \whoknows_{2r}$, then we would like to say that
\[    \Prob_{A,a,b}[I_r] :=  \Prob_{A,a,b}\{ \eta \in I_r\}
\asymp r^{3/4} \, \Prob_{A,a,b}\{0 \in \eta\}, \]
or equivalently, that $$  \Prob_{A,a,b}\{0 \in \eta \mid
 \eta \in I_r\} \asymp r^{-3/4}.$$  This will follow from 
\eqref{jul14.0} provided that with a reasonable probability
we know that $S_r$ is not too small, see Figure~\ref{fig:separation}.
  The technical tool for
establishing this is called a ``separation lemma''.   We will
need to prove a particular version here, but the basic idea
of the proof is similar to other versions (see, e.g.,
\cite{Lcut,Masson}).  This can be considered as a 
generalization of a boundary Harnack principle. Roughly speaking,
if we condition a process to get away from a boundary point by
a certain time, then it is unlikely to stay near the boundary
for a long period of time. Indeed, the probability of staying close
to the boundary for a long period of time is much less than
the probability of getting away quickly. 

\begin{figure}[t]
\centering
  \def\svgwidth{0.7\columnwidth}
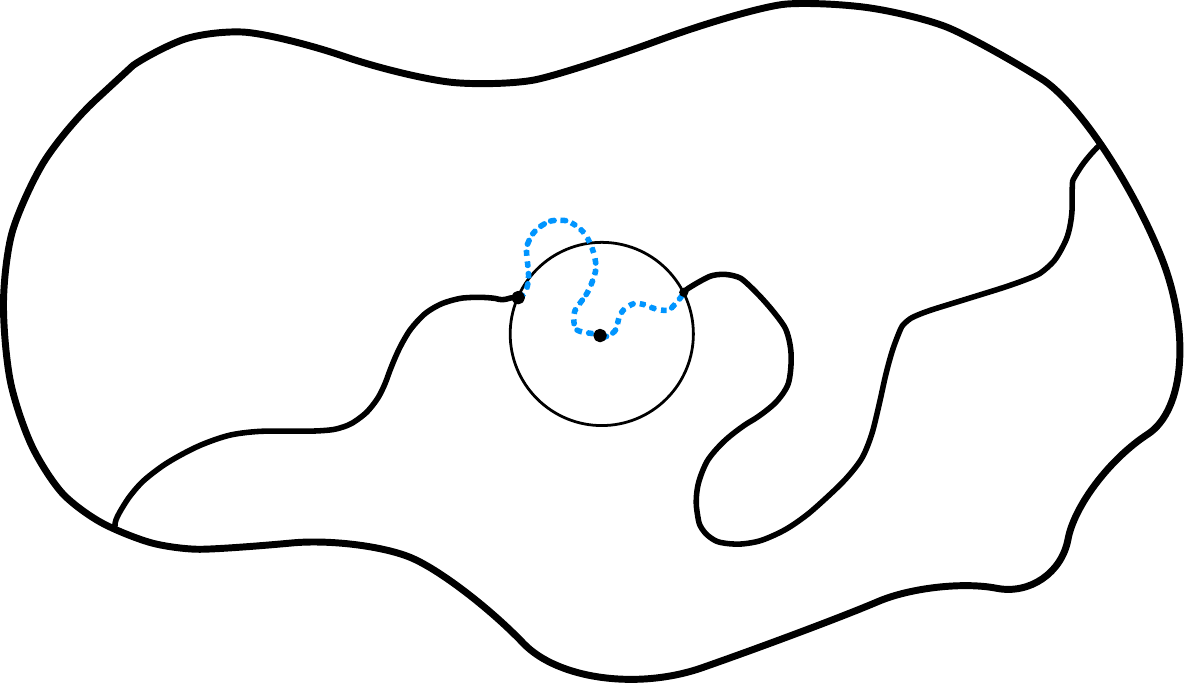
\caption{If conditionally on the LERW $\eta$ hitting $C_{r}$, the first and last visits to $\partial C_{r}$ (i.e. $a^{*},b^{*}$) are well-separated with positive probability, then the one-point estimate implies that the conditional probability of $\eta$ visiting $0$ is comparable to $r^{-3/4}$.}\label{fig:separation}
\end{figure}

\begin{thm}[Separation Lemma]  \label{seplemma.1}
There exists $c > 0$
such that if $(A,a,b)$ $ \in \whoknows_{2r}$
with $\Prob_{A,a,b}(I_r) >0$,  then
\begin{equation}  \label{jul14.1}
   \Prob_{A,a,b}\left\{S_r \geq c \mid \eta \in I_r\right\}
 \geq c . 
 \end{equation}
 \end{thm}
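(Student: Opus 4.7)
The plan is to derive the separation lemma from a single one-scale improvement step, using the domain Markov property and reversibility of LERW together with the one-point estimate \eqref{jul14.0}. Starting from $(A, a, b) \in \whoknows_{2r}$ and the event $\eta \in I_r$, I would first grow the LERW from both ends until the first (resp.\ last) visit to $C_{2r}$, producing stubs $\eta_{2r}^1, \eta_{2r}^2$ and a residual configuration $(A', a', b') := (A_{2r}, a_{2r}, b_{2r}) \in \soup_{2r}$. By the domain Markov property on the $a$-side and its analogue on the $b$-side (via reversibility of LERW), conditionally on the two stubs the remaining middle of $\eta$ is distributed as LERW in $(A',a',b')$. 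It therefore suffices to prove the one-scale estimate
\[
\Prob_{A',a',b'}\bigl(S_r \ge c_0 \mid \eta \in I_r\bigr) \ge p_0
\]
uniformly in $(A', a', b') \in \soup_{2r}$ with $\Prob_{A',a',b'}(\eta \in I_r) > 0$, for some absolute constants $c_0, p_0 > 0$, and then average over the pair of stubs.

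The one-scale estimate naturally splits into two cases. In the \emph{good case} $S_{A',a',b'} \ge c_0$, I would combine \eqref{jul14.0} with the Beurling estimate. Summing the one-point bound over a mesh of target points in $C_r$ lying in prescribed, well-separated angular sectors of $\partial C_r$ produces a lower bound on the probability of $\eta \in I_r$ restricted to the event that both the first and last visits to $C_r$ fall in those sectors. A second application of \eqref{jul14.0} gives a matching upper bound of order $r^{3/4}$ for the unrestricted hitting probability, so the ratio is bounded below by a constant. The geometric input is that when $S_{A',a',b'} \ge c_0$, the harmonic measures on $\partial C_r$ from each of $a'^*, b'^*$ each assign positive mass to angular sectors of $\partial C_r$ well-separated from the opposite entry.

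In the \emph{bad case} $S_{A',a',b'} < c_0$, a recovery step is needed: I would show that by growing the LERW an additional fixed dyadic scale to $C_{3r/2}$, with positive probability the resulting configuration $(A'', a'', b'') \in \soup_{3r/2}$ satisfies $S_{A'',a'',b''} \ge c_0$, thereby reducing to the good case. This is the one-sided separation phenomenon applied independently at each end; the natural tool is Wilson's algorithm, which expresses the LERW in $(A', a', b')$ in terms of simple random walks from $a'^*, b'^*$, together with the classical separation lemma for simple random walk \cite{Lcut} to spread the entry points on $\partial C_{3r/2}$. Iterating the tower property then gives the claim.

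The main obstacle is precisely this recovery step. The two ends of the LERW are coupled through the requirement that $\eta$ is a single path from $a'$ to $b'$, so the improvement in separation at each end is not independent, and naive decoupling would lose a factor that cannot be controlled uniformly in the starting data. Overcoming this requires combining Wilson's algorithm with reversibility and the one-point estimate \eqref{jul14.0} to show that the joint law of the two scale-$3r/2$ entry points retains a positive amount of spread even when the scale-$2r$ configuration is nearly degenerate; this is the delicate analogue of boundary Harnack principle mentioned in the preamble and is where most of the technical work of the section (and of the separation lemmas in \cite{Lcut,Masson}) is concentrated.
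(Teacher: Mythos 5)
Your reduction to a uniform one-scale statement over $\soup_{2r}$ via the two-sided domain Markov property matches the paper's setup, but there is a genuine gap exactly where you place the ``main obstacle'': the bad-case recovery step is the entire content of the theorem, and you assert it rather than prove it. As stated it is either unusable or circular. Unconditionally, a configuration $(A',a',b')\in\soup_{2r}$ with $|a'^*-b'^*|$ much smaller than $r$ has probability tending to zero (with $|a'^*-b'^*|/r$) of even producing a configuration in $\soup_{3r/2}$ --- by the final assertion of Lemma \ref{sep28.lemma1} the two ends typically ``hook up'' in a region of diameter comparable to $|a'^*-b'^*|$ and the path never reaches inward --- so there is no uniform constant-probability recovery to feed into a conditional bound, and the nearly degenerate configurations could a priori dominate the law conditioned on $I_r$. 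If instead you intend the recovery conditionally on $\eta\in I_r$ (or on reaching $C_{3r/2}$), that \emph{is} a one-scale separation lemma, and it cannot be obtained by citing Wilson's algorithm plus the classical separation lemma for independent random walks: the conditioning couples the two ends through the single path, which you acknowledge but do not resolve. The paper's proof supplies precisely the missing mechanism: it grows the path from both ends as a reverse-time sub-Markov chain indexed by radius, uses the hook-up estimate of Lemma \ref{sep28.lemma1} to show that whenever the endpoints are within distance $s$ there is a uniformly positive chance the chain is killed before advancing another $c's$ inward, and then plays the resulting $e^{-cm^2}$ bound for surviving $m^2$ such sub-steps without doubling the separation (\eqref{sep28.4}) against the lower bound $\Prob_{A,a,b}[I_r]\geq c_1e^{-\beta m}$ of \eqref{sep28.34}; this gives \eqref{sep28.3}, which is iterated over dyadic separation scales to obtain \eqref{sep28.2}. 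Nothing in your outline produces this $m^2$-versus-$m$ competition or any substitute for it.

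A secondary problem is the good case: summing the one-point lower bound \eqref{jul14.0} over lattice points in angular sectors bounds the expected number of visited points there, not the probability that the \emph{first and last visits} of $\eta$ to $C_r$ land in prescribed sectors --- a point deep inside $C_r$ can be visited by a path that first enters $C_r$ anywhere. The paper instead works with the event $J_r$, defined for random walks and preserved under loop erasure, whose probability is bounded below via Lemma \ref{lemma.masson} and the invariance principle once the endpoints are macroscopically separated (\eqref{sep28.1}). That part of your argument is fixable along these lines; the bad-case gap is not a matter of detail.
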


\begin{proof} See Section \ref{sepsec}.
 We actually prove 
 that there exists $c > 0$ such that 
 \[ \Prob_{A,a,b}\left\{|a_r - b_r|\geq c \, r \mid \eta \in I_r\right\}
 \geq c , \]
but \eqref{jul14.1} follows immediately from
this and  \eqref{sep25.11}.\end{proof}

\begin{cor}  \label{corollary.bastille1}
If $(A,a,b) \in \whoknows_{2r}$, then
\begin{equation}  \label{jul14.3}
  \Prob_{A,a,b}\{0 \in \eta \mid \eta \in I_r\} \asymp
   r^{-3/4}, 
   \end{equation}
and hence
\begin{equation}  \label{jul14.4}
       \Prob_{A,a,b}[I_r]
    \asymp r^{3/4} \,   \Prob_{A,a,b}\{0 \in \eta  \}.
    \end{equation}
\end{cor}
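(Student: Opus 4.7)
The plan is to exploit the two-sided domain Markov property of LERW to reduce the conditional hitting probability to a one-point estimate on a well-separated configuration. Concretely, I will condition on the decomposition $\eta = \eta^1 \oplus \tilde\eta \oplus \eta^2$ set up just after Theorem~\ref{BLV}: $\eta^1$ is the initial piece of $\eta$ stopped on first entry to $C_r$ and $[\eta^2]^R$ is the initial piece of $\eta^R$ stopped on first entry to $C_r$. Using the standard (forward) domain Markov property of LERW to extract $\eta^1$, and then reversibility of LERW from $a$ to $b$ combined with one more application of the domain Markov property to extract $[\eta^2]^R$, I will argue that, conditional on the $\sigma$-field generated by $\eta^1,\eta^2$ on the event $\eta \in I_r$, the middle segment $\tilde\eta$ has the law $\Prob_{A_r,a_r,b_r}$, where $(A_r,a_r,b_r)\in \soup_r$.

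Granting this, the upper bound in \eqref{jul14.3} is immediate from Theorem~\ref{BLV}: deterministically on $\eta \in I_r$ one has $r_{A_r} \asymp r$, so the upper half of \eqref{jul14.0} gives $\Prob_{A_r,a_r,b_r}\{0 \in \tilde \eta\} \le c_2 r^{-3/4}(S_r^3 + r^{-u}) \le C\,r^{-3/4}$, and taking $\Prob_{A,a,b}[\,\cdot \mid \eta \in I_r]$-expectation yields the bound. For the lower bound I would invoke the separation lemma, Theorem~\ref{seplemma.1}, which tells us that, under $\Prob_{A,a,b}[\,\cdot \mid \eta \in I_r]$, the event $\{S_r \ge c_0\}$ has probability at least $c_0$. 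On this favorable event, the lower half of \eqref{jul14.0} gives $\Prob_{A_r,a_r,b_r}\{0 \in \tilde \eta\} \ge c_1 r^{-3/4}(c_0^3 - r^{-u}) \ge c\,r^{-3/4}$ once $r$ is large enough for the error term to be absorbed (with bounded $r$ handled trivially). Taking conditional expectation then produces the matching lower bound in \eqref{jul14.3}.

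The second statement \eqref{jul14.4} follows from the tautology $\Prob_{A,a,b}\{0\in\eta \mid \eta \in I_r\} = \Prob_{A,a,b}\{0\in\eta\}/\Prob_{A,a,b}[I_r]$, which is valid because $0 \in C_r$ forces $\{0 \in \eta\}\subset I_r$. The main obstacle I expect is the careful justification of the two-sided domain Markov property: the standard LERW domain Markov property is only a one-sided statement, so identifying $\tilde\eta$ as a bona fide LERW in $(A_r,a_r,b_r)$, rather than a forward LERW conditioned on its reversed first-hitting path, requires combining reversibility of LERW with the forward domain Markov property and some care with the conditioning on the event $\eta \in I_r$. Once that step is in hand, the sharp one-point Green's-function estimate \eqref{jul14.0} and the separation lemma do the rest.
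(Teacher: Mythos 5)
Your proposal is correct and follows essentially the same route as the paper: condition on the decomposition $\eta = \eta^1 \oplus \tilde\eta \oplus \eta^2$ so that the middle piece is LERW in $(A_r,a_r,b_r) \in \soup_r$, apply the one-point estimate \eqref{jul14.0} for the upper bound and combine it with the separation lemma \eqref{jul14.1} for the lower bound, then deduce \eqref{jul14.4} from $\Prob_{A,a,b}\{0\in\eta\} = \Prob_{A,a,b}[I_r]\,\Prob_{A,a,b}\{0\in\eta \mid \eta\in I_r\}$. The two-sided domain Markov property you flag as the main obstacle is exactly the ingredient the paper also relies on (stated in Section~\ref{sepsec} for the two-sided growth), so no genuinely different machinery is involved.
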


\begin{proof}  Since $(A_r,a_r,b_r) \in \soup_r$, we know that
$r_{A_r} \asymp r$, and hence from $\eqref{jul14.0}$,
on the event $\{\eta \in I_r\}$, we have 
 \[ \Prob_{A,a,b}\{0 \in \eta \mid (A_r,a_r^*,b_r^*)\} 
     \asymp \, r^{-3/4} \, [S_r^3 + O(r^{-u})]. \]
Combining this with \eqref{jul14.1} gives
\eqref{jul14.3}.  Since  
\[   \Prob_{A,a,b}\{ 0 \in \eta\} = 
 \Prob_{A,a,b}[I_r] \, \Prob_{A,a,b}\{0 \in \eta \mid \eta \in I_r\},\] 
 we see that \eqref{jul14.4} also
 follows.
\end{proof}

 The next lemma is an upper bound for the ``two-point Green's function'', that is, the 
 probability that LERW visits two points,  when the points
 are not too close together.  We follow with a  
 corollary for  points that are close together.
 Here we use $H_A(0,a)$ for the discrete Poisson kernel,
 that is, the probability that a simple random 
 walk starting at $0$ exits $A$ along the edge
 $a$. Since  $H_A(0,a) = G_A(0,a^*)/4$, 
 we can replace $H_A(0,a)$ with 
 $G_A(0,a^*)$ on the right-hand
 side of the estimate of Theorem~\ref{lemma.twopoint}. We state this 
 in terms of 
  $\hat P_{A,a,b}$ rather than the probability
 measure $\Prob_{A,a,b}$.

\begin{thm}[Two-point estimate]  \label{lemma.twopoint} There exists $c < \infty$
such that the following holds.  Suppose $1  \leq  s \leq r <
\infty$,  $(A,a,b)
\in \whoknows_r$, and $\zeta \in A$ with
$\dist(\zeta,\partial A) \geq s$ and $|\zeta| \geq r/4$.
  Then
\[ \hat P_{A,a,b}
\{ 0 , \zeta \in \eta\} \leq 
   \frac{c\, G_A(0,\zeta )
   \,  \left[H_A(0,a) \, H_A(\zeta,b)   + H_A(0,b) \, H_A(\zeta,a)
   \right]}{ r^{3/4} \, s^{3/4}}
       .\]
\end{thm}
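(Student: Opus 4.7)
The plan is to decompose the LERW $\eta$ at its visits to $0$ and $\zeta$, extract the LERW one-point exponent factors $r^{-3/4}$ and $s^{-3/4}$ near each point using Corollary~\ref{corollary.bastille1} (which combines the BLV one-point estimate with the separation lemma, Theorem~\ref{seplemma.1}), and bound the remaining middle segment by the simple random walk Green's function $G_A(0, \zeta)$.

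First, I split the event $\{0, \zeta \in \eta\}$ according to the order in which $\eta$ visits the two points. LERW reversibility (swapping the roles of $a$ and $b$) shows that the two orderings yield the two symmetric terms in the numerator, so it suffices to bound the contribution of $\{0 \prec_\eta \zeta\}$ by $c \, G_A(0, \zeta) \, H_A(0, a) \, H_A(\zeta, b)/(r^{3/4} s^{3/4})$. On this event, $\eta$ enters $C_r \subset A$, visits $0$, eventually enters $C_s(\zeta) \subset A$, visits $\zeta$, and then exits towards $b$; the hypotheses $|\zeta| \geq r/4$ and $s \leq r$ ensure (after mildly shrinking the two radii if needed) that the balls $C_r$ and $C_s(\zeta)$ are macroscopically disjoint. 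Cut $\eta$ at its first entrance to $C_r$ and its last exit from $C_s(\zeta)$ to write $\eta = \eta^{(1)} \oplus \eta^{(\mathrm{mid})} \oplus \eta^{(3)}$.

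By Corollary~\ref{corollary.bastille1} applied to $\eta^{(1)}$, conditional on entering $C_r$ the LERW visits $0$ with probability at most $c \, r^{-3/4}$; the separation lemma is crucial here, ensuring that this bound holds with a universal constant, independent of the entry point and the residual domain. By reversibility of LERW, the analogous statement applied to $\eta^{(3)}$ from the $b$-end gives a factor $s^{-3/4}$. For the middle segment $\eta^{(\mathrm{mid})}$, I use the random walk representation $\hat{P}(\eta^{(\mathrm{mid})}) = \sum_{\omega : \text{LE}(\omega) = \eta^{(\mathrm{mid})}} p(\omega)$ in the slit residual domain; dropping the loop-erasing constraint and extending the summation to walks in all of $A$ bounds this by the simple random walk Green's function $G_A(0, \zeta)$, up to a universal constant. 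Combining the three factors yields the bound for the $0 \prec_\eta \zeta$ ordering.

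The main obstacle is handling the middle piece correctly: after conditioning on $\eta^{(1)}$ and $\eta^{(3)}$, the middle segment is only a LERW in a slit domain, not in $A$. The key observation is that the total random walk measure in a slit domain is dominated by that in $A$, so the Green's function bound survives the slit. A secondary subtlety is in handling the boundary Poisson kernels: after the decomposition, $\eta^{(1)}$ and $\eta^{(3)}$ are LERW pieces connecting $a$ and $b$ to $\partial C_r$ and $\partial C_s(\zeta)$ respectively, and their weights must be shown to be bounded by the natural boundary-interior Poisson kernels $H_A(0, a), H_A(\zeta, b)$ (up to constants via the separation lemma) times the LERW exponent factors $r^{-3/4}, s^{-3/4}$. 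The separation lemma is what makes both one-point estimates available with uniform constants, even conditioned on the LERW entering the mesoscopic disks.
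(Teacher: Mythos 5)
Your outline has the right global shape (a factor $r^{-3/4}$ near $0$, a factor $s^{-3/4}$ near $\zeta$, a Green's function for the middle), but the mechanism you propose does not produce the right-hand side of Theorem~\ref{lemma.twopoint}, and the step you yourself flag as a ``secondary subtlety'' is in fact the whole problem. Conditional statements about the LERW under $\Prob_{A,a,b}$ --- Corollary~\ref{corollary.bastille1}, the separation lemma, and one-point bounds in slit domains via the domain Markov property --- only give bounds on $\Prob_{A,a,b}\{0,\zeta\in\eta\}$, i.e.\ on $\hat P_{A,a,b}\{0,\zeta\in\eta\}$ up to the factor $H_{\partial A}(a,b)$; they contain no route to the product $H_A(0,a)\,G_A(0,\zeta)\,H_A(\zeta,b)$, which is not comparable to $H_{\partial A}(a,b)$ times any conditional hitting probabilities. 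Your assertion that the weights of the end pieces are ``bounded by $H_A(0,a),H_A(\zeta,b)$ (up to constants via the separation lemma)'' is precisely the missing argument, and the separation lemma does not deliver it: Theorem~\ref{seplemma.1} is a statement about the conditional law given $I_r$, not a comparison between LERW segment weights and random-walk Poisson kernels. The paper avoids this by decomposing not the SAW $\eta$ but the generating random walk, $\omega=[\omega^1]^R\oplus\tilde\omega\oplus\omega^2$ as in \eqref{sep29.6}: the three walk pieces contribute exactly $H_A(0,a)$, $G_A(z,w)\asymp G_A(0,\zeta)$ (Harnack) and $H_A(\zeta,b)$, while the loop-erasure constraint is recorded as non-intersection events between walk pieces and the loop erasures $LE(\omega^1),LE(\omega^2)$, which are then relaxed (the passage from $\Gamma$ to $\Gamma'$) to preserve an upper bound. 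This also repairs your treatment of the middle segment: ``dropping the loop-erasing constraint'' is not legitimate at the level of SAW weights, since the $\hat P$-weight of a segment carries the factor $4^{-|\cdot|}F_\cdot(\cdot)$ with $F\ge 1$ unbounded and the avoidance constraints couple the segments; it is automatic only when the middle object is literally a walk, as in the paper's decomposition.

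The second missing ingredient is where $r^{-3/4}$ and $s^{-3/4}$ actually come from in this scheme. After the walk-level decomposition they are escape-probability estimates: one needs $\E_{\pi_{r,A,a}}[h_r(\eta)]\le c\,r^{-3/4}$, the probability that an independent walk from $0$ reaches $\partial C_r$ avoiding the loop erasure of a walk \emph{conditioned to exit $A$ at $a$}, cut at $\partial C_{r}$. This is Proposition~\ref{prop.nonintersection}, and it is not a consequence of Corollary~\ref{corollary.bastille1} or of \eqref{jul14.0} applied in slit domains; its proof requires comparing the radial measures $\pi_{r,A,a}$ and $\pi_{r,2r}$ up to constants via a loop-measure factorization (Lemma~\ref{comppi}) and then a multi-scale disconnection iteration over the scales $r2^{-k}$. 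None of this machinery, nor any substitute for it, appears in your proposal, so as written the argument has a genuine gap rather than being an alternative proof. (Minor additional issues --- Corollary~\ref{corollary.bastille1} requires $\whoknows_{2r}$ while the theorem assumes only $\whoknows_r$, and the disks must be shrunk by a definite factor such as $40$ to be disjoint under $|\zeta|\ge r/4$, $s\le r$ --- are fixable, but the two points above are structural.)
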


\begin{proof}  See Section \ref{twopointsec}.  We actually
show the slightly stronger result that the $\hat P$-measure of paths in $\saws(A,a,b)$
that  visit $0$ first and then $\zeta$
is bounded above by a constant times
\[  \frac{ G_A(0,\zeta )\,
   H_A(0,a) \, H_A(\zeta,b)    
   }{ r^{3/4} \, s^{3/4}}.
       \]\end{proof}

\begin{cor}  There exists $c < \infty$
such that the following holds.  Suppose $(A,a,b)
\in \whoknows_r$, and $|\zeta| \leq r/4$. 
   Then,
\begin{equation}  \label{sep23.2}
   \Prob_{A,a,b}\{\zeta \in \eta\mid
0 \in \eta\} \leq c \, |\zeta|^{-3/4}.
\end{equation}
\end{cor}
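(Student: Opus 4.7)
The plan is to combine the two-point estimate with a first–last visit decomposition at the intermediate scale $s=2|\zeta|$, in order to reduce the problem to the regime in which Theorem~\ref{lemma.twopoint} applies directly. Because $|\zeta|\le r/4$, we have $(A,a,b)\in\whoknows_{2s}$, so the decomposition $\eta=\eta^1\oplus\tilde\eta\oplus\eta^2$ is defined on $\{\eta\in I_s\}$, with $\tilde\eta$ distributed as a LERW in a random triple $(A_s,a_s,b_s)\in\soup_s$ of scale $r_{A_s}\asymp|\zeta|$. Since both $0$ and $\zeta$ lie in $C_s$, the events $\{0\in\eta\}$ and $\{0,\zeta\in\eta\}$ are exactly the same as $\{0\in\tilde\eta\}$ and $\{0,\zeta\in\tilde\eta\}$.

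Within $A_s$ the two points are at comparable scales ($|\zeta|\ge r_{A_s}/4$), so Theorem~\ref{lemma.twopoint} applies with second scale $s^*=\dist(\zeta,\partial A_s)$. Using Harnack's inequality to compare $H_{A_s}(\zeta,\cdot)$ and $H_{A_s}(0,\cdot)$ (valid when $s^*\ge c|\zeta|$) and the standard identity $H_{A_s}(0,a_s)H_{A_s}(0,b_s)/H_{\partial A_s}(a_s,b_s)\asymp S_{A_s}^2$ (a direct computation after conformally mapping to the disk), together with the one-point asymptotics $\Prob_{A_s,a_s,b_s}\{0\in\eta\}\asymp|\zeta|^{-3/4}S_{A_s}^3$ from Theorem~\ref{BLV}, one obtains the pointwise bound
\[
\frac{\Prob_{A_s,a_s,b_s}\{0,\zeta\in\eta\}}{\Prob_{A_s,a_s,b_s}\{0\in\eta\}}\le c\,\frac{G_{A_s}(0,\zeta)}{S_{A_s}\,(s^*)^{3/4}}.
\]
The denominator in the original domain is handled by Corollary~\ref{corollary.bastille1}, which gives $\Prob_{A,a,b}\{0\in\eta\}\asymp|\zeta|^{-3/4}\Prob\{\eta\in I_s\}$. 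Averaging over the law of $(A_s,a_s,b_s)$ reduces the claim to showing that
\[
\E\!\left[\frac{G_{A_s}(0,\zeta)\,S_{A_s}^2}{(s^*)^{3/4}}\,\bigg|\,\eta\in I_s\right]=O(1),
\]
with the extra factor $S_{A_s}^2$ arising from the tilt induced by conditioning on $0\in\tilde\eta$.

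The main obstacle is the control of this expectation when the outer pieces $\eta^1,\eta^2$ pass close to $\zeta$, making $s^*$ small. The separation lemma (Theorem~\ref{seplemma.1}) guarantees $S_{A_s}\asymp 1$ with positive probability conditional on $\eta\in I_s$, so the $S_{A_s}^2$ factor is effectively bounded. The delicate cancellation is then between the singularity of $(s^*)^{-3/4}$ and the vanishing of $G_{A_s}(0,\zeta)$ like $s^*/|\zeta|$ (up to logarithmic factors) as $\zeta$ approaches $\partial A_s$; combined with a Beurling-type tail estimate $\Prob\{s^*\le t\mid\eta\in I_s\}\le c(t/|\zeta|)^{1/2}$ bounding how close the random boundary $\partial A_s$ comes to $\zeta$, the integrand has an integrable tail and the expectation is $O(1)$, yielding $\Prob\{\zeta\in\eta\mid 0\in\eta\}\le c|\zeta|^{-3/4}$.
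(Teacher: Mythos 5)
Your opening moves coincide with the paper's (decompose at the scale $s=2|\zeta|$, use Corollary~\ref{corollary.bastille1} for the one-point factor, and apply Theorem~\ref{lemma.twopoint} inside the inner configuration $(A_s,a_s,b_s)\in\soup_s$), but the proof goes astray where you treat $s^*=\dist(\zeta,\partial A_s)$ as a random quantity that can become small. It cannot: $\eta^1$ and $[\eta^2]^R$ are stopped at their \emph{first} visits to $C_s$, so apart from their terminal vertices (whose modulus is at least $s-1$) they lie outside $C_s$, and $\partial A$ is at distance at least $r\ge 4|\zeta|$ from the origin; hence $C_s\subset A_s$ and $s^*\ge s-|\zeta|-1\asymp|\zeta|$ deterministically. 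This observation is precisely what makes the paper's proof short: in $\soup_s$ both scales in Theorem~\ref{lemma.twopoint} are comparable to $|\zeta|$, $G_{A_s}(0,\zeta)\asymp 1$, the Harnack comparison between $0$ and $\zeta$ is at a single macroscopic scale, and the only substantive step is the one-sided inequality $H_{A_s}(0,a_s)\,H_{A_s}(0,b_s)\le c\,H_{\partial A_s}(a_s,b_s)$, proved by splitting excursions at their first visit to $C_{s/2}$. All of your machinery for the small-$s^*$ regime — restricting Harnack to $s^*\ge c|\zeta|$, the claimed decay $G_{A_s}(0,\zeta)\lesssim s^*/|\zeta|$, the conditional tail bound $\Prob\{s^*\le t\mid \eta\in I_s\}\le c(t/|\zeta|)^{1/2}$, the appeal to the separation lemma — addresses a regime that never occurs, and as written those claims are not justified: for the rough boundaries created by the arms, Beurling gives only $G_{A_s}(0,\zeta)\lesssim (s^*/|\zeta|)^{1/2}$, not linear decay, and the conditional tail estimate is asserted rather than proved (nothing in the paper supplies it).

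There are also two bookkeeping problems in the ratio-and-tilt argument. Dividing pointwise by $\Prob_{A_s,a_s,b_s}\{0\in\tilde\eta\}$ needs the lower bound $\gtrsim |\zeta|^{-3/4}S_{A_s}^3$, which Theorem~\ref{BLV} (see \eqref{jul14.0}) does not give when $S_{A_s}^3\lesssim s^{-u}$, and the two-sided ``identity'' $H_{A_s}(0,a_s)H_{A_s}(0,b_s)/H_{\partial A_s}(a_s,b_s)\asymp S_{A_s}^2$ is not a direct computation for these random, rough domains (it is a nontrivial discrete estimate of Kozdron--Lawler type); the paper deliberately avoids both, using only the $S$-free one-sided Poisson-kernel bound, and indeed your $S$-factors cancel once you multiply back by the tilt, so the detour buys nothing. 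Moreover the reduction is mis-stated: averaging correctly, \eqref{sep23.2} requires $\E\bigl[G_{A_s}(0,\zeta)\,S_{A_s}^2\,(s^*)^{-3/4}\mid \eta\in I_s\bigr]\le c\,|\zeta|^{-3/4}$, not $O(1)$; an $O(1)$ bound yields only the trivial estimate $\Prob_{A,a,b}\{\zeta\in\eta\mid 0\in\eta\}\le c$. Once you use the deterministic fact $s^*\asymp|\zeta|$ together with the one-sided inequality above, the desired bound follows immediately, which is exactly the paper's argument.
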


\begin{proof}
Let $s = 2|\zeta| \leq r/2$. Corollary \ref{corollary.bastille1}
implies   that
\[  \Prob_{A,a,b}\{\eta \in I_s\} \asymp s^{3/4} \,
\Prob_{A,a,b}\{ 
0 \in \eta\}, \]
We also claim that
\[  \Prob_{A,a,b}\{0,\zeta \in \eta
 \mid \eta \in I_s\}  \leq c \, s^{-3/2}.\]
Indeed, we now show that there exists $c$
such that for all $(A',a',b') \in \soup_s$,
\begin{equation}  \label{eve.1}
\Prob_{A',a',b'}\{0,\zeta \in \eta
 \}  \leq c \, s^{-3/2}.
 \end{equation}
 To see this, we first note that $G_{A'}(0,\zeta)
  \asymp 1$ and 
\[ \Prob_{A',a',b'}\{0,\zeta \in \eta
 \}   = H_{\partial A'}( a',b')^{-1} \,
   \hat P_{A',a',b'}\{0,\zeta \in \eta\}\]
The Harnack inequality implies
 that $H_{A'}(0,a') \asymp H_{A'}(\zeta,a')$
 and 
 $H_{A'}(0,b')$ $ \asymp H_{A'}(\zeta,b')$.
 Hence \eqref{eve.1} will follow from
Theorem~\ref{lemma.twopoint} provided we show
 that
 \[   H_{A'}(0,a') \, H_{A'}(0,b')
   \leq c \, H_{\partial A'}(a',b').\]
For this, we will show that
the left-hand side of the last display is comparable to the $p$-measure
of walks from $a'$ to $b'$ staying in $A'$ that
 intersect $C_{s/2}$, which is obviously bounded above by $H_{\partial A'}(a', b')$.  
To finish the proof, split any such walk $\omega$
as $\omega = \omega_1 \oplus \omega_2$ where
$\omega_1$ is stopped at the first time that
the walk reaches a point in $C_{s/2}$.  Given $\omega_1$,
the measure of the set
 of choices for $\omega_2$ is $H_{A'}(w,b')$
where $w$ is the endpoint of $\omega_1$.  By
the Harnack inequality, we know that for
each such $w$,  $H_{A'}(w,b') \asymp H_{A'}(0,b')$.
The reversal of any random walk path starting
at $0$ stopped when it leaves  $A'$
at $a'$ can similarly be written as $\omega_1 \oplus
\omega_3$ where $\omega_1$ is as above and $\omega_3$
is a walk from $w$ to $0$.  For each $w$, the measure
of choices for $\omega_3$ is $G_{A'}(w,0) \asymp 1$
and hence the measure of the set of acceptable $\omega_1$
is comparable to $H_{A'}(0,a')$.
 \end{proof}

\subsection{Estimates for analytic domains}
In this section we discuss various estimates under the assumption that the domain we consider is analytic.
We first consider second moment bounds on the number of steps in a LERW. We will derive some consequences of the estimates stated in Section~\ref{sect:main-lerw}. The issue is that the estimates given there are not very sharp
near the boundary in general. Here we will show that the estimates
are good enough if the discrete sets are sufficiently ``nice''. 

For the remainder of this subsection we fix a  
  simply connected domain $D$ with 
  $0 \in D \subset \Disk $ with analytic boundary $\partial D$
  and two distinct boundary
points $a',b' \in \partial D$.   We emphasize that all constants in this subsection,
either explicit or implicit, are allowed to depend on  $D,a',b'$.
Let $f:D \rightarrow \Disk$ be the unique
conformal transformation with $f(0) = 0, f(a') = 1$.  Recall that the analyticity assumption means
 that $f$ extends to a conformal transformation of a neighborhood
of $\overline D$. In particular, $|f'|$ is uniformly bounded above and away from $0$ on $\overline{D}$.    

  For each
$n=1,2,\ldots$, let $A_n$ be the connected component 
containing the origin of the lattice set
\[    \{ z \in \Z^2: \Square_z \subset n  D\}, \]
and let \[D_n = n^{-1} \, D_{A_n}.\]  
(We are using a different notation from previous section here; we are now writing $D_n$ instead of $\check{D}=\check{D}(A,N)$.) Note that $D_n \subset D$ is a simply connected domain and that every point on $\partial D_n$ is at distance $O(n^{-1})$ from $\partial D$.
Therefore, since $|f'|$ is bounded above, there exists $c_1=c_1(D) < \infty$ such that for all $n$,
\[             \left( {1} - \frac{c_1}{n} \right)
 \, \Disk \subset f(D_n) \subset \Disk. \]
 Also, $(\diam A_n)/r_{A_n} \asymp 1$.   
 We let $a_n,b_n$ be points in $\partial_e A_n$ (considered as points
 in $\partial D_{A_n}$) that are closest to $na',nb'$.  (If there are
 ties for ``closest'' we can choose arbitrarily.)  We will write
 $\Prob_n$ for $\Prob_{A_n,a_n,b_n}$.  If $z \in A_n$,
 we write
 \[    \delta(z) = \delta_{A_n}(z) = \min\{|z-a_n|, |z-b_n|\}, \]
 and
 \[    d_z = d_{z,A_n} = \dist(z,\partial A_n).\]

 Our first goal is to use Theorem~\ref{lemma.twopoint} to prove the following two-point
 estimate.

\begin{prop} \label
{sept23.prop1} For every $(D,a',b')$ as above, with $\partial D$ analytic, there exists $c    < \infty$
such that if $z,w \in A_n$ with $z\neq w$ and $\delta(z) \leq \delta(w)$,
 then
\begin{equation}  \label{sep23.1}
  \Prob_n\{z,w \in \eta\} \leq  
  c \, \delta(z)^{-3/4} \, \left[\delta(w)^{-3/4} + |z-w|^{-3/4} \right]. 
  \end{equation}
  \end{prop}
  
The proof uses the following facts that we will not prove.  
These estimates  can be considered versions
of the well-known gambler's ruin estimate for random walk (see,
e.g., \cite[Theorem 5.7.1]{LL} for a uniform version)
 and strongly use the fact
that the boundary of $D$ is smooth and hence locally looks like a straight
line.
  
\begin{lemma}  \label{sep30.lemma1}
For every $(D,a',b')$ as above, with $\partial D$ analytic, there exists $c < \infty$ such that 
\[            c^{-1} \leq n^2 \, H_{ \partial A_n}(a_n,b_n)
 \leq c . \]
Moreover, if 
  $z \in A_n$,   $S_j$ is a simple random walk starting at $z$, 
and $\tau = \tau_{A_n } = \min\{j: S_j \not \in A_n \}$, then
\[  \Prob^z\left\{\diam(S[0,\tau]) \geq r \, \dist(z,\partial A_n) \right\}
  \leq \frac{c}{r} . \]
  \end{lemma}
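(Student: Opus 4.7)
The plan is to exploit analyticity of $\partial D$ to locally straighten the boundary by a conformal change of coordinates and reduce both statements to standard gambler's-ruin estimates for simple random walk in the upper half plane. Analyticity gives that around each boundary point $\zeta \in \partial D$ there is a conformal map $\phi_\zeta$, defined on a neighborhood of uniform size and bi-Lipschitz with constants depending only on $D$, sending $\partial D$ to a segment of the real axis.

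For the diameter bound, fix $z \in A_n$ and set $d = \dist(z, \partial A_n)$; the bound is trivial when $rd = O(1)$, so assume $rd$ is large. Let $\zeta \in \partial D$ be nearest to $z/n$. After straightening by $\phi_\zeta$ and coupling the simple random walk with Brownian motion (for instance via Skorokhod embedding applied to $\phi_\zeta(S_j/n)$), the imaginary part of the image process is an approximate bounded martingale starting from height $\asymp d/n$, and optional stopping shows it reaches height $\asymp rd/n$ before returning to $0$ with probability at most $c/r$: the classical gambler's ruin. Uniform control of $\phi_\zeta'$ prevents large transverse excursions of the walk without a commensurate vertical excursion, which converts this into the asserted bound on $\diam S[0,\tau]$.

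For the two-sided bound on $H_{\partial A_n}(a_n, b_n)$, I would use the identity $H_{\partial A_n}(a_n, b_n) = G_{A_n}(a_n^*, b_n^*)/16$ and compare the discrete Green's function with its continuous counterpart on $nD$. Both endpoints sit at distance $\asymp 1$ from $\partial A_n$ and approach the distinct analytic boundary points $na', nb'$; the continuum two-boundary-point asymptotic gives $G_{nD}(a_n^*, b_n^*) \asymp n^{-2}$, with implicit constants controlled by the mixed inward normal derivatives of $G_D$ at $(a', b')$, which are finite and positive because $a' \neq b'$ and $\partial D$ is analytic. A coupling of random walk with Brownian motion in smooth domains, yielding additive error $O(n^{-\alpha})$ in the relevant hitting probabilities, transfers this asymptotic to the discrete setting.

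The main obstacle is the one-pixel lattice mismatch between $\partial A_n$ and $n\partial D$ near the marked edges $a_n, b_n$, where the leading-order constants of the boundary Poisson kernel are determined. Handling this rigorously relies essentially on analyticity of $\partial D$ rather than mere smoothness, so that the discrete-to-continuous boundary comparison produces only multiplicative errors $1 + O(n^{-\alpha})$ in the relevant Green's function and hitting estimates. Away from the marked edges, the estimates are routine applications of the half-plane straightening.
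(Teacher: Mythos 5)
The paper does not actually prove this lemma: it is stated with the explicit remark that the estimates ``will not be proved'' and are versions of the uniform gambler's ruin estimate (\cite[Theorem 5.7.1]{LL}) combined with the fact that the analytic boundary, rescaled by $n$, is locally flat. So there is no detailed argument to compare against, and your overall plan --- local straightening plus gambler's ruin for the diameter bound, and the identity $H_{\partial A_n}(a_n,b_n)=G_{A_n}(a_n^*,b_n^*)/16$ for the Poisson kernel bound --- is in the intended spirit. Two of your steps, however, do not hold as written.

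First, the claim that ``uniform control of $\phi_\zeta'$ prevents large transverse excursions without a commensurate vertical excursion'' is false: the event $\diam(S[0,\tau])\geq r\,\dist(z,\partial A_n)$ can perfectly well occur with the walk sliding a distance $rd$ parallel to the boundary at height $O(d)$, so optional stopping on the height alone, in the form you state it (``reaches height $\asymp rd/n$ before returning to $0$''), does not bound the right event. The correct step is to bound the probability of exiting $B(z,rd/2)\cap A_n$ through the circular arc, i.e.\ the harmonic measure of the arc seen from a point at height $\asymp d$ above a locally flat boundary, which is $\asymp 1/r$ (equivalently, one must use that \emph{conditioned} on reaching the arc the exit height is comparable to $rd$). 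Moreover, at lattice scale $d\asymp 1$ a Skorokhod/KMT coupling has errors comparable to $d$ itself, so the estimate should be run directly for the discrete walk --- this is exactly what the uniform gambler's ruin of \cite[Theorem 5.7.1]{LL} is for. Second, and more seriously, your transfer of the continuum asymptotic $G_{nD}\asymp n^{-2}$ to the lattice cannot proceed through a coupling with \emph{additive} error $O(n^{-\alpha})$ in hitting probabilities: the quantity being estimated is itself of order $n^{-2}$, with $a_n^*,b_n^*$ at distance $O(1)$ from $\partial A_n$, so an additive error of that size says nothing, and the assertion that analyticity upgrades it to a multiplicative $1+O(n^{-\alpha})$ error at unit distance from the boundary is precisely the unproved crux. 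The estimate should instead be multiplicative from the start: write $G_{A_n}(a_n^*,b_n^*)=\Prob^{a_n^*}\{\text{hit } b_n^* \text{ before } \partial A_n\}\,G_{A_n}(b_n^*,b_n^*)$ with $G_{A_n}(b_n^*,b_n^*)\asymp 1$, and factor the hitting probability into an escape from lattice scale to macroscopic scale near $a'$ (probability $\asymp 1/n$, again by gambler's ruin against the locally flat boundary) times a bulk factor $\asymp 1/n$ obtained from the Harnack inequality and the same near-boundary estimate at $b'$; no invariance-principle coupling is needed, and only two-sided bounds up to constants are required.
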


 \begin{proof}[Proof of Proposition \ref{sept23.prop1}]
We first claim that without loss of generality we may
assume that  $d_z \geq  \delta(z)/20, d_w
 \geq \delta(w)/20$; in particular, 
 $d_z \asymp \delta(z), d_w \asymp \delta(w)$.  If this is not the case, we can add
 the disk of radius $\delta(z)/20$ about $z$ and the disk
 of radius $\delta(w)/20$ about $w$ to $A_n$.  Clearly
 this only increases $\hat P_{A_n,a,b}
 \{z,w \in \eta\} $ and it is not difficult to see that
 this increases $H_{\partial A_n}(a,b)$ by at most a universal
 multiplicative constant.  Hence, adding the disks
 would decrease
 $\Prob_{n}\{z,w \in \eta\}$ by at most
 a multiplicative constant.

 Combining the estimates in  Lemma \ref{sep30.lemma1}, we first claim that 
\begin{equation}  \label{dec23.3}
 H_{  A_n} (z,a_n)   \leq \frac{c d_z}{|z-a_n|^2} . 
 \end{equation}
To see this, think of the right-hand side as the product of two terms.  The 
probability starting at $z$ of getting   to distance $|z-a_n|/2$ without
leaving $A_n$ is bounded above by $c d_z/{|z-a_n|}$. 
Given that the walker succeeds in doing this,
 since $(\diam A_n)/r_{A_n} \asymp 1$ and $\partial D$ analytic, the probability of leaving $A_n$ at $a_n$ is  
$O(1/|z-a_n|)$. We can write \eqref{dec23.3} as 
\[  \frac{H_{  A_n} (z,a_n) }
   { d_z^{3/4}} \leq c \,   \frac{ d_z^{1/4}}{|z-a_n|^2} , \]
 and similarly for $  {H_{  A_n} (w,b_n) }/
   { d_w^{3/4}}$.
We also claim that 
\[    G_{A_n}(z,w) \leq c\, \frac{d_z \, d_w}
    {|z-w|^2} . \]
  To prove this, we can view
  the right-hand side as the product of three terms.  The probability
that a walk starting at $z$ moves distance $|z-w|/4$ without leaving
$A_n$ is comparable to $1 \wedge (d_z/|z-w|)$ and similarly for a random walk
starting at $w$.  Given that both of these happen, the expected number of visits to the other point
is   $O(1)$.  Combining the last two displayed inequalities,
 we get  
\begin{align}\label{lhs}
    \frac{  H_{  A_n} (z,a_n)\, H_{A_n}(w,b_n)  \, G_{A_n}(z,w) }
   { d_z^{3/4} \, d_w^{3/4} }
    \leq   \frac{c\, d_z^{5/4} \, d_w^{5/4} }{|z-a_n|^2 \, |w-b_n|^2
     \, |z-w|^2}.\end{align}
Note that the left-hand side of \eqref{lhs} corresponds to one of the two terms in the upper bound of Theorem~\ref{lemma.twopoint}; the other is obtained by interchanging $z$ and $w$. We will derive \eqref{sep23.1} from \eqref{lhs} using Theorem~\ref{lemma.twopoint}. There are several cases to consider.    
     
\medskip
   
  \textbf{Case 1.}  $|z-w|  \geq d_z/4$.
In this case, we will
bound the  left-hand side of \eqref{sep23.1} by 
$c\,\delta(z)^{-3/4} \, \delta(w)^{-3/4}$. 

  We claim that the right-hand side of \eqref{lhs} satisfies
  \[ 
  \frac{d_z^{5/4} \, d_w^{5/4} }{|z-a_n|^2 \, |w-b_n|^2
     \, |z-w|^2} \le \frac{c}{n^{2} \, \delta(z)^{3/4} \, \delta(w)^{3/4}}.
  \]
  
 To see this, we first note that the triangle inequality
 implies that  at least
 one of the following must hold: $|z-w| \geq |a_n-b_n|/2$,
 $|z-a_n| \geq |a_n-b_n|/4$, or $|w - b_n| \geq |a_n-b_n|/4$. 
 Also, we know that $|a_n - b_n| \geq cn$.
\begin{itemize}
\item  If $|z-w| \geq |a_n-b_n|/2$, then
 $|z-w| \geq c \, n$.  Since $\delta(z) \geq d_z$,
$\delta(w) \geq d_w$, and, by definition $|z-a_n| \ge \delta(z)$, $ |w-b_n| \ge \delta(w)$,
the claim holds.

\item If $|w - b_n| \geq |a_n-b_n|/4$, then $|w-b_n| \ge c n$. We also have $|z-a_n| \ge \delta(z) \asymp d_z$ and that $|z-w| \geq d_z/4$. 
\begin{align*}
 \frac{  d_z^{5/4} \, d_w^{5/4} }{|z-a_n|^2 \, |w-b_n|^2
     \, |z-w|^2}       \le &   \frac{ c\,d_z^{5/4} \,  d_w^{5/4}}
            {n^2 \, \delta(z)^2\, d_z^{2}}\\
             \le &   \frac{c\,
           d_w^{5/4}}
            {n^2 \, \, d_z^{3/4}\delta(w)^2}\\
        \leq &   \frac{c}{n^2 \, \delta(z)^{3/4} \,
            \delta(w)^{3/4}} .
    \end{align*}
\item In the same way, if  $|z-a_n|  \geq |a_n-b_n|/4$, we see that
\begin{align*}
\frac{  d_z^{5/4} \, d_w^{5/4} }{|z-a_n|^2 \, |w-b_n|^2
     \, |z-w|^2}      \le &  \frac{c\,
           d_w^{5/4}}
            {n^2 \, \, d_z^{3/4}\delta(w)^2}\\
        \le &   \frac{c}{n^2 \, \delta(z)^{3/4} \,
            \delta(w)^{3/4}} .
            \end{align*}
\end{itemize}
 
    By interchanging
 the role of $z$ and $w$ and using  
  Theorem~\ref{lemma.twopoint}  and the estimate
  $H_{\partial
  A_n}(a_n,b_n) \asymp n^{-2}$, we see that 
 \[ \Prob_n \{z,w \in \eta\}
   \leq  \frac c 
   {\delta(z) ^{3/4} \, \delta(w)^{3/4}}  .\]

\textbf{ Case 2:} $|z-w| \leq d_z/4$. 
Using  \eqref{sep23.2}, we see that
 \[     \Prob_{n} \{z,w \in \eta\} \leq
    c \, |z-w|^{-3/4} \,  \Prob_n\{z \in \eta\} \leq c \,
    |z-w|^{-3/4} \, d_z^{-3/4}, \]
and we use $\delta(z) \asymp d_z$.
 \end{proof}
 
We consider now the number of steps in a LERW running in an approximation of $D$. Let 
\[  T =  T_{n,D,a',b'}= \sum_{z \in A_n} 1\{z \in \eta \}. \]
More generally if 
 $\zeta \in \C$ and $r > 0$, let
\[   T(r;\zeta ) 
=  T_{n,D,a',b'}(r;\zeta ) = \sum_{| z- \zeta| \leq r} 1\{z \in \eta\} \]
be the number of steps inside a ball of radius $r$ about $\zeta$. Finally we define the associated maximal function
\[    \overline {T(r)} =  \overline {T_{n,D,a',b'}(r)} = \max\left\{T(r;\zeta): \zeta \in A_n \right\}\]
which is important for our main argument.
We will estimate the second moments of these random variables.

\begin{prop} \label{prop:maximal-estimate}
For every $(D,a',b')$ as above, with $\partial D$ analytic, there exists $c < \infty$ such that
for  every $0 < r \leq 1$ and $
\zeta \in A_n$,
\begin{equation}  \label{sep24.1}
    \E_n\left[T(rn;\zeta)^2\right] \leq  c \, (rn)^{13/4} \left( |\zeta-a_n|^{-3/4}  + |\zeta-b_n|^{-3/4} \right). 
    \end{equation}
\begin{equation}  \label{sep24.2}   \E_n\left[\overline {T(rn)} ^2\right] \leq c \, r^{5/4}
\, n^{5/2}.
    \end{equation}
    In particular,
  \[   \E_n\left[T^2\right] \leq c \, n^{5/2}.\]
\end{prop}

We will be using this lemma with $r \asymp 1/\log n$ in which case we get
\[        \E_n\left[\overline {T(rn)} ^2\right] \leq c \, n^{5/2} \,
\left(\log n \right)^{-5/4}.\]
 
\begin{proof} Note that
\[    \E_n\left[T(rn;\zeta)^2\right] =
  \sum_{|z-\zeta| \leq rn} \sum_{|w-\zeta| \leq rn} 
    \Prob_n\{z,w \in \eta\}.\] 
The first estimate \eqref{sep24.1} follows from \eqref{sep23.1}
and the easy estimates
\[    \sum_{z:|z-\zeta| \leq rn} (1+|z-a_n|)^{-3/4}  \leq c \, |\zeta-a_n|^{-3/4}
  \, (rn)^{2}, \]
  \[   \sum_{|z-\zeta| \leq rn} \sum_{|w-\zeta| \leq r} 
     (1+|z-a_n|)^{-3/4} \,  (1+|z-w|)^{-3/4} \leq c \, |\zeta-a_n|^{-3/4} \, (rn)^{13/4}.\]
which can be obtained by approximating by an integral.

To prove \eqref{sep24.2},
  let $m$ be the integer such that
$2^{m-1} <  rn \le 2^{m} $, and consider  
\[       L_{rn} =     \left\{ j 2^{m} + i k 2^m : j,k \in \Z \right\}
 \, \cap \left\{|z| \leq 2n\right\} . \]
Let
\[        K = \max_{\zeta \in L_{rn}} T(2^{m+1},\zeta),\]
and note that  
\[ \overline{T(rn)}  ^2 \leq K^2 \leq \sum_{ \zeta \in L_{rn}}  T(2^{m+1},\zeta)^2
 . \] 
 (Recall that $\diam A_n < 2n$.) Using \eqref{sep24.1},
we see that
\begin{align*}
\E_n[K^2] & \leq  \sum_{\zeta \in L_{rn}} \E_n\left[T(2^{m+1},\zeta)^2
\right]\\
& \leq   c \, (rn)^{13/4} \, \sum_{\zeta \in L_{rn}}
    [1+|\zeta|]^{-3/4} 
    \\
    & \leq   c \, (rn)^{13/4} \,  2^{-3m/4} \,
        [n2^{-m}]^{5/4} 
       \leq   c \, r^{5/4} \, n^{5/2}.
\end{align*}

\end{proof}

\begin{rem}We note that the estimate in the last lemma is really just noting that
\[    \int_{|z| \leq R}  \int_{|w | \leq R} \frac{dA(z) dA(w)}{|z|^{3/4} \,
   |z-w|^{3/4}} \asymp R^{5/4} \cdot R^{5/4} = R^{5/2}, \]
\[  \int_{|z| \leq R}  \int_{|w-z| \leq rR} \frac{dA(z)dA(w)}{|z|^{3/4} \,
   |z-w|^{3/4}} \asymp R^{5/4} (rR)^{5/4} = r^{5/4} \, R^{5/2}. \]
 
\end{rem}

 \subsection{Separation lemma: proof of Theorem~\ref{seplemma.1}}  \label{sepsec}
 In this section we will prove Theorem~\ref{seplemma.1}. As a start we state a lemma about simple
 random walk.  Suppose we
 start a random walk at $z \in \partial_i C_r$ and $s < r$.  Consider
 the first time that the random walk gets distance $s$
 from $z$.  Then it is easy to see (for example, by comparison
 with Brownian motion) that there exists $ c> 0$
 (independent
 of $z,s,r$) such that with probability at least
 $c$, the random walker stops  within distance
 $r - (s/3)$ of the origin.  Now suppose that $A \supset C_r$
 and we condition that the walk stays in $A$ before it reaches
 distance $s$.  If anything, this should push the random walker
 closer to the origin and hence there should be
 a uniform lower bound on  the probability
 of being within distance $r - (s/3)$.  The next lemma states
 that   this intuition is correct, and we can find a
 constant that is independent of $r,s,z,A$.  For a proof
 of the first statement, see \cite[Proposition 3.5]{Masson};
 the second is done similarly, and,
 in fact, is slightly easier.
 
 \begin{lemma} \label{lemma.masson}
  There exists $c >0$ such that the following
 holds. 
 \begin{itemize}
 \item  Suppose $s < r,$  
  $C_r \subset A$ and $z \in \partial_i C_r$.  Let
$S$ be a simple random walk starting at $z$ and let
\[   \tau = \tau_A = \min\{j: S_j \not \in A\}, \]
\[    \sigma  = \sigma_s = \min\{j: |S_j - S_0|
  \geq s\}.\]
Then,
\[   \Prob\{|S_\sigma| \leq  {r-(s/3)} \mid \sigma
   < \tau\} \geq c . \]
  \item
  Suppose $s < r$ and
  $\Z^2 \setminus C_r
   \subset A$ and $z \in \partial C_r$.  Let
$S$ be a simple random walk starting at $z$ and let
\[   \tau = \tau_A = \min\{j : S_j \not \in A\}, \]
\[    \sigma  = \sigma_s = \min\{j: |S_j - S_0|
  \geq s\}.\]
Then,
\[   \Prob\{|S_\sigma| \leq  {r+(s/3)} \mid \sigma
   < \tau\} \geq c . \]
   \end{itemize}
 \end{lemma}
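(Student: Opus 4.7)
I will prove the first statement; the second follows by essentially the same argument (and is slightly simpler since $\partial A$ is even further from the walker). The plan is to frame the claim as a ratio comparison for two positive discrete-harmonic functions and apply a discrete boundary Harnack principle. Let $\Omega = A \cap C_s(z)$, a Jordan lattice subdomain containing $z$, and define two discrete-harmonic functions on $\Omega$ by
\[
p(w) = \Prob^w\{\sigma < \tau,\ |S_\sigma| \leq r - s/3\}, \qquad q(w) = \Prob^w\{\sigma < \tau\}.
\]
Both vanish on $\partial A \cap \partial \Omega$ and satisfy $p \leq q$ on the outer boundary $\partial C_s(z) \cap \Omega$. The lemma asserts $p(z) \geq c\, q(z)$ for a universal $c > 0$.

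The argument has two inputs. The first is an unconditional estimate at a reference interior point. Pick $w^* \in A$ with $|w^* - z| \asymp s/2$ and $|w^*| \leq r - s/6$; the set of such $w^*$ subtends a definite angle at $z$ by the same geometry (comparison with Brownian motion and the law of cosines) that gives the unconditional bound $\Prob^z\{|S_\sigma| \leq r - s/3\} \geq c_0 > 0$. The hypothesis $C_r \subset A$ forces $\dist(w^*, \partial A) \geq r - |w^*| \geq s/6$, so the ball $C_{s/6}(w^*)$ lies inside $A$. Direct comparison of SRW with Brownian motion in this macroscopic ball then yields $p(w^*) \asymp q(w^*) \asymp 1$, and in particular $p(w^*)/q(w^*) \geq c_1$.

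The second ingredient is a discrete boundary Harnack principle: two positive discrete-harmonic functions on the Jordan lattice domain $\Omega$ which vanish on the same portion of the boundary have a ratio bounded above and below by universal constants on every compact subset at definite distance (in units of $s$) from the vanishing boundary. Applied to $p,q$ and the pair $z, w^*$, both of which lie in $\Omega$ at mutual distance $\asymp s/2$ and at distance $\gtrsim s$ from $\partial A \cap \partial\Omega$ (relative to $s$), this gives $p(z)/q(z) \asymp p(w^*)/q(w^*) \geq c_1$, completing the proof.

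The main obstacle is the quantitative boundary Harnack principle in this discrete setting, where the relevant boundary $\partial A \cap \partial\Omega$ is smooth only at macroscopic scale $s$ while being rough at unit scale. A version adapted to this ``rough at unit scale, regular at macroscopic scale'' geometry is classical in the random walk literature; the proof in the closely related setting of \cite{Masson}, Proposition~3.5, adapts directly. An alternative route avoiding boundary Harnack is to establish the comparison $p(z)/q(z) \geq c$ directly via a strong Markov decomposition, proving $p(z) \geq c_1 \, \Prob^z\{\rho < \tau, S_\rho \in G\}$ for a good intermediate set $G$ on $\partial C_{s/2}(z)$ (using $C_{s/6}(w) \subset A$ for $w \in G$) and bounding $\Prob^z\{\rho < \tau, S_\rho \in G\} \geq c_2\, q(z)$ by the same Harnack comparison at the single scale $s/2$.
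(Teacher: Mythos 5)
Your reduction to comparing the two harmonic functions $p$ and $q$, and the unconditional estimates $p(w^*)\asymp q(w^*)\asymp 1$ at an interior reference point $w^*$ with $C_{s/6}(w^*)\subset C_r\subset A$, are fine. The gap is the transfer from $w^*$ to $z$, which you delegate to a ``classical'' discrete boundary Harnack principle, and that step does not hold up as stated. First, your geometric claim that $z$ and $w^*$ both lie at distance $\gtrsim s$ from the vanishing boundary $\partial A\cap\partial\Omega$ is false: $z\in\partial_i C_r$ may be at unit distance from $\partial A$ (take $A=C_r$), and this is precisely the difficulty — if $z$ really were at distance of order $s$ from $\partial A$, interior Harnack chains would finish the proof and the lemma would be easy. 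Second, the BHP you quote (``ratio bounded on every compact subset at definite distance from the vanishing boundary'') is not a correct formulation (the ratio can degenerate near the non-vanishing part of $\partial\Omega$), and, more seriously, a Euclidean-ball BHP with constants uniform over \emph{arbitrary} finite simply connected $A\subset\Z^2$ is not an off-the-shelf result: $\partial A\cap C_s(z)$ is constrained only to avoid $C_r$ and is otherwise arbitrarily rough at every scale up to $s$ (it is not ``regular at macroscopic scale''), and for general simply connected planar domains the naive ball-localized BHP can fail (two thin fingers of the domain meeting near the boundary point give wildly different ratios), so any valid version here must exploit the one-sided geometry $C_r\subset A$ — which is exactly the content of the lemma itself. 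Your fallback ``alternative route'' is circular: the inequality $\Prob^z\{\rho<\tau,\,S_\rho\in G\}\ge c\,\Prob^z\{\rho<\tau\}$ at scale $s/2$ is again the statement of the lemma, and you propose to obtain it ``by the same Harnack comparison,'' i.e.\ by the unproven BHP.

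For comparison, the paper does not prove this lemma either; it cites Masson's Proposition~3.5, which \emph{is} (the first bullet of) this statement and is proved there by a direct argument that uses $C_r\subset A$ to push the conditioned walk inward, not by invoking a boundary Harnack principle. So deferring to ``the proof in Masson adapts directly'' amounts to citing the lemma rather than proving it; if you want a self-contained argument, you need to supply the one-scale inward-exit estimate for the conditioned walk (or an honest BHP proof adapted to this one-sided geometry), and that is the missing core of your proposal.
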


If $(A,a,b) \in \soup_r$, we define
$e_a(A,a,b)$ to be the probability that a simple random
walk starting at $a^*$ reaches distance $|a^* - b^*|/3$
from $a^*$ without leaving $A$.  
We define
\[        e(A,a,b) = e_a(A,a,b) \, e_b(A,b,a).\]
 
 \begin{lemma} \label{sep28.lemma1}
  There exists $0 < c_1 < c_2 < \infty$
 such that the following holds.  Suppose $(A,a,b)
  \in \soup_r$.  Then
  \[      c_1 \, e(A,a,b) \leq H_{\partial A}(a,b)
    \leq c_2 \, e(A,a,b).\]
  Moreover,  the $p$-measure of the set of
   walks in $\paths_A(a,b)$ 
    of diameter
  less than $c_2 \, |a^* - b^*|$ is at
  least $c_1 \, H_{\partial A}(a,b)$.
  \end{lemma}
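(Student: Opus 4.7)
The plan is to split the lemma into two separate claims:
\begin{itemize}
\item[(I)] the upper bound $H_{\partial A}(a,b) \le C_1 \, e(A,a,b)$;
\item[(II)] the existence of a subcollection of walks in $\paths_A(a,b)$ of diameter at most $C_2 \, |a^*-b^*|$ whose total $p$-measure is at least $c \, e(A,a,b)$.
\end{itemize}
Part (II) will immediately give the matching lower bound $H_{\partial A}(a,b) \ge c \, e(A,a,b)$, and combining (I) and (II) will yield the second (diameter) assertion, since the walks in (II) then comprise a proportion $\ge c/C_1$ of the total $p$-measure. As a preliminary reduction, because $a^*, b^* \in \partial_i A$ each has a neighbor in $\partial A$, a single random walk step from either point exits $A$ with probability at least $1/4$; hence $G_A(a^*, a^*) \asymp G_A(b^*, b^*) \asymp 1$, and
$$H_{\partial A}(a,b) = \frac{G_A(a^*,b^*)}{16} \asymp \Prob^{a^*}\{\tau_{b^*} < \tau_A\}.$$
It therefore suffices to compare $\Prob^{a^*}\{\tau_{b^*} < \tau_A\}$ with $e_a \cdot e_b$. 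Throughout, set $s = |a^*-b^*|/3$.

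For (I), since $b^*$ lies outside the disk of radius $s$ about $a^*$, any walk $a^* \to b^*$ must first exit $\ball(a^*,s)$, and the strong Markov property gives
$$\Prob^{a^*}\{\tau_{b^*} < \tau_A\} \le e_a \cdot \sup_{w \in \partial \ball(a^*,s) \cap A} v(w), \qquad v(z) := \Prob^z\{\tau_{b^*} < \tau_A\}.$$
The function $v$ is nonnegative, discrete harmonic on $A \setminus \{b^*\}$, and vanishes on $\partial A$. I will run a Harnack chain inside the bulk region $C_r \subset A$, supplemented near $\partial A$ by the boundary-Harnack-type estimate encoded in Lemma~\ref{lemma.masson}, to compare $v(w)$ uniformly to $v(z_0)$ at a canonical reference point $z_0$ with $|z_0-b^*| \asymp s$ and $\dist(z_0, \partial A) \asymp s$. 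A time-reversal argument (using $G_A(z_0,b^*) = G_A(b^*,z_0)$ and $G_A(b^*,b^*) \asymp 1$) together with a second decomposition at the first exit of $\ball(b^*,s)$ from $b^*$ will then give $v(z_0) \asymp e_b$, completing the upper bound.

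For (II), by Lemma~\ref{lemma.masson} applied at $a^*$ with radius $s$, the $p$-measure of walks starting at $a^*$, staying in $A$, and first exiting $\ball(a^*,s)$ at a ``well-separated'' point $w$ with $|w| \le r-s/3$ is at least $c\, e_a$; each such $w$ lies in $C_r$ at distance $\gtrsim s$ from $\partial A$. By reversibility and the same lemma applied in the time-reversed direction at $b^*$, the total $p$-measure, summed over well-separated $w' \in \partial \ball(b^*,s)$, of walks $w' \to b^*$ confined to $\ball(b^*,s) \cap A$ is at least $c \, e_b$. To connect $w$ to $w'$ I will take a bounded region $\Omega \subset A$ of diameter $O(|a^*-b^*|)$ containing both points --- for instance, a ball of radius $\asymp |a^*-b^*|$ about the midpoint of $a^*,b^*$, intersected with $A$ (after slightly tightening the ``well-separated'' criterion so that $w, w'$ sit uniformly away from $\partial \Omega$). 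A standard interior Harnack bound for $G_\Omega(w,w')$ between macroscopically separated interior points then yields $G_\Omega(w,w') \ge c$, an absolute constant. Concatenating the three pieces produces walks of total $p$-measure $\ge c^3 e_a e_b$ and diameter $\le C_2 |a^*-b^*|$.

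The main obstacle will be (I): the uniform Harnack-chain bound $\sup_w v(w) \le C v(z_0)$ must accommodate arbitrarily complicated geometries of $\partial A$ outside $C_r$. This is where the assumption $C_r \subset A$ (providing the bulk needed for interior Harnack) and Lemma~\ref{lemma.masson} (controlling $v$ near $\partial A$) become essential. In (II) the principal care is the choice of the connecting region $\Omega$, so that it has the required diameter, lies in $A$, and admits the interior Harnack lower bound on $G_\Omega$; beyond that, (II) reduces to a three-piece concatenation of known random-walk estimates.
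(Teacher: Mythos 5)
Your part (II) is essentially sound and is close in spirit to the paper's own argument: the paper proves both bounds and the diameter statement from the single identity $4H_{\partial A}(a,b)=G_A(a^*,b^*)=\sum_{(z,w)\in U}h_a(z)h_b(w)G_A(z,w)$, where $h_a,h_b$ are the first-exit distributions on the spheres of radius $s=|a^*-b^*|/3$ about $a^*,b^*$, together with Lemma~\ref{lemma.masson} (which places a constant fraction of $h_a,h_b$ on points of $C_{r-(s/3)}$) and a Green's function lower bound at scale $s$ in the bulk (the paper uses the restricted domain $V=\{\zeta\in C_r:|\zeta-a^*|\le 4s\}$ where you use a connecting region $\Omega$; either works). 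So the lower bound and the diameter assertion in your plan are fine.

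The genuine gap is in part (I). After reducing to bounding $\sup_w v(w)$, $v(z)=\Prob^z\{\tau_{b^*}<\tau_A\}$, over $w\in\partial\ball(a^*,s)\cap A$, you propose to compare $v(w)$ to $v(z_0)$ by a Harnack chain ``inside $C_r$'' supplemented by Lemma~\ref{lemma.masson} near $\partial A$. This does not work as stated: since $|a^*|\ge r-1$, most of $\partial\ball(a^*,s)\cap A$ lies \emph{outside} $C_r$, in a part of $A$ about whose geometry nothing is assumed beyond simple connectedness, so a Harnack chain from such $w$ to $z_0$ has uncontrolled length and hence no uniform constant; moreover Lemma~\ref{lemma.masson} is not a boundary Harnack principle --- it gives a \emph{lower} bound on a conditional exit probability, which is the wrong direction for an upper bound on $v$, and it says nothing about comparing values of a harmonic function vanishing on $\partial A$ at two boundary-adjacent locations. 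The missing idea (and the crux of the paper's upper bound) is that no comparison to a reference point is needed: write $v(w)\asymp G_A(b^*,w)$, decompose at the first exit of $\ball(b^*,s)$ to get $G_A(b^*,w)=\sum_{w'}h_b(w')\,G_A(w',w)$, and observe that $G_A(w',w)\le c$ \emph{uniformly}, because $|w-w'|\gtrsim s$ while $\dist(w,\partial A)\le s+2$ (as $a^*$ is adjacent to $\partial A$), a Beurling-type consequence of simple connectedness. This yields $\sup_w v(w)\le c\,e_b$ directly (the paper's statement ``$G_A(z,w)\le c_2$ for $(z,w)\in U$''), and it is exactly this uniform bound on the middle Green's function that your Harnack-chain route is trying, unsuccessfully, to replace.
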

  
 The last assertion can be rephrased as saying that
 the probability that an excursion from $a$ to $b$ in $A$
 has diameter less than $c_2 \, |a^* - b^*|$  is at
 least $c_1$.
  
 \begin{proof} 
 Let $s = |a^* - b^*|/3$.
 It suffices to prove the result for $s$ sufficiently
 large (for small $s$ one can give a direct proof,  
 which we omit, by constructing specific paths).
 
  Let $h_a(z) = h_{A,a}(z)$ be
 the probability that a random walk starting
 at $a^*$ reaches distance $s$ from $a^*$
 without exiting $A$ and that the first point at distance $s$ that it hits is $z$.
 That is, using the notation of the previous lemma,
 \[   h_a(z) = \Prob^{a^*}\{\sigma_s < \tau_A,
  S(\sigma_s) = z\}. \]
 We define $h_b(w)$ similarly, and note
 that (for $s$ large enough)
 \[  4H_{\partial A}(a,b) = G_A(a^*,b^*) = \sum_{(z,w) \in U} h_a(z) \, h_b(w)
  \, G_A(z,w),\]
  where $U$ denotes the set of  $(z,w)$ with $s \leq
  |z-a^*| < s+1, s \leq |w-b^*| < s+1.$
  Using simple connectedness of $A$, it is not hard to verify that $G_A(z,w) \leq c_2$
  for all such $(z,w) \in U$; and if
   $(z,w) \in U \cap (C_{r - (s/3)}\times C_{r - (s/3)})$,
  then $G_A(z,w) \geq c_1 $.
  Note that
  \[   \sum_{(z,w) \in U} h_a(z) \, h_b(w)
   = e(A,a,b), \]
  and Lemma \ref{lemma.masson} implies that
  \[ \sum_{z,w \in C_{r - (s/3)} } h_a(z) \, h_b(w)
   \geq c \, e(A,a,b). \]
  Taken together these estimates give the first assertion.
For the second assertion, we consider the set
\[        V = \{\zeta \in C_r: |\zeta - a^*|
 \le 4 s\} \]
 and show that 
 \[
 H_{\partial V}(a,b) \ge c  H_{\partial A}(a,b).
 \]
Indeed, one can easily check that there is a constant $c' > 0$ such that $(z,w) \in U\cap  (C_{r - (s/3)} \times C_{r - (s/3)})$ implies
\[     G_V(z,w) \geq c'.\]
Consequently,
\begin{align*}
4H_{\partial V}(a,b)  = \sum_{(z,w) \in U}h_a(z)h_b(w) G_V(z,w) \ge c' \sum_{z,w \in C_{r - (s/3)}}h_a(z)h_b(w).
\end{align*}
But we have already shown that the last term is comparable to $H_{\partial A}(a,b)$.

  \end{proof}
  
  The next lemma is easy but important. It gives a lower
 bound on the probability that the LERW grown simultaneously from $a$ and $b$ reaches $C_{r-4|a-b|}$ and that at this time the distance
 of the endpoints has been increased by a factor of two.

\begin{lemma}   \label{dec29.lemma1}
 There exists $c > 0$ such that
if $(A,a,b) \in \soup_r$, and $s = |a-b|  \leq
r/10$, then
\[   \Prob_{A,a,b}\left[I_{r-4s} \cap \{|a_{r-4s} ^* -
 b_{r-4s} ^*|
    \geq  2 \, |a^* - b^*| \}\right] 
      \geq c . \]
\end{lemma}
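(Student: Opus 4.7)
I will use the identity $\Prob_{A,a,b}[E] = \hat P_{A,a,b}[E]/H_{\partial A}(a,b)$, where $E$ denotes the target event, combined with the upper bound $H_{\partial A}(a,b) \leq c_2\,e(A,a,b)$ from Lemma~\ref{sep28.lemma1}. It then suffices to produce a class of simple random walk excursions $\omega \in \paths_A(a,b)$ of total $p$-measure at least $c\,e(A,a,b)$ whose loop-erasures lie in $E$. Using $s \leq r/10$ and the simple connectedness of $A$, I pick points $x_a, x_b \in \partial C_{r-5s}$ with $|x_a - a^*|, |x_b - b^*| \asymp s$ and $|x_a - x_b| \geq 3|a^*-b^*|$, together with disjoint subdomains $U_a, U_b \subset A \setminus C_{r-5s}$ of width $\asymp s$ providing ``channels'' $a^* \to x_a$ and $b^* \to x_b$. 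I then consider walks of the form $\omega = \omega_1 \oplus \omega_2 \oplus \omega_3$ with $\omega_1 \in \paths_{U_a \cup \{x_a\}}(a^*, x_a)$, $\omega_2 \in \paths_{C_{r-5s} \cup \{x_a, x_b\}}(x_a, x_b)$, and $\omega_3 \in \paths_{U_b \cup \{x_b\}}(x_b, b^*)$. By the geometric disjointness, the three pieces share vertices only in $\{x_a, x_b\}$.

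\textbf{Loop-erasure identification.} The disjointness ensures that the ``last-visit'' times $s_0 < s_1 < \cdots$ defining $\text{LE}(\omega)$ proceed sequentially through $\omega_1$, then $\omega_2$, then $\omega_3$: interior vertices of $\omega_1$ have their last visits inside $\omega_1$, so the early $s_i$ lie in the time-range of $\omega_1$ and trace $\text{LE}(\omega_1)$ exactly; this self-avoiding sub-path ends at $x_a$ (which may appear at an $s_i$ located in $\omega_2$ if $x_a$ is re-visited there, but this does not affect $x_a$'s position in $\text{LE}(\omega)$). Since $\text{LE}(\omega_1) \subset U_a \cup \{x_a\} \subset (A\setminus C_{r-4s}) \cup \{x_a\}$, the first visit of $\eta := \text{LE}(\omega)$ to $C_{r-4s}$ is $x_a$; symmetrically (after an analogous analysis from the $\omega_3$ end) the last visit is $x_b$. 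Therefore $|a^*_{r-4s} - b^*_{r-4s}| = |x_a - x_b| \geq 3|a^*-b^*| \geq 2|a^*-b^*|$ and $\eta \in E$.

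\textbf{Measure estimate.} The total $p$-measure of the class factors as
\[
\sum p(\omega) = G_{U_a\cup\{x_a\}}(a^*,x_a)\cdot G_{C_{r-5s}\cup\{x_a,x_b\}}(x_a,x_b)\cdot G_{U_b\cup\{x_b\}}(x_b,b^*).
\]
Revisiting the proof of Lemma~\ref{sep28.lemma1}, the first and third factors are each bounded below by a constant multiple of $e_a(A,a,b)$ and $e_b(A,b,a)$ respectively---the tubes $U_a, U_b$ of width $\asymp s$ carry a positive fraction of the harmonic measure defining these escape probabilities. The middle factor is bounded below by a universal positive constant, using Harnack's inequality in $C_{r-5s}$ together with $|x_a - x_b| \asymp s$ and $s \leq r/10$. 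Multiplying, the $p$-measure of the constructed class is $\geq c'\,e(A,a,b)$, yielding $\Prob_{A,a,b}[E] \geq c$ as desired. The main obstacle is the loop-erasure identification: carefully verifying the last-visit bookkeeping so that the initial segment of $\eta$ really is $\text{LE}(\omega_1)$, in particular when $x_a$ is re-visited by $\omega_2$. A secondary point is the geometric construction of disjoint tubes $U_a, U_b$ of the correct scale, which is possible for every $(A,a,b) \in \soup_r$ because $s \leq r/10$ gives ample room.
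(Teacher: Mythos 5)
Your overall strategy (renormalize by $H_{\partial A}(a,b)\asymp e(A,a,b)$ and exhibit an explicit class of excursions whose loop-erasures lie in the event) is the same one the paper sketches, but the quantitative execution fails at the measure estimate, and this is a genuine gap rather than a detail. By forcing the walk through the two \emph{specific} lattice points $x_a,x_b\in\partial C_{r-5s}$ you pay factors that you have not accounted for. First, $x_a$ and $x_b$ are adjacent to the complement of the region $C_{r-5s}\cup\{x_a,x_b\}$ and are at mutual distance at least $3|a^*-b^*|\asymp s$, so by the standard near-boundary estimate $G_{C_{r-5s}\cup\{x_a,x_b\}}(x_a,x_b)\asymp |x_a-x_b|^{-2}=O(s^{-2})$; it is \emph{not} bounded below by a universal constant (Harnack does not help for two boundary-adjacent points far apart). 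Second, $G_{U_a\cup\{x_a\}}(a^*,x_a)$ is not comparable to $e_a(A,a,b)$: getting a positive fraction of the escape harmonic measure into the tube is fine, but the walk must then hit the single point $x_a$ from distance of order $s$ inside a width-$\asymp s$ tube, which costs an extra factor $\asymp 1/s$ (write $G_U(a^*,x_a)=\Prob^{a^*}\{\hbox{hit }x_a\hbox{ before leaving }U\}\,G_U(x_a,x_a)$ with $G_U(x_a,x_a)\asymp 1$). Altogether your class has $\hat P$-measure $O\left(e(A,a,b)\,s^{-4}\right)$, so the argument only yields $\Prob_{A,a,b}[E]\geq c\,s^{-4}$, not a constant. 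The correct route (and what the paper's sketch does, mirroring Lemma~\ref{sep28.lemma1}) is to \emph{sum} over the hitting points: decompose as $\sum_{(z,w)}h_a(z)h_b(w)\cdot(\hbox{measure of suitable middle walks }z\to w)$ and impose on the middle walk only constant-probability geometric requirements (enter $C_{r-4s}$ through one prescribed sector, exit through a far-away sector, etc.), never the passage through prescribed single points.

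Two secondary problems: (i) your containments are inconsistent — you take $U_a\subset A\setminus C_{r-5s}$ but later assert $\mathrm{LE}(\omega_1)\subset(A\setminus C_{r-4s})\cup\{x_a\}$; a channel from $a^*$ down to $x_a\in\partial C_{r-5s}$ necessarily crosses the annulus $C_{r-4s}\setminus C_{r-5s}$, so the first entrance of $\eta$ into $C_{r-4s}$ is not $x_a$ but only within $O(s)$ of it, and your margin $|x_a-x_b|\geq 3|a^*-b^*|$ is then not clearly enough; you need to build in more separation (which is available since $\partial C_{r-5s}$ has diameter $\geq r\geq 10s$). (ii) The identity $\sum p(\omega)=G_{U_a\cup\{x_a\}}(a^*,x_a)\,G_{C_{r-5s}\cup\{x_a,x_b\}}(x_a,x_b)\,G_{U_b\cup\{x_b\}}(x_b,b^*)$ presumes each concatenation is unique, which fails when the junction points are revisited; for a lower bound this is fixable by a canonical first-hit splitting, but as written it is not an equality. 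Your loop-erasure bookkeeping (concatenation of loop-erasures when consecutive pieces meet only at the junction vertex) is essentially correct; the proof collapses at the measure estimate, not there.
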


\begin{proof} [Sketch of proof]  As in the previous
proof we first consider a random walk up to the
time that it gets distance $s/3$ from $a$ and $b$.
We consider $(z,w) \in U$ and consider random
walk paths from $z$ to $w$ whose loop-erasure
will stay in $C_{r-4s}$ and satisfy
$|a_{r-4s} ^* -
 b_{r-4s} ^*|
    \geq  2 \, |a^* - b^*|$.  We could give a
specific event, but we leave this to the reader.

\end{proof}

   In order to prove separation of the LERW, it is useful to consider
 an event defined in terms of the random walk from
 $a$ to $b$ in $A$.  Suppose $(A,a,b) \in \soup_{s}$
 with $3r/2 \leq s \leq 2r$.
 Consider the set of random walk paths
 \[  \omega = [\omega_0 , \omega_1,\ldots,
  \omega_{n-1}, \omega_{n}  ] \in \paths_A(a^*,b^*), \]
  satisfying the following conditions.
  \begin{itemize}
  \item $\omega \cap C_r \neq \eset$.
  \item  $\omega \cap C_r \subset \{x+iy: |y|\leq r/10\}$.
   \item  Let $j_-,j_+,k_-,k_+,l_-,l_+$ be the first and
   last visits to     $ C_r \cap \{\Re(z) < -   r/3
    \}   $, $ C_r \cap \{\Re(z) =0
    \}, C_r \cap \{\Re(z) >   r/3
   \} $, respectively.  Then
   \[   0 <
    j_- \leq j_+  \leq k_- \leq k_+ \leq l_- \leq l_+ 
    < n. \]
 Implicit in this condition
 is the fact that $\omega$   visits
 all three of $C_r \cap \{\Re(z) < -   r/3
    \}   $, $ C_r \cap \{\Re(z) =0
    \}, C_r \cap \{\Re(z) >   r/3
   \} $. Note that if $r \geq 3$, then we would
   also have $j_+ < k_-, k_+ < l_-$.
 \item
 \[   [\omega_0, \ldots,\omega_{j_+}]
   \cap [\omega_{k_-},\ldots,\omega_n] = \eset.\] 
   \end{itemize}
   
In particular, the walk $\omega$ enters 
  $ C_r \cap \{x+iy: |y|\leq r/10\}$ from the left
  and leaves from the right.  We let $J_r$ be
  the set of $\omega$ such that either $\omega$
  or the reversal of $\omega$ satisfies the
  conditions above.  An important fact that
  is easy to verify is the following:
 \begin{itemize}
 \item If $\omega
  \in J_r,$ then $LE(\omega) \in J_r$. 
  \end{itemize}  
  
With the aid of
Lemma \ref{lemma.masson} and
the invariance principle (by considering
an appropriate event for Brownian motion and approximating
by random walk), it is not hard to show the following.

\begin{lemma}  For every $\delta > 0$, there exists $ 
c_\delta > 0$, such that if $(A,a,b) \in \soup_{s}$ with
$3r/2 \leq s \leq 2r$ and $|a-b| \geq \delta \, r$, then
\begin{equation}  \label{sep28.1}
      \Prob_{A,a,b}(J_r) \geq c_\delta . 
      \end{equation}
\end{lemma}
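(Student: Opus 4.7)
The plan is to use that $J_r$ is preserved by loop erasure, so
\[
\Prob_{A,a,b}(J_r) = \hat P_{A,a,b}(J_r)/H_{\partial A}(a,b)
\]
equals the probability that the simple random walk excursion from $a^{*}$ to $b^{*}$ in $A$ belongs to $J_r$. I would then construct an explicit Brownian analog $\widetilde J_r$ of $J_r$ in $D_A$, lower bound its Brownian excursion probability by an elementary harmonic-measure argument, and transfer to random walk via the invariance principle for excursions together with Lemma \ref{lemma.masson} for near-boundary control. After a rotation of coordinates we may assume $\Re a^{*} \le -c_\delta r$ and $\Re b^{*} \ge c_\delta r$, using $|a^{*}-b^{*}| \asymp |a-b| \ge \delta r$ and $(A,a,b) \in \soup_s$.

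Define $\widetilde J_r$ as the set of continuous paths $\tilde\omega: [0,T] \to \overline{D}_A$ from $a$ to $b$ such that: (i) $\tilde\omega \cap C_r \subset \{|\Im z| < r/10\}$; (ii) $\tilde\omega$ visits, in chronological order, the open sets $U_1 = \{\Re z < -r/3,\, r/20 < \Im z < r/10\}$, $U_2 = \{|\Re z| < r/100,\, -r/10 < \Im z < -r/20\}$, and $U_3 = \{\Re z > r/3,\, -r/10 < \Im z < -r/20\}$; (iii) $\tilde\omega$ up to its last visit to $U_1$ lies in $\{\Im z > r/20\}$, while $\tilde\omega$ from its first hit of $\{\Re z = 0\}$ onward lies in $\{\Im z < -r/20\}$; and (iv) after first visit to $U_3$, $\tilde\omega$ stays in $\{\Re z > 0\}$ until reaching $b$. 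Conditions (iii) and (iv) produce (after discrete approximation) the non-intersection property and the ordering $k_+ \le l_-$ from the definition of $J_r$. Using $(A,a,b) \in \soup_s$ with $s \ge 3r/2$, the annulus $C_s \setminus C_r \subset A$ has width $\ge r/2$, providing room for the upper-to-lower transition to happen outside $C_r$. Applying the strong Markov property at five hitting times (first entry of $C_r$ in the upper-left, $U_1$, exit of $C_r$ through the upper arc, re-entry of $C_r$ from the lower-right, $U_2$, and $U_3$) combined with elementary harmonic-measure lower bounds in simply connected domains containing disks of radius $\asymp r$, I obtain $\nu_{a,b}(\widetilde J_r) \ge c'_\delta > 0$.

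To transfer to random walk, couple the simple random walk excursion from $a^{*}$ to $b^{*}$ in $A$ with a Brownian excursion from $a$ to $b$ in $D_A$ so that the two paths agree up to sup-norm error $O(r^{1/2+o(1)})$ on the bulk with probability $1-o(1)$, and use Lemma \ref{lemma.masson} to handle the first and last few steps near $a^{*}$ and $b^{*}$ (these give the needed uniform lower bounds on the harmonic measure of the ``upper-left'' region from $a^{*}$ and the ``lower-right'' region to $b^{*}$). Since $\widetilde J_r$ is open in sup norm with margin of order $r$ in each defining condition, sup-norm closeness ensures the discrete path lies in $J_r$; in particular the separation between $\{\Im z > r/20\}$ and $\{\Im z < -r/20\}$ automatically produces the non-intersection condition on lattice points. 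The main obstacle is obtaining the strong-invariance coupling uniformly in the irregular domain $A$; this is handled because the hypothesis $(A,a,b) \in \soup_s$ with $s \asymp r$ guarantees $D_A$ is sandwiched between round disks of radii $\asymp r$ near the relevant regions, giving enough regularity for a uniform Beurling-type coupling estimate.
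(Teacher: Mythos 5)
Your overall route --- lower-bounding the plain random-walk excursion probability of $J_r$ (note that since loop erasure preserves $J_r$, the LERW probability is \emph{at least}, not equal to, the excursion probability, but that is the right direction), constructing a Brownian analogue, and transferring via the invariance principle with Lemma~\ref{lemma.masson} handling the endpoints --- is exactly the proof the paper sketches. The genuine gap is at your very first reduction. The event $J_r$ is defined relative to the \emph{fixed} coordinate axes (the band $\{|y|\le r/10\}$ and the sets $C_r\cap\{\Re z<-r/3\}$, $C_r\cap\{\Re z=0\}$, $C_r\cap\{\Re z>r/3\}$), and the bound must hold uniformly over all $(A,a,b)\in\soup_s$ with $|a-b|\ge \delta r$, e.g.\ with $a^*$ and $b^*$ both near the top of the circle or both near the bottom. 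Neither $J_r$ nor the lattice law is invariant under general rotations, so ``after a rotation of coordinates we may assume $\Re a^*\le -c_\delta r$ and $\Re b^*\ge c_\delta r$'' is not an admissible reduction: rotating the configuration replaces $J_r$ by a different event. Your construction genuinely relies on this normalization (condition (iii) already fails at time $0$ whenever $\Im a^*\le r/20$, and (iv) needs $\Re b^*>0$), so as written it covers only a special class of configurations. The correct construction must route the excursion from an \emph{arbitrary} $a$ through the annulus $C_s\setminus C_r\subset A$ into the left end of the band and from the right exit back to an arbitrary $b$, keeping the incoming and outgoing corridors disjoint; it is exactly when $|a-b|$ is only of order $\delta r$ that these corridors must squeeze past each other, and that is where the $\delta$-dependence of $c_\delta$ really comes from --- in your write-up $\delta$ enters only through the invalid rotation.

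The transfer step is also not sound as stated: $\widetilde J_r$ is \emph{not} open in sup norm with margin of order $r$. Your condition (iii) is a hitting-time condition: a Brownian path may approach the line $\{\Re z=0\}$ inside $C_r$ at height $\Im z>-r/20$ to within an arbitrarily small distance without touching it, making its first true hit only later, below $-r/20$; a coupled lattice path within sup-distance $o(r)$ can then hit $C_r\cap\{\Re z=0\}$ at the earlier near-approach, which ruins both the ordering $j_+\le k_-$ and the disjointness of $[\omega_0,\dots,\omega_{j_+}]$ and $[\omega_{k_-},\dots,\omega_n]$, so the coupled walk need not lie in $J_r$. (Moreover, read literally, (iii) forces the path to stay in $\{\Im z<-r/20\}$ all the way to $b$, which is impossible whenever $\Im b^*\ge -r/20$, so your event is empty for many admissible configurations.) To fix this, define the Brownian event through explicit open corridors of width comparable to $r$ --- e.g.\ stay in a tube inside $\{\Im z>r/20\}$ until reaching a window in the left third, cross the band inside a tube meeting $\{\Re z=0\}$ only well below $-r/20$, exit through a right window and return to $b$ in a tube disjoint from the incoming one --- so that \emph{any} path within a small fixed fraction of $r$ in sup norm of a path in the event automatically satisfies the discrete first/last-visit and non-intersection requirements. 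Finally, the uniform strong coupling of excursions in an arbitrary $A$ with error $O(r^{1/2+o(1)})$ that you invoke is both unnecessary and not available off the shelf; after the strong Markov property reduces matters to crossing events inside $C_{3r/2}$ together with the endpoint estimates from Lemma~\ref{lemma.masson} and Harnack, the ordinary invariance principle suffices, which is what the paper intends.
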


We emphasize that the
constant $c_\delta$ depends strongly on $\delta$ and  
  goes to zero with $\delta$.  The separation
lemma is established by showing that there exists $c > 0$
such that if $(A,a,b) \in \mathcal{J}_{2r}$, then
\[      \Prob_{A,a,b}(J_r \mid I_r) \geq c  . \]
Here the constant is independent of $\delta$ but we are
only estimating a conditional probability.

\begin{figure}[t]
\centering
  \def\svgwidth{0.6\columnwidth}
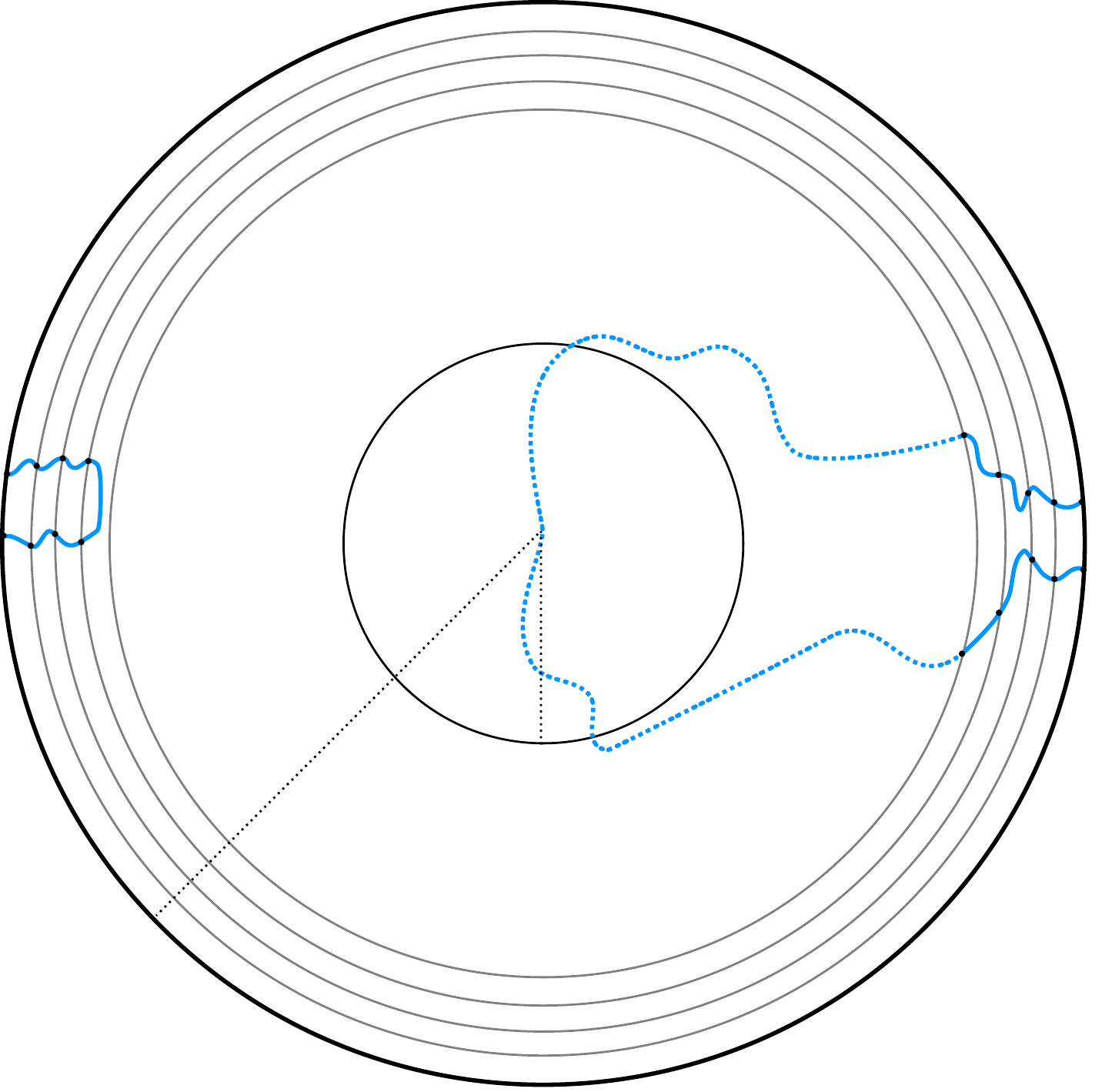
\caption{Sketch for the proof of the separation lemma for LERW. It shows two realizations of the LERW sub-Markov chain. In both cases the two paths start at distance $s=u2^{-m-1}$ and $u \asymp r$. On the left, the chain has ``hooked up'' and terminated before reaching $C_{r}$ and before separating by a factor $2$. On the right, the paths separate and eventually reach $C_{r}$ (and hit $0$). At each step, if the paths have not separated, there is a probability $c > 0$ that they hook up so the event that the paths reach $C_{r}$ and have \emph{not} separated by a factor of $2$ in $s m^{2}$ steps is $O(e^{-c m^{2}})$. The probability that the paths reach $C_{r}$ is at least a constant time $e^{-\beta m}$. Hence the paths are likely to separate in the conditional measure.}\label{fig:separation2}
\end{figure}

To prove the separation lemma, we
  start with $(A_{u},a_{u},b_{u}) \in \soup_{u}$ where
$u$ is a positive integer,  and
consider the (reverse time) subMarkov chain
\[ (A_{u},a_{u},b_{u}), (A_{u-1},a_{u-1},b_{u-1}),
  (A_{u-2},a_{u-2}, b_{u-2}), \ldots . \]
 induced by the measure $\Prob_{u} := \Prob_{A_{u},a_{u},b_{u}}$. 
It is a sub-Markov chain because the process is
killed at step $k$ on the event $I_{k+1} \setminus
I_k$. In words, unless the chain terminates, going from step $k$ to $k+1$ we grow both ends of the LERW one at a time until both paths reach distance $u-(k+1)$ from $0$. It stops at $(A_{1},a_1,b_1)$; the path is
still ``alive'' at that time if and only if $0 \in \eta$.
The path is growing at both the front and the back.
 The domain Markov property implies that
on the event $I_{s}$, the conditional distribution
of the remainder of the LERW is given by
$\Prob_{A_s,a_s,b_s}$. See Figure~\ref{fig:separation2}.
Let
\[       \sigma_{\delta}
 = \sigma_{\delta,u}  = \min\{k: |a_{u-k} - b_{u-k}|
      \geq \delta  \}. \]
 We claim that it suffices to show the following:
 \begin{itemize}
 \item  There exists $ 0 < \epsilon < 1/4$ such that
if $(A,a,b) \in \soup_{2r}$, then
\begin{equation}  \label{sep28.2}
   \Prob_{A,a,b}\left \{\sigma_{\epsilon r} \leq r/2
 \mid I_r\right\}  \geq \epsilon  . 
 \end{equation}
 That is, uniformly with probability at least $\ee$ the chain (conditioned on reaching $C_{r}$) separates by a factor of $\ee$ before reaching $C_{u-r/2}$. \end{itemize}
 Indeed, if we have this, since $J_r \subset I_r$,
 \eqref{sep28.1} and the domain Markov property
  imply that
\[
   \Prob_{A,a,b}\left\{ J_r \mid I_r \right\} \geq \epsilon \, c_\epsilon. 
\]
 
 In order to establish \eqref{sep28.2} we prove the following.
 \begin{itemize}
\item  There exists $c < \infty, \beta > 0$ such that
if $(A,a,b) \in \soup_u$ with $3r/2 \leq u \leq 2r$
and  $|a-b| \geq u 2^{-(m+1)}=:s,$ 
then
\begin{equation}  \label{sep28.3}
    \Prob_{A,a,b}\left \{\sigma_{2s} \geq 2s m^2  \, 
 \mid I_r\right\}  \le  c \, e^{-\beta m}. 
 \end{equation}
 \end{itemize}
 Indeed, choosing $\ee > 0$ so that $\sum_{ k = -\log \ee}^{\infty} k^2 2^{-k}< 1/4$, continued use of 
 \eqref{sep28.3} and the
 domain Markov property gives \eqref{sep28.2}.  To get \eqref{sep28.3}
 one proves two estimates,
\begin{equation}  \label{sep28.34}
   \Prob_{A,a,b} [I_r]  \geq   c_1 \, e^{- \beta m},
   \end{equation}
\begin{equation}\label{sep28.4}
    \Prob_{A,a,b}\left[\left \{\sigma_{2s} \geq 2s m^2 \,
 \right\}
 \cap  I_r\right]  \leq c_2 \, e^{-2  \beta m}, 
 \end{equation}
 for some $c_1,c_2$.
 
 For \eqref{sep28.34}, we can actually prove the stronger
 estimate   $\hat P_{A,a,b} [I_r]  \geq   c_1 \, e^{- \beta m}$.
To see this, we can either use estimates on the probability that random walk stays in a cone
or just repeated application of Lemma \ref{dec29.lemma1}.
For \eqref{sep28.4}, we use the final assertion of Lemma
\ref{sep28.lemma1} to see that if $|a-b| \leq u2^{-m}$, then
there is a positive probability that that random walk (and consequently the LERW) will
not hit $C_{u(1 - c'2^{-m})}$ for some constant $c'$. Instead the two ends of the LERW will ``hook up'' and the chain will terminate at this point.  By iterating this,
we can see that in $m^2$ attempts, except for an
event of probability $\exp\{-c m^2\} = o(e ^{-2\beta m})$, on at least one of the $m^{2}$ attempts
the random path will either fail to proceed another distance
$c'u2^{-m}$ inward or the endpoints will separate by a factor at least $2$.

\subsection{Two-point estimate: proof of Theorem~\ref{lemma.twopoint}}
   \label{twopointsec}\
    Given $(A,a,b)$ and $0,\zeta \in A$, let us write $\{a\rightarrow 0 \rightarrow \zeta 
  \rightarrow b \}$ for the event
 that the LERW $\eta$ from $a$ to $b$ first goes through $0$ and then later
 through $\zeta$.   Our goal is to show that
\begin{equation}  \label{sep29.5}
  \hat P_{A,a,b}\{a\rightarrow 0 \rightarrow \zeta 
  \rightarrow b\}  \leq \frac{c \, G(0,\zeta)\, H_A(0,a) \,
    H_A(\zeta,b)}{r^{3/4} \, s^{3/4} }. 
    \end{equation}
 Once we have this, we can conclude the proof of Theorem~\ref{lemma.twopoint} by interchanging the role of
 $a$ and $b$.  Before going through the details, let us quickly
 sketch the idea to show where the terms on the right-hand
come from, See Figure~\ref{fig:2ptfig}.  If $\eta$ is a SAW from $a$ to $b$ going
 through $0$ and then $\zeta$, we can write $\eta$ uniquely 
 as
 \[    \eta = \eta^- \oplus \eta^0 \oplus \tilde \eta
 \oplus \eta^\zeta \oplus \eta^+ \]
where $\eta^0$ is a
SAW starting and ending on $\partial C_{r/40}$ and otherwise staying in $C_{r/40}$, and going through $0$. Similarly, $\eta^\zeta$  is a SAW starting and ending on $\partial C_{s/40}(\zeta)$, staying in $C_{s/40}(\zeta)$, and going through $\zeta$.
By Theorem \ref{BLV}, the measure of possible choices for $\eta^0,\eta^\zeta$
are $O(r^{-3/4})$ and $O(s^{-3/4})$, respectively. Making this precise is what requires most of the work in this section. In particular, we will have to be able to compare several different loop-erased measures on walks in the discs around $0,\zeta$.

\begin{figure}[t]
\centering
  \def\svgwidth{0.7\columnwidth}
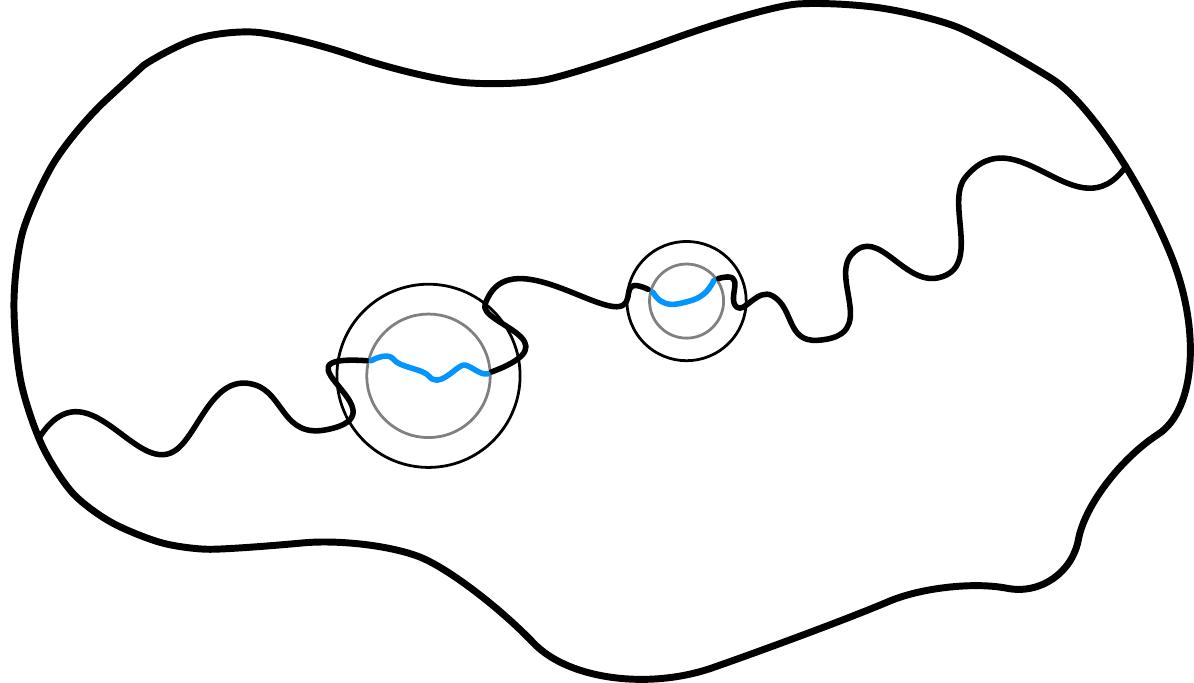
\caption{The proof of the two-point estimate \eqref{sep29.5}  splits the path into $5$ sub-paths. The contributions of $\eta^{-}, \tilde{\eta}, \eta^{+}$ are estimated by random walk quantities: $H_{A}(a,0)$, $G_{A}(0,\zeta)$ and $H_{A}(\zeta,b)$, respectively. Given $\eta^{-}, \tilde{\eta}, \eta^{+}$, the remaining $\eta^{0}$ and $\eta^{\zeta}$ are paths in $C_{r/40}$ and $C_{s/40}(\zeta)$ and going through $0$ and $\zeta$ respectively. We want to say that the LERW measures on these paths are $O(r^{-3/4})$ and $O(r^{-3/4})$. To get the latter estimates we need to estimate the escape probability of a random walk in a disc, given a LERW from the center of the disc.}\label{fig:2ptfig}
\end{figure}

We then have to multiply by the measure of possible choices
for $\eta^- , \tilde \eta,
  \eta^+ $ and this gives terms of  $H_A(0,a),
   G(0,\zeta), H_A(\zeta,b)$, respectively.  Our arguments
 do not use the fact that there are avoidance constraints
for the  paths $\eta^- , \tilde \eta,
  \eta^+ $, and this is why we only get an upper bound.
If $0,\zeta$ are in the interior, then our bound tends to be correct
up to a multiplicative constant, while if $0$ or $\zeta$ is
near the boundary, our estimate  is not sharp (but does
suffice for the needs in this paper).  

We start by focusing on the SAW $\eta^0$.

\begin{itemize}
\item{If $A$ is a finite simply connected subset of $\Z^2$
containing the origin
and $a \in \partial_e A$, we let $\saws_{A,0,a}$ denote the
set of SAWs starting at the origin, ending with
$a$, and otherwise staying in $A$.  We write
$\hat P_{A,0,a}$ for the usual loop-erased measure on such paths
(with total mass $H_A(0,a)$) and $\Prob_{A,0,a}$
for the normalized probability measure.}
\item{ We write
$\saws_{0,r}$ for the set of SAWs starting at the origin,
ending on $\partial C_r$ and otherwise in $C_r$.  In other
words,
\[\saws_{0,r} = \bigcup_{a \in \partial_e C_r}  \saws_{C_r,0,a}.\]
If $\eta \in  \saws_{0,r}$, we write $\eta^*$
for the terminal vertex, that is, the point in $\partial C_r$
at which the walk terminates.}
\end{itemize}
We will consider several related probability measures on
$\saws_{0,r}$, and eventually ``escape probabilities'' related to these measures, see Figure~\ref{fig:nonintersection}.
\begin{itemize}
\item{
The first corresponds to the usual LERW
in the disk $C_r$ stopped at the boundary:
  take 
  simple random walk starting at the origin, stop
  the walk when it reaches $\partial C_r$, and 
  then erase the loops.  We will write
$\pi_{r}$ for the induced probability measure
on paths, for which we know that \cite[(9.5)]{LL}
\begin{equation}  \label{sep29.1}
 \pi_{r}(\eta) = 4^{-|\eta|} \, F_\eta(C_r) . 
 \end{equation}
Here $|\eta|$ denotes the number of steps of $\eta$ and
$\log F_\eta(C_r)$ is the random walk loop measure of
loops in $C_r$ that intersect $\eta$.  Here we use
the ({\em rooted}) loop measure $m$   
defined by $m(l) = 
 |l|^{-1} \, 4^{-|l|}$  for each rooted loop
$l$ with $|l| > 0$ (we could also use the unrooted loop measure,
but to be definite we will choose the rooted measure).}
\item{More generally, if $C_r \subset A$, we write
$\pi_{r,A}$ for the probability measure 
on $\saws_{0,r}$ obtained by starting a
simple random walk at the origin, stopping when it reaches $\partial A$,
erasing loops, and then considering the resulting
SAW up to the first visit of $C_r$. 
 We
write $\pi_{r,s}$ for  $\pi_{r,C_s}$.  Under this
definition, $\pi_r = \pi_{r,r}$.   As in \eqref{sep29.1},
we can write 
\[  \pi_{r,A}(\eta)
= 4^{-|\eta|} \, F_\eta(A) \, e_A(\eta) , \]
where \[e_A(\eta) = H_{\partial (A \setminus \eta)}
 (\eta_*,\partial A) = \sum_{a \in \partial A} e_A(\eta;a)\] is the (escape) probability that a
simple random walk starting at 
$\eta^*$ reaches $\partial A$ without returning to $\eta$ and  
\[
e_A(\eta;a) = H_{\partial (A\setminus \eta)}(\eta^*,a).
\]
By definition, $e_{A}(\eta) = 1$
if $\eta^* \in \partial A$.}
\item{ Similarly, if
$a \in \partial A$, we write
$\pi_{r,A,a}(\eta)$ for the corresponding
probability law obtained as in the previous bullet if we replace
the simple random walk with a random walk $h$-process conditioned
to leave $A$ at $a$.
In this case,
\begin{equation}  \label{sep29.2}
  \pi_{r,A,a}(\eta)
= 4^{-|\eta|} \, F_\eta(A) \, e_A(\eta;a) \, H_A(0,a)^{-1}.
\end{equation}}
\end{itemize}
 
The measures $\pi_{r}$ and $\pi_{r,A}$
can be significantly different especially at the terminal
point of the walk.  However, as we show in the next lemma,
  the measures $\pi_{r,A}$ and $\pi_{r,A,a}$
are comparable to $ \pi_{r,2r}$  provided that $C_{2r} \subset A$.

\begin{lemma}   \label{comppi}
 There exist $0 < c_1 < c_2 < \infty$
such that if $C_{2r} \subset A$ and $a \in \partial A$,
then for all $\eta \in \saws_{0,r}$,
\[  c_1\,  \pi_{r,2r}(\eta) \leq
     \pi_{r,A,a}(\eta)  \leq c_2 \, 
     \pi_{r,2r}(\eta), \]
\[  c_1\,  \pi_{r,2r}(\eta) \leq
     \pi_{r,A}(\eta)  \leq c_2 \, 
     \pi_{r,2r}(\eta). \]
\end{lemma}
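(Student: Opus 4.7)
The strategy is to factor each of the two ratios, via the formula $\pi_{r,2r}(\eta) = 4^{-|\eta|} F_\eta(C_{2r}) e_{C_{2r}}(\eta)$ together with the formulas already displayed for $\pi_{r,A}$ and $\pi_{r,A,a}$, into a \emph{loop factor} $F_\eta(A)/F_\eta(C_{2r})$ times an \emph{escape factor}, to show that each is uniformly bounded above and below, and then to conclude using that all three measures are probability measures on $\saws_{0,r}$. Explicitly,
\[
\frac{\pi_{r,A}(\eta)}{\pi_{r,2r}(\eta)} = \frac{F_\eta(A)}{F_\eta(C_{2r})}\cdot\frac{e_A(\eta)}{e_{C_{2r}}(\eta)}, \qquad \frac{\pi_{r,A,a}(\eta)}{\pi_{r,2r}(\eta)} = \frac{F_\eta(A)}{F_\eta(C_{2r})}\cdot\frac{e_A(\eta;a)}{H_A(0,a)\,e_{C_{2r}}(\eta)}.
\]

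For the loop factor, $\log[F_\eta(A)/F_\eta(C_{2r})]$ is the rooted loop measure of loops contained in $A$ that intersect $\eta$ but exit $C_{2r}$. Since $\eta \subset C_r \cup \partial C_r$, any such loop must cross the annulus $C_{2r}\setminus C_r$, and the rooted loop measure of loops crossing an annulus of bounded modulus is a universal constant (a standard loop-measure estimate). Hence $F_\eta(A)/F_\eta(C_{2r}) \asymp 1$ uniformly in $\eta$ and $A$.

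For the escape factor $e_A(\eta)/e_{C_{2r}}(\eta)$, the strong Markov property applied at the first hit of $\partial C_{2r}$ before $\eta$ gives
\[
e_A(\eta) = \sum_{z \in \partial C_{2r}} \mu_{\eta^*}(z)\,\Prob^z\{\tau_A < \tau_\eta\}, \qquad e_{C_{2r}}(\eta) = \sum_{z \in \partial C_{2r}} \mu_{\eta^*}(z),
\]
where $\mu_{\eta^*}(z) := \Prob^{\eta^*}\{S(\tau_{C_{2r}}) = z,\,\tau_{C_{2r}} < \tau_\eta\}$. The function $z \mapsto \Prob^z\{\tau_A < \tau_\eta\}$ is SRW-harmonic on $A\setminus\eta$, and since every point of $\partial C_{2r}$ is at distance at least $r$ from $\eta$, Harnack's inequality gives $\Prob^z\{\tau_A < \tau_\eta\} \asymp q_A(\eta) := \Prob^{z_0}\{\tau_A < \tau_\eta\}$ for a fixed reference $z_0 \in \partial C_{2r}$ and all $z \in \partial C_{2r}$. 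Consequently $e_A(\eta)/e_{C_{2r}}(\eta) \asymp q_A(\eta)$. The inclusion $\eta \subset C_r\cup\partial C_r$ provides the easy lower bound $q_A(\eta) \ge \tilde q_A := \Prob^{z_0}\{\tau_A < \tau_{C_r\cup\partial C_r}\}$. For the matching upper bound $q_A(\eta) \le C\tilde q_A$ I will invoke the discrete Beurling projection estimate: because $\eta$ is a connected set joining $0$ to $\partial C_r$, hitting probabilities for $\eta$ dominate (up to a universal constant) the analogous probabilities for the straight radial slit from $0$ to $\partial C_r$, and those in turn are comparable to the corresponding probabilities for the full disk by a direct harmonic-measure comparison on $A\setminus[0,r]$ versus $A\setminus(C_r\cup\partial C_r)$. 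Thus $e_A(\eta)/e_{C_{2r}}(\eta) \asymp \tilde q_A$ for an $\eta$-independent quantity $\tilde q_A$.

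To finish, summing $\pi_{r,A}(\eta)/\pi_{r,2r}(\eta) \asymp \tilde q_A$ over $\eta$ and using $\sum_\eta \pi_{r,A}(\eta) = \sum_\eta \pi_{r,2r}(\eta) = 1$ forces $\tilde q_A \asymp 1$ with universal constants, proving the second pair of inequalities. The argument for $\pi_{r,A,a}$ is structurally identical: the ratio $e_A(\eta;a)/H_A(0,a)$ is an $h$-transformed escape probability with $h = H_A(\cdot,a)$, so Harnack on $\partial C_{2r}$ applies to the quotient in the same way, and the normalization $\sum_\eta \pi_{r,A,a}(\eta) = 1$ again absorbs all $A$- and $a$-dependent constants. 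The main obstacle is the Beurling-type upper bound on $q_A(\eta)$; it is the only step that genuinely uses the connectedness of $\eta$ as a SAW from $0$ to $\partial C_r$, and it rests on the discrete Beurling projection theorem for simple random walk (cf.\ \cite[Ch.~6]{LL}).
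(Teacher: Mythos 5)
Your overall skeleton (factorize $\pi_{r,A,a}(\eta)/\pi_{r,2r}(\eta)$ into an $\eta$-dependent and an $\eta$-independent part, then use that all the measures are probability measures on $\saws_{0,r}$ to absorb the $\eta$-independent part) is exactly the paper's strategy, but your treatment of the loop factor contains a genuine gap. The claim that $F_\eta(A)/F_\eta(C_{2r}) \asymp 1$ uniformly in $A$ is false, and the cited ``standard loop-measure estimate'' does not exist in the form you use it: the loops contributing to $\log\bigl[F_\eta(A)/F_\eta(C_{2r})\bigr]$ are all loops contained in $A$ that meet $\eta$ and are not contained in $C_{2r}$, and these are not confined to any annulus of bounded modulus. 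Concretely, every loop through the origin meets every $\eta \in \saws_{0,r}$, so
\[
\log\frac{F_\eta(A)}{F_\eta(C_{2r})} \;\geq\; \log\frac{G_A(0,0)}{G_{C_{2r}}(0,0)},
\]
which for $A = C_R$ grows like $\log\bigl(\log R/\log r\bigr)$; hence the ratio itself grows like $\log R/\log r$ and is unbounded. (Correspondingly, in that example your escape quantity $\tilde q_A \asymp 1/\log(R/r)$, so the normalization step would force the false conclusion $\tilde q_A \asymp 1$; the two divergences must cancel, and neither factor is individually bounded.) The bounded-measure statement that is actually available (the refinement of \cite[Lemma 11.3.3]{LL}) applies only to loops that do \emph{not} disconnect $0$ from $\partial C_r$. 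The missing idea — and the crux of the paper's proof — is the dichotomy: crossing loops that disconnect $0$ from $\partial C_r$ have unbounded measure but hit \emph{every} $\eta \in \saws_{0,r}$, so they contribute an $\eta$-independent factor $\lambda_1(C_r;A)$ that your normalization trick can absorb; crossing loops that do not disconnect have uniformly bounded measure and contribute a factor $\asymp 1$. Without this splitting your loop-factor step fails.

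There is also a secondary problem in the escape factor: Harnack comparability of $z \mapsto \Prob^z\{\tau_A < \tau_\eta\}$ (or of its $h$-transformed analogue) across $\partial C_{2r}$ is not justified, because the hypothesis is only $C_{2r} \subset A$, so $\partial A$ may come within unit distance of $\partial C_{2r}$; a point of $\partial C_{2r}$ adjacent to $\partial A$ has escape probability of order $1$ while points on the far side can have escape probability of order $1/\log(\diam A /r)$. This is fixable by working on an intermediate circle such as $\partial C_{3r/2}$, whose points are at distance $\gtrsim r$ from both $\eta$ and $\partial A$ so that a bounded Harnack chain in the annulus applies — this is essentially what the paper does, together with Lemma \ref{lemma.masson} to control the $\eta$-dependence of the hitting distribution; your further Beurling-projection step for removing the $\eta$-dependence of the reference value would also need to be carried out at that radius. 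But the principal missing ingredient is the disconnecting/non-disconnecting decomposition of the crossing loops.
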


\begin{proof}  We will prove the first displayed
expression which will imply the second since
\[    \pi_{r,A}(\eta)
  = \sum_{a \in \partial A} H_A(0,a) \, 
   \pi_{r,A,a}(\eta).\]
Since each $ \pi_{r,A,a}$ is a probability
measure, it suffices to find functions
$v_r,q_r$ such that $\pi_{r,A,a}$ factorizes up to constants:
\begin{equation}
\label{july18.1}
        \pi_{r,A,a}(\eta) \asymp v_r(\eta) \,
 \, q_r(A),
 \end{equation}
where $v_r$ depends only on $\eta$ and $r$, 
while  $q_r$ depends only on $A$ and $r$.

Recalling that $F_{\{0\}}(A) = G_A(0,0)$
and using the fact that every loop that hits
$0$ intersects every $\eta \in \saws_{0,r}$,
we get that 
\[ \pi_{r,A,a}(\eta)
= G_A(0,0) \, 4^{-|\eta|}  \, F_\eta(A\setminus \{0\}) 
\,  e_A(\eta;a) \, H_A(0,a)^{-1} .\]  From this we see 
that it is enough to factorize $F_\eta(A\setminus \{0\})$ and $e_A(\eta;a)$.

We start by looking at $F_\eta(A\setminus \{0\})$. We first partition the loops in $\Z^2 \setminus
\{0\}$ that intersect $C_{r}$ 
into three sets:
\begin{itemize}
\item $L_r^0$:  loops that lie entirely in $C_{2r}\setminus
\{0\} $;
\item  $L_r^1$:  loops in $\Z^2 \setminus \{0\}$
 that do not lie entirely  in $C_{2r}$
and disconnect $0$ from $\partial C_r$;
\item  $L_r^2:$ loops in $\Z^2 \setminus \{0\}$
that do not lie entirely in $C_{2r}$
and do not
 disconnect $0$ from $\partial C_r$. 
 \end{itemize}
We then write 
\[   F_\eta(A \setminus \{0\})
  = \prod_{j=0}^2 \lambda_j(\eta;A), \]
  where
  \[
    \lambda_j(\eta,A) = 
    \exp \left\{  m\{\ell \in L_r^j: \ell \subset
  A\setminus \{0\}
  , \ell \cap \eta \neq \eset \} \right\}.\]
  
Clearly, $\lambda_0(\eta;A) = \lambda_0(\eta;C_{r})$
for all $A \supset C_r$, so it depends only on $r,\eta$.
If  $\eta \in \saws_{0,r}$ and
$\ell \in L_r^1$, then $\ell \cap \eta 
\neq \eset$.  Hence,
\[   \lambda_1(\eta;A) = \lambda_1(C_r;A) = \exp \left\{  m\{\ell \in L_r^1: \ell \subset
  A\setminus \{0\}
  , \ell \cap C_r \neq \eset \} \right\},\]
which depends only on $r$ and $A$.

In \cite[Lemma 11.3.3]{LL} it is proved that 
 exists $c < \infty$ such that for each $r$,
 $m(L_r^2)  \leq c $.
Indeed,  the proof gives a  stronger fact:
there exists $c < \infty$ such that for each $r$
and each positive integer $k$, the loop measure
of loops in $C_{(k+2)r}$ that do not lie entirely
 in $C_{(k+1)r}$ and do not disconnect $0$
 from $\partial C_{r}$ is $O(k^{-2})$. 
Since $\lambda_2(\eta;A) \leq \exp
 \left\{ m(L_r^2) \right\}$, this implies that
 $\lambda_2(\eta;A) \asymp 1$.

Combining these estimates, we see that
  for all
 $\eta \in \saws_{0,r}$,
 \[     \frac{ F_\eta(A \setminus \{0\})}
 {F_\eta(C_{2r}\setminus \{0\})}
  \asymp  \frac{\lambda_1(C_r;A)}
  {\lambda_1(C_r; C_{2r})}.\]
Note that the right-hand side 
depends only on $r,A$. This gives the desired factorization of $F_\eta(A \setminus \{0\})$. 

It remains to consider $e_A(\eta;a)$. Using the Harnack inequality and
Lemma \ref{lemma.masson}, we can see that
$  e_{C_{3r/2}}(\eta) \asymp e_{C_{2r}}(\eta)$
and 
for every $A \supset C_{2r}$ and $a \in \partial A$,
\[    
e_A(\eta;a)   \asymp e_{C_{3r/2}}(\eta) \, H_A(0,a).\]
Combining all of this, we see that
$  \pi_{r,A,a}(\eta)$ is comparable to
\[\left[ 4^{-|\eta|} \,e_{C_{3r/2}}(\eta)
\, F_\eta(C_{2r}\setminus \{0\})
\right] \;\; \left[ G_A(0,0) \,\lambda_1(C_r;A)  
 \, \lambda_1(C_r; C_{2r} )^{-1}\right] .\]
This gives \eqref{july18.1}.\end{proof}

\begin{figure}[t]
\centering
  \def\svgwidth{0.8\columnwidth}
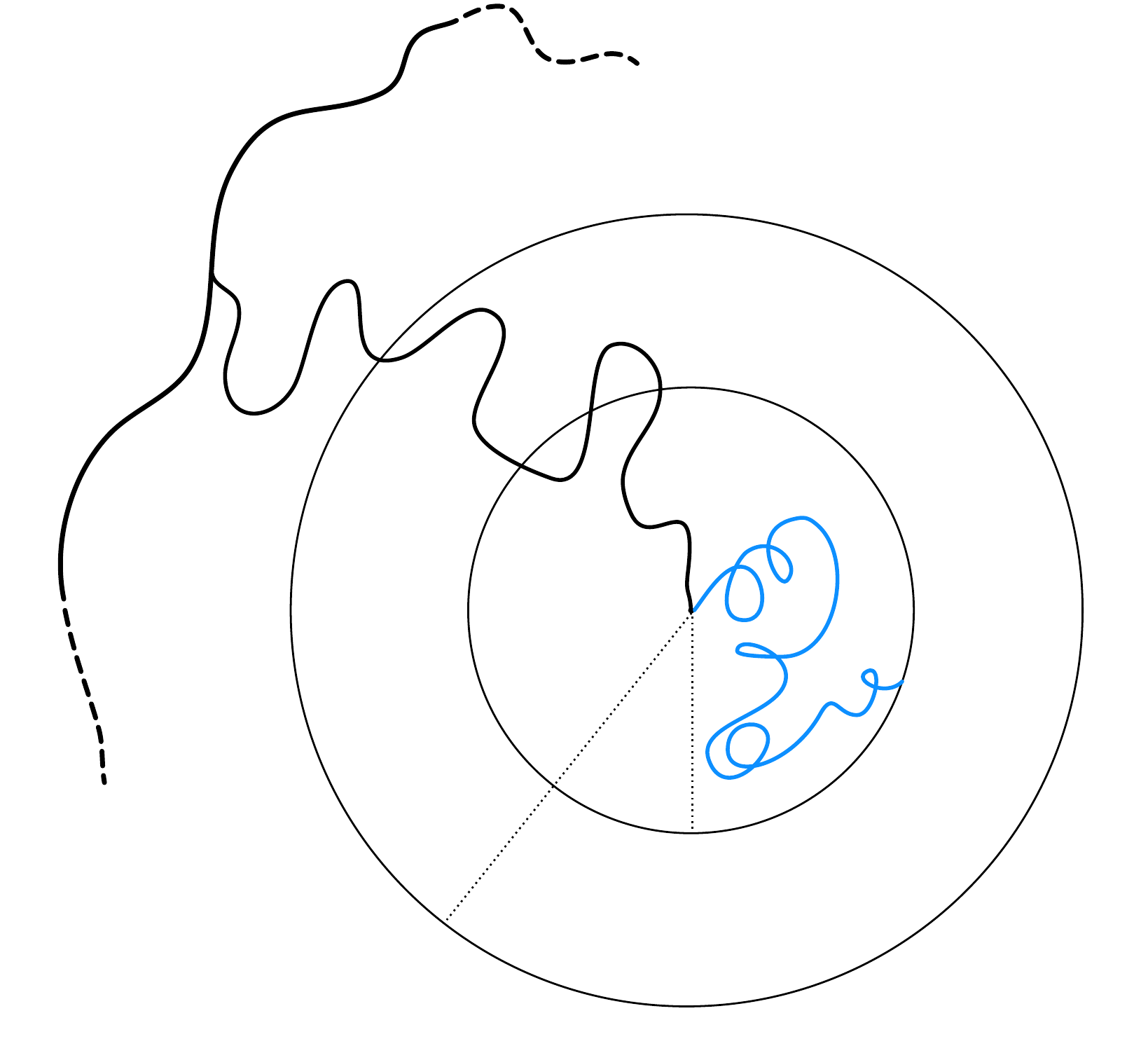
\caption{Given a LERW $\eta$ from $0$, we estimate the conditional probability that a random walk from $0$ in $C_{r}$ escapes to $\partial C_{r}$ without returning to $\eta$. We need to compare three different ``radial'' distributions on $\eta$. Theorem~\ref{BLV} gives the case when $\eta$ is LERW from $0$ to $\partial C_{r}$. Proposition~\ref{prop.nonintersection} compares this with the cases of LERW from $0$ to $\partial C_{2r}$ (stopped at $\partial C_{r}$) and LERW from $0$ to a fixed $a \in \partial A$ (stopped at $\partial C_{r}$).}\label{fig:nonintersection}
\end{figure}

Given $\eta \in \saws_{0,r}$, we let $h_r(\eta)$
denote the (conditional) non-intersection probability that a simple random walk
starting at the origin reaches $\partial C_{r}$ without
returning to $\eta$.  An immediate
consequence of Theorem~\ref{BLV}, specifically the up-to-constants version stated in \eqref{jul14.0}, is the following.

\begin{prop}  \label{jul18.prop1}
  There exist $0 < c_1 < c_2
 < \infty$ such that
 \[  c_1 \, r^{-3/4}
  \leq   \E_{\pi_{r}}[h_r(\eta)] \leq c_2
  \, r^{-3/4} . 
\]
\end{prop}

We will need the corresponding upper bound for
the other measures.  (The lower bound also
holds but we will not need this.)

\begin{prop}\label{prop.nonintersection}  There exists $c
 < \infty$ such that if $A \in \whoknows_{2r}$ and
 $a \in \partial_e A$, then 
\begin{equation}  \label{sep29.7} 
   \E_{\pi_{r,A,a}}[h_r(\eta)] \leq c
  \, r^{-3/4} . 
\end{equation}
\end{prop}

\begin{proof}
By Lemma \ref{comppi}, it suffices
to show that
 \[ 
    \E_{\pi_{r,2r}}[h_r(\eta)] \leq c
  \, r^{-3/4} . 
\] 
We fix an $\epsilon > 0$ such that the following
holds.  
\begin{itemize}
\item Suppose that $r \geq 1$ and
 $S_j$ is a simple random walk
starting at $z$ with $|z|  \leq \epsilon r$
and let $T = T_{r/4}$ be the first $j$ with
$|S_j| \geq r/4$.  Then the probability that
$S[0,T-1]$ disconnects $0$ from $\partial C_{r/4}$
is at least $.99$.  (By disconnection we mean
that if $\tilde S$ is another simple random walk
starting at the origin independent of $S$, then
the probability that $\tilde S$ visits $S[0,T-1]$
before reaching $\partial C_{r/4}$ is one.  By definition,
if $0 \in S[0,T-1]$, then $S[0,T-1]$ disconnects.)
\end{itemize}
 To show that such an $\epsilon$ exists,
 we first find an $\epsilon, r_0$ such that this  
holds  for  $r \geq r_0$ by   
  the invariance principle.  
Once we have this we can prove it for all $r$ by
choosing perhaps a smaller $\epsilon$ so that
$\epsilon \, r_0 \leq 1/2$.  In this case if
$r < r_0$, then $|z| \leq \epsilon r$ implies
that $z = 0$.   

Let $\eta \in \saws_{0,r}$ be chosen from
  the distribution $\pi_{r,2r}$.
Let $\omega$ denote a random walk path started
uniformly on $\{\pm 1, \pm i\}$ and stopped
when in reaches $\partial C_r$ and we write $\Prob_\omega$ for the probability law of $\omega$.  We write $\omega_*$
for the terminal point of $\omega$.  Note that
\[     h_r(\eta) = \Prob_\omega\{\eta \cap \omega
 = \eset \}. \]  
We let 
\[     h_r^*(\eta) = \Prob_\omega\{\eta \cap \omega
 = \eset ; \dist(\omega_*,\eta) \geq \epsilon \, r\}. \] 
 The definition of $h_r^*$ depends on $\epsilon$, but
 since we have fixed $\epsilon$ we will not include
 it in the notation. We claim the following.
 \begin{itemize}
 \item  There exists $\delta > 0$ such that
\begin{equation}  \label{oct1.1}
\E_{\pi_{2r,2r}}[h_{2r}] 
    \geq \delta\, \E_{\pi_{r,2r}}[h_r^*].
    \end{equation}
 \end{itemize}
 To see this, we first note that in the measure $\pi_{2r,2r}$
 the conditional distribution of the remainder of the path
 given $\eta$, the SAW up to the first visit to $\partial C_r$, can be
 obtained by starting a random walk at the endpoint $\eta$
  conditioned to reach $\partial C_{2r}$ without returning to $\eta$
  and then erasing loops.    Using the Separation Lemma (Lemma \ref{lemma.masson}), 
  we can see that in this conditioned distribution  
  there is a positive probability $\rho$ that the random walk (and hence also its loop-erasure) stays in
  $C_{2r} \setminus C_{r - (r\epsilon/5)}$ and that its argument
  does not vary by more than $\epsilon/10$.  We get similar
  estimates for the extension of the random walk $\omega$ to
  $\partial C_{2r}$.
  
 From \eqref{oct1.1} and Proposition \ref{jul18.prop1}, 
 we see that there exists $c < \infty$
 such that for all $s \geq 2r$,
 \begin{equation}  \label{jul12.1}
    \E_{\pi_{r,s}}[h_r^*] \leq c \, r^{-3/4}.
    \end{equation}
    
For each nonnegative integer $k$, we let $\eta^k,
\omega^k$ be the initial segments of these
paths stopped at the first visit to $\partial C_{r/2^k}$.
We define the events
\[   U_k = \left\{
\eta^k \cap \omega^k = \eset\right\}, \;\;\;\;
 V_k =\left\{\dist(\eta^k,\omega^k_*) \geq  \epsilon \, 2^{-k}
 \, r\right\} .\]
Here $\epsilon$ is as defined above. Using \eqref{jul12.1}, we see that
\begin{equation}\label{jan6.1}
  \Prob \left[U_{k}
            \cap V_k \right] \leq c \, (r/2^k)^{-3/4},\end{equation}
            where we now write $\Prob$ for the coupling where $\eta$ is distributed according to $\pi_{r,2r}$ and $\omega$ is an independent simple random walk.
  We want to prove that $\Prob  \left[ U_0 \right] \le c r^{-3/4}$. Note that by
the definition of $\epsilon$,
\[ \Prob \left[U_{k-1} \mid  (\eta^k, \omega^k) \right]
    \leq 1_{U_k} \, [1_{V_k} + (.01) \, 1_{V_k^c}],\]
    so that
    \[
    \Prob \left[U_{k-1} \cap V_k^c \mid (\eta^k, \omega^k) \right] \le (.01)1_{U_k}.
    \]
 By iterating this and recalling that $U_k$ are increasing events in $k$, we see that
 \[  \Prob \left[U_{0} \cap V_1^c \cap  \cdots \cap  V_{k-1}^c \mid  (\eta^k, \omega^k) \right]
     \leq  (.01)^{k-1} \, 1_{U_k}. \]
 Hence 
 \[ \Prob\left[U_{0} \cap V_1^c \cap  V_2^c \cdots\cap   V_{k-1}^c
  \cap V_k \right]   \leq (.01)^{k-1} \, \Prob \left[U_k \cap V_k \right]. \]
We can write
\[ U_0 \subset \left[U_0 \cap V_1^c \cap  V_2^c \cdots\cap   V_{k'}^c \right]
   \cup \left[\bigcup_{k=1}^{k'}
        (U_{k} \cap V_1^c \cap  V_2^c \cdots\cap   V_{k-1}^c
  \cap V_k) \right],\]
  where $k' = k'_r$ is defined to be the smallest integer $k$
  such that $(.01)^k \leq r^{-3/4}.$  We
 therefore get, using \eqref{jan6.1},
 \[  \Prob \left[U_0 \right] \leq r^{-3/4}
     + \sum_{k=1}^{k'}
          (.01)^{k-1} \, \Prob \left[U_{k}
            \cap V_k \right] \le c \, r^{-3/4}.\]
      and the lemma follows.\end{proof}

We are now ready to establish the two-point estimate \eqref{sep29.5}. 
Let $r' = r/40, s' = s/40$.
Let $\Gamma^*$ denote the set of
nearest neighbor paths
$\omega$  in $\paths_A(a,b)$
  that visit both
$0$ and $\zeta$ and such that the last visit
to $\zeta$ occurs after the last visit
to $0$.  Each $\omega
\in \Gamma^*$  has a unique decomposition 
\begin{equation}  \label{sep29.6}
 \omega = [\omega^1]^R \oplus \tilde{\omega}
\oplus \omega^2,
\end{equation}
where:
\begin{itemize}
\item $\omega^1$ is a nearest neighbor path
starting at $0$ leaving $A$ at $a$.
\item  $\omega^2$ is a nearest neighbor
path starting at $\zeta$ in $A \setminus \{0\}$
leaving $A \setminus \{0\}$ at $b$.
\item  $\tilde \omega$ is a nearest
neighbor path starting at $0$, ending
at $\zeta$, and otherwise staying
in $A \setminus \{0,\zeta\}$.
\end{itemize}
Let $\Gamma$ be the set of $\omega
\in \Gamma^*$ such that in the decomposition
above, 
\[    \omega^2 \cap LE(\omega^1) = \eset, \]
\[    (\tilde \omega)^{oo} \cap [LE(\omega^1)
  \cup LE(\omega^2)] =  \eset.\]
  Here $(\tilde \omega)^{oo}$ is $\tilde \omega$
  with the initial and terminal vertices
 removed.  If $\omega \in \Gamma$, we
define the SAW
\[ \eta = [LE(\omega^1)]^R
\oplus LE(\tilde \omega) \oplus LE(\omega^2)
  .\] 
Note that $\eta \in \saws_A(a,b)$  and $\eta$ visits
$0$ before visiting $\zeta$.  Moreover, for any such
$\eta$, 
  the
measure of the set of $\omega$ such that
$\eta$ is produced is  $4^{-|\eta|} \, F_\eta(A)$,
that is, we get the usual LERW measure.  In particular,
we can see that $ \hat P_{A,a,b}\{a\rightarrow 0 \rightarrow \zeta 
  \rightarrow b\}$ equals the measure of $\Gamma$.

To give an upper bound on the measure of $\Gamma$,
we refine the decomposition \eqref{sep29.6}
 by writing
\[  \tilde \omega =  \tilde \omega^1
 \oplus \tilde \omega' \oplus [\tilde \omega^2]^R
   , \]
 where 
 \begin{itemize}
 \item $\tilde \omega^1$ is a path starting
 at $0$ stopped when it reaches $\partial C_{r'}$.
 \item $\tilde \omega^2$ is a path starting
 at $\zeta$ stopped when it reaches $\partial C_{s'}(\zeta)$.
 \item $ \tilde \omega'$ is a path starting
 at the terminal point of $\tilde \omega^1$
 and ending at the terminal point of $\tilde \omega^2$.
 \end{itemize}
 We let $\Gamma'$ be the set of  paths $\omega \in \Gamma^*$ such
 that
 \begin{itemize}
 \item $(\tilde \omega^1)^o \cap LE(\omega^1)
  = \eset$, where $(\tilde \omega^1)^o$ denotes
  $\tilde \omega^1$ with the initial vertex
  removed.    
 \item $(\tilde \omega^2)^o \cap LE(\omega^2)
  = \eset$,
 \end{itemize}
 Note that $\Gamma \subset \Gamma'$. 
To estimate the measure of $\Gamma'$ we see that
\begin{itemize}
\item  The measure of possible $\omega^1$
is $H_A(0,a)$.
\item  
The measure of possible $\omega^2$
is $H_A(\zeta,b)$.
\item  Using Proposition~\ref{prop.nonintersection}, we see
that the probability that $(\tilde \omega^1)^o$
avoids $LE(\omega^1)$ is $O(r^{-3/4})$.
\item  Similarly, the probability that $(\tilde \omega^2)^0$
avoids $LE(\omega^2)$ is $O(s^{-3/4})$.
\item  Given $\tilde \omega^1,\tilde \omega^2$,
with terminal vertices $z,w$, respectively
the measure of paths in $A$ starting at $z$ and
ending at $w$ is $G_A(z,w)$.  Using the discrete
Harnack
inequality we see that this is comparable to
$G_A(0,\zeta)$.
\end{itemize}
By combining these bounds, the proof of Theorem~\ref{lemma.twopoint} is complete. \qed
\subsection{Estimates of bottleneck events}

We will need an estimate that shows that the LERW path does not have too many ``bottlenecks''; that is, that it is
unlikely for LERW to get near a point, then
get far away, and then subsequently get even
closer.

\begin{prop}  \label{dec23.1}
 There exist $c < \infty$ such that the following holds.
 Suppose $0 < r < R$ and $(A,a,b) \in \whoknows_r$ with
 $|a^*| < r$. Let $E'$ denote the set of  
  $\eta = [\eta_0,\ldots,\eta_n]   \in \saws_A(a,b)
$ such that  there exists $ 0 < j
   < k < n$, with $|\eta_j| \geq R, |\eta_k|
 \leq r.$  Then
 \begin{equation}\label{mar9.1}  \Prob_{A,a,b}\left\{ E' \right\} \leq   
 c \, (r/R)  .\end{equation}
 \end{prop}
  Proposition~\ref{dec23.1} is 
 an immediate corollary of Lemma~\ref{bottleneck-lemma} which
 is the corresponding statement for random walk excursions. 
 \begin{rem}The proof of Lemma~\ref{bottleneck-lemma} (with an obvious modification) yields an estimate similar to \eqref{mar9.1} for the event that the path \emph{twice} goes from radius $r$ to $R$ and returns to radius $r$. The only difference is that the probability of this event is $O((r/R)^2)$. This ``$6$-arm'' estimate leads to a sufficient regularity estimate for LERW which can be used to prove convergence of the (chordal) LERW path to the SLE$_2$ path parametrized by capacity from the coupling of Section~\ref{sect:coupling}, see \cite{LV_lerw_chordal_note}. 
 \end{rem}

By analogy  
 with SLE$_2$, we conjecture that this lemma  
 can be strengthened so that $(r/R)$ is replaced by
 $(r/R)^{3/2}$ where $3/2 = (8/\kappa-1)/2$.
 We have not proved the stronger result, but this
 lemma suffices for our purposes.   

 \begin{lemma}\label{bottleneck-lemma}
 There exist $c < \infty$ such that the following holds.
 Suppose $0 < r < R$ and $(A,a,b) \in \whoknows_r$ with
 $|a^*| < r$. Let $E$ denote the set of 
  $\omega = [\omega_0,\ldots,\omega_n]
   \in \paths_A(a,b)$ such that there exists $ 0 < j
   < k < n$, with $|\omega_j| \geq R, |\omega_k|
 \leq r.$  Then
 \[  \Prob ( E ) \leq   
 c \, (r/R) \, H_{\partial A}(a,b) .\]
 
 \end{lemma}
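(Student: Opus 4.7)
The plan is to decompose each walk $\omega \in E$ at the two natural stopping times $\sigma_R = \min\{j : |\omega_j| \geq R\}$ and $\sigma_r = \min\{k > \sigma_R : |\omega_k| \leq r\}$, writing $\omega = \omega^{(1)} \oplus \omega^{(2)} \oplus \omega^{(3)}$ with endpoints $z = \omega_{\sigma_R}$ (satisfying $|z|\geq R$) and $w = \omega_{\sigma_r}$ (satisfying $|w|\leq r$). The random walk measure then factors as
\[
p(E) \;=\; \sum_{z,w} q_1(z)\, q_2(z,w)\, q_3(w),
\]
where $q_1(z)$ is the $p$-measure of walks from $a$ to first hit of $\{|\cdot|\geq R\}$ at $z$ in $A$, $q_2(z,w)$ is the analogous measure for walks from $z$ first reaching $C_r$ at $w$ in $A$, and $q_3(w) = \tfrac14 G_A(w, b_+)$. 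Since $\sum_z q_1(z) \cdot \tfrac14 G_A(z, b_+) \leq H_{\partial A}(a,b)$ (this sum is exactly the $p$-measure of walks in $\paths_A(a,b)$ that visit $\{|\cdot|\geq R\}$), the lemma reduces to the pointwise bound
\[
\sum_w q_2(z,w)\, q_3(w) \;\leq\; c\,\frac{r}{R}\cdot \tfrac14\, G_A(z, b_+), \qquad |z|\geq R,
\]
uniformly in $z$.

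By the strong Markov property applied at $\tau_{C_r}$, the left-hand side above equals $G_A(z,b_+) - G_A^{C_r}(z,b_+)$, where $G_A^{C_r}$ is the Green's function for the walk in $A$ killed on first entry to $C_r$; equivalently, the claim says that a random walk from $z$ conditioned to exit $A$ at $b$ visits $C_r$ with probability at most $c(r/R)$. The intuition is conformal: under $f\colon D_A \to \Half$ with $f(a)=0,\, f(b)=\infty$, Koebe's theorem places $f(C_r)$ within a disc of radius $\asymp r|f'(0)|$ around $z_0 = f(0)$ and $f(\{|\cdot|\geq R\})$ at distance $\asymp R|f'(0)|$ from $z_0$. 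The Brownian half-plane excursion from $0$ to $\infty$ has imaginary part distributed as a Bessel-$3$ process, and for Bessel-$3$ started at height $R|f'(0)|$ the probability of ever returning to height $r|f'(0)|$ equals $r/R$ exactly, which is precisely the scale factor sought.

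To make this rigorous, I would first apply Harnack to the positive harmonic function $x \mapsto G_A(x,b_+)$ (which is harmonic on $C_r$ since $C_r\subset A$ forces $b_+\notin C_r$) to obtain $G_A(w,b_+) \leq c\, G_A(0,b_+)$ uniformly for $w\in C_r$, reducing the key bound to $P^z(\tau_{C_r} < \tau_A)\, G_A(0, b_+) \leq c(r/R)\, G_A(z, b_+)$. I would then establish this either by coupling the conditioned walk with the Brownian half-plane excursion in $D_A$ and invoking the Bessel-$3$ gambler's ruin directly, or by a discrete argument applying boundary Harnack at dyadic annuli $\{2^k r \leq |\cdot| \leq 2^{k+1} r\}$ for $k = 0,1,\ldots,\lfloor\log_2(R/r)\rfloor$, showing that descending one dyadic level in the conditioned measure incurs a factor bounded away from $1$, so that the product telescopes to $r/R$. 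The principal difficulty is controlling the constants uniformly across all geometries of $A$ when $b$ is close to $C_r$ or to $\{|\cdot|\geq R\}$ and standard Harnack comparisons degenerate; the hypothesis $|a^*|<r$, which forces $\partial A$ to pass within distance $r$ of $0$, supplies the geometric rigidity needed to keep these constants in check.
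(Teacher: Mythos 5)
Your decomposition at $\sigma_R$ and $\sigma_r$ and the identity $p(E)=\sum_{z,w}q_1(z)q_2(z,w)q_3(w)$ are fine, but the reduction is lossy in a way that makes the key claim you reduce to false. Your claim~2 asserts that, uniformly over all $(A,a,b)\in\whoknows_r$ with $|a^*|<r$ and all $z$ with $|z|\ge R$, the walk from $z$ conditioned to exit $A$ at $b$ visits $C_r$ with probability at most $c\,r/R$. But $\whoknows_r$ only guarantees $\dist(0,\partial A)\ge r$, so $b^*$ may lie at distance about $r$ from the origin. Concretely, let $A$ be the discrete disc $C_{10R}$ with a lattice slit along the positive real axis from radius $r$ outward removed, let $a$ be the boundary edge at the tip of the slit (so $|a^*|=r-1<r$), and let $b$ be a boundary edge on the side of the slit one step further out, so that $b^*$ is within two lattice steps (through $A$) of a point of $C_r$. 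A walk from $z$ conditioned to exit at $b$ terminates at $b^*$, and an elementary comparison (for adjacent $w_1,w_2\in A$ one has $G_A(z,w_1)\ge \tfrac14 G_A(z,w_2)$, plus a forced three-step exit path) shows it enters $C_r$ before exiting with probability bounded below by a universal constant, not $O(r/R)$. The lemma itself is still true in this configuration, but there the factor $r/R$ comes from the \emph{outbound} leg: the excursion from $a$ to $b$, both near the origin, is unlikely to reach radius $R$ at all. That information was discarded at the step $\sum_z q_1(z)H_A(z,b)\le H_{\partial A}(a,b)$, so no uniform pointwise bound on the return leg can close the argument.

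A secondary problem is the conformal heuristic: since $\dist(0,\partial A)$ may equal $r$, Koebe distortion controls the uniformizing map only at scale $r$ around the origin; points with $|z|\ge R$ can be arbitrarily close to $\partial A$ and hence map arbitrarily close to $\R$, so the image of $\{|z|\ge R\}$ is \emph{not} at distance $\asymp R|f'(0)|$ from $f(0)$, and the Bessel-$3$ gambler's ruin computation has no basis. The paper avoids both issues by keeping the out-and-back journey of the \emph{unconditioned} walk together: with $\rho$ the first time the walk from $a^*$ moves distance $r/2$ and $T$ the exit time of $A$, it proves $\Prob\{\tau_{2r}<T\mid \rho<T\}\le c\,r/R$ by two applications of the discrete Beurling estimate (a factor $\sqrt{r/R}$ for crossing from $\partial C_{2r}$ to $\partial C_R$ inside $A$, and another for returning), using exactly the fact that $\partial A$ comes within distance $r$ of the origin; a Poisson kernel estimate in $\Z^2\setminus C_r$ gives $\Prob\{S(\tau_r)=w\mid\cdots\}\le c/r$, and the accounting is closed by $H_{\partial A}(a,b)\ge c\,\Prob\{\rho<T\}H_A(0,b)$ together with $H_A(0,b)\asymp r^{-1}\sum_{w\in\partial C_{r-1}}H_A(w,b)$, which is valid for every admissible location of $b$. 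To salvage your factorization you would have to retain, in the first leg, how unlikely it is for the excursion to reach radius $R$ when $H_A(\cdot,b)$ is concentrated near $C_r$, which in effect reproduces the paper's two-sided Beurling argument.
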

 
 Proposition~\ref{dec23.1} follows from this lemma since
 \[  \{\omega\in \paths_A(a,b): LE(\omega) \in E'\}
   \subset E.\]
 Since it is possible for $\omega \in E$ but $LE(\omega)
 \not\in E$, we can see why Lemma \ref{dec23.1}
 might not be a sharp estimate.
 
 \begin{proof}
 
Let $S$ denote a simple random walk starting at $a^*$ and
let
\[   \rho = \min\{j: |S_j - a^*|
  \geq r/2\},\;\;\;\;  \sigma = \min\{j: |S_j| \geq R\},\]
   \[ \tau_{2r} = \min\{k \geq \sigma:
     |S_j| \leq 2r\}, \;\;\;\;  \tau_r = \min\{k \geq \sigma: |S_j| < r \},
\]     \[
   T = \min\{n: S_n \not\in A\}. \] 
Then by the strong Markov property,
\[   \Prob (E) = \sum_{w \in \partial_i C_r}
      \Prob \{\tau_r < T, S(\tau_r) = w\}\,   H_{A}(w,b).\]
Note that  
\begin{multline*}   \Prob \{\tau_r < T, S(\tau_r) = w\} \\
 \leq \Prob \{ \rho < T \}
 \, \Prob \{ \tau_{2r} < T \mid  \rho < T \}
 \,  \Prob \{ S(\tau_r) = w \mid \tau_{2r} < T ,\rho < T \}.\end{multline*}
 Using the discrete Beurling estimate 
 (see, e.g., \cite[Theorem 6.8.1]{LL}), we see that
 \begin{equation}\label{beurling-twice}    \Prob \{ \tau_{2r} < T \mid  \rho < T \} \leq
   c \, (r/R). \end{equation}
 Indeed, we get a factor comparable to  $\sqrt{r/R}$ as an upper
 bound for the probability  to go from $\partial C_{2r}$
 to $\partial C_R$ staying in $A$, and we get
 another such factor for the probability of 
 returning from $\partial C_R$ to $\partial C_{2r}$ without exiting $A$. Using a standard estimate for the Poisson kernel in $\Z^2
 \setminus C_r$ (see, e.g., \cite[Lemma 6.3.7]{LL}),  we see that
 for each $w \in \partial_i C_r$,
 \[ \Prob \{ S(\tau_r) = w \mid \tau_{2r} < T ,\rho < T \}
  \leq c \, r^{-1}.\]
 Combining these estimates, we get 
 \[ \Prob(E) \leq  \frac{c}{R} \, \Prob^{a^*}
    \{\rho < T \} \, \sum_{w \in \partial_i C_r}
        H_A(w,b).\]
   Hence, to prove the lemma it suffices to prove
 that
 \[  H_{\partial A}(a,b) \geq \frac{c}{r} \, 
  \Prob 
    \{\rho < T \} \, \sum_{w \in \partial_i C_r}
        H_A(w,b).\]
 Note that if $w \in \partial_i C_r$ then either
 $w \in \partial C_{r-1}$ or $w$ has a nearest
 neighbor in $\partial C_{r-1}$.  Using this 
 we can see that
\[ \sum_{w \in \partial_i C_r}
        H_A(w,b) \leq c \, \sum_{w \in \partial C_{r-1}}
        H_A(w,b) .\]

  Using Lemma \ref{lemma.masson}, the strong Markov
  property,  and the Harnack inequality,
  we have
     \[  H_{\partial A}(a,b) \geq c\, 
  \Prob 
    \{\rho < T \} \, H_A(0,b).\]
  But using the estimate for the Poisson kernel \cite[Lemma 6.3.7]{LL} again we see that
  \[   H_A(0,b) = \sum_{w \in \partial C_{r-1}}
       H_{C_{r-1}}(0,w) \, H_A(w,b)
         \asymp r^{-1} 
         \sum_{w \in \partial C_{r-1}}
       H_A(w,b).\]
  Combining these estimates completes the proof. \end{proof}

 \section{Estimates about the metric}  \label{metricsec}

Here we collect  some  facts about continuity of the
SLE and LERW measures with respect to the Prokhorov 
metric.  We will not try to give optimal bounds.  We fix
a (bounded) analytic, simply connected domain $D$ containing
the origin with distinct boundary
points $a',b'$. We allow constants to depend on $D,a',b'$.  
Let $f: \Disk \rightarrow D$ be 
the unique conformal transformation with $f(0) = 0,
f'(0) > 0$. Since $D$ is analytic, $f$ extends to
a conformal transformation of $(1+\epsilon) \Disk$
for some $\epsilon > 0$.  In particular, there exists
$K = K_D < \infty$ such that
\[     \frac 1 K \leq |f'(z)| \leq K , \;\;\;
   |z| \leq 1 . \]
 Let $\check D = 
\check D_N$ be the lattice approximation of $D$ as before,
and let $\check f = \check f_N$ be the corresponding
map from $\Disk$ to $\check D$.  Since $\check D$
has a Jordan boundary, $\check f$ extends to a homeomorphism
of $\overline \Disk$.  Let $\psi = f \circ \check f^{-1}$
which is a conformal transformation of $\check D$ onto
$D$ with $\psi(0) =0, \psi'(0) > 0$.   Note that $\psi$
extends to a homeomorphism of the closures.

\begin{lemma}  \label{mar5.lemma1}
There exists $c < \infty$, such that
for all $z \in \check D$, 
\[    |\psi(z) - z| \leq c \, \frac{\log N}{N}.\]
Moreover, if $  \dist(z, \partial D) \geq  c/N$, then
\[   |\psi'(z)-1| \leq \frac{c}{N \,\dist(z, \partial D)}.\]
\end{lemma}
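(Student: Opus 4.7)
My plan is to transport both estimates to the unit disk via $f$, where they become perturbation-of-identity bounds for a conformal map between nearly-coinciding subdomains of $\mathbb{D}$, and then appeal to classical distortion theory.

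Setting $\rho := f^{-1}\circ\check f \colon \mathbb{D} \to K$ with $K := f^{-1}(\check D) \subset \mathbb{D}$, and noting that analyticity of $\partial D$ makes $f$ extend as a conformal map on a neighborhood of $\overline{\mathbb{D}}$ with $|f'|$ bounded above and below, the identities
\[
\psi(z) - z = f(\zeta) - f(\rho(\zeta)), \qquad \psi'(z) = \frac{f'(\zeta)}{\check f'(\zeta)},
\]
at $\zeta = \check f^{-1}(z)$, combined with Koebe's $1/4$-theorem (yielding $\dist(z,\partial\check D) \asymp 1 - |\zeta|$), reduce both claims to showing $|\rho(\zeta) - \zeta| = O(\log N/N)$ uniformly on $\mathbb{D}$ and $|\rho'(\zeta) - 1| = O(1/(N(1-|\zeta|)))$ for $1 - |\zeta| \ge c/N$.

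The geometric input is that $\dist_H(\partial\check D,\partial D) = O(1/N)$, a direct consequence of the analyticity of $\partial D$ and the lattice definition of $\check D$. Transferred via $f$, $\dist_H(\partial K, \partial \mathbb{D}) = O(1/N)$ and $K \supset \{|\zeta| \le 1 - c/N\}$, so that Schwarz--Pick applied to $\rho \colon \mathbb{D} \to \mathbb{D}$ gives $\delta := 1 - \rho'(0) = O(1/N)$. The function $h(\zeta) := 1 - \rho(\zeta)/\zeta$ is then holomorphic on $\mathbb{D}$ with $\Re h \ge 0$ (since $|\rho(\zeta)/\zeta| \le 1$ by Schwarz) and $h(0) = \delta$. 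Its Herglotz representation, combined with the thin-annulus constraint $\rho(\partial\mathbb{D}) = \partial K \subset \{1 - O(1/N) \le |w| \le 1\}$ (which controls how the boundary measure of $h$ can concentrate), yields the derivative bound $|\rho'(\zeta) - 1| = O(1/(N(1-|\zeta|)))$, which transfers through the chain rule to the claimed estimate on $|\psi'(z) - 1|$.

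The $C^0$ bound then follows by integrating the derivative estimate along a radius:
\[
|\rho(\zeta) - \zeta| \le \int_0^{|\zeta|} |\rho'(te^{i\arg\zeta}) - 1|\,dt \le \int_0^{1-c/N}\frac{C\,dt}{N(1-t)} = O\!\left(\frac{\log N}{N}\right),
\]
with the thin layer $1 - |\zeta| < c/N$ handled directly from the boundary proximity $\dist_H(\partial K, \partial\mathbb{D}) = O(1/N)$.

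The main obstacle is the derivative estimate near $\partial\mathbb{D}$: a naive Cauchy-formula argument based on the global sup bound $|h| = O(\log N/N)$ loses a logarithm in the derivative, and obtaining the sharper rate $O(1/(N(1-|\zeta|)))$ requires exploiting the thin-annulus structure of $\partial K$ relative to $\partial\mathbb{D}$ in the Herglotz representation --- equivalently, using that the total boundary mass of $\Re h$ is $O(1/N)$ and cannot concentrate arbitrarily near $\arg\zeta$ because $\partial K$ lies uniformly close to $\partial \mathbb{D}$ in Hausdorff distance.
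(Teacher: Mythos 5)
Your reduction to a map of the disk and the use of the sandwich $(1-c/N)\Disk\subset K\subset \Disk$ are in the same spirit as the paper, but the crux of your argument --- the derivative bound $|\rho'(\zeta)-1|=O\bigl(1/(N(1-|\zeta|))\bigr)$ via the Herglotz representation of $h=1-\rho(\zeta)/\zeta$ --- has a genuine gap. From $h(0)=\delta=O(1/N)$ you only control the \emph{total mass} of the Herglotz measure; with no further information this gives $|h'(\zeta)|\le 2\delta/(1-|\zeta|)^2$, hence only $|\rho'(\zeta)-1|=O\bigl(1/(N(1-|\zeta|)^2)\bigr)$, and radial integration then yields $|\rho(\zeta)-\zeta|=O\bigl(1/(N(1-|\zeta|))\bigr)$, which is $O(1)$ rather than $O(\log N/N)$ near the boundary. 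You acknowledge this and propose to rule out concentration of the boundary mass using that $\partial K$ is within $O(1/N)$ of $\partial\Disk$ in Hausdorff distance, but that constraint only controls $1-|\rho(e^{i\theta})|$. The boundary density of $\Re h$ is $1-\Re\bigl(\rho(e^{i\theta})e^{-i\theta}\bigr)\approx\bigl(1-|\rho(e^{i\theta})|\bigr)+\tfrac12\bigl(\arg\rho(e^{i\theta})-\theta\bigr)^2$, and the angular displacement term is not controlled by Hausdorff closeness; bounding it pointwise is essentially the uniform boundary estimate you intend to obtain \emph{afterwards} by integrating the derivative bound, so the argument as ordered is circular.

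The fix, which is exactly the paper's route, is to work with the logarithm instead of the linear difference: set $L(\zeta)=-\log\bigl(|\rho(\zeta)|/|\zeta|\bigr)$ (equivalently the paper's $L=\log|g(z)|-\log|z|$ for $g=\rho^{-1}$). Two applications of the Schwarz lemma --- to $\rho:\Disk\to K\subset\Disk$ and to $g$ restricted to $(1-c/N)\Disk\subset K$ --- give $0\le L\le -\log(1-c/N)=O(1/N)$ \emph{pointwise}, so its Poisson/Herglotz density is bounded by $O(1/N)$ in $L^\infty$, not merely in total mass, and the interior gradient estimate for bounded harmonic functions immediately yields $\bigl|\bigl(\log(\rho(\zeta)/\zeta)\bigr)'\bigr|\le c/\bigl(N(1-|\zeta|)\bigr)$ with no anti-concentration argument needed; integrating then gives the $C^0$ bound away from a boundary layer, which is handled separately using the lattice structure of $\check D$. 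Two smaller points: your chain-rule reduction $\psi'(z)=f'(\zeta)/\check f'(\zeta)=f'(\zeta)/\bigl(f'(\rho(\zeta))\rho'(\zeta)\bigr)$ also requires the \emph{pointwise} estimate $|\rho(\zeta)-\zeta|=O\bigl((1+\log\frac1{1-|\zeta|})/N\bigr)$ (not just the sup bound $O(\log N/N)$) to make the ratio $f'(\zeta)/f'(\rho(\zeta))$ contribute at the claimed rate; and the thin layer $1-|\zeta|<c/N$ is not disposed of by boundary proximity alone, since closeness of $\partial K$ to $\partial\Disk$ does not by itself say $\rho(\zeta)$ is close to $\zeta$ there.
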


\begin{proof}  
Let $U = f^{-1}(\check D) \subset \DD$
and   $g:  f^{-1} \circ \psi \circ 
f =
\check f^{-1} 
\circ f$ which is the unique conformal transformation of
$U$ onto $\Disk$ with $g(0) = 0, \, g'(0) > 0$.
By considering $z = f(w)$ and using the fact
that $|f'| $ and $1/|f'|$ are uniformly bounded,
we see that 
\[  \max_{z \in \check D} |\psi(z) - z|
 \leq K \,  \max_{z \in \check D} |f^{-1}(\psi(z)) - f^{-1}(z)|
  =K \, \max_{w \in   U} |g(w) - w|. \]
 
  Since $\dist(\partial D,
\partial \check D) \leq \sqrt 2/N$, we have that $U$
contains the disk $(1-r) \Disk$ where $r =K  \sqrt 2/N$.
Let  $q(z) = g(z)/z$. The normalization of $g$ implies that we can choose a branch of $\log q(z)$ which is analytic in $U$ and $\Im \log q(0) = 0$. 
Using the Schwarz lemma, we have
\[        1 \leq \frac{|g(z)|}{|z|} \leq \,  \frac{1}{1-r},
\;\;\;\; z \in U.\]
Moreover, since $g'(0)=|g'(0)|$,
\[
1\le g'(0) \le \frac{1}{1-r}.
\]
It follows that if $L(z) = \Re \log q(z)
  = \log |g(z)| - \log |z|$, then
\[          |L(z)| \leq -\log (1-r)  \leq \frac c N.\]
Note that $L(z)$ is a positive harmonic funcion in $U$. Using the last estimate we can use the Poisson kernel to see that 
\[    |\nabla L(z) | \leq \frac{c}{N \, \dist(z, \partial U)}
       \leq \frac{c'}{N \, [1-|z|]}, \;\;\;\;
         |z|  \leq 1 - \frac{2K}{N}.\]
Since $L(z) = \Re \log q(z)$, this implies that the same bound holds for $|[\log q(z)]'|$.  Using
$\log q(0) = O(r)$, we can integrate to see 
that for  $|z| \leq  1 - 2K/N$, 
\[   |g(z) - z| \leq \frac{c \, |\log (1 - |z|)|}{N}, \]
\[   |g'(z) - 1| \leq \frac{c \, |\log (1 - |z|)|}{N }.\]
From this we see that there exists $c > 0$ such that for
$z \in D$ with $\Delta_z:= 1/\dist(z, \partial D)  \leq c
\, N$,
\[    |\psi(z) - z| \leq \frac{c \,(1 +
  \log \Delta_z) }{N}, \]
  \[   |\psi'(z) - 1| \leq 
   \frac{c \, (1 +
  \log \Delta_z) }{N}.\]
  
Up to this point, we have not used the special properties
of $\check D$ as a domain formed using the square lattice. By doing this
we get that for \emph{all} $z \in \check{D}$,
\[  |\psi(z) - z| \leq 
  \frac{c \, 
  \log N  }{N}. \]

\end{proof}

\begin{lemma} There exists $c < \infty$ such that if $a,b, a', b' \in \partial D$ and 
  if $|a-a'| \leq \delta \leq  |a-b|/3$
and $|b - b'| \leq \delta \leq |a-b|/3$, then there
exists a   conformal
transformation $F:D \rightarrow D$ with $F(a') = a, \, F(b') = b$ and such that
\begin{equation}  \label{nov1.1}
 |F(z) - z| + |F'(z) - 1|
  \leq c \, \delta , 
  \end{equation}
for all $z$.
\end{lemma}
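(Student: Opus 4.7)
The plan is to reduce via uniformization to a problem on $\DD$ about Möbius automorphisms close to the identity. First I would fix a conformal map $\phi : D \to \DD$ with $\phi(a') = 1$ and $\phi(b') = -1$. Because $\partial D$ is analytic, $\phi$ extends to a conformal isomorphism between open neighborhoods of $\overline D$ and $\overline{\DD}$, so there is a constant $K$ depending only on $D, a', b'$ with $K^{-1} \le |\phi'|, |(\phi^{-1})'| \le K$ uniformly on the relevant closed sets. Writing $\eta := \phi(a)$ and $\zeta := \phi(b)$, both points lie on $\partial\DD$ and satisfy $|\eta - 1|, |\zeta + 1| \le K\delta$. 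Then $F := \phi^{-1} \circ M \circ \phi$ will satisfy \eqref{nov1.1} as soon as $M : \DD \to \DD$ is a Möbius automorphism with $M(1) = \eta$, $M(-1) = \zeta$, and $\sup_{w \in \overline{\DD}}(|M(w) - w| + |M'(w) - 1|) = O(\delta)$, since the chain rule and the uniform $C^1$ bounds on $\phi$ and $\phi^{-1}$ transfer the estimate back to $D$. So it suffices to construct such an $M$.

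For this I would apply the implicit function theorem to the smooth evaluation map $\Phi : G \to \partial\DD \times \partial\DD$ defined by $\Phi(g) = (g(1), g(-1))$ on $G := \mathrm{Aut}(\DD)$, parametrized as the three-real-dimensional Lie group of transformations $g_{\theta, p}(z) = e^{i\theta}(z - p)/(1 - \bar p z)$ with $\theta \in \mathbb{R}$ and $p \in \DD$. A direct calculation shows that $d\Phi_{\mathrm{id}}$ has one-dimensional kernel (the tangent to the one-parameter subgroup of hyperbolic automorphisms fixing $\pm 1$) and is surjective onto the two-dimensional tangent space of the target. Hence $\Phi$ admits a smooth local right-inverse $\sigma$ near $(1, -1)$ with $\sigma(1, -1) = \mathrm{id}$, and setting $M := \sigma(\eta, \zeta)$ yields a Möbius automorphism within $C^1$-distance $O(\delta)$ of the identity on $\overline{\DD}$. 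Composing with $\phi^{\pm 1}$ then produces the desired $F$ and \eqref{nov1.1}.

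The main (mild) obstacle will be making the implicit function step quantitative: the operator norm of $(d\sigma)_{(1,-1)}$ degenerates as the two prescribed boundary points collide, so the final constant $c$ will depend on $|a' - b'|^{-1}$. This is harmless here because $a', b'$ are fixed in the ambient setup of Section~\ref{metricsec} and constants are allowed to depend on $D, a', b'$. For $\delta$ bounded below by a positive constant, \eqref{nov1.1} is automatic by enlarging $c$, so the whole argument really only needs the local IFT statement in a neighborhood of the identity in $G$.
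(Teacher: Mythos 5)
Your construction is correct in substance and proves the lemma in the form in which it is actually used, but it executes the key step differently from the paper. The paper's (sketched) proof conjugates by the \emph{fixed} Riemann map $f:\Disk\to D$ and takes $F=f\circ M\circ f^{-1}$ with $M$ an \emph{explicit} M\"obius automorphism sending $f^{-1}(a')\mapsto f^{-1}(a)$, $f^{-1}(b')\mapsto f^{-1}(b)$, the estimate \eqref{nov1.1} being checked directly from the formula; you instead uniformize with a map $\phi$ adapted to the marked points ($\phi(a')=1$, $\phi(b')=-1$) and produce $M$ softly via the implicit function theorem on $\mathrm{Aut}(\Disk)$ near the identity. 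Your linearization is right: the evaluation map $g\mapsto(g(1),g(-1))$ has surjective differential at the identity with one-dimensional kernel tangent to the hyperbolic subgroup fixing $\pm1$, so a smooth local section exists and gives $M$ within $C^1(\overline\Disk)$-distance $O(\delta)$ of the identity. The explicit route buys a constant whose dependence on the geometry can be tracked through $|a-b|$ alone and a conjugating map independent of the marked points; your route buys freedom from M\"obius algebra at the cost that the constant enters through the $C^1$ bounds on the point-dependent $\phi$, hence through $|a'-b'|$.

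Two caveats, neither fatal. First, as you note, your constant depends on $a',b'$, which is strictly weaker than the literal statement (uniform $c$ over all $a,b,a',b'$ with $\delta\le|a-b|/3$). But the literal uniform statement cannot hold in any case: $F$ maps the positively oriented boundary arc from $a'$ to $b'$ onto that from $a$ to $b$, and when $|a-b|\asymp\delta$ these arcs can have lengths differing by a definite factor, forcing $|F'-1|$ to be of order $1$ somewhere; a uniform bound thus requires $|a-b|$ bounded below, which is exactly the regime in which the lemma is invoked (Corollary~\ref{mar5.cor3}, with $a',b'$ the fixed boundary points). So your concession loses nothing that the paper's sketch secures. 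Second, transferring the derivative estimate back to $D$ needs slightly more than ``uniform $C^1$ bounds'' on $\phi,\phi^{-1}$: writing $F'(z)=(\phi^{-1})'(M(w))\,M'(w)\,\phi'(z)$ with $w=\phi(z)$ and using $(\phi^{-1})'(\phi(z))\phi'(z)=1$, you need $(\phi^{-1})'(M(w))/(\phi^{-1})'(w)=1+O(\delta)$, i.e.\ a Lipschitz bound on $(\phi^{-1})'$ over $\overline\Disk$; this is available because the analytic boundary lets $\phi^{-1}$ extend conformally past $\overline\Disk$, but it should be stated, since $C^1$ bounds alone only give $|F'|\asymp 1$.
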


\begin{proof}  If $D = \Disk$, $F$ is a M\"obius
transformation which
can be given explicitly and one readily checks the stated estimates in this case.  For other analytic
$D$ we write $F = f \circ M \circ f^{-1}$
where $M$ is an appropriate M\"obius transformation and use that  $|f'|$ is bounded above and away from $0$ in $\overline{D}$.
We omit the details.   
\end{proof}
 
\begin{cor}  \label{mar5.cor1}
There exists $c < \infty$ such that for $\epsilon $
sufficiently small, if $a,b, a', b' \in \partial D$ with
$|a-a'|, |b-b'|<\epsilon$, 
\[ 
 \wp \left[\mu_D(a',b'), \mu_D(a,b)\right]
  \leq c \, \sqrt \epsilon.
\]
  \end{cor}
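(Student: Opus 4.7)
The plan is to build an explicit coupling of $\gamma \sim \mu_D(a,b)$ and $\gamma' \sim \mu_D(a',b')$ by pushing $\gamma'$ forward through the conformal automorphism $F:D\to D$ produced in the previous lemma (with $F(a')=a$, $F(b')=b$ and $|F(z)-z|+|F'(z)-1|\leq c\epsilon$), then controlling both the spatial and the time discrepancy.

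First I would invoke conformal invariance of SLE$_\kappa$: if $\gamma'(t)$, $0\leq t\leq\Theta'_\infty$, is SLE$_2$ from $a'$ to $b'$ in $D$ parametrized by $5/4$-dimensional Minkowski content, then $F\circ\gamma'$ is SLE$_2$ from $a$ to $b$ in $D$ but not in the natural parametrization. Using the conformal covariance of SLE Minkowski content established in \cite{LR}, the natural parametrization of $F\circ\gamma'$ is recovered by the time change
\[
s(t)=\int_0^t |F'(\gamma'(u))|^{5/4}\,du.
\]
Define the coupled curve $\gamma$ by $\gamma(s(t))=F(\gamma'(t))$; then $\gamma \sim \mu_D(a,b)$ by construction.

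Next I would bound $\rho(\gamma,\gamma')$ pathwise in this coupling by taking $\alpha=s$ in the definition \eqref{metric}. Since $|F(z)-z|\leq c\epsilon$ uniformly on $D$,
\[
\sup_t|\gamma(\alpha(t))-\gamma'(t)|=\sup_t|F(\gamma'(t))-\gamma'(t)|\leq c\epsilon.
\]
Since $|F'(z)-1|\leq c\epsilon$ on $D$,
\[
\sup_t|\alpha(t)-t|\leq \int_0^{\Theta'_\infty}\bigl||F'(\gamma'(u))|^{5/4}-1\bigr|\,du\leq c\epsilon\,\Theta'_\infty.
\]
Therefore $\rho(\gamma,\gamma')\leq c\epsilon(1+\Theta'_\infty)$ almost surely.

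Finally I would convert this almost-sure bound into a Prokhorov estimate. Since $D$ is bounded with analytic boundary, $\E[\Theta'_\infty]=\int_D G_D(z;a',b')\,dA(z)<\infty$, uniformly in the choice of boundary points (for $\epsilon$ small, $a',b'$ stay away from each other). By Markov's inequality, $\Prob\{\Theta'_\infty>1/\sqrt\epsilon\}\leq c\sqrt\epsilon$, so
\[
\Prob\bigl\{\rho(\gamma,\gamma')>c\sqrt\epsilon\bigr\}\leq c\sqrt\epsilon,
\]
which via Strassen's characterization of the Prokhorov metric gives $\wp[\mu_D(a',b'),\mu_D(a,b)]\leq c\sqrt\epsilon$. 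The main point requiring care is the conformal covariance of the natural parametrization, but this is exactly the content of the transformation rule for SLE Minkowski content in \cite{LR}; everything else is elementary.
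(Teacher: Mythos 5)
Your proposal is correct and takes essentially the same route as the paper: couple $\gamma'\sim\mu_D(a',b')$ with its image under the automorphism $F$ from the preceding lemma, reparametrize via the conformal covariance of the Minkowski content, bound $\rho$ pathwise by $c\,\epsilon\,(1+\Theta'_\infty)$, and control the tail of the total content to get the $c\sqrt{\epsilon}$ Prokhorov bound. The only (immaterial) difference is that you use the first moment of $\Theta'_\infty$ together with Markov's inequality, while the paper invokes $\E[T_\gamma^2]<\infty$ and Chebyshev; both tail bounds suffice for the stated estimate.
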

  
 \begin{proof}
 We use the $F$ from the previous lemma and write
 $\Prob,\E$ for probabilities and expectations with
 respect to $\mu_D(a',b')$.   If
 $\gamma$ is a curve parametrized by Minkowski content, then \eqref{nov1.1} implies that
 \[    \rho(\gamma,F \circ \gamma) \leq c \left( T_\gamma + 1 \right)
  \, \epsilon , \]
 where $t_\gamma$ is the total content of $\gamma$.  In the case of SLE$_{2}$ we know that
 $\E[T_\gamma^2] < \infty$, so by
 the Chebyshev inequality 
 \[   \Prob\{T_\gamma \geq \epsilon^{-1/2}\}  \leq
    O(\epsilon). \]
  Hence there exists $c$ such that
  \[   \Prob\{\rho(\gamma,F \circ \gamma)  \geq
    c\, \sqrt \epsilon\} \leq \epsilon,\]
    which implies the bound in the statement.

 \end{proof}

  The metric $\rho$ is continuous under truncation in the
  following sense. We omit the easy proof.  
  \begin{lemma}
  Suppose $\gamma(t), \, t \in [0, T_\gamma],$
  is a curve and that $r,s$ are chosen so that $0 < r< T_\gamma - s 
   \leq  T_\gamma$.  Let $\tilde \gamma(t), \, t \in [0,  s-r],$  be defined by $\tilde \gamma(t)
   = \gamma(t+r)$.
   Then,
   \[  \rho(\gamma,\tilde \gamma)
     \leq \max\{ r, s\} +
      \max \left\{  \diam\left(\gamma[0,r]\right)
       ,\diam\left(\gamma[t_\gamma -s,t_\gamma]\right)\right\}.\]
   \end{lemma}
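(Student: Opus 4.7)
The plan is to exhibit a near-optimal reparametrization witnessing the bound. Since $\rho$ is defined via an infimum over increasing homeomorphisms, it is enough to produce, for every $\delta>0$, an increasing homeomorphism $\alpha_\delta$ from $[0,T_\gamma]$ to the domain of $\tilde\gamma$ for which the two suprema in the definition of $\rho$ add up to at most the right-hand side plus $\delta$, and then let $\delta\to 0$. The ``ideal'' (non-strictly-monotone) reparametrization is $\alpha(t) = \max(0,\min(t-r, T_{\tilde\gamma}))$, where $T_{\tilde\gamma}$ denotes the length of the domain of $\tilde\gamma$; this would give $\tilde\gamma(\alpha(t)) = \gamma(t)$ exactly on the middle segment. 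We approximate it by a strictly increasing, piecewise linear map $\alpha_\delta$ which is (i) a steep linear segment from $0$ to $\delta$ on $[0,r]$, (ii) the shift $t\mapsto t-r+O(\delta)$ on the middle interval, and (iii) a symmetric terminal linear segment climbing to the right endpoint on the final piece of length $s$ (after the natural reindexing fixing any notational discrepancy in the statement).

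With $\alpha_\delta$ in hand, the analysis splits into three regimes. On the middle interval, $\alpha_\delta(t) = t - r + O(\delta)$, so $|\alpha_\delta(t)-t| = r + O(\delta)$ and $\tilde\gamma(\alpha_\delta(t)) = \gamma(\alpha_\delta(t)+r) = \gamma(t) + O(\text{modulus of continuity of }\gamma\text{ at scale }\delta)$, which becomes negligible as $\delta\to 0$. On the initial piece $[0,r]$, one has $\alpha_\delta(t)\in[0,\delta]$, so both $\gamma(t)$ and $\tilde\gamma(\alpha_\delta(t)) = \gamma(\alpha_\delta(t)+r)$ lie in the arc $\gamma[0,r+\delta]$; thus the spatial discrepancy is at most $\diam(\gamma[0,r]) + o(1)$, while $|\alpha_\delta(t)-t|\le r$. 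A symmetric argument on the terminal piece bounds the spatial discrepancy there by $\diam(\gamma[T_\gamma-s,T_\gamma]) + o(1)$ and the time discrepancy by the length of the truncation removed from the end.

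Taking the suprema over the three subintervals, adding the two terms in the definition of $\rho$, and letting $\delta\to 0$ produces the desired inequality. There is no real obstacle beyond the bookkeeping; the only subtlety is that no strictly increasing homeomorphism can realise the ``ideal'' collapsing map exactly, which is why the approximation step and the $\delta\to 0$ limit are needed.
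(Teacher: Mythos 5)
Your overall plan (an explicit three-piece, nearly optimal reparametrization followed by $\delta\to 0$) is the natural one -- the paper simply omits the proof -- but the time bookkeeping does not deliver the stated bound, and the problem is precisely the shift you build into the middle piece. Granting the intended reading of the (slightly garbled) statement, namely that $\tilde\gamma$ is $\gamma$ with an initial piece of duration $r$ and a terminal piece of duration $s$ removed, note that if you also shift time so that $\tilde\gamma(t)=\gamma(t+r)$ lives on $[0,T_\gamma-r-s]$, then \emph{every} admissible increasing homeomorphism $\alpha:[0,T_\gamma]\to[0,T_\gamma-r-s]$ must send $T_\gamma$ to $T_\gamma-r-s$, so $\sup_t|\alpha(t)-t|\ge r+s$. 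In particular your claim that the time discrepancy on the terminal piece is only ``the length of the truncation removed from the end'' is false: with your $\alpha_\delta$ it equals $r+s+O(\delta)$ at $t=T_\gamma$, and no choice of $\alpha$ can produce the $\max\{r,s\}$ term for the time-shifted curve. What your construction actually proves is $\rho(\gamma,\tilde\gamma)\le r+s+\max\left\{\diam\left(\gamma[0,r]\right),\diam\left(\gamma[T_\gamma-s,T_\gamma]\right)\right\}$, which is weaker than the lemma whenever $\min\{r,s\}$ exceeds the diameter term.

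The stated inequality is the one for the \emph{unshifted} truncation: read $\tilde\gamma$ as the restriction of $\gamma$ to $[r,T_\gamma-s]$ with its original time labels (this is the ``natural reindexing'' you allude to, but in the opposite direction from the one you chose). Then take $\alpha_\delta:[0,T_\gamma]\to[r,T_\gamma-s]$ to be linear from $r$ to $r+\delta$ on $[0,r+\delta]$, the identity on $[r+\delta,T_\gamma-s-\delta]$, and linear from $T_\gamma-s-\delta$ to $T_\gamma-s$ on $[T_\gamma-s-\delta,T_\gamma]$. The time error is at most $r$ on the first piece, $0$ in the middle, and at most $s$ on the last piece, so the time supremum is $\le\max\{r,s\}$; the spatial error is bounded by $\diam\left(\gamma[0,r+\delta]\right)$ on the first piece, $0$ in the middle, and $\diam\left(\gamma[T_\gamma-s-\delta,T_\gamma]\right)$ on the last, and letting $\delta\to0$ (using uniform continuity of $\gamma$, exactly as in your limiting step) gives the lemma. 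Apart from this misplaced shift, your treatment of the initial piece, the approximation of the non-injective ``ideal'' map, and the $\delta\to0$ limit are fine.
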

       
\begin{lemma}\label{lemma-oct-20}  There exists $c < \infty$ such that 
the following holds.  Suppose  
   $D$ is a domain with
distinct boundary points $a,b$.
Suppose $\epsilon >0$  and $f:D \rightarrow
f(D)$ is a conformal transformation that extends to
a homeomorphism of $\overline D$ satisfying
$|f(z) - z| \leq \epsilon$ for all $z \in
D$.
Suppose there exists $V \subset D$ such that
\begin{equation}  \label{oct23.2}
   G_D( D \setminus V;a,b) + G_{f(D)}
  \left(f(D \setminus V); f(a), f(b)\right) \leq  \epsilon , 
  \end{equation}
and such that for $z \in V$, 
\begin{equation}  \label{oct23.3}
    \left|\log |f'(z)|\right|  \leq \epsilon . 
     \end{equation}
Suppose also that
\begin{equation}  \label{oct23.1}
    G_D(D;a,b) 
  \leq  K . 
  \end{equation}
Then,
\[      \wp_\rho\left[\mu_D(a,b),
   \mu_{f(D)}(f(a),f(b)) \right] \leq c \, 
   (K+1) \sqrt{\epsilon}. \]
   \end{lemma}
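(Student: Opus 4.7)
The plan is to build an explicit coupling of $\gamma \sim \mu_D(a,b)$ and $\tilde\gamma \sim \mu_{f(D)}(f(a),f(b))$ by setting $\tilde\gamma$ equal to $f\circ\gamma$ reparametrized by Minkowski content, and then to estimate the metric $\rho$ on this coupling. Conformal invariance of SLE$_2$ together with the conformal covariance of $5/4$-dimensional Minkowski content (see \cite{LR}) implies that if $\gamma(t)$, $0\le t\le T_\gamma$, is the content-parametrized SLE$_2$ in $D$, then the function
\[
\alpha(t) \;=\; \int_0^t |f'(\gamma(u))|^{5/4}\,du
\]
is the total content in $f(D)$ of $f(\gamma[0,t])$, and hence $\tilde\gamma(s) := f(\gamma(\alpha^{-1}(s)))$ is distributed as $\mu_{f(D)}(f(a),f(b))$.

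Using $\alpha$ itself as the reparametrization in the definition \eqref{metric} of $\rho$, one gets
\[
\rho(\gamma,\tilde\gamma) \;\le\; \sup_{0\le t\le T_\gamma}\bigl|f(\gamma(t))-\gamma(t)\bigr| \;+\; \sup_{0\le t\le T_\gamma}\bigl|\alpha(t)-t\bigr|.
\]
The first supremum is at most $\epsilon$ by hypothesis. For the second, I would split the integrand according to whether $\gamma(u)\in V$ or $\gamma(u)\in D\setminus V$. On $V$, the estimate \eqref{oct23.3} gives $\bigl||f'|^{5/4}-1\bigr|\le c\epsilon$, so that piece contributes at most $c\epsilon\, T_\gamma$. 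The contribution from $D\setminus V$ is bounded in absolute value by
\[
\int_0^{T_\gamma}\!\mathbf 1\{\gamma(u)\in D\setminus V\}\bigl(|f'(\gamma(u))|^{5/4}+1\bigr)\,du \;=\; \Cont\bigl(f\!\circ\!\gamma\cap f(D\setminus V)\bigr) + \Cont\bigl(\gamma\cap (D\setminus V)\bigr),
\]
where the equality for the first term is again the conformal change-of-variables for content. Call this last quantity $X$; the martingale property of the two-sided Green's function identity in Section~\ref{SLE:defs} together with \eqref{oct23.2} gives $\E[X]\le 2\epsilon$.

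To conclude, I would apply Markov's inequality twice. Since $\E[T_\gamma]=G_D(D;a,b)\le K$, we have $\Prob\{T_\gamma\ge K\epsilon^{-1/2}\}\le\sqrt\epsilon$, and separately $\Prob\{X\ge\sqrt\epsilon\}\le 2\sqrt\epsilon$. Off an event of probability at most $3\sqrt\epsilon$, the two pieces combine to give
\[
\sup_t|\alpha(t)-t| \;\le\; c\epsilon\cdot K\epsilon^{-1/2} + \sqrt\epsilon \;\le\; c(K+1)\sqrt\epsilon,
\]
and hence $\rho(\gamma,\tilde\gamma)\le c(K+1)\sqrt\epsilon$ on this event, which yields the stated Prokhorov bound after possibly enlarging the constant. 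The only real subtlety is the conformal change-of-variables identity for Minkowski content of SLE, which I expect to be the one nontrivial input; it is a standard consequence of the characterization of the natural parametrization established in \cite{LR}. Everything else is an elementary split-and-Markov argument.
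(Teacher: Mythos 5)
Your proposal is correct and follows essentially the same route as the paper: the coupling $\gamma \leftrightarrow f\circ\gamma$ with the time change $\phi(t)=\int_0^t|f'(\gamma(s))|^{5/4}\,ds$, the Green's function identity $\E[\Cont(\gamma\cap V')]=\int_{V'}G_D(z;a,b)\,dA(z)$ applied to $D\setminus V$ and $f(D\setminus V)$, and Markov's inequality to control $T_\gamma$ and the content spent outside $V$. The only cosmetic difference is that you bound $\sup_t|\phi(t)-t|$ by splitting the integrand $|f'|^{5/4}-1$ over $V$ and $D\setminus V$ directly, whereas the paper compares both $t$ and $\phi(t)$ to the time spent in $V$ and $f(V)$; the estimates are the same.
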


 \begin{proof} We use natural parametrization  throughout
 this proof. Since we consider SLE$_{2}$ this is the same as parametrizing by $5/4$-dimensional Minkowski content.  We write $\Prob$ for probabilities
 under the measure $\mu_D(a,b)$.

  If $\gamma$
 is a curve from $a$ to $b$ in $D$, we write $f \circ \gamma$
 for the corresponding curve in $f(D)$, parameterized by natural time; more specifically we have
 \[   (f \circ \gamma)(\phi(t)) = f(\gamma(t)), \]
 where
 \[   \phi(t) = \int_0^t |f'(\gamma(s))|^{5/4} \, ds . \]
 We write $\gamma_f$ for the image curve without
 the change of parametrization,
 \[  \gamma_f(t) = f( \gamma(t)).\] 
 For any curve $\gamma$ we write $T_\gamma$ for the total content of the curve, that is, the total time
 duration in the natural parametrization.  In particular,
 \[   T_{f \circ \gamma}  
  = \int_0^{T_\gamma} |f'(\gamma(s))|^{5/4} \, ds . \]
  Let
  \[   T_{\gamma, V^c} = \int_0^{T_\gamma} 1\{\gamma(s) \in V^c\}
   \, ds =  \Cont\left[\gamma  \setminus V \right]. \] 
 Using the identity reparametrization, by the assumption on $f$, we see that
 \[  \rho(\gamma,\gamma_f) \leq \sup_{0 \leq s \leq T_\gamma}
   |\gamma(s) - f(\gamma(s))| \leq \epsilon.\]
On the other hand, using the reparametrizaton $\phi$, 
\[  \rho(\gamma_f,f\circ \gamma)
  \leq \sup_{0 \leq t \leq T_\gamma}
     | \phi(t) - t|, \]
 and hence
 \[     \rho(\gamma,f \circ \gamma)
     \leq \epsilon+ \sup_{0 \leq t \leq T_\gamma}
     | \phi(t)-t|. \]

To give an upper bound on the Prokhorov distance
between   
 $\mu_D(a,b)$ and $\mu_{f(D)}
(f(a),f(b))$  we use the coupling $\gamma \longleftrightarrow
f \circ \gamma$.   Then, we have for all $\delta >0$,
\[  \rho\left[\mu_D(a,b),\mu_{f(D)}
(f(a),f(b))\right] \leq \delta + \epsilon 
 + \Prob\left\{\sup_{0 \leq t \leq T_\gamma}
   |\phi(t) - t| \geq \delta \right\}.\]

Using
\eqref{oct23.2} and \eqref{oct23.1}, we see that
\[   \E\left[T_{\gamma, V^c} + T_{f\circ \gamma,f(V)^c}\right]
\leq \epsilon, \;\;\;\;
  \E\left[T_\gamma + T_{f \circ \gamma}\right] \leq   3K.\]
  Hence, except perhaps on an event of probability $O(\sqrt \epsilon)$,
\begin{equation}  \label{oct23.4}
  T_{\gamma,V^c} + T_{f\circ \gamma,f(V)^c} \leq \sqrt \epsilon,
\;\;\;\; T_\gamma   \leq K /\sqrt \epsilon.
\end{equation}
Using \eqref{oct23.3}, 
\begin{eqnarray*}
 \int_0^{\phi(t)} 1\{f \circ \gamma(s) \in f(V)\} \, ds
  & = & 
 \int_0^t  1\{\gamma(s) \in V\} |f'(\gamma(s))|^{5/4}\, ds\\
  & = & [1 + O(\epsilon)] 
 \int_0^t  1\{\gamma(s) \in V\} \, ds
 \end{eqnarray*}
Also, if $t \in [0, T_{\gamma}]$,
\[  \left|t -  \int_0^t  1\{\gamma(s) \in V\}  \, ds\right|
  \leq  T_{\gamma,V^c} , \]
\[ \left|
 \phi(t) -  \int_0^{\phi(t)} 1\{f \circ \gamma(s) \in f(V)\} 
 \, ds \right|  \leq  T_{f\circ \gamma,
    f(V)^c}.\]
Combining these estimates, we see that
\[  \sup_{0 \leq t \leq T_\gamma}
   |\phi(t) - t|  \leq c \, \epsilon\, T_\gamma +
      T_{\gamma,V^c} + T_{f \circ \gamma,
      f(V)^c}.\]
Therefore, 
 on the event that
\eqref{oct23.4} holds we have 
\[  \sup_{0 \leq t \leq T_\gamma}
   |\phi(t) - t|  \leq c \, 
     (K+1) \, \sqrt \epsilon.\]

\end{proof}

\begin{cor}  \label{mar5.cor3}
Under the assumptions of Theorem  \ref{thm:main_complete}, 
there exists $c$ such that for every $N$,
\[   \wp_\rho\left[\mu_D(a',b'),
 \mu_{\check D}(\check a, \check b)\right]
   \leq c \, \left[N^{-5/16} + |a' - \check a|^{1/2}
    + |b' - \check b|^{1/2}\right].\]
\end{cor}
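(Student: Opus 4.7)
The plan is to use the triangle inequality for the Prokhorov metric with the intermediate measure $\mu_D(\psi(\check a), \psi(\check b))$, where $\psi: \check D \to D$ is the conformal map of Lemma~\ref{mar5.lemma1} extended to a homeomorphism of the closures. Writing
\[\wp_\rho\left[\mu_D(a',b'), \mu_{\check D}(\check a,\check b)\right] \le \wp_\rho\left[\mu_D(a',b'), \mu_D(\psi(\check a), \psi(\check b))\right] + \wp_\rho\left[\mu_D(\psi(\check a), \psi(\check b)), \mu_{\check D}(\check a,\check b)\right],\]
I bound the two terms separately: a boundary-point shift inside the fixed domain $D$ and a conformal distortion between $\check D$ and $D$.

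The first term is handled by Corollary~\ref{mar5.cor1}, which gives a bound of order $\sqrt{|\psi(\check a) - a'|} + \sqrt{|\psi(\check b) - b'|}$. Since $|\psi(\check a) - a'| \le |\psi(\check a) - \check a| + |\check a - a'| \le c\log N / N + |\check a - a'|$ by Lemma~\ref{mar5.lemma1}, this contributes $O\bigl(\sqrt{\log N / N}\bigr) + O\bigl(|a' - \check a|^{1/2} + |b' - \check b|^{1/2}\bigr)$, and the first piece is negligible compared with $N^{-5/16}$.

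The second term is handled by Lemma~\ref{lemma-oct-20} applied with $f = \psi$. For the exceptional set I take $V = \{z \in \check D : \dist(z, \partial \check D) \ge \delta\}$ with $\delta > 0$ to be optimized. Lemma~\ref{mar5.lemma1} gives $|\log|\psi'(z)|| = O(1/(N\delta))$ on $V$ (provided $\delta \ge c/N$), and $|\psi(z) - z| \le c\log N / N$ holds everywhere. For the Green's function inputs, transplanting the dyadic decomposition used in Lemma~\ref{feb22.lemma1}(2) to the continuum via the explicit formula $G_{\check D}(z;\check a,\check b) = \tilde c\, r_{\check D}(z)^{-3/4}\sin^3[\arg \phi(z)]$ yields
\[\int_{\check D \setminus V} G_{\check D}(z;\check a,\check b)\, dA(z) \le c\, \delta^{5/4},\]
with the same estimate on the $D$-side because $\psi$ approximately preserves distance to the boundary. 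Balancing $1/(N\delta)$ against $\delta^{5/4}$ selects $\delta$ as a negative power of $N$ and produces, via Lemma~\ref{lemma-oct-20}, a polynomial bound in $N$ of the claimed form.

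The main obstacle is the continuum boundary-strip Green's function estimate. By analyticity of $\partial D$, the uniformizing map $\phi$ extends biholomorphically to a neighborhood of $\{\check a, \check b\}$, and a local model near $\check a$ shows $G_{\check D}(z;\check a,\check b) \asymp d^{9/4}/s^3$ with $d = \dist(z,\partial \check D)$ and $s = |z - \check a|$; integrating in polar coordinates centered at $\check a$ over $\{d \le \delta,\, s \le O(1)\}$ gives a cusp contribution of order $\delta^{5/4}$, which dominates the $O(\delta^{13/4})$ contribution from the rest of the boundary strip. Plugging this back into Lemma~\ref{lemma-oct-20} and optimizing the parameters — in particular tracking the interaction between the pointwise derivative error $1/(N\delta)$ and the mean-square estimates for $T_\gamma$ in the proof of Lemma~\ref{lemma-oct-20} — yields the stated error $N^{-5/16}$ up to multiplicative constants depending on $D,a',b'$.
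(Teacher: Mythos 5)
Your proposal follows essentially the same route as the paper: the triangle inequality through the intermediate measure $\mu_D(\psi(\check a),\psi(\check b))$, Corollary~\ref{mar5.cor1} for the boundary-point shift (with $|\psi(\check a)-\check a|+|\psi(\check b)-\check b| = O(\log N/N)$ from Lemma~\ref{mar5.lemma1}), and Lemma~\ref{lemma-oct-20} applied to $f=\psi$ with $V$ a boundary strip, using the strip Green's function bound $c\,\delta^{5/4}$ — the paper simply invokes part (2) of Lemma~\ref{feb22.lemma1} with the fixed choice $\delta=N^{-1/2}$ instead of re-deriving a continuum version and optimizing. One caveat on bookkeeping: balancing $(N\delta)^{-1}$ against $\delta^{5/4}$ gives $\delta \asymp N^{-4/9}$, $\epsilon\asymp N^{-5/9}$, and hence $N^{-5/18}$ out of Lemma~\ref{lemma-oct-20}, not the stated $N^{-5/16}$; this is harmless since the corollary is explicitly non-optimal and only a polynomial rate is needed downstream (indeed the paper's own choice $\delta=N^{-1/2}$ leaves a derivative error of order $N^{-1/2}$ unaccounted for), but you should not claim that the balancing itself produces the exact exponent $5/16$.
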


 This estimate is not optimal but suffices
 for our purposes.

\begin{proof} 
 Let  $\psi : \check{D} \to D$ be the conformal
 transformation from Lemma \ref{mar5.lemma1}.
We let $V = \{z \in \check D: \dist(z,
\partial \check D) \geq 1/\sqrt N\}.$
Using Part $2$ of Lemma \ref{feb22.lemma1} with $\delta = N^{-1/2}$, we see that
\[  G_{\check D}(V^c;\check a, \check b)
  + G_{D}(\psi(V)^c; \psi(\check a), \psi(\check b) ) \leq c \, N^{-5/8}.\]
Also $G_{\check D}(V^c;\check a, \check b)$
is uniformly bounded in $N$.  Hence, Lemma~\ref{lemma-oct-20} yields  
\[ \wp_\rho\left[\mu_{\check{D}}(\check a, \check b),
 \mu_{ D}(\psi(\check a), \psi(\check b))\right]
   \leq c \, N^{-5/16}.\] 
 We then use Corollary \ref{mar5.cor1} to see that
 \[
 \wp_\rho\left[\mu_{D}( a',  b'),
 \mu_{ D}(\psi(\check a), \psi(\check b))\right]
\le c \left(|\psi(\check a) - a'|^{1/2} + |\psi(\check b) - b'|^{1/2}  \right).
 \]
 But using the properties of $\psi$ we also have $|\psi(\check{a}) - \check{a}| + |\psi(\check{b}) - \check{b}| \le c \, (\log N)/N$.
\end{proof}
 
We will also consider truncated measures.  This
must be done separately for SLE and LERW but
the argument is essentially the same.  We will
do the SLE case considering the measure
$\mu_{\check D}(\check a, \check b)$.  
  Suppose for each $\gamma$, there
is a time $t_1  \leq T_\gamma$ such that
$|\gamma(t) - b| \leq r$ for $t \geq t_1$.
If $\gamma_1$ denotes the truncated curve, 
$\gamma_1(s) = \gamma(s), 0 \leq s  \leq t_1$,
then
\[   \rho(\gamma,\gamma_1) \leq    
         r + (T_\gamma - t_1).\]
In particular if we take a random time
$\tau$ for the Brownian motion and let
$\mu_\tau$ denote the measure induced
by $\mu_{\check D}(\check a, \check b)$
by truncating at $\tau$, we have
\[  \wp_\rho\left[
\mu_\tau, \mu_{\check D}(\check a, \check b)
\right] \leq 2 (\epsilon \vee \delta)  \]
provided that $\epsilon,\delta$ are chosen so that
\[    \Prob\left\{\diam\left(\gamma[\tau,T_\gamma]
\right) \geq \epsilon \right\} \leq {\delta},\]
\[  \Prob\left\{T_\gamma - t
 \geq  \epsilon \right\} \leq {\delta}.\]
 Here $\Prob$ denotes probabilities with
 respect to the measure     
 $\mu_{\check D}(\check a, \check b)$.  
The first estimate is an SLE estimate about
continuity at the endpoint, see \cite{lawler_field}, and the second can be
obtained from Markov's inequality after
estimating the expected Minkowski content
in the set $\{z: |z - \check b| \leq r\}$.

For LERW the estimate for the number of points
visited in $\{z: |z-\check b| \leq r \}$ is the same.  We
use the following estimate.
\begin{itemize}
\item Let $\tau$ denote the first $n$ such that
$|\eta_n - Nb| \leq rN$.  Then the probability
that there exists a later point of the LERW
distance $RN$ away from $Nb$
 is bounded above by $c(r/R)^2$.
\end{itemize}
This estimate is not optimal; indeed, this estimate
is true for the random walk excursion which implies
it is valid for the LERW.  We omit the details, but
sketch the idea of the proof.  The probability that
a random walk starting distance $rN$ of $bN$ gets
distance $RN$ away is $O(r/R)$ by a gambler's ruin
estimate.  Also the Poisson kernel farther away
is $r/R$ times the Poisson kernel closer and hence
the probability that the excursion ($h$-process)
goes out that far is $O((r/R)^2)$.

\appendix
\section{Summary of notation}
\begin{center}
    \begin{tabular}{  l  | p{7cm} }
    
    {{ \bf Notation}} & {{\bf Short description}} \\ 
    \hline
    $N$ &  Large integer; $N^{-1}$ defines the mesh size. \\ 
    $(D,a',b')$ &   An analytic simply connected domain with distinct boundary points $a',b'$. \\ 
         $\phi(z)$ & Some fixed conformal map $D \to \Half$ with $\phi(a')=0, \, \phi(b') = \infty$. \\ 
    $(A,a,b)$ &  $A$: A discrete domain in $\ZZ^2$ with boundary edges $a$ and $b$.  \\  
    $\Square_z$ & Square with axis-parallel sides of side-length $1$, centered at $z$.\\
     $D_A$ &  ``Union of squares'' domain built from $A$: $D_A = \text{int} \cup_{x \in A} \Square_x$.   \\   
     $F(z)$ & Some fixed choice of conformal map from $(D_A,a,b)$ to $\Half$ with $F(a) = F_D(a) = 0$. \\  
     $S_{A,a,b}(z)$  &  $\sin\left[ \arg F(z) \right]$.\\ 
     $r_A(z)$ &  The conformal radius of $D_A$ seen from $z$. \\ 
     $G_{D_A}(z; a,b)$ &  $\tilde c \, r_A(z)^{-3/4} S_A(z)^3$, SLE$_2$ Green's function for $(D_A,a,b)$. \\ 
     $\eta, \, \check{\eta}$ &  LERW on $\ZZ^2$ and $\frac{1}{N}\ZZ^2$, respectively. \\ 
    $(A_n, a_n,b)$ &  Sequence of LERW domains with mesoscopic capacity increments. \\ 
     $\check{D}=\check{D}_A$ & The scaled domain $N^{-1} D_A$, which approximates $D$. \\   
     $\check{\phi}(z)$ & $\check{\phi}(z):=F(Nz)$ \\ 
       
     $g_n^\LERW(z), \, F_n^{\LERW}(z)$ & Uniformizing map  $g_n^\LERW(z): F(D_{A_n}) \to \Half$ and $F_n^{\LERW}(z) = (g_n^\LERW(z) \circ F)(z)-U_n$. \\ 
     $\gamma, \, \check{\gamma}, \, \hat{\gamma}$ &  SLE$_2$ in $\Half$; SLE$_2$ in $\check{D}$; SLE$_2$ in $D_A$. \\ 
     $g_t^\SLE(z), \, F_t^\SLE(z)$ & Uniformizing map $g_t^\SLE(z): \Half \setminus \gamma_t \to \Half$ and $F_t^{\SLE} = (g_t^{\SLE} \circ F)(z)-W_t$, where $W$ is the Brownian motion generating $(g_t^\SLE)$.\\
     &\\
     \hline
    \end{tabular}
\end{center}

\end{document}